\setlist{leftmargin=2\parindent}
\newenvironment{smatrix}{\left( \begin{smallmatrix} } {\end{smallmatrix} \right) }
\theoremstyle{plain}
\newtheorem{theorem}{Theorem}[section]
\newtheorem{lemma}[theorem]{Lemma}
\newtheorem{proposition}[theorem]{Proposition}
\newtheorem{corollary}[theorem]{Corollary}
\newtheorem{definition}[theorem]{Definition}
\newtheorem{notation}[theorem]{Notation}
\newtheorem{conjecture}[theorem]{Conjecture}
\newtheorem{assumption}[theorem]{Assumption}
\theoremstyle{remark}
\declaretheorem[name=Remark,sibling=theorem,qed={\lower-0.3ex\hbox{$\diamond$}}]{remark}
\declaretheorem[name=Note,sibling=theorem,qed={\lower-0.3ex\hbox{$\diamond$}}]{note}
\DeclareMathOperator{\Fil}{Fil}
\DeclareMathOperator{\GL}{GL}
\DeclareMathOperator{\GSp}{GSp}
\DeclareMathOperator{\Gal}{Gal}
\DeclareMathOperator{\Hom}{Hom}
\DeclareMathOperator{\Frob}{Frob}
\DeclareMathOperator{\ind}{ind}
\DeclareMathOperator{\Iw}{Iw}
\DeclareMathOperator{\loc}{loc}
\DeclareMathOperator{\tors}{tors}
\newcommand{\f}{\mathrm{f}}
\renewcommand{\AA}{\mathbf{A}}
\newcommand{\an}{\mathrm{an}}
\newcommand{\ans}{\an, \Sigma}
\newcommand{\CC}{\mathbf{C}}
\newcommand{\NN}{\mathbf{N}}
\newcommand{\DD}{\mathbf{D}}
\newcommand{\BB}{\mathbf{B}}
\newcommand{\cris}{\mathrm{cris}}
\newcommand{\Dcris}{\DD_{\cris}}
\newcommand{\DdR}{\DD_{\mathrm{dR}}}
\newcommand{\Drig}{\DD^\dag_{\rig}}
\newcommand{\QQbar}{\overline{\QQ}}
\newcommand{\QQ}{\mathbf{Q}}
\newcommand{\Ql}{\QQ_\ell}
\newcommand{\Qp}{\QQ_p}
\newcommand{\ZZ}{\mathbf{Z}}
\newcommand{\Zp}{\ZZ_p}
\newcommand{\cD}{\mathcal{D}}
\newcommand{\cE}{\mathcal{E}}
\newcommand{\cF}{\mathcal{F}}
\newcommand{\cG}{\mathcal{G}}
\newcommand{\cH}{\mathcal{H}}
\newcommand{\cK}{\mathcal{K}}
\newcommand{\cL}{\mathcal{L}}
\newcommand{\cM}{\mathcal{M}}
\newcommand{\cN}{\mathcal{N}}
\newcommand{\cO}{\mathcal{O}}
\newcommand{\cP}{\mathcal{P}}
\newcommand{\cR}{\mathcal{R}}
\newcommand{\cU}{\mathcal{U}}
\newcommand{\dR}{\mathrm{dR}}
\newcommand{\fP}{\mathfrak{P}}
\newcommand{\into}{\hookrightarrow}
\newcommand{\onto}{\twoheadrightarrow}
\newcommand{\ord}{\mathrm{ord}}
\newcommand{\res}{\mathrm{res}}
\newcommand{\rig}{\mathrm{rig}}
\newcommand{\pp}{\mathfrak{p}}
\newcommand{\Ki}{\cK_\infty}
\newcommand{\Kh}{\widehat{\cK}_\infty}
\numberwithin{equation}{section}
\renewcommand{\le}{\leqslant}
\renewcommand{\ge}{\geqslant}
\renewcommand{\geq}{\geqslant}
\renewcommand\part{%
   \if@noskipsec \leavevmode \fi
   \par
   \addvspace{4ex}%
   \@afterindentfalse
   \secdef\@part\@spart}
\def\@part[#1]#2{%
    \ifnum \c@secnumdepth >\m@ne
      \refstepcounter{part}%
      \addcontentsline{toc}{part}{\thepart\hspace{1em}#1}%
    \else
      \addcontentsline{toc}{part}{#1}%
    \fi
    {\parindent \z@ \raggedright
     \interlinepenalty \@M
     \normalfont
     \ifnum \c@secnumdepth >\m@ne
       \Large\bfseries \partname\nobreakspace\thepart
       \par\nobreak
     \fi
     \huge \bfseries #2%
     %%%\markboth{}{}\par}% removing redefinition of headings
     \par}%
    \nobreak
    \vskip 3ex
    \@afterheading}
\def\@spart#1{%
    {\parindent \z@ \raggedright
     \interlinepenalty \@M
     \normalfont
     \huge \bfseries #1\par}%
     \nobreak
     \vskip 3ex
     \@afterheading}
\author{David Loeffler}
\address[Loeffler]{UniDistance Suisse, Schinerstrasse 18, 3900 Brig, Switzerland}
\email{david.loeffler@unidistance.ch}
\urladdr{\href{http://orcid.org/0000-0001-9069-1877}{0000-0001-9069-1877}}
\author{Sarah Livia Zerbes}
\address[Zerbes]{Department of Mathematics, ETH Z\"urich, R\"amistrasse 101, 8092 Z\"urich, Switzerland}
\email{sarah.zerbes@math.ethz.ch}
\urladdr{\href{http://orcid.org/0000-0001-8650-9622}{0000-0001-8650-9622}}
\thanks{D.L. gratefully acknowledges the support of the European Research Council through the Horizon 2020 Excellent Science programme (Consolidator Grant ``ShimBSD: Shimura varieties and the BSD conjecture'', grant ID 101001051)}
\begin{document}

\title{Ultra-Kolyvagin systems and non-ordinary Selmer groups}

\begin{abstract}
 We develop a machine for bounding Selmer groups of Galois representations via Euler systems in ``non-ordinary'' settings, using Pottharst's definition of Selmer groups via Robba-ring $(\varphi, \Gamma)$-modules. Our approach relies on Sweeting's interpretation of Kolyvagin derivative classes via non-principal ultrafilters. We apply these results to prove new cases of the cyclotomic Iwasawa main conjecture for non-ordinary Rankin--Selberg convolutions.
\end{abstract}

\maketitle

%\tableofcontents

\section{Introduction}

 \subsection{The problem} The theory of \emph{Euler systems} has been conspicuously successful in studying the relations between special values of $L$-functions and arithmetic, and hence in proving cases of some of the major open problems of mathematics, such as the BSD conjecture, Bloch--Kato conjecture, and Iwasawa main conjecture. 
 
 However, in the application of Euler systems to Iwasawa theory, an important stumbling block arises. Euler systems work very well for bounding Selmer groups of $p$-adic Galois representations defined by \emph{strict} local conditions at $p$; or more generally local conditions of \emph{Greenberg type}, in which the local condition is given by the orthogonal complement of a subrepresentation stable under the decomposition group at $p$. For applications to the Iwasawa main conjecture, one needs this subrepresentation to be of a specific size: for global Galois representations arising from cohomological, cuspidal automorphic representations of $\GL_n$ with $n$ even, the required subrepresentation should have dimension $\tfrac{n}{2} + 1$ (serving as a ``stepping stone'' to the Selmer group appearing in the main conjecture itself, which should be defined by a subrepresentation of dimension $\frac{n}{2}$).
 
 The existence of such subrepresentations is a rather restrictive condition: it frequently occurs that no such subrepresentation exists, e.g. if the Galois representation is locally irreducible at $p$, and these cases cannot be handled in the framework of Greenberg Selmer groups. The highly influential work of Pottharst \cite{pottharst13} allows one to define and study a much more general class of Selmer groups, using submodules of $(\varphi, \Gamma)$-modules in place of local subrepresentations. This allows a robust and natural \emph{formulation} of the Iwasawa main conjecture, applying to any Galois representation which is crystalline at $p$ (and many that are not). However, it has not previously been possible to use Euler systems to bound Pottharst's Selmer groups. That is the main goal of this article.
 
 \subsection{Local conditions and Kolyvagin systems} As input to our construction, we assume we have an Euler system which satisfies a non-trivial Pottharst-type local condition; we would like to obtain from this a bound for the Selmer group with that local condition.

 However, the actual bounds for Selmer groups are obtained by using an intermediate object, a \emph{Kolyvagin system}, obtained from the Euler system via Kolyvagin's ``derivative'' construction. This raises the problem: \emph{if an Euler system satisfies some non-trivial local condition, is this reflected in additional local properties of the Kolyvagin system derived from it?} This question is less straightforward than it might seem, for the following fundamental reason: Kolyvagin classes are ``mod $p^n$'' objects, taking values in the mod $p^n$ reduction of a Galois representation; whereas Pottharst's Selmer groups are ``$\Qp$-linear'' objects, with no natural integral structures. In order to bridge this gap we crucially use an insight originally due to Sweeting \cite{sweeting20}, which is that Kolyvagin classes can be ``patched together'' using \emph{non-principal ultrafilters} in order to give characteristic 0 limiting objects. These are the ``ultra-Kolyvagin systems'' of the paper's title. Assuming that the local condition satisfied by the Euler system is `closed' (in the sense of Conjecture \ref{conj:closed}), we can show that it propagates to the Kolyvagin system (Theorem \ref{thm:ultrabeef}), which allows us to use the Euler system machine to directly obtain a bound of Selmer groups with the orthogonal local condition. 
 
 Checking the `closedness' of local conditions which arise naturally is a delicate problem in $p$-adic Hodge theory. We check this property for Bloch--Kato ``$H^1_\f$'' and ``$H^1_{\mathrm{g}}$'' local conditions (Corollary \ref{h1fclosed}); and for Pottharst local conditions arising from (potentially non-\'etale) sub-$(\varphi,\Gamma)$-modules we prove in Corollary \ref{pottharstclosed} a slightly weaker result that turns out to be sufficient for Iwasawa-theoretic applications.

 \subsection{Applications}
 
 As a first application of our machinery, we consider the Euler system of \cite{loefflerzerbes16} for the Rankin--Selberg convolution of two non-ordinary modular forms. For this case, the strongest result avalable with previous methods was to show that the \emph{support} of the Selmer group in the Iwasawa main conjecture is contained in the vanishing locus of the $p$-adic $L$-function. With our methods we are able to upgrade this to an actual divisibility, thus proving the ``Euler system half'' of the Iwasawa main conjecture in this context. Similar arguments can be applied to numerous other Euler systems (e.g.~for $\GSp_4$, or for quadratic Hilbert modular forms); details will appear in subsequent works.

 \subsection*{Acknowledgements} We are very grateful to Laurent Berger, Pierre Colmez, Ananyo Kazi, Chris Skinner, Rustam Steingart, and Naomi Sweeting for illuminating conversations on the topics of this paper.

 \subsection*{Notation} We fix throughout this paper a prime $p > 2$, and a finite extension $E / \Qp$ with ring of integers $\cO$, uniformizer $\varpi$ and residue field $\mathbf{k}$; we will study Galois representations on $E$-vector spaces and $\cO$-modules.

\part{Local theory}

 We begin with some local preliminaries. The main results of this section are to establish that, for $F$ a ``big'' unramified extension of $\Qp$ (not of finite degree  over $\Qp$), and $V$ a suitable Galois representation, certain natural subspaces of $H^1(F, V)$ are closed in the Banach-space topology: see \cref{pottharstclosed} for subspaces defined using $(\varphi, \Gamma)$-modules ``\`a la Pottharst'', and \cref{h1fclosed} for the Bloch--Kato $H^1_\f$ and $H^1_{\mathrm{g}}$ subspaces.

\section{Local cohomology groups}

 Let $\Ki$ the unique unramified $\Zp$-extension of $\Qp$. Note that the completion $\Kh$ is a complete discretely valued field, with the same residue field as $\Ki$, and the groups $\Gal\left(\overline{\Kh} / \Kh\right)$ and $\Gal(\overline{\Ki} / \Ki)$ are canonically isomorphic; we identify them henceforth. We let $F$ be a finite unramified extension of $\Kh$.

 \subsection{Cohomology of Galois representations}

   Let $T$ be a finite-rank free $\cO$-module with a continuous action of $\Gal(\bar{F} / F)$, and let $V = T[1/p]$.

  \begin{proposition}\label{prop:banachH1} \
   \begin{itemize}
    \item For all $i \ge 2$, the groups $H^i(F, T)$ and $H^i(F, V)$ are zero, as is $H^i(F, T/\varpi^j)$ for all $j \ge 1$.

    \item The cohomology group $H^1(F, T)$ has finite $p$-torsion subgroup; and the quotient $H^1(F, T) / \{\mathrm{torsion}\}$ is a flat, $\varpi$-adically complete and separated $\cO$-module, whose base-extension to $E$ is canonically identified with $H^1(F, V)$.
   \end{itemize}
  \end{proposition}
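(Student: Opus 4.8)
The plan is to deduce everything from the single fact that $F$ has mod-$p$ cohomological dimension at most $1$. The residue field $k_F$ of $F$ is a finite extension of the residue field of $\Ki$, which is perfect and has absolute Galois group $\prod_{\ell \ne p} \ZZ_\ell$ (the prime-to-$p$ part of $\widehat{\ZZ}$); hence $\mathrm{cd}_p(k_F) = 0$. Since $F$ is complete for a discrete valuation with perfect residue field of characteristic $p$, the standard inequality $\mathrm{cd}_p(F) \le \mathrm{cd}_p(k_F) + 1$ gives $\mathrm{cd}_p(F) \le 1$. In particular $H^i(F, M) = 0$ for all $i \ge 2$ and every discrete $p$-power-torsion $\Gal(\bar F / F)$-module $M$; taking $M = T/\varpi^j$ settles the torsion part of the first bullet.

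To treat $T$ and $V$ in degrees $\ge 2$ I would use the limit exact sequences. Writing $T = \varprojlim_j T/\varpi^j$, the $\varprojlim^1$-exact sequence for continuous cohomology gives $0 \to \varprojlim^1_j H^{i-1}(F, T/\varpi^j) \to H^i(F, T) \to \varprojlim_j H^i(F, T/\varpi^j) \to 0$; for $i \ge 2$ the right-hand term is $0$ by the previous paragraph, and the $\varprojlim^1$-term is $0$ — trivially for $i \ge 3$, and for $i = 2$ because the transition maps $H^1(F, T/\varpi^{j+1}) \to H^1(F, T/\varpi^j)$ are surjective, as one sees from the long exact sequence of $0 \to T/\varpi \xrightarrow{\varpi^j} T/\varpi^{j+1} \to T/\varpi^j \to 0$ whose next term $H^2(F, T/\varpi)$ vanishes. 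Thus $H^i(F, T) = 0$ for $i \ge 2$. Since $V/T \cong \varinjlim_j T/\varpi^j$ we get $H^i(F, V/T) = \varinjlim_j H^i(F, T/\varpi^j) = 0$ for $i \ge 2$, and the long exact sequence of $0 \to T \to V \to V/T \to 0$ then forces $H^i(F, V) = 0$ for $i \ge 2$.

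For the $H^1$ statements, note first that since $H^1(F, V)$ is an $E$-vector space, the torsion subgroup of $H^1(F, T)$ coincides with $\ker\!\big(H^1(F, T) \to H^1(F, V)\big)$, which by the long exact sequence equals $\mathrm{coker}\!\big(H^0(F, V) \to H^0(F, V/T)\big)$. Let $W = T^{\Gal(\bar F/F)}$; it is finite free over $\cO$ and saturated in $T$ (if $\varpi t$ is Galois-fixed so is $t$, as $T$ is torsion-free), so $H^0(F, V) = W[1/p]$. The module $H^0(F, V/T) = (V/T)^{\Gal(\bar F/F)}$ is cofinitely generated, and since $(V/T)[\varpi^n] \cong T/\varpi^n$ compatibly in $n$ its Tate module $\varprojlim_n (V/T)^{\Gal(\bar F/F)}[\varpi^n]$ is canonically $\varprojlim_n (T/\varpi^n)^{\Gal(\bar F/F)} = W$; hence its corank equals $\mathrm{rk}_\cO W$. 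The image of $W[1/p]$ is a divisible submodule of the same corank (its kernel in $V/T$ is $W[1/p] \cap T = W$), so the cokernel has corank $0$, and being cofinitely generated it is finite.

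Finally, set $N = H^1(F, T)/\{\mathrm{torsion}\}$; over the discrete valuation ring $\cO$ it is torsion-free, hence flat. First, $H^1(F, T)$ is already $\varpi$-adically complete and separated: $H^2(F, T) = 0$ yields $H^1(F, T)/\varpi^n \xrightarrow{\sim} H^1(F, T/\varpi^n)$, and the $\varprojlim^1$-exact sequence — with $\varprojlim^1_n H^0(F, T/\varpi^n) = 0$ because the $H^0$'s are finite — identifies $H^1(F, T)$ with $\varprojlim_n \big(H^1(F, T)/\varpi^n\big)$. Then, writing $C$ for the finite torsion subgroup and tensoring $0 \to C \to H^1(F, T) \to N \to 0$ with $\cO/\varpi^n$ (valid as $N$ is flat) and passing to the inverse limit, one gets $N \xrightarrow{\sim} \varprojlim_n \big(N/\varpi^n\big)$, so $N$ is $\varpi$-adically complete and separated. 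For the last clause, the natural map $N \to H^1(F, V)$ extends $E$-linearly to $N \otimes_\cO E \to H^1(F, V)$; it is injective since $N$ embeds into the $E$-vector space $H^1(F, V)$, and surjective since every class in $H^1(F, V)$ has $\varpi$-power-torsion image in $H^1(F, V/T) = \varinjlim_j H^1(F, T/\varpi^j)$, so a $\varpi$-power multiple of it lifts to $H^1(F, T)$. The two steps that are not pure diagram chasing are the cohomological-dimension bound — which is exactly where the shape of the tower $\Ki$ enters — and the corank bookkeeping in the third paragraph; I expect the latter to be the point demanding the most care.
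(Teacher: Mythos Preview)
Your proof is correct and follows essentially the same route as the paper: both pivot on $\mathrm{cd}_p(F) \le 1$ and the $\varprojlim^1$-exact sequence. The only differences are cosmetic --- you handle $H^i(F, V) = 0$ via $V/T$ and the long exact sequence where the paper invokes compactness of $G_F$ to bound cochains, and you spell out the corank bookkeeping for finiteness of the torsion in $H^1(F, T)$ that the paper leaves as a one-line assertion.
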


  \begin{proof}
   By a general result of Tate \cite[Proposition 2.2]{tate76}, we have short exact sequences
   \[ 0 \to {\varprojlim_j}^{(1)} H^{i-1}(F, T/\varpi^j) \to H^i(F, T) \to \varprojlim_j H^i(F, T/\varpi^j) \to 0.\]
   We claim that the groups $H^i(F, T/\varpi^j)$ are zero for $i \ge 2$ and all $j$. This is because $F$ is a complete discretely valued field whose residue field $\mathbf{k}_{\Kh}$ has $p$-cohomological dimension zero, since its Galois group is an inverse limit of finite groups of prime-to-$p$ order. By Theorem 6.5.15 of \cite{nsw}, we have $\operatorname{cd}_p(\Kh) = 1$; that is, $H^i(\Kh, A) = H^i(F, A)$ vanishes for all $p$-torsion modules $A$ and all $i \ge 2$.

   It follows that the inverse system $\left( H^{i-1}(F, T/\varpi^j)\right)_{j \ge 1}$ satisfies the Mittag--Leffler condition. For $i = 0, 1$ this is obvious (since the groups concerned are finite); and for $i \ge 2$, the cokernel of the map $H^{i-1}(F, T/\varpi^{j + 1}) \to H^{i-1}(F, T/\varpi^j)$ injects into $H^i(F, T/\varpi)$, which is zero by the above, so the transition maps are surjective. Hence the map $H^i(F, T) \to \varprojlim_j H^i(F, T/\varpi^j)$ is an isomorphism for all $i$, and in particular $H^i(F, T) = 0$ for $i \ge 2$. Finally, since $\Gal(\QQbar_p / F)$ is a compact group, any continuous $V$-valued cochain must factor through $\varpi^{-m} T \subseteq V$ for some $m$, and so lies in the image of $H^i(F, \varpi^{-m} T) \cong H^i(F, T) = 0$. This completes the proof of the first point.

   For the second point, we observe that the torsion in $H^1(F, T)$ is identified with the image of $H^0(F, V / T)$ modulo its divisible part, which is a finite group. Moreover, since $H^2$ vanishes, we have $H^1(F, T) / \varpi^j \cong H^1(F, T/\varpi^j)$, so we conclude that $H^1(F, T) \cong \varprojlim_j H^1(F, T) / \varpi^j$ is $\varpi$-adically complete and separated.
  \end{proof}

  It follows that $H^1(F, V)$ is naturally an $E$-Banach space, with $H^1(F, T) / \{\mathrm{torsion}\}$ as its unit ball. It is, however, not generally finite-dimensional over $E$.

 \subsection{Cohomology of $(\varphi, \Gamma)$-modules}

  We consider the usual rings of $(\varphi, \Gamma)$-module theory
 \begin{subequations}
 \begin{equation}
   \label{eq:rings}
   \AA_F = \cO_F[[\pi]][1/\pi]^\wedge, \qquad \BB_F = \AA_F[1/p], \qquad \AA^{\dag}_F,\qquad \BB^{\dag}_F, \quad \BB^{\dag}_{\rig, F},
  \end{equation}
  and their subrings
  \begin{equation}
   \AA^+_F = \cO_F[[\pi]], \qquad \BB^+_F = \AA^+_F[1/p], \qquad \BB^+_{\rig, F}.
  \end{equation}
  \end{subequations}

  \begin{remark}
   The notations above are those of Colmez (see e.g.~\cite{cherbonniercolmez98}); but we shall also denote $\BB^{\dag}_{\rig, F}$ (the Robba ring) by the more familiar notation $\cR_F$, and $\BB^+_{\rig, F}$ by $\cR_F^+$. (The other rings above will not occur outside this subsection.)
  \end{remark}

  For a $G_F$-representation $V$ we have $(\varphi, \Gamma)$-modules $\DD(V)$, $\DD^\dag(V)$, $\Drig(V)$ over $\BB_F$, $\BB^\dag_F$, $\BB^\dag_{\rig, F}$ respectively; and for $T \subset V$ a lattice we have integral modules $\DD(T)$, $\DD^\dag(T)$ over $\AA_F$, $\AA^\dag_F$ respectively.

  \begin{proposition}
   If $F$ is a finite extension of $\Kh$, and $T$ is a lattice in a $G_F$-representation $V$, then the map $\psi - 1$ is surjective on all of the modules $\DD(T)$, $\DD(V)$, $\DD^{\dag}(T)$, $\DD^{\dag}(V)$, and $\Drig(V)$.
  \end{proposition}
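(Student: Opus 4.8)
The plan is to reduce everything to the étale integral module $\DD(T)$, and to reduce that in turn, modulo $\varpi$, to a statement in characteristic $p$. First I would treat $\DD(T)$. As a finitely generated module over $\AA_F \widehat\otimes_{\Zp}\cO$ it is $\varpi$-adically complete and separated, and $\DD(T)/\varpi^j\DD(T) = \DD(T/\varpi^j)$; hence, since $\psi-1$ is continuous, a $\varpi$-adic successive-approximation argument shows it suffices to prove that $\psi-1$ is surjective on $\DD(T/\varpi)$. This is an étale $(\varphi,\Gamma)$-module over the characteristic-$p$ ring $\AA_F/\varpi$, a finite product of Laurent-series fields over finite extensions of the residue field $k_F$ of $F$. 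Via the decomposition $D = D^{\psi=0}\oplus\varphi(D)$ (valid because $\AA_F/\varpi$ is free of rank $p$ over its image under $\varphi$), on which $\psi-1$ acts as $-1$ on the first summand, and by dévissage along a $G_F$-stable filtration of $T/\varpi$ with trivial graded pieces — obtained after a finite prime-to-$p$ base change $F'/F$ chosen so that $G_{F'}$ acts through a $p$-group and hence unipotently, using that $\psi-1$-surjectivity passes to $\psi$-stable direct summands and is insensitive to finite prime-to-$p$ base change via Shapiro's lemma ($\AA_{F'}$ being finite free over $\AA_F$, with $\DD_F(T/\varpi)$ a $\psi$-stable summand of $\DD_F(\Ind_{G_{F'}}^{G_F}(T/\varpi))=\DD_{F'}(T/\varpi)$) — one is reduced to the trivial module $\AA_{F'}/\varpi$. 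There $(\psi-1)x=y$ unwinds to a chain of Artin--Schreier equations over $k_{F'}$, all solvable because $k_{F'}$, being an infinite algebraic extension of $\FF_p$, has $p$-cohomological dimension zero — the very property of the residue field exploited in the proof of \cref{prop:banachH1}, and the point at which the hypothesis ``$F$ over $\Kh$'' is essential (the analogous surjectivity is \emph{false} for $F$ finite over $\Qp$, where it fails already for the trivial module).

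The case of $\DD(V)$ then follows at once, since $\DD(V)=\DD(T)\otimes_\cO E$ and $\psi-1$ is $\cO$-linear, so surjectivity is preserved under the right-exact base change $\cO\to E$.

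For the overconvergent modules I would invoke the overconvergence of Iwasawa cohomology à la Cherbonnier--Colmez: the inclusion of complexes $[\DD^\dagger(T)\xrightarrow{\psi-1}\DD^\dagger(T)]\hookrightarrow[\DD(T)\xrightarrow{\psi-1}\DD(T)]$ is a quasi-isomorphism, so the cokernels of $\psi-1$ agree (and likewise with $V$ in place of $T$); the proof given for $F/\Qp$ finite carries over to our $F$, since the relevant field-of-norms and $(\varphi,\Gamma)$-module formalism is unchanged for the cyclotomic extension $F_\infty/F$. Finally, for $\Drig(V)$ one uses in the same way that $[\DD^\dagger(V)\xrightarrow{\psi-1}\DD^\dagger(V)]\hookrightarrow[\Drig(V)\xrightarrow{\psi-1}\Drig(V)]$ is a quasi-isomorphism (the comparison between bounded-overconvergent and Robba-ring Iwasawa cohomology, in the style of Liu and of Kedlaya--Pottharst--Xiao). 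In each case the already-established étale statement supplies the surjectivity of $\psi-1$.

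The hard part will be the characteristic-$p$ reduction: in particular carrying out the dévissage for a general, possibly ramified, torsion representation and handling the bookkeeping in the chain of Artin--Schreier equations; and, secondarily, checking that the Cherbonnier--Colmez, Liu, and Kedlaya--Pottharst--Xiao comparisons are genuinely available over the large unramified base field $F$ rather than only over finite extensions of $\Qp$. (One can cross-check the conclusion against Iwasawa cohomology: the cokernel of $\psi-1$ on $\DD(T)$ is $H^2_{\mathrm{Iw}}(F,T)=\varprojlim_n H^2(F(\mu_{p^n}),T)$, and each $H^2(F(\mu_{p^n}),T)$ vanishes by the argument of \cref{prop:banachH1} applied to $F(\mu_{p^n})$, whose residue field is again a finite extension of $k_{\Kh}$ — but turning this into a proof requires controlling a $\varprojlim^1$-term for the integral coefficients, which the characteristic-$p$ argument above bypasses entirely.)
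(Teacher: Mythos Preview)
Your outline is essentially correct, and for $\DD(T)$, $\DD(V)$, $\DD^{\dag}(T)$, $\DD^{\dag}(V)$ it matches the paper's argument closely (the paper cites Berger's thesis for the algebraically-closed-residue-field case and descends along the prime-to-$p$ extension $\overline{k}_F/k_F$, which is the same mechanism underlying your dévissage-plus-Artin--Schreier argument; and for $\DD^{\dag}$ the paper, like you, appeals to the Cherbonnier--Colmez/Liu comparison, specifically the injection $\DD^{\dag}(T)/(\psi-1)\hookrightarrow\DD(T)/(\psi-1)$, which is the first half of \cite[Lemma~2.6]{liu08} and does not need $[F:\Qp]<\infty$).

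The one place where the paper takes a genuinely different, and sharper, route is the passage to $\Drig(V)$. You propose to invoke a quasi-isomorphism of the two-term $\psi$-complexes $[\DD^{\dag}(V)\xrightarrow{\psi-1}\DD^{\dag}(V)]\hookrightarrow[\Drig(V)\xrightarrow{\psi-1}\Drig(V)]$ ``in the style of Liu and KPX''; but over a base field with infinite residue field this comparison is precisely one of the things you flag as needing to be checked, and its $H^1$-part (matching the $\psi-1$-cokernels) is essentially the statement you are trying to prove. The paper avoids this entirely: it uses instead \cite[Prop.~1.5.4]{kedlaya08}, a pure $\varphi$-module statement (no finiteness of the residue field is needed) giving $\DD^{\dag}(V)/(\varphi-1)\xrightarrow{\sim}\Drig(V)/(\varphi-1)$, and then the one-line observation
\[
(\varphi-1)\cD \;=\; (\psi-1)\big(\varphi(\cD)\big)\;\subseteq\;(\psi-1)\cD,
\]
so that $\Drig(V)=\DD^{\dag}(V)+(\varphi-1)\Drig(V)\subseteq(\psi-1)\DD^{\dag}(V)+(\psi-1)\Drig(V)=(\psi-1)\Drig(V)$. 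This trick --- converting a $\varphi$-cokernel comparison into $\psi$-surjectivity --- is the main thing your proposal is missing; it replaces the delicate ``are Liu/KPX valid over $F$?'' verification by an elementary inclusion.
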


  \begin{proof}
   (The following argument is due to Berger and Steingart, and we are grateful to them for sharing it with us.) First we claim that $\psi - 1$ is surjective on the Fontaine $(\varphi, \Gamma)$-module $\DD(T)$. This is proved in \cite[Lemma VI.7]{berger-thesis} assuming the residue field to be algebraically closed; and since $\Gal(\overline{\mathbf{k}}_F / \mathbf{k}_F)$ is an inverse limit of finite groups of prime-to-$p$ order, we can descend to $F$.

   We now apply the first half of the proof of Lemma 2.6 in \cite{liu08} (which does not require $[F : \Qp]$ to be finite, although the second half does) to conclude that $\DD^\dag(T) / (\psi - 1) \DD^{\dag}(T)$ injects into $\DD(T) / (\psi - 1) \DD(T)$, and hence must be zero. So we have proved the claim for $\DD(T)$ and $\DD^{\dag}(T)$, and the result for $\DD(V)$ and $\DD^{\dag}(V)$ follows.

   To conclude, we use \cite[Proposition 1.5.4]{kedlaya08}, which shows that the natural map $\DD^{\dag}(V) / (\varphi - 1) \to \Drig(V) / (\varphi - 1)$ is an isomorphism (note this concerns $\varphi$, not $\psi$). However, since $(\varphi - 1) \cD = (\psi - 1)(\varphi(\cD)) \subseteq (\psi - 1) \cD$, this implies $\cD / (\psi - 1) = 0$.
  \end{proof}

  \begin{definition}
   For $\cD$ a $(\varphi, \Gamma)$-module over any of the above rings, we write $C_{\varphi, \Gamma}(\cD)$ for its Herr complex (where $\gamma$ is a topological generator of $\Gamma$)
   \[ [\cD \xrightarrow{\left(\begin{smallmatrix}\varphi-1 \\ \gamma-1\end{smallmatrix}\right)} \cD \oplus \cD \xrightarrow{(1 - \gamma, \varphi - 1)} \cD], \]
   and $C_{\psi, \Gamma}(\cD)$ for the analogous complex with $\psi$ instead of $\varphi$.
  \end{definition}

  There is a natural surjective map of complexes $C_{\varphi, \gamma}(\cD) \to C_{\psi, \gamma}(\cD)$ whose kernel is the complex $[0 \longrightarrow \cD^{\psi = 0} \xrightarrow{\gamma - 1}\cD^{\psi = 0}]$.

  \begin{proposition}[Herr, Cherbonnier--Colmez, Kedlaya--Pottharst--Xiao, Liu]
   If $V$ is a $G_F$-representation and $\cD = \DD(V), \DD^\dag(V), \DD^{\dag}_\rig(V)$, or $T \subset V$ is a lattice and $\cD = \DD(T), \DD^\dag(T)$, then the complexes $C_{\varphi, \gamma}(\cD)$ and $C_{\psi, \gamma}(\cD)$ are quasi-isomorphic and their cohomology groups agree with the Galois cohomology groups $H^i(F, -)$.
  \end{proposition}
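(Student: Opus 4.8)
The plan is to reduce to the case $[F:\Qp] < \infty$, where all three assertions are the cited theorems of Herr (for $\cD = \DD(V), \DD(T)$), Cherbonnier--Colmez together with Herr (for $\cD = \DD^{\dag}(V), \DD^{\dag}(T)$), and Liu \cite{liu08} and Kedlaya--Pottharst--Xiao (for $\cD = \Drig(V)$), and then to check that none of the arguments uses finiteness of the residue field. The quasi-isomorphism $C_{\varphi,\gamma}(\cD) \to C_{\psi,\gamma}(\cD)$ I would dispose of first and separately: as noted just above, this surjection of complexes has kernel $[0 \to \cD^{\psi = 0} \xrightarrow{\gamma - 1} \cD^{\psi=0}]$, so it suffices that this two-term complex be acyclic, i.e.\ that $\gamma - 1$ act invertibly on $\cD^{\psi=0}$ — a classical fact (Herr; for the Robba ring see \cite[\S2]{liu08}) whose proof concerns only the $\Gamma$-module structure of $\cD^{\psi=0}$ and so is insensitive to $[F:\Qp]$. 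Equivalently, one may simply observe that $C_{\varphi,\gamma}(\cD)$ and $C_{\psi,\gamma}(\cD)$ both compute $R\Gamma(F,-)$, compatibly with the map between them, whence they are quasi-isomorphic. In either case the remaining work is the comparison $H^i(C_{\varphi,\gamma}(\cD)) \cong H^i(F,-)$, say for the $\varphi$-version.

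For that comparison the essential point is that our $F$ — a finite extension of $\Kh$ — is a complete discretely valued field whose residue field $\mathbf{k}_F$ is perfect of characteristic $p$ with $\operatorname{cd}_p(\mathbf{k}_F) = 0$ (as recalled at the start of this section), hence $\operatorname{cd}_p(F) = 1$; this is exactly the class of fields to which Fontaine's $(\varphi,\Gamma)$-module formalism — the equivalence of categories, overconvergence, and Herr's cohomological comparison — applies without any modification. Concretely, I would recall that Herr's proof for $\DD(V)$ and $\DD(T)$ proceeds by descent along the cyclotomic tower $F_\infty = F(\mu_{p^\infty})$: one first identifies $H^\bullet(H_F, -)$ of the relevant period rings (with $H_F = \Gal(\overline F / F_\infty)$) using the Tate--Sen formalism, and then computes $H^\bullet(\Gamma, -)$ through the two-term complex $[\cdot \xrightarrow{\gamma - 1}\cdot]$; both steps depend only on the (standard, cyclotomic) ramification of $F_\infty / F$, which is unaffected by enlarging $\mathbf{k}_F$, so the argument transcribes verbatim. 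I would then invoke that the Cherbonnier--Colmez overconvergence theorem \cite{cherbonniercolmez98}, proved by the same Tate--Sen method, likewise does not use $[F:\Qp] < \infty$ — so $\DD^\dag(V)$ and $\DD^\dag(T)$ exist over $F$, and the comparison for them follows on combining with the $\DD^\dag$/$\DD$ comparison furnished by the first half of \cite[Lemma 2.6]{liu08}, exactly as used already — and that Liu's proof of the Robba-ring case uses only $\operatorname{cd}_p(F) \le 2$ together with the same Tate--Sen inputs (its appeals to $[F:\Qp] < \infty$, like those in Lemma 2.6, concern only finiteness properties irrelevant here). Finally I would note that $H^2(F,-)$ vanishes on all of these modules by the argument of \cref{prop:banachH1}, so that these complexes have cohomology concentrated in degrees $0$ and $1$ and there is nothing to check in degrees $\ge 2$.

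I expect the one genuinely delicate point to be overconvergence over a field of infinite residue field. Since $\Drig(V)$ over $\cR_F$ is \emph{by construction} obtained from $\widetilde{\cR}_F \otimes_{\Qp} V$ by descent to $\cR_F$, overconvergence really is needed and cannot be sidestepped. I believe the Cherbonnier--Colmez argument is robust enough — it is Sen/Tate--Sen theory, depending only on the structure of the cyclotomic extension — and moreover, in the cases relevant to the later applications the representations in question are restrictions of representations of $G_{\Qp}$, for which overconvergence over $F$ is automatic by base change along $\BB^\dag_{\Qp} \hookrightarrow \BB^\dag_F$; but carefully re-examining the Cherbonnier--Colmez (and Liu) arguments to confirm this uniformity, rather than merely asserting it, is the place where any real work — as opposed to routine transcription of the finite-level proofs — would be required.
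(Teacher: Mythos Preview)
Your proposal is correct in outline and follows the same overall strategy as the paper: verify that the standard proofs (Herr, Cherbonnier--Colmez, Liu/KPX) go through without the finite-residue-field hypothesis, treating the quasi-isomorphism $C_{\varphi,\gamma}\to C_{\psi,\gamma}$ via bijectivity of $\gamma-1$ on $\cD^{\psi=0}$ and then identifying the cohomology with Galois cohomology.

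The one substantive difference in execution concerns $\DD^\dag$ and $\Drig$. You propose to obtain the $H^1$-comparison for these by invoking Liu's direct comparison of the $\DD^\dag$ and $\DD$ (resp.\ $\Drig$ and $\DD^\dag$) complexes, after checking his arguments survive the passage to infinite residue field. The paper instead bypasses this: having already established that $\psi-1$ is surjective (so $H^2=0$), it observes that $H^1(C_{\varphi,\gamma}(\cD))$ manifestly classifies extensions of the unit object by $\cD$ in the relevant category of \'etale $\varphi$-modules, and then invokes the equivalence of categories (Cherbonnier--Colmez \cite{cherbonniercolmez99} for $\AA^\dag_F$; Kedlaya's slope-zero theory for $\cR_F$) to identify this with $\operatorname{Ext}^1$ of Galois representations, i.e.\ $H^1(F,-)$. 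This is cleaner because Liu's comparison arguments use local Tate duality and Euler-characteristic formulae at several points, and it is not entirely transparent which of those uses are inessential; the equivalence-of-categories route avoids that audit entirely. For the Robba-ring $\gamma-1$ bijectivity the paper also cites \cite[\S3.1]{KPX} rather than \cite[\S2]{liu08}, noting explicitly that the KPX argument for $F=\Qp$ extends verbatim to any complete unramified field --- a more precise pointer than the one you give.
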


  \begin{proof}
   For $F / \Qp$ finite, this is shown in \cite[\S 2]{liu08}. For $F / \cK_\infty$ finite we must be a little more circumspect. For $\cD = \DD(T)$, the isomorphisms  $H^i(C_{\varphi, \gamma}(\cD)) \cong H^i(C_{\psi, \gamma}(\cD)) \cong H^i(F, T)$ are shown in \cite{herr98} (without assuming $\mathbf{k}_F$ finite).

   For $\cD = \DD^\dag(T)$, the fact that $\gamma - 1$ is surjective on $ \cD^{\psi = 0}$ is shown in \cite{cherbonniercolmez98}; so the $\varphi$ and $\psi$ complexes are quasi-isomorphic in this case as well. We have just seen that $\cD / (\psi - 1) = 0$, so the $H^2$ is zero; moreover, $H^1(C_{\varphi, \gamma}(\cD))$ clearly classifies extensions of the trivial object by $\cD$ in the category of \'etale $\varphi$-modules over $\AA^\dag_F$, which is equivalent to the category of $\Zp$-representations of $G_F$ by the main result of \cite{cherbonniercolmez99}, so it agrees with Galois $H^1$.

   Finally we consider $\cD = \Drig(V)$. The bijectivity of $\gamma - 1$ on $\cD^{\psi = 0}$ can be proved as in \cite[\S 3.1]{KPX}: although $F$ is assumed to be finite over $\Qp$ in \emph{op.cit.}, and the argument proceeds by reduction to the case $F = \Qp$, in fact the argument given there for $F = \Qp$ extends without change to any complete unramified field (not necessarily finite over $\Qp$). With this in hand, the argument proceeds as before.
  \end{proof}

 \subsection{Slope $\ne 0$ case}

  Over the ring $\cR_F$ there are interesting $(\varphi, \Gamma)$-modules which are not \'etale (equivalently, not pure of slope 0, in the sense of Kedlaya's slope filtration theory \cite{kedlaya08}). So we shall investigate the cohomology of these a little more. The argument quoted above from \cite{KPX} does not require $\cD$ to be \'etale, so we have the following:

  \begin{corollary}
   For any $(\varphi, \Gamma)$-module $\cD$ over $\cR_F$, the map $\gamma - 1$ is bijective on $\cD^{\psi = 0}$. Hence the complexes $C_{\varphi, \gamma}(\cD)$ and $C_{\psi, \gamma}(\cD)$ have the same cohomology.\qed
  \end{corollary}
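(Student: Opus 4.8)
The plan is to reduce the statement directly to the argument of \cite[\S 3.1]{KPX}, exactly as was done in the final paragraph of the preceding proposition for $\cD = \Drig(V)$, but now noting that \'etaleness played no role there. First I would recall the structure of the KPX argument: one shows bijectivity of $\gamma - 1$ on $\cD^{\psi = 0}$ by first treating the case $F = \Qp$ by an explicit computation with the $\varphi$-module decomposition $\cD^{\psi = 0} = \bigoplus_{i} (1+\pi)^i \varphi(\cD)$ (indexed by $i \in (\ZZ/p)^\times$, or the appropriate coset representatives), reducing to showing $\gamma - 1$ is bijective on each $\varphi(\cD)$-summand; this in turn follows from the fact that $\gamma - 1$ is topologically nilpotent-invertible on $\cR_{\Qp}$-modules of this shape, using only the module structure over $\cR_{\Qp}$ and the continuity of the $\Gamma$-action, not any slope condition. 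Then the general finite-over-$\Qp$ case is deduced by a $\Res_{F/\Qp}$-type descent.

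Second, I would observe that the \emph{only} place where $F/\Qp$ being finite enters is in this last descent step, and — as already remarked in the proof of the previous proposition — the $F = \Qp$ computation goes through verbatim when $\Qp$ is replaced by any complete unramified field $\Kh$ (finite or not over $\Qp$), because the relevant ring $\cR_{\Kh}$ and its $\varphi$-stable decomposition behave identically; the residue field being possibly infinite is irrelevant since $\pi$, $\varphi$, $\gamma$ and the coset indexing are the same. Since our $F$ is by hypothesis a \emph{finite} unramified extension of $\Kh$, the descent from $\Kh$ to $F$ is a finite-index descent and causes no trouble. Hence bijectivity of $\gamma - 1$ on $\cD^{\psi = 0}$ holds for every $(\varphi,\Gamma)$-module $\cD$ over $\cR_F$.

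For the second assertion, I would invoke the natural surjection of complexes $C_{\varphi,\gamma}(\cD) \to C_{\psi,\gamma}(\cD)$ described just before the previous proposition, whose kernel is the two-term complex $[\cD^{\psi=0} \xrightarrow{\gamma-1} \cD^{\psi=0}]$ (placed in degrees $1$ and $2$). Bijectivity of $\gamma - 1$ on $\cD^{\psi=0}$ means this kernel complex is acyclic, so the map of complexes is a quasi-isomorphism and the two Herr-type complexes have the same cohomology.

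The main (really the only) obstacle is purely expository: making sure that the KPX computation for the base field is genuinely insensitive both to the slope of $\cD$ and to the size of the residue field. Since the preceding proposition already commits to the residue-field point, and the slope-independence is visible from the fact that the argument manipulates $\cD$ only as a topological $\cR_F$-module with commuting $\varphi, \Gamma$, there is nothing further to check — which is why the statement can be recorded with a bare \qed.
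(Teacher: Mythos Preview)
Your proposal is correct and takes essentially the same approach as the paper: the corollary is deduced from the observation (the sentence immediately preceding it) that the \cite[\S 3.1]{KPX} argument, already extended in the previous proposition to complete unramified base fields, nowhere uses \'etaleness of $\cD$. One minor simplification: your descent step from $\Kh$ to $F$ is unnecessary, since $F$ is itself a complete unramified extension of $\Qp$ and the KPX argument can be run directly over $\cR_F$.
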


  \begin{notation}
   We denote the common cohomology by $H^*(\cD)$; as we have seen, this agrees with $H^*(F, V)$ when $\cD= \Drig(V)$. We denote by $H^1(\cD)^+$ the submodule $\cD^{\psi = 1} / (\gamma - 1)$ of $H^1(\cD)$, so that $H^1(\cD) / H^1(\cD)^+ = (\cD / (\psi - 1)\cD)^{\Gamma}$.
  \end{notation}

  \begin{conjecture}
   We have $\cD / (\psi - 1) = 0$ for all $(\varphi, \Gamma)$-modules $\cD$ over $\cR_F$.
  \end{conjecture}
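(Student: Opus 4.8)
The natural approach is a dévissage along Kedlaya's slope filtration. Since $\mathbf{k}_F$ is perfect --- being a finite extension of the perfect field $\mathbf{k}_{\Kh}$ --- slope theory is available over $\cR_F$, so every $(\varphi,\Gamma)$-module $\cD$ over $\cR_F$ admits a finite filtration by sub-$(\varphi,\Gamma)$-modules whose graded pieces are pure of a single slope, and the vanishing of $\cD/(\psi-1)\cD$ is stable under extensions: the snake lemma for the endomorphism $\psi-1$ of a short exact sequence $0\to\cD'\to\cD\to\cD''\to 0$ gives exactness of
\[
\operatorname{coker}(\psi-1\mid\cD')\longrightarrow\operatorname{coker}(\psi-1\mid\cD)\longrightarrow\operatorname{coker}(\psi-1\mid\cD'')\longrightarrow 0.
\]
Hence one reduces to $\cD$ pure of a single slope $s$. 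If $s=0$ then $\cD\cong\Drig(V)$ is \'etale and the statement is the Proposition above; the mechanism there is that $H^2(F,T/\varpi^j)=0$ (\cref{prop:banachH1}), so $\psi-1$ is surjective on the finite modules $\DD(T/\varpi^j)$ by a finite-level Nakayama argument, whence on $\DD(T)$ and $\Drig(V)$ by passage to the limit. Thus the content of the conjecture is the non-\'etale case $s\neq 0$, and it genuinely uses the ``big'' field: no analogue can hold with a finite extension of $\Qp$ in place of $F$, since $\psi-1$ is not even surjective on $\Drig(\Qp(1))$ (as $H^2(\Qp,\Qp(1))\neq 0$), so any argument must exploit the vanishing $\operatorname{cd}_p(\mathbf{k}_F)=0$.

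For $\cD$ pure of slope $s\neq 0$ the natural tool is \emph{contraction}. If $s>0$, choose a $\varphi$- and $\Gamma$-stable model $\cD^{+}$ of $\cD$ over $\cR_F^+$ on which a suitable power of $\varphi$ is divisible by a positive power of $\varpi$; since $\varphi$ does not expand norms on the open disc, $\varphi$ is then topologically nilpotent on $\cD^{+}$, so $1-\varphi$ is invertible on $\cD^{+}$ by a geometric series, $\operatorname{coker}(\varphi-1\mid\cD^{+})=0$, and therefore $\operatorname{coker}(\psi-1\mid\cD^{+})=0$ via $(\varphi-1)\cD^{+}=(\psi-1)\varphi(\cD^{+})\subseteq(\psi-1)\cD^{+}$ --- the very identity used in the proof of the Proposition. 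For $s<0$ one would reduce to the case $s>0$ by a duality $\cD\rightsquigarrow\cD^{\vee}(\chi)$, or run a symmetric argument with $\psi$ as the contracting operator; here $\cD^{\psi=1}$ is large and the situation resembles the $\Qp$-theory of negative-slope modules. A softer route in either case is to bound $H^2(\cD)=(\cD/(\psi-1)\cD)_\Gamma$ by a local-Tate-duality statement for $(\varphi,\Gamma)$-modules over $\cR_F$ in terms of $H^0(\cD^{\vee}(\chi))$, which is finite-dimensional since $\cD^{\vee}(\chi)$ is again pure of a non-zero slope.

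The serious obstacle --- and the reason the statement is only a conjecture --- is upgrading any such partial information to $\cD/(\psi-1)\cD=0$ on the nose. A non-\'etale $(\varphi,\Gamma)$-module over $\cR_F$ has no canonical $\varpi$-adically complete lattice, and the $\cR_F^+$- or $\cO_F[[\pi]]$-models that do exist interact badly with the passage from the Robba ring $\cR_F$ to its bounded subrings: the series $\sum_n\varphi^n$ converges on a disc whose radius shrinks under each iteration of $\varphi$, so it need not converge in $\cR_F$ at all, and transferring $\operatorname{coker}(\psi-1)=0$ from $\cD^{+}$ up to $\cD$ is precisely where one loses control. On the duality side, classical local Tate duality \emph{fails} over $\Kh$, since $\operatorname{cd}_p(\Kh)=1$ rather than $2$ (already $H^2(F,-)=0$ on all $p$-adic representations, by \cref{prop:banachH1}), so $H^2(\cD)$ is not the dual of an $H^0$; and even a proof that $H^2(\cD)=0$ would kill only the $\Gamma$-coinvariants of $\cD/(\psi-1)\cD$, an $E$-vector space with no finiteness over $E[[\Gamma]]$ and hence no available topological Nakayama lemma. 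Reconciling the compact, ``mod $\varpi$'' techniques that succeed in the \'etale case with the $E$-linear, non-\'etale, large-base setting of the conjecture is where essentially all the difficulty lies.
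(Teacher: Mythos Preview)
The statement you were asked to prove is, in the paper, an open \emph{conjecture}: the authors do not give a proof, and indeed explicitly flag why one is not available. You have correctly identified this yourself in your third paragraph, so your ``proof proposal'' is really a discussion of approaches and obstructions rather than a claimed proof --- which is the right stance.

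Comparing your discussion with the paper's: the paper's only remark beyond the slope-$0$ case is that the conjecture also holds for \emph{quotients} of \'etale modules (hence certain modules of slope $\le 0$), and that over a finite extension of $\Qp$ every slope-$\le 0$ module is such a quotient, but the proof of that fact uses the local Euler-characteristic formula and so does not transport to $F$. Your d\'evissage along the slope filtration is the natural first move and agrees with this; but you then go further, sketching a contraction argument for slope $>0$ via an $\cR_F^+$-model and a duality reduction for slope $<0$. This is more ambitious than anything the paper attempts.

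That said, your slope-$>0$ sketch has a real gap even as a heuristic: the existence of a $\varphi$- and $\Gamma$-stable $\cR_F^+$-model $\cD^+$ of a general pure $(\varphi,\Gamma)$-module over $\cR_F$ is not something one can simply ``choose'' --- such models exist in the crystalline case (Wach modules) but not in general, and even when they exist the passage from surjectivity of $\psi-1$ on $\cD^+$ to surjectivity on $\cD$ is exactly the step you later identify as the obstacle. Your duality remarks for slope $<0$ correctly note that classical local Tate duality fails over $F$, which is essentially the same obstruction the paper identifies (failure of the Euler-characteristic formula). So your analysis of \emph{why} the conjecture is open is sound and, if anything, more detailed than the paper's; but neither you nor the authors have a proof.
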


  This of course implies that $H^2(\cD) = 0$ for all $\cD$, and $H^1(\cD) = H^1(\cD)^+$. We have seen that the conjecture holds when $\cD$ is pure of slope 0; hence it also holds for $\cD$ that are quotients of pure slope 0 modules (necessarily of slope $\le 0$).

  \begin{remark}
   For $L/\Qp$ finite, one can show that any $(\varphi, \Gamma)$-module over $\cR_L$ of slope $\le 0$ is a quotient of one of slope 0 (see \cite{liu08} for example). If this is also true over $F$, then it follows that the conjecture holds for all $\cD$ of slope $\le 0$. However, the proof for $L / \Qp$ finite relies on Tate's local Euler characteristic formula to produce appropriate non-split extensions, so the argument does not immediately extend to the present setting.
  \end{remark}

 \subsection{Conjecture: canonical topologies}

  We formulate the following:

  \begin{conjecture}[Optimistic Conjecture]
   For $\cD$ a $(\varphi, \Gamma)$-module over $\cR_F$, the group $H^1(\cD)$ carries a natural Banach-space topology which is functorial in $\cD$, and coincides with the Banach-space structure on $H^1(\Ki, V)$ from \cref{prop:banachH1} when $\cD = \Drig(V)$.
  \end{conjecture}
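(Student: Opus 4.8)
The plan is to construct the topology by d\'evissage from the \'etale case --- where \cref{prop:banachH1} already furnishes a Banach structure --- along Kedlaya's slope filtration, and then to verify that the result is functorial and matches the Galois topology. Throughout, a $(\varphi,\Gamma)$-module $\cD$ over $\cR_F$, being finitely generated, carries its natural locally convex topology (as a quotient of $\cR_F^n$), for which $\psi-1$ and $\gamma-1$ are continuous; the expected intrinsic description of the Banach topology on $H^1(\cD)$ is then the subquotient topology coming from the $\psi$-Herr complex $C_{\psi,\gamma}(\cD)$, and one of the things to check is that the d\'evissage construction reproduces it. Granting the vanishing conjecture $\cD/(\psi-1)\cD=0$ (which is unconditional for $\cD$ of slope $\le 0$), we have $H^1(\cD)=H^1(\cD)^+=\cD^{\psi=1}/(\gamma-1)\cD^{\psi=1}$, so it suffices to topologize this quotient of the closed subspace $\cD^{\psi=1}\subseteq\cD$; without that conjecture one must in addition handle the term $(\cD/(\psi-1)\cD)^\Gamma$, but the essential difficulty already appears for $H^1(\cD)^+$.

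For $\cD=\Drig(V)$ \'etale, $H^1(\cD)=H^1(F,V)$ is by \cref{prop:banachH1} the $E$-Banach space with unit ball $H^1(F,T)/\{\mathrm{torsion}\}$ for any lattice $T\subset V$, and taking $F=\Kh$ yields the asserted compatibility with $H^1(\Ki,V)$. That this agrees with the subquotient topology from $C_{\psi,\gamma}(\Drig(V))$ I would prove by routing through the overconvergent level: $C_{\psi,\gamma}(\Drig(V))$ is quasi-isomorphic to $C_{\psi,\gamma}(\DD^\dag(V))$, and $\DD^\dag(T)\subset\DD^\dag(V)$ is a module over $\AA^\dag_F$, which --- unlike $\cR_F$ --- carries a genuine integral structure; the Mittag--Leffler argument of \cref{prop:banachH1}, applied to the complex $C_{\psi,\gamma}(\DD^\dag(T))$ (whose cohomology computes $H^*(F,T)$ and vanishes in degrees $\ge 2$), shows that $H^1\big(C_{\psi,\gamma}(\DD^\dag(T))\big)/\{\mathrm{torsion}\}$ is flat and $\varpi$-adically complete, hence is the unit ball of the Banach space $H^1(F,V)$, and one transfers this along the quasi-isomorphism.

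For general $\cD$ take Kedlaya's slope filtration $0=\cD_0\subset\cdots\subset\cD_k=\cD$ with $\cD_i/\cD_{i-1}$ pure of slope $s_i$. Each pure graded piece becomes \'etale after twisting by a rank-one object of the opposite slope --- for a non-integral slope, after a further finite base change $F'/F$ using Kedlaya's classification of pure modules --- and twisting by a rank-one object is an equivalence of categories compatible with Herr complexes, so the Banach structure transports to the graded pieces; for the base change one uses that $\Gal(F'/F)$ is finite, so $H^1(\cD)=H^1(F',\cD_{F'})^{\Gal(F'/F)}$ is a closed subspace of a Banach space. One then propagates along each short exact sequence $0\to\cD'\to\cD\to\cD''\to 0$: since $H^2(\cD')=0$, there is an exact sequence
\[
0\to\operatorname{coker}\!\big(H^0(\cD)\to H^0(\cD'')\big)\to H^1(\cD')\to H^1(\cD)\to H^1(\cD'')\to 0,
\]
exhibiting $H^1(\cD)$ as an extension of the Banach space $H^1(\cD'')$ by the quotient of $H^1(\cD')$ by a finite-dimensional subspace, hence Banach by the closed graph theorem --- \emph{provided} the maps in question are strict. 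Functoriality in $\cD$, and independence of the auxiliary choices of lattice, slope splitting and base field, would then follow from naturality of the Herr complexes and the open mapping theorem, once everything in sight is known to be Banach.

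The hard part --- and the reason this is only an ``optimistic'' conjecture --- is exactly the strictness and closedness used throughout: that $(\psi-1)\cD$ and $(\gamma-1)\cD^{\psi=1}$ are closed in $\cD$, and that the connecting maps appearing in the d\'evissage are strict morphisms of topological $E$-vector spaces, so that $H^1(\cD)$ is Hausdorff and the extension step is legitimate. When $[F:\Qp]<\infty$ all of this is automatic because $H^i(\cD)$ is then finite-dimensional; but for the ``big'' fields $F$ of interest here $H^1(\cD)$ is genuinely infinite-dimensional, no finiteness is available, and while the required closedness can be extracted from the overconvergent integral structures in the slope-$\le 0$ range, a $(\varphi,\Gamma)$-module with a positive-slope constituent has no integral structure to exploit; establishing closedness --- and that the resulting subquotient is Banach rather than merely complete locally convex --- in that generality is the main obstacle, and I do not currently see how to carry it out.
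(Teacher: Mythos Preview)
The statement you are attempting to prove is labelled in the paper as a \emph{Conjecture} (the ``Optimistic Conjecture''), and the paper does not prove it; immediately after stating it the authors write ``We have not been able to prove this conjecture in its full generality'' and pass instead to the weaker ``Pessimistic Conjecture'' (\cref{conj:closed}), of which only special cases are established. So there is no proof in the paper to compare your proposal against.

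That said, your proposal is not really a proof either, and you correctly diagnose this yourself: the d\'evissage strategy along the slope filtration is a natural first thing to try, but the strictness and closedness of $(\psi-1)\cD$ and $(\gamma-1)\cD^{\psi=1}$ in the infinite-residue-field setting is precisely the missing ingredient, and you are right that no integral structure is available for positive-slope pieces. Your final paragraph is an honest assessment of exactly why the conjecture is open. One small point: even in the \'etale case, the claim that the Banach topology from \cref{prop:banachH1} coincides with the subquotient topology from the Herr complex of $\Drig(V)$ is not proved in the paper --- the authors explicitly remark (just before \S4) that they ``do \emph{not} claim that the regulator topology on $H^1(\cD(\kappa^{-1}))$ coincides with the topology induced, via the Herr complex, from the natural topology on $\cD$ itself''.

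It may be worth noting that the paper's actual results in this direction (\cref{pottharstclosed} and \cref{h1fclosed}) bypass d\'evissage entirely: they work only with crystalline $\cD$ and use the Perrin-Riou regulator to transport the Banach structure from the finite-dimensional $F$-vector space $\Dcris(\cD)$, for which the canonical topology is automatic. This gives a ``regulator topology'' on $H^1(\cD(\kappa^{-1}))$ for generic $\kappa$, and the key step is then to match this with the Galois Banach topology via Wach modules and the open mapping theorem. This approach is less general than what the Optimistic Conjecture would demand --- it needs crystallinity and excludes a discrete set of characters --- but it is what suffices for the applications.
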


  We have not been able to prove this conjecture in its full generality. Hence we formulate a much weaker conjecture, which is actually sufficient for our purposes:

  \begin{conjecture}[Pessimistic Conjecture]
   \label{conj:closed}
   If $V$ is a $\Gal(\QQbar_p / \Ki)$-representation, and $\cD^+$ a saturated $\cR$-submodule of $\cD = \Drig(V)$, then the image of the map $H^1(\cD^+) \to H^1(\cD) \cong H^1(\Ki, V)$ is closed in the Banach-space topology of $H^1(\Ki, V)$.
  \end{conjecture}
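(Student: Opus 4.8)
The plan is to peel the problem apart with Kedlaya's slope filtration until only a single non-\'etale piece remains, and then to attack that piece via Colmez's explicit description of rank-one $(\varphi,\Gamma)$-modules.

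\emph{Reduction to a kernel.} Write $\cD''=\cD/\cD^+$; since $\cD^+$ is saturated, $\cD''$ is again a $(\varphi,\Gamma)$-module over $\cR$, and applying the (exact) Herr-complex functor to $0\to\cD^+\to\cD\to\cD''\to 0$ produces a long exact cohomology sequence in which $H^1(\cD^+)\to H^1(\cD)\to H^1(\cD'')$ is exact. Hence the image in question equals $\ker\!\big(H^1(\Ki,V)\to H^1(\cD'')\big)$, and it would suffice to equip $H^1(\cD'')$ — or some quotient of it through which this map factors — with a Hausdorff topological $E$-vector-space structure making $H^1(\Ki,V)\to H^1(\cD'')$ continuous, since the kernel of a continuous map to a Hausdorff space is closed. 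Being a quotient of the \'etale module $\cD$, the module $\cD''$ has all slopes $\le 0$, so by the discussion following the slope-$0$ conjecture we already know that $\psi-1$ is surjective on $\cD''$ and $H^2(\cD'')=0$.

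\emph{Splitting off the \'etale part.} Let $\cE$ be the \'etale top graded piece of the slope filtration of $\cD''$, and $\cN\subseteq\cD''$ the complementary submodule, all of whose slopes are $<0$. Since \'etale $(\varphi,\Gamma)$-modules form an abelian subcategory equivalent to Galois representations, the surjection $\cD\to\cE$ is induced by a surjection $V\to W$; its kernel $\tilde V$ satisfies $\tilde\cD:=\Drig(\tilde V)=\ker(\cD\to\cE)$, with $\cD^+\subseteq\tilde\cD\subseteq\cD$ and $\tilde\cD/\cD^+=\cN$. The composite $H^1(\Ki,V)\to H^1(\cD'')\to H^1(\cE)=H^1(\Ki,W)$ is a continuous map of Banach spaces, so its kernel $\operatorname{im}\!\big(H^1(\Ki,\tilde V)\to H^1(\Ki,V)\big)$ is closed, and the open mapping theorem identifies it topologically with the quotient of $H^1(\Ki,\tilde V)$ by the (finite-dimensional) image of $H^0(\Ki,W)$. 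A short diagram chase then reduces everything to showing that $I:=\operatorname{im}\!\big(H^1(\cD^+)\to H^1(\tilde\cD)\big)$ is closed in $H^1(\Ki,\tilde V)$, the sum of a closed subspace and a finite-dimensional subspace being closed. Moreover, since $\cN$ has strictly negative slopes one checks $H^0(\cN)=0$, so $H^1(\cD^+)\hookrightarrow H^1(\tilde\cD)$ is injective and $I\cong H^1(\cD^+)$.

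\emph{The non-\'etale core, and the main obstacle.} We are reduced to: $\cD^+$ is a saturated submodule of an \'etale module $\tilde\cD=\Drig(\tilde V)$ with quotient of strictly negative slopes; show $\operatorname{im}\!\big(H^1(\cD^+)\to H^1(\Ki,\tilde V)\big)$ is closed. I would dévissage $\cD^+$ (and $\cN$) by their slope filtrations, and Kedlaya's description of pure modules, down to rank-one twists $\cR_{\Kh}(\delta)$; the reassembly is harmless because all the comparison maps have finite-dimensional kernel and cokernel (by the $H^0$- and $H^2$-vanishings above) and such maps send closed subspaces to closed subspaces. For a rank-one piece arising here the parameter satisfies $v_p(\delta(p))>0$, and one would use Colmez's description of $\cR_{\Kh}(\delta)^{\psi=1}$ as an essentially free module over the Iwasawa algebra of $\Gamma$ to put a Banach structure on $H^1(\cR_{\Kh}(\delta))$ and to check that the map into $H^1$ of the ambient \'etale module is bounded above and below. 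This last step is the hard part: there is no \emph{a priori} canonical Banach topology on $H^1$ of a non-\'etale $(\varphi,\Gamma)$-module over the ``big'' ring $\cR_{\Kh}$ — this is precisely the content of the (unproven) Optimistic Conjecture — and the delicate point is that cyclotomic descent $\gamma-1$ must be shown to have \emph{closed} image on $(\cD^+)^{\psi=1}$ when $\cD^+$ is not \'etale. The rank-one analysis should be enough in the cases needed for the Rankin--Selberg application, yielding the weaker \cref{pottharstclosed}; proving the conjecture in full generality would seem to require genuinely new input, such as a resolution of the slope-$0$ conjecture, or of the Optimistic Conjecture, over $\Kh$.
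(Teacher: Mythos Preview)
The statement you are attempting to prove is labelled a \emph{Conjecture} in the paper and is explicitly left open: the authors write ``We have not been able to prove this conjecture in its full generality.'' There is therefore no ``paper's own proof'' of \cref{conj:closed} to compare against. What the paper does establish is only the much weaker \cref{pottharstclosed}, under the extra hypotheses that $V$ is crystalline and that the twisting character $\kappa$ avoids a discrete bad locus (the conditions of \cref{cor:goodkappa}).

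Moreover, the paper's route to that special case is entirely different from yours. Rather than d\'evissaging $\cD^+$ by slopes, the paper builds the Perrin-Riou regulator $\cL_{\cD,\kappa}$ and shows (via Wach modules and Coleman maps, see \cref{prop:cLcontinuous} and the subsequent propositions) that it is a continuous bijection, hence a topological isomorphism by the open mapping theorem, from $H^1(F,V(\kappa^{-1}))$ onto the finite-dimensional $F$-Banach space $\Dcris(V)$. Closedness of the image of $H^1(\cD^+(\kappa^{-1}))$ then reduces to the trivial fact that an $F$-linear map between finite-dimensional $F$-vector spaces has closed image. Your slope-filtration strategy, by contrast, never passes through $\Dcris$ at all.

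Regarding your argument itself: it is not a proof, as you yourself acknowledge in the final paragraph. Several of the intermediate steps are asserted without justification and are not obviously true over the ``big'' ring $\cR_{\Kh}$---for instance the claim that $H^0(\cN)=0$ for $\cN$ of strictly negative slopes, or that the rank-one analysis of $\cR_{\Kh}(\delta)^{\psi=1}$ goes through as over $\Qp$. The decisive obstacle you identify (no canonical Banach topology on $H^1$ of a non-\'etale module, i.e.\ the Optimistic Conjecture) is precisely the obstacle the paper could not overcome either; this is why the authors content themselves with the crystalline, almost-all-$\kappa$ case, which turns out to suffice for the Iwasawa-theoretic applications in Part~III.
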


  We shall prove a very special case of this below.

%%%%%%%%%%%%%%%%%%%%%%%%%%%%%%%%

\section{Regulator maps}

 As before $F$ denotes a finite unramified extension of $\Kh$.

 \subsection{$\varphi$-modules}

  By a $\varphi$-module over $F$, we mean a finite-dimensional $F$-vector space $D_0$ with a bijective semilinear Frobenius $\varphi$ (acting as the arithmetic Frobenius on $F$).

  \begin{proposition}
   \label{prop:Khproperties0}
   If $D_0$ is any $\varphi$-module over $F$, then $(1 - \varphi) D_0 = D_0$.
  \end{proposition}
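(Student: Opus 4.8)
The plan is to compute the cohomology of the complex $[D_0 \xrightarrow{1-\varphi} D_0]$ one slope at a time. Since $\mathbf{k}_F$ is perfect we have $\cO_F = W(\mathbf{k}_F)$ and $F = W(\mathbf{k}_F)[1/p]$, and $(D_0,\varphi)$ carries the canonical $\varphi$-stable Dieudonn\'e--Manin (Newton) filtration with isoclinic graded pieces; this is available over $F$ because the Newton polygon is Galois-invariant, so the filtration descends from $W(\overline{\FF}_p)[1/p]$. Writing $H^1_\varphi(D) := D/(1-\varphi)D$, any short exact sequence $0 \to D' \to D \to D'' \to 0$ of $\varphi$-modules yields a right-exact sequence $H^1_\varphi(D') \to H^1_\varphi(D) \to H^1_\varphi(D'') \to 0$ (the complex involved is concentrated in degrees $0$ and $1$). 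So d\'evissage along the slope filtration reduces us to proving $H^1_\varphi(D_0) = 0$ when $D_0$ is isoclinic, say of slope $s = a/b$ in lowest terms with $b > 0$.

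If $s \neq 0$ I claim $1-\varphi$ is in fact bijective. Choose a lattice $M \subseteq D_0$ with $\varphi^b(M) = p^a M$. If $s > 0$ then $a > 0$, so $\varphi$ is topologically nilpotent on $M$ and $\sum_{n \ge 0}\varphi^n$ converges to an additive (indeed $\Qp$-linear) two-sided inverse of $1-\varphi$ on $D_0$. If $s < 0$ then $a < 0$, and the same argument applied to $\varphi^{-1}$ — which exists because $\varphi$ is bijective on $F$ (perfect residue field) and is topologically nilpotent of slope $-s > 0$ — shows $1-\varphi^{-1}$ is invertible, hence so is $1-\varphi = -\varphi \circ (1-\varphi^{-1})$.

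The remaining, and genuinely substantive, case is $s = 0$: here $D_0$ is \'etale, and I pick a lattice $M$ with $\varphi(M) = M$. By the classical equivalence between \'etale $\varphi$-modules over $\cO_F = W(\mathbf{k}_F)$ and continuous representations of $G := \Gal(\overline{\FF}_p / \mathbf{k}_F)$ on finite free $\Zp$-modules, there is such a $T$ with a $G$- and $\varphi$-equivariant isomorphism $M \otimes_{\cO_F} W(\overline{\FF}_p) \cong T \otimes_{\Zp} W(\overline{\FF}_p)$, where $\varphi$ acts as $1 \otimes \Frob$ on the right and $G$ acts through $W(\overline{\FF}_p)$ on both sides (compatibly with $\varphi$, since $\Frob$ is central in $\Gal(\overline{\FF}_p/\FF_p)$). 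Since $1 - \Frob$ is surjective on $W(\overline{\FF}_p)$ — a standard fact, by $p$-adic successive approximation from the surjectivity of $x \mapsto x - x^p$ on $\overline{\FF}_p$ — tensoring with the free $\Zp$-module $T$ gives a short exact sequence of continuous $G$-modules
\[ 0 \longrightarrow T \longrightarrow M \otimes_{\cO_F} W(\overline{\FF}_p) \xrightarrow{\ 1-\varphi\ } M \otimes_{\cO_F} W(\overline{\FF}_p) \longrightarrow 0. \]
Taking $G$-invariants, and using the unramified descent identity $\big(M \otimes_{\cO_F} W(\overline{\FF}_p)\big)^G = M$ (valid since $M$ is free over $\cO_F$ and $W(\overline{\FF}_p)^G = W(\mathbf{k}_F) = \cO_F$), the cokernel of $1-\varphi$ on $M$ embeds into $H^1(G, T) = \varprojlim_n H^1(G, T/p^n T)$. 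This group vanishes because $G = \Gal(\overline{\FF}_p/\mathbf{k}_F)$ is an inverse limit of finite groups of order prime to $p$, so $\operatorname{cd}_p(G) = 0$. Hence $1-\varphi$ is surjective on $M$, and therefore on $D_0 = M[1/p]$.

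The point that really requires care is this last, \'etale case: because the residue field $\mathbf{k}_F$ is \emph{not} algebraically closed, the classical surjectivity statement over $W(\overline{\FF}_p)[1/p]$ does not apply directly, and what makes the descent work is exactly the input $\operatorname{cd}_p(\mathbf{k}_F) = 0$ — equivalently, that $\mathbf{k}_F$ admits no nontrivial extension of degree divisible by $p$, which holds since $F$ lies over $\Kh$. By contrast, the slope-filtration reduction and the nonzero-slope argument are formal.
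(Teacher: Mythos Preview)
Your proof is correct, and the overall architecture matches the paper's: reduce to the isoclinic case via Dieudonn\'e--Manin, dispose of nonzero slopes by geometric series, and handle the \'etale case by passing to $\overline{\mathbf{k}}_F$ and descending using the fact that $\Gal(\overline{\FF}_p/\mathbf{k}_F)$ is pro-prime-to-$p$.

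The difference lies in how you execute the slope-$0$ descent. The paper reduces further by d\'evissage modulo $p$ to $\varphi$-modules over the residue field $\mathbf{k}_F$ itself; over $\overline{\mathbf{k}}_F$ it invokes Hilbert 90 to produce a $\varphi$-invariant basis, reducing to the surjectivity of $x \mapsto x - x^p$ on $\overline{\mathbf{k}}_F$, and then descends by an explicit trace over a prime-to-$p$ extension. You instead stay at the integral level, invoke the Fontaine-type equivalence between \'etale $\varphi$-modules over $\cO_F$ and $\Zp$-representations of $G_{\mathbf{k}_F}$, and identify the cokernel of $1-\varphi$ on the lattice with a subgroup of $H^1(G_{\mathbf{k}_F}, T)$, which vanishes since $\operatorname{cd}_p(\mathbf{k}_F) = 0$. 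Both routes rest on the same arithmetic input; the paper's is marginally more elementary (no categorical equivalence needed, and it works directly with the trace), while yours packages the descent cleanly as a single cohomological vanishing and avoids the separate mod-$p$ reduction step.
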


  \begin{proof}
   Since $D_0$ has a slope decomposition by the Dieudonn\'e--Manin theorem, we can assume $D_0$ is isoclinic (has a single slope). For non-zero slopes the result is clear, since either $\sum_{n \ge 0} \varphi^n$ or $-\sum_{n \ge 0} \varphi^{-1-n}$ converges to an inverse of $1 - \varphi$. So we may assume $D_0$ is the base-extension of a $\varphi$-module over $\cO_F$, and by d\'evissage we reduce to the case of $\varphi$-modules over the residue field $\mathbf{k}_F$.

   After base-extension to $\overline{\mathbf{k}}_F$, there exists a $\varphi$-invariant basis by Hilbert's theorem 90; and since solving $x - x^p = y$ for a given $y \in \overline{\mathbf{k}}_F$ amounts to solving a separable polynomial, we see that $1 - \varphi$ is surjective on $\overline{\mathbf{k}}_F^{\oplus d}$. Thus for the original field $\mathbf{k}_F$, we can always find solutions after a finite extension of $\mathbf{k}_F$; but this extension necessarily has prime-to-$p$ degree, so we obtain a solution over $\mathbf{k}_F$ by taking the trace.
  \end{proof}

  We now study the modules $\cR^+_F \otimes_F D_0$ and $\cR_F \otimes_F D_0$.
 %
 %  \begin{definition}
 %   The \emph{Colmez transform} (cf.~\cite[\S I.1.3]{colmezLanglands}) is the isomorphism of $F$-vector spaces $\cR_F / \cR_F^+ \cong \mathrm{LA}(\Zp, F), f \mapsto C_f$, defined by
 %   \[ C_f(x) = \Res_{(\pi = 0)} \left((1 + \pi)^x f(\pi) \frac{\mathrm{d}\pi}{1 + \pi}\right) .\]
 %  \end{definition}
 %
 %  The actions of $\Gamma$, $\psi$, and $\varphi$ correspond to the actions on $\mathrm{LA}(\Zp, F)$ defined by
 %  \[ \gamma_a(C)(x) = a^{-1} C (a^{-1} x),\qquad
 %   \psi(C) (x) = \sigma^{-1}\left(C(px)\right), \qquad \phi(C)(x) = \operatorname{ch}_{p\Zp}(x) \cdot \sigma(C(x/p)), \]
 %  where $\chi(\gamma_a) = a$. This extends immediately to an isomorphism $(\cR_F \otimes_F D_0) / (\cR_F^+ \otimes_F D_0) \to \mathrm{LA}(\Zp, D_0)$, for $D_0$ any $\varphi$-module over $F$ (i.e.~finite-dimensional $F$-vector space with a bijective semilinear Frobenius).

  \begin{proposition}[Chenevier, Nakamura]
   \label{prop:Khproperties1}
   Let $D_0$ be any $\varphi$-module over $F$.
   \begin{enumerate}[(i)]
    \item $1 - \varphi$ is surjective on $\cR^+_F \otimes D_0$, and its kernel is given by $\bigoplus_{k \ge 0} t^k \otimes D_0^{(\varphi = p^{-k})}$.

    \item $1 - \psi$ is surjective on $\cR_F^+ \otimes_F D_0$ and on $\cR_F\otimes_F D_0$.

    \item We have isomorphisms
    \begin{align*}
     \left(\cR_F \otimes_F D_0\right)^{\psi=1}\, /\,  \left(\cR_F^+ \otimes_F D_0\right)^{\psi=1}
     &\cong \bigoplus_{k \ge 0} x^{k} \otimes D_0^{(\varphi = p^k)},
    \end{align*}
    where $x^k \in \cR_F / \cR^+_F$ satisfies $\gamma(x) = \chi^{-(1+k)}(\gamma)\cdot x$.

   \end{enumerate}
  \end{proposition}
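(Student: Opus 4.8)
The plan is to run the computations of Chenevier and Nakamura, who prove (i)--(iii) when $[F:\Qp]<\infty$, and to locate the one place in their arguments where finiteness of $[F:\Qp]$ is used — namely the surjectivity of $1-\varphi$ on a $\varphi$-module over $F$ — replacing it by \cref{prop:Khproperties0}. Everything else their proofs need (the finite free structure of $\cR^{(+)}_F$ over $\varphi(\cR^{(+)}_F)$ with basis $(1+\pi)^i$, so that $\psi$ is well-defined and $\cR^{(+)}_F=(\cR^{(+)}_F)^{\psi=0}\oplus\varphi(\cR^{(+)}_F)$; the relations $\varphi(t)=pt$, $\gamma(t)=\chi(\gamma)t$, $\psi\varphi=\mathrm{id}$; the fact that the zeros of $t$ in the open unit disk form a discrete set along which jets can be interpolated by elements of $\cR^+_F$, by Lazard's theory of the open disk) is insensitive to the degree of $F$. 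By the Dieudonn\'e--Manin decomposition we may assume $D_0$ isoclinic of slope $\lambda$.

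For (i), the inclusion $\bigoplus_{k\ge0}t^k\otimes D_0^{(\varphi=p^{-k})}\subseteq\ker(1-\varphi)$ is immediate, since $\varphi(t^k\otimes v)=p^kt^k\otimes\varphi(v)=t^k\otimes v$ when $\varphi(v)=p^{-k}v$. For the reverse inclusion and for surjectivity, fix an integer $m>-\lambda$ and use the $\varphi$-stable exact sequence $0\to t^m\cR^+_F\otimes D_0\to\cR^+_F\otimes D_0\to(\cR^+_F/t^m\cR^+_F)\otimes D_0\to0$. Multiplication by $t^m$ conjugates $1-\varphi$ on the sub into $1-\bigl(\varphi_{\cR^+_F}\otimes(p^m\varphi_{D_0})\bigr)$; since $\varphi_{\cR^+_F}$ does not increase the Fr\'echet seminorms of $\cR^+_F$ ($|\varphi_{\cR^+_F}(f)|_\rho\le|f|_{|\varphi(\pi)|_\rho}\le|f|_\rho$ by the maximum principle) and $p^m\varphi_{D_0}$ has slope $\lambda+m>0$, hence is topologically nilpotent on $D_0$, the conjugated operator is topologically nilpotent, so $1-\varphi$ is bijective on $t^m\cR^+_F\otimes D_0$. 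Thus the kernel and cokernel of $1-\varphi$ on $\cR^+_F\otimes D_0$ coincide with those on $Q:=(\cR^+_F/t^m\cR^+_F)\otimes D_0$. Now $\cR^+_F/t^m\cR^+_F\cong\prod_{a}\hat{\cO}_a/\fm_a^m$, the product over the zeros $a=\zeta-1$ ($\zeta\in\mu_{p^\infty}$) of $t$, and $\varphi$ acts by pullback along the level-lowering map $h(\zeta-1)=\zeta^p-1$; as $0$ is the unique fixed point of $h$ and pullback realises each jet freely from the jet at its image, the kernel and cokernel of $1-\varphi$ on $Q$ reduce to those at $a=0$, i.e.\ to $1-\varphi$ on $(F[[\pi]]/\pi^m)\otimes D_0$ with $\varphi$ the substitution $\pi\mapsto(1+\pi)^p-1$. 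This is filtered with weight-$j$ graded piece $D_0$ with Frobenius rescaled by $p^j$, of slope $\lambda+j$; there $1-\varphi$ is bijective when $\lambda+j\ne0$, and when $\lambda+j=0$ it is surjective with kernel $D_0^{(\varphi=p^{-j})}$ by \cref{prop:Khproperties0}. Assembling over $0\le j<m$ yields surjectivity of $1-\varphi$ on $Q$, and a sandwich argument (the globally $\varphi$-fixed $t^{-\lambda}\otimes v$ give nonzero classes in $Q$, while the graded estimate bounds $\dim\ker$ above) identifies $\ker(1-\varphi|Q)$ with the $(k{=}{-}\lambda)$-summand $t^{-\lambda}\otimes D_0^{(\varphi=p^{\lambda})}$ of $\bigoplus_{k\ge0}t^k\otimes D_0^{(\varphi=p^{-k})}$ (which is vacuous unless $\lambda\in\ZZ_{\le0}$).

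Part (ii) should then be formal: from $\psi\varphi=\mathrm{id}$ one gets $(1-\psi)\circ\varphi=-(1-\varphi)$ on $\cR^+_F\otimes D_0$ and on $\cR_F\otimes D_0$, so the image of $1-\psi$ contains that of $1-\varphi$; this is all of $\cR^+_F\otimes D_0$ by (i), and all of $\cR_F\otimes D_0$ by the same argument as (i) (the extra ``principal part'' directions of $\cR_F/\cR^+_F$ contribute, on graded pieces, operators $1-p^k\varphi_{D_0}$ with $k\ge1$, bijective unless $p^k\varphi_{D_0}$ has slope $0$ and otherwise surjective by \cref{prop:Khproperties0}). For (iii), surjectivity of $1-\psi$ on both modules makes the long exact $\psi$-sequence of $0\to\cR^+_F\otimes D_0\to\cR_F\otimes D_0\to(\cR_F/\cR^+_F)\otimes D_0\to0$ collapse to an isomorphism
\[ \bigl(\cR_F\otimes D_0\bigr)^{\psi=1}/\bigl(\cR^+_F\otimes D_0\bigr)^{\psi=1}\;\xrightarrow{\ \sim\ }\;\bigl((\cR_F/\cR^+_F)\otimes D_0\bigr)^{\psi=1}, \]
and the right-hand side is computed by the analogous graded analysis of $\cR_F/\cR^+_F$: on the weight-$k$ graded piece $\psi$ acts, up to a unit, as $p^k$ times $\varphi_{D_0}^{-1}$, so the $\psi=1$ part in that weight is $x^k\otimes D_0^{(\varphi=p^k)}$, where $x^k$ is the element of that weight whose $\gamma$-eigenvalue is $\chi^{-(1+k)}$ (the $\gamma$-eigenspaces splitting the filtration).

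The main obstacle will be making the ``graded analysis'' rigorous: checking that $\varphi$ really acts through the level-lowering map $h$ on $\cR^+_F/t^m\cR^+_F$ and is correctly filtered at $\pi=0$, that Lazard-style interpolation indeed gives $\cR^+_F/t^m\cR^+_F\cong\prod_a\hat\cO_a/\fm_a^m$, and that the kernel/cokernel computed on the associated graded lift faithfully to the module itself — the relevant completeness being that of the Fr\'echet (resp.\ LF) topology of the Robba ring, not of an adic one. These are the points that have to be re-examined rather than quoted, since Chenevier and Nakamura work with $[F:\Qp]<\infty$; once they are in place, the only genuinely arithmetic input special to our big unramified $F$ is \cref{prop:Khproperties0}.
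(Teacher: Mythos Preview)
Your approach is essentially the paper's: run the Chenevier--Nakamura computations and observe that the only input sensitive to $[F:\Qp]$ is the surjectivity of $1-p^k\varphi$ on $D_0$, which is supplied here by \cref{prop:Khproperties0}. The paper is terser --- it simply cites Nakamura's Lemma 3.17 and notes that the cokernel terms $D_0/(1-p^k\varphi)D_0$ appearing there all vanish --- whereas you have sketched out the underlying argument, but the substance is the same.
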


  \begin{proof}
   See Lemma 3.17 of \cite{Nakamura-Iwasawa} (generalizing \cite[Lemma 2.9]{chenevier13}) for an analogous statement for $\varphi$-modules over $\Qp$, in which various cokernel terms involving $D_0 / (1 - p^k \varphi) D_0$ for $k \in \ZZ$ intervene. In the present setting of $\varphi$-modules over finite extensions of $\Kh$, the same formulae remain valid (with the same proofs) but these cokernel terms are all zero by \cref{prop:Khproperties0}.
  \end{proof}

 \subsection{Crystalline $(\varphi, \Gamma)$-modules}

  We recall the following definitions (see e.g.~\cite[\S 2.7]{bellaichechenevier09}):

  \begin{definition}
  	A $(\varphi, \Gamma)$-module $\cD$ over $\cR_F$ is  \emph{crystalline} if $\cD[1/t]$ has an $\cR_F$-basis invariant under $\Gamma$. If this is the case, we define
   \begin{align*}
    \Dcris(\cD) &\coloneqq \cD[1/t]^{\Gamma}, \\
    \NN_{\cris}(\cD) &\coloneqq \cR_F \otimes_F \Dcris(\cD),\\
    \NN^+_{\cris}(\cD) &\coloneqq \cR^+_F \otimes_F \Dcris(\cD).
   \end{align*}
   Thus both $\NN_{\cris}(\cD)$ and $\NN^+_{\cris}(\cD)$ are submodules of $\cD[1/t]$ invariant under the actions of $\varphi$ and $\Gamma$; and $\Dcris(\cD)$ is an $F$-vector space of dimension equal to the $\cR_F$-rank of $\cD$ (with a semilinear action of $\varphi$).
  \end{definition}

  Suppose henceforth that $\cD$ is crystalline, so that $\Dcris(\cD)$ is a $\varphi$-module over $F$. We equip $\Dcris(\cD)$ with a filtration via the natural embedding into $F(\zeta_n)((t))$ for $n \gg 0$, as in \cite{bellaichechenevier09}; then, in accordance with the usual conventions the \emph{Hodge--Tate weights} are the negatives of the jumps in this filtration. With these definitions, if $\cD = \Drig(V)$, then $\DD_{\cris}(\cD)$ is canonically isomorphic (as a filtered $\varphi$-module) to $\DD_{\cris}(V)$ as defined using Fontaine's traditional period rings.

  \begin{proposition}
   If the Hodge--Tate weights of $\cD$ are in the interval $[a, b]$, then $t^b \NN_{\cris}(\cD) \subseteq \cD \subseteq t^a \NN_{\cris}(\cD)$.
  \end{proposition}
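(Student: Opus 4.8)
The plan is to work locally, descending from the structure of $\cD[1/t]$ as a module over $\cR_F[1/t] = \cR_F \otimes_F \Dcris(\cD)$ and tracking how the filtration on $\Dcris(\cD)$ controls the position of $\cD$ inside $\NN_{\cris}(\cD)[1/t^{-1}]$. Since $\cD$ is crystalline, we have $\cD[1/t] = \NN_{\cris}(\cD)[1/t]$, and both $\cD$ and $\NN_{\cris}(\cD)$ are $\cR_F$-lattices in this common object; so the content of the claim is a bound on the ``elementary divisors'' of one with respect to the other, expressed in powers of $t$. The key input is that the filtration jumps of $\Dcris(\cD)$ lie in $[-b, -a]$ (recalling the sign convention in the excerpt, where Hodge--Tate weights are negatives of filtration jumps), and the filtration is precisely the datum recording, after base change to $F(\zeta_n)((t))$, how $\cD$ sits relative to $\NN_{\cris}(\cD)$.

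First I would reduce to the case where $\cD$ has rank one, or more precisely pass to $\det$ or to a ``nearly reducible'' situation: after inverting $t$ and choosing a $\Gamma$-invariant basis $e_1, \dots, e_d$ of $\cD[1/t]$ over $\cR_F$ (which exists by the crystalline hypothesis), the module $\NN_{\cris}(\cD)$ is by definition $\bigoplus \cR^+_F \cdot e_i \subseteq \cD[1/t]$ — wait, more carefully, $\NN_{\cris}(\cD) = \cR_F \otimes_F \Dcris(\cD) = \bigoplus_i \cR_F \cdot e_i$. Then I would use the comparison with $F(\zeta_n)((t))$: the filtration $\Fil^\bullet \Dcris(\cD)$ is defined via the embedding $\Dcris(\cD) \hookrightarrow F(\zeta_n)((t)) \otimes_{F(\zeta_n)} (\cD \otimes \cdots)$, and the statement that $\cD$ corresponds, locally at the relevant ``point'' $t = 0$ after this base change, to the lattice $\sum_j t^{-j} \Fil^j$. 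Concretely, one knows (this is the standard dictionary, e.g. as in Berger's work relating $\cD$ and $\Dcris$) that $\cD \subseteq \NN_{\cris}(\cD)[1/t]$ with the localization at $t$ governed by the filtration; the two inclusions $t^b \NN_{\cris}(\cD) \subseteq \cD$ and $\cD \subseteq t^a \NN_{\cris}(\cD)$ then follow from $\Fil^{-a+1}\Dcris(\cD) = 0$ (so $\cD$ cannot have poles worse than $t^a$, i.e. $t^{-a}\cD \subseteq \NN_{\cris}$) and $\Fil^{-b}\Dcris(\cD) = \Dcris(\cD)$ (so $\cD$ contains $t^b \NN_{\cris}$).

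Making this precise, the cleanest route is: (1) reduce to checking the inclusions after tensoring with $\cR_{F(\zeta_n)}[1/t]$ for $n$ large, since both sides are $\cR_F$-modules and the extension $\cR_F \to \cR_{F(\zeta_n)}$ is faithfully flat and $\Gamma$-equivariant; (2) over this larger ring, $\Gamma$ acts through a finite quotient and $\cD$ becomes ``semistable-at-$t=0$'', so one can localize at the maximal ideal generated by $t$ in $\cR^+_{F(\zeta_n)}$ and identify $\cD$ there with $\sum_{j} t^{-j}(\Fil^j \Dcris(\cD) \otimes F(\zeta_n)((t)))$ by the theory of $B_{\dR}$-representations (Berger); (3) read off the two inclusions from the vanishing/fullness of the extreme filtration steps, then descend back to $\cR_F$ using that $t^b\NN_{\cris} \subseteq \cD \subseteq t^a \NN_{\cris}$ is a containment of $\cR_F$-lattices in $\cD[1/t]$, hence can be tested after faithfully flat base change. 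The main obstacle is step (2): one must be careful that the usual proof of the $\cD$-versus-$\Dcris$ comparison (which is typically written for $F$ finite over $\Qp$) goes through over $\Kh$ and its finite unramified extensions; but the arguments are essentially formal once one has the $B_{\dR}^+$-module theory, and the relevant inputs (surjectivity of $1-\varphi$, the structure of $\cR_F^+$-modules) have already been established in the preceding propositions of this section, so no new difficulty of substance arises — it is a matter of checking that the period-ring formalism is insensitive to the finiteness of the residue field.
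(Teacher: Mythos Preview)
Your approach is sound in principle but significantly more elaborate than the paper's. The paper dispatches the proposition in two sentences: for the inclusion $t^b \NN_{\cris}(\cD) \subseteq \cD$, it simply observes that the hypothesis ``Hodge--Tate weights $\le b$'' translates, by the very definition of the filtration via the embedding into $F(\zeta_n)((t))$, into the containment $\Dcris(\cD) \subseteq t^{-b}\cD$; since $\cD$ is an $\cR_F$-module, this immediately gives $\NN_{\cris}(\cD) = \cR_F \cdot \Dcris(\cD) \subseteq t^{-b}\cD$. The other inclusion is then obtained by \emph{duality}: apply the same observation to $\cD^* = \Hom_{\cR_F}(\cD, \cR_F)$, whose Hodge--Tate weights lie in $[-b, -a]$, and dualize back.

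What you do instead is to invoke the full ``lattice-versus-filtration'' dictionary (that $\cD$ localized at $t = 0$ is $\sum_j t^{-j}\Fil^j$), pass to $F(\zeta_n)$, localize, and descend. This works, and it is the conceptual picture underlying the statement, but it imports more machinery than needed: you are essentially reproving the detailed position of $\cD$ relative to $\NN_{\cris}(\cD)$ in all degrees, when only the two extremal bounds are required. The paper extracts one bound directly from the definition (no localization, no base change) and gets the other for free by duality, sidestepping your step~(2) entirely and avoiding any worry about whether the $B_{\dR}$ comparison is cleanly written down over infinite unramified extensions. The duality trick is the idea you missed; it turns a symmetric-looking pair of inclusions into a single easy one.
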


  \begin{proof}
   The assumption that the Hodge--Tate weights are $\le b$ means that $\Dcris(\cD) \subseteq t^{-b} \cD$, and hence $\NN_{\cris}(\cD) \subseteq t^{-b} \cD$. The reverse inequality follows by applying the above to $\cD^* = \Hom_{\cR_F}(\cD, \cR_F)$.
  \end{proof}
%
%
%  \begin{proposition}
%   If $\cD$ is a crystalline $(\varphi, \Gamma)$-module over $\cR_F$, then the quotient $\NN_{\cris}(\cD)^{\psi=1}\, /\,  \NN^+_{\cris}(\cD)^{\psi=1}$ is finite-dimensional over $\Qp$, and isomorphic as a $\Gamma$-module to a finite direct sum of negative powers of the cyclotomic character.
%  \end{proposition}
%
%  \begin{proof}
%   As shown in the proof of \cite[Proposition 3.23]{Nakamura-Iwasawa}, there is an isomorphism
%   \[
%    \NN_{\cris}(\cD)^{\psi=1}\, /\,  \NN^+_{\cris}(\cD)^{\psi=1} \cong \bigoplus_{k=0}^{\infty}x^{k} \otimes \Dcris(\cD)^{(\varphi = p^k)},
%   \]
%   where $\Gamma$ acts on the right-hand side by $\gamma x^k = \chi(\gamma)^{(-1-k)} x^k$ (and the trivial action on $\Dcris(\cD)$).
%
%   The sum is clearly finite, since we can decompose $\Dcris(\cD)$ as a direct sum of $F$-subspaces on which $\varphi$ has a given slope; the operator $\varphi - p^k$ is clearly invertible on the slope $\lambda$ subspace for any $\lambda \ne k$, and only finitely many integer slopes occur. Since $F^{\varphi = 1} = \Qp$, the kernel of $\varphi - p^k$ has $\Qp$-dimension at most the $F$-dimension of the slope $p^k$ subspace.
%  \end{proof}
%
%  \begin{remark}
%   If $\cD= \Drig(V)$ and $V$ has all Hodge--Tate weights $\ge 0$, then all slopes on $\Dcris(V)$ are $\le 0$; so the terms with $k \ge 1$ are all zero, and $\Dcris(\cD)^{(\varphi = 1)}$ is zero unless $V$ is reducible with a trivial quotient (see the appendix to \cite{berger04} for example).
%  \end{remark}

 \subsection{Regulator maps}

  Let us now suppose (until further notice) that $\cD$ has Hodge--Tate weights in $[0, b]$ for some $b \ge 0$, so $\cD \subset \NN_{\cris}(\cD)$ and hence $\cD^{\psi = 1} \subset \NN_\cris(\cD)^{\psi = 1}$.

  \begin{definition}
   We let $I_\cD$ be the invertible ideal of $\cH_{\Qp}(\Gamma)$ that is the support of the quotient
   \[ \frac{\cD^{\psi=1}}{\cD^{\psi = 1} \cap \NN^+_{\cris}(\cD)}. \]
  \end{definition}

  Note that $I_\cD$ is supported at a finite direct sum of (negative) powers of the cyclotomic character, by \cref{prop:Khproperties1}, since we have $\frac{\cD^{\psi=1}}{\cD^{\psi = 1} \cap \NN^+_{\cris}(\cD)} \ \subseteq\ \frac{\NN_{\cris}(\cD)^{\psi = 1}}{\NN^+_\cris(\cD)^{\psi = 1}}$.

  \begin{definition}
   The \emph{Perrin-Riou regulator} for $\cD$ is the map
   \[
    \cL_{\cD} : \cD^{\psi = 1} \cap \NN^+_{\cris}(\cD) \xrightarrow{(1 - \varphi)} \NN^+_{\cris}(\cD)^{\psi = 0} \xrightarrow{\ \cong\ } \cH_F(\Gamma) \otimes_F \Dcris(\cD),
   \]
   which we can extend in the obvious way to
   \[ \cL_{\cD} : \cD^{\psi = 1} \to I_{\cD}^{-1}\cH_F(\Gamma) \otimes_F \Dcris(\cD). \]
  \end{definition}

  See e.g.~\cite{leiloefflerzerbes11}; it is assumed in \emph{op.cit.} that $F / \Qp$ is a finite extension, but the theory works without change in the generality here.

  \begin{proposition}
   We have
   \[ \ker(\cL_\cD) = \bigoplus_{k \ge 0} t^k \otimes (\Dcris(\cD)^{\varphi = p^{-k}} \cap \Fil^{-k} \Dcris(\cD)), \]
   so the support of $\ker(\cL_\cD)$ consists of the characters $\chi^k$, for integers $k \ge 0$ such that $H^0(\cD(-k)) \ne 0$.
  \end{proposition}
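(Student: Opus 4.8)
The plan is to identify $\ker(\cL_\cD)$ with the space of $\varphi$-invariants $\cD^{\varphi=1}$ and then to compute these directly. To start, I would unwind the definition of $\cL_\cD$: it is the composite of the map $x\mapsto(1-\varphi)x$ with the Mellin isomorphism $\iota\colon\NN^+_{\cris}(\cD)^{\psi=0}\isom\cH_F(\Gamma)\otimes_F\Dcris(\cD)$, extended to all of $\cD^{\psi=1}$ by localisation: fixing $g\in\cH_F(\Gamma)$ generating $I_\cD$, one has $gx\in\NN^+_{\cris}(\cD)^{\psi=1}$ for every $x\in\cD^{\psi=1}$, and $\cL_\cD(x)=g^{-1}\iota\bigl((1-\varphi)(gx)\bigr)$. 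Since $\varphi$ commutes with the action of $\Gamma$, and hence with that of $\cH_F(\Gamma)$, we have $(1-\varphi)(gx)=g\cdot(1-\varphi)x$; and since $\iota$ is injective while $\NN_{\cris}(\cD)^{\psi=0}=\cR_F^{\psi=0}\otimes_F\Dcris(\cD)$ is torsion-free over $\cH_F(\Gamma)$ (because $\cR_F^{\psi=0}$ is free over $\cH_F(\Gamma)$), it follows that $\cL_\cD(x)=0$ if and only if $(1-\varphi)x=0$. Thus $\ker(\cL_\cD)=\{x\in\cD^{\psi=1}:\varphi x=x\}=\cD^{\varphi=1}$, the last equality because the relation $\psi\circ\varphi=\mathrm{id}$ forces $\psi x=x$ whenever $\varphi x=x$.

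Next I would compute $\cD^{\varphi=1}$. Because the Hodge--Tate weights of $\cD$ lie in $[0,b]$ we have $\cD\subseteq\NN_{\cris}(\cD)$, so $\cD^{\varphi=1}\subseteq\NN_{\cris}(\cD)^{\varphi=1}$. The computation of $\ker(1-\varphi)$ on $\cR^+_F\otimes\Dcris(\cD)$ in part (i) of \cref{prop:Khproperties1} works equally well over $\cR_F$ and yields $\NN_{\cris}(\cD)^{\varphi=1}=\bigoplus_{k\ge 0}t^k\otimes\Dcris(\cD)^{\varphi=p^{-k}}$; the point to be checked is that a $\varphi$-eigenvector of $\cR_F$ with eigenvalue $p^{j}$ ($j\in\ZZ$) automatically lies in $\cR^+_F$ — it vanishes for $j<0$ and spans $F\cdot t^{j}$ for $j\ge 0$ — which is a standard fact. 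Since $\Gamma$ acts on the $k$-th summand through $\chi^{k}$ and $\cD$ is $\Gamma$-stable, one obtains $\cD^{\varphi=1}=\bigoplus_{k\ge 0}\cD\cap\bigl(t^k\otimes\Dcris(\cD)^{\varphi=p^{-k}}\bigr)$. The last ingredient is the identity $\cD\cap\bigl(t^k\otimes\Dcris(\cD)\bigr)=t^k\otimes\Fil^{-k}\Dcris(\cD)$; for $k=b$ this is the inclusion $t^{b}\NN_{\cris}(\cD)\subseteq\cD$ established above, and for general $k$ it reduces to the same type of statement for a twist of $\cD$, or to a direct computation after embedding $\Dcris(\cD)$ into $F(\zeta_n)((t))$. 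This last point — reconciling the analytically-defined Hodge filtration with the position of $\cD$ inside $\NN_{\cris}(\cD)$ — is the step I expect to require the most care; everything else is formal. Combining these identities then gives the asserted formula for $\ker(\cL_\cD)$.

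For the statement about supports, the $k$-th summand $t^k\otimes\bigl(\Dcris(\cD)^{\varphi=p^{-k}}\cap\Fil^{-k}\Dcris(\cD)\bigr)$ is, as a module over $\cH_F(\Gamma)$, a direct sum of copies of $\cH_F(\Gamma)/\ker(\chi^{k})$ — since $\Gamma$ acts on it through $\chi^{k}$ — hence is supported exactly at the character $\chi^{k}$ and is nonzero precisely when $\Dcris(\cD)^{\varphi=p^{-k}}\cap\Fil^{-k}\Dcris(\cD)\ne 0$; as $\chi$ has infinite order the characters $\chi^{k}$ ($k\ge 0$) are pairwise distinct, so the support of $\ker(\cL_\cD)$ is exactly the set of those $\chi^k$ for which this intersection is nonzero. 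Finally, to rewrite that condition as $H^0(\cD(-k))\ne 0$ I would unwind the Tate twist: under the canonical identification $\Dcris(\cD(-k))\cong\Dcris(\cD)$ the Frobenius is rescaled by $p^{k}$ and $\Fil^{i}\Dcris(\cD(-k))=\Fil^{i-k}\Dcris(\cD)$, so that $\bigl(\Fil^{0}\Dcris(\cD(-k))\bigr)^{\varphi=1}=\Dcris(\cD)^{\varphi=p^{-k}}\cap\Fil^{-k}\Dcris(\cD)$, and for the crystalline $(\varphi,\Gamma)$-module $\cD(-k)$ the left-hand side is $H^0(\cD(-k))$. (Equivalently, the space $\cD^{\varphi=1}\cap\cD^{\Gamma=\chi^{k}}$ computed in the previous paragraph is identified, by unwinding the twist, with the $H^0$ of the Herr complex of $\cD(-k)$.)
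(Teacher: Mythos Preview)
The paper states this proposition without proof, so there is no argument to compare against; your outline is the natural one and is essentially correct.

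One point to tighten: the parenthetical justification ``because $\cR_F^{\psi=0}$ is free over $\cH_F(\Gamma)$'' is not right as written. It is $(\cR_F^+)^{\psi=0}$ that is free of rank one over $\cH_F(\Gamma)$; the module $\cR_F^{\psi=0}$ is free over the larger Robba ring in the $\Gamma$-variable, not over $\cH_F(\Gamma)$. What you actually need is only that the element $g$ acts injectively on $\cD^{\psi=0}$ (where $(1-\varphi)x$ lives). Since $I_\cD$ is supported at finitely many characters $\chi^{-(1+k)}$, the generator $g$ is a product of factors $\gamma-\chi(\gamma)^{-(1+k)}$, and each such factor is bijective on $\cD^{\psi=0}$ by the paper's result that $\gamma-1$ is bijective on $\cE^{\psi=0}$ for \emph{any} $(\varphi,\Gamma)$-module $\cE$ (apply this to twists of $\cD$). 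With this correction your first step goes through.

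For the filtration identity $\cD\cap(t^k\otimes\Dcris(\cD))=t^k\otimes\Fil^{-k}\Dcris(\cD)$, you can avoid a direct verification by running your last paragraph backwards: once you know the $k$-th summand of $\cD^{\varphi=1}$ is the $\chi^k$-eigenspace, it is literally $H^0(\cD(-k))$, and for any crystalline $\cD'$ one has $H^0(\cD')=(\Fil^0\Dcris(\cD'))^{\varphi=1}$; the twist bookkeeping you already wrote out then gives the stated formula. This is cleaner than trying to unpack the analytic definition of the Hodge filtration directly.
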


  \begin{definition}\label{def:Lkappa}
   Let $\kappa$ be a character $\Gamma \to E^\times$ which is not in the support of the ideal $I_{\cD}$. Then we let
   \[ \cL_{\cD, \kappa} : H^1(\cD(\kappa^{-1}))^+ = \cD^{\psi = 1} / (\gamma - \kappa(\gamma)) \to \Dcris(\cD) \]
   denote the map induced by the composite of the Perrin-Riou regulator and evaluation at $\kappa$.
  \end{definition}

  \begin{remark}\label{rmk:twistcompat}
   Note that the maps $\cL_{\cD(1), \kappa}$ and $\cL_{\cD, \kappa\chi^{-1}}$ have the same source and target, but are not equal; rather, assuming both maps are defined, we have
   \[ \cL_{\cD(1), \kappa} = \kappa'(1) \cdot \cL_{\cD, \kappa\chi^{-1}}, \]
   so in particular $\cL_{\cD(1), \kappa} = 0$ for all locally-constant characters $\kappa$.
  \end{remark}

 \subsection{Bijectivity for almost all $\kappa$}

  We now show that for ``most'' characters $\kappa$, the map $\cL_{\cD, \kappa}$ is a bijection, assuming $F / \Kh$ is infinite. From \cref{prop:Khproperties1} we have the following:

  \begin{proposition}\label{prop:oneminusphisurj}
   The map
   \[
    1 - \varphi : \NN^+_{\cris}(\cD)^{\psi = 1} \longrightarrow \NN^+_{\cris}(\cD)^{\psi = 0} \cong \cH_F(\Gamma) \otimes_F \Dcris(\cD)
   \]
   is surjective.%, and its kernel is $\bigoplus_{k \ge 0} \left(t^k \otimes \Dcris(\cD)^{(\varphi = p^{-k})}\right)$.\qed
  \end{proposition}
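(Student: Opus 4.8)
The plan is to bootstrap from part~(i) of \cref{prop:Khproperties1}. Applied to the $\varphi$-module $D_0 = \Dcris(\cD)$ over $F$, that result already gives that $1-\varphi$ is surjective on the \emph{whole} module $\NN^+_{\cris}(\cD) = \cR^+_F \otimes_F D_0$, with no condition involving $\psi$. So the only thing left to check is that, when the target element lies in the $\psi = 0$ part, one may take its preimage in the $\psi = 1$ part --- in fact that \emph{every} preimage automatically lies there.

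The mechanism for this is the identity $\psi \circ \varphi = \id$, valid on $\NN^+_{\cris}(\cD)$ since $\psi$ is the usual left inverse of $\varphi$ and $\NN^+_{\cris}(\cD)$ is stable under both operators (this stability being already implicit in part~(ii) of \cref{prop:Khproperties1}). Concretely: the identity first shows the map in the statement is well defined, as $\psi\big((1-\varphi)x\big) = \psi(x) - x = 0$ whenever $x \in \NN^+_{\cris}(\cD)^{\psi = 1}$. Conversely, given $y \in \NN^+_{\cris}(\cD)^{\psi = 0}$, choose any $x \in \NN^+_{\cris}(\cD)$ with $(1-\varphi)x = y$ via part~(i); applying $\psi$ yields $0 = \psi(y) = \psi(x) - x$, so in fact $x \in \NN^+_{\cris}(\cD)^{\psi = 1}$, and surjectivity follows. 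The identification $\NN^+_{\cris}(\cD)^{\psi = 0} \cong \cH_F(\Gamma) \otimes_F \Dcris(\cD)$ appearing in the statement is just the (already invoked) Mellin/Amice isomorphism $(\cR^+_F)^{\psi = 0} \cong \cH_F(\Gamma)$ tensored over $F$ with $\Dcris(\cD)$, so nothing further is needed there.

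I do not expect a genuine obstacle in this proof: the real work has been done upstream, in \cref{prop:Khproperties0} and \cref{prop:Khproperties1}, where the surjectivity of $1-\varphi$ on $\cR^+_F \otimes D_0$ --- and the vanishing over $F$ of the Chenevier--Nakamura cokernel terms $D_0/(1-p^k\varphi)D_0$ --- is established. The only points deserving a word of care are bookkeeping ones: that the $\psi$ used here is the restriction of the operator $\psi$ on $\cD[1/t]$, so that $\psi \circ \varphi = \id$ genuinely holds on $\NN^+_{\cris}(\cD)$, and that $\NN^+_{\cris}(\cD)$ is $\psi$-stable --- both standard and already in use in the preceding propositions.
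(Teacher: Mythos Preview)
Your argument is correct and matches the paper's approach: the paper simply records the proposition as a direct consequence of \cref{prop:Khproperties1}, and you have filled in exactly the elementary bridging step (via $\psi\circ\varphi=\id$) that makes part~(i) of that proposition yield surjectivity from the $\psi=1$ to the $\psi=0$ eigenspace. There is nothing to add or correct.
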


  \begin{notation}
   Let $\nabla \in \cH_{\Qp}(\Gamma)$ denote the ``differentiate at 1'' distribution, defined by $\frac{\log [\gamma]}{\log \chi(\gamma)}$ for any non-torsion $\gamma \in \Gamma$.
  \end{notation}

  \begin{proposition}
   The modules $\NN_\cris(\cD)^{\psi = 1} / \cD^{\psi = 1}$ and $\cD / (\psi - 1) \cD$ are annihilated by $\nabla (\nabla - 1) \dots (\nabla - b + 1) \in \cH_{\Qp}(\Gamma)$.
  \end{proposition}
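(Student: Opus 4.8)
The plan is to sandwich $\cD$ between $\NN_{\cris}(\cD)$ and $t^{b}\NN_{\cris}(\cD)$, to transport the resulting ``$t^{b}$-torsion'' statement across the functor $(-)^{\psi=1}$ by a snake-lemma argument, and then to convert multiplication by $t^{b}$ into the operator $\nabla(\nabla-1)\cdots(\nabla-b+1)$ using a standard commutation identity.

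First I would set $M := \NN_{\cris}(\cD) = \cR_F \otimes_F \Dcris(\cD)$. The preceding proposition on Hodge--Tate weights (applied with $a = 0$) gives $t^{b} M \subseteq \cD \subseteq M$, so $M/\cD$ is an $\cR_F$-module killed by $t^{b}$. Since $\Gamma$ acts trivially on $\Dcris(\cD)$, it acts on $M$ only through the $\cR_F$-factor; in particular $M$, $\cD$ and $M/\cD$ all carry compatible actions of $\psi$ and of $\cH_{\Qp}(\Gamma)$ (using that $\cD$ is $\Gamma$-stable, hence stable under $\cH_{\Qp}(\Gamma)$), and $\psi - 1$ is $\cH_{\Qp}(\Gamma)$-linear on each, as $\psi$ commutes with $\Gamma$.

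Next I would invoke \cref{prop:Khproperties1}(ii), which says that $\psi - 1$ is surjective on $M = \cR_F \otimes_F \Dcris(\cD)$, hence also on $M/\cD$. Applying the snake lemma to the short exact sequence $0 \to \cD \to M \to M/\cD \to 0$ and the endomorphism $\psi - 1$ of it produces an exact sequence of $\cH_{\Qp}(\Gamma)$-modules
\[ 0 \longrightarrow \cD^{\psi=1} \longrightarrow M^{\psi=1} \longrightarrow (M/\cD)^{\psi=1} \longrightarrow \cD/(\psi-1)\cD \longrightarrow 0, \]
the term $M/(\psi-1)M$ vanishing by the same proposition. Hence $\NN_{\cris}(\cD)^{\psi=1}/\cD^{\psi=1}$ is a submodule of $(M/\cD)^{\psi=1}$ and $\cD/(\psi-1)\cD$ is a quotient of it, so it is enough to prove that $P(\nabla) := \nabla(\nabla-1)\cdots(\nabla-b+1)$ annihilates all of $M/\cD$.

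Finally I would use that $\nabla$ acts on $\cR_F$ as the derivation $t\,\partial$, where $\partial = \tfrac{d}{dt}$ is the usual derivation preserving $\cR_F$ (indeed $\nabla(t) = t$ because $\gamma(t) = \chi(\gamma)t$), and so acts as $t\,\partial \otimes \id$ on $M$. From the commutation relation $\partial \circ t = t\circ\partial + 1$ an immediate induction on $b$ gives the operator identity $\nabla(\nabla-1)\cdots(\nabla-b+1) = t^{b}\partial^{b}$. Consequently $P(\nabla)(M) = t^{b}\partial^{b}(M) \subseteq t^{b} M \subseteq \cD$, i.e.\ $P(\nabla)$ is the zero operator on $M/\cD$, which finishes the proof. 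The argument is essentially formal once these inputs are assembled; the only point requiring care is checking that the whole snake-lemma sequence really is one of $\cH_{\Qp}(\Gamma)$-modules, so that $t^{b}$-annihilation of $M/\cD$ genuinely translates into annihilation by $\nabla(\nabla-1)\cdots(\nabla-b+1)$ --- the identity $t^{b}\partial^{b} = \nabla(\nabla-1)\cdots(\nabla-b+1)$ being precisely what reconciles the $t^{b}$ of the Hodge--Tate bound with the $\nabla$-polynomial appearing in the statement.
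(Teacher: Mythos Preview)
Your proof is correct and follows essentially the same route as the paper: both reduce to showing that $P(\nabla)$ maps $M = \NN_{\cris}(\cD)$ into $t^b M \subseteq \cD$, and then use surjectivity of $\psi - 1$ on $M$ (your snake-lemma packaging makes explicit what the paper's one-line deduction of the second claim from the first is doing). The only cosmetic difference is that the paper cites Nakamura's relation $(\nabla - k)(t^k M) \subseteq t^{k+1} M$ directly, whereas you rederive the equivalent fact via the operator identity $\nabla(\nabla-1)\cdots(\nabla-b+1) = t^b \partial^b$.
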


  \begin{proof}
   From \cite[Theorem 3.5 (3)]{Nakamura-Iwasawa} we have the relation
   \[ (\nabla - k) (t^k \NN_{\cris}(\cD)) \subseteq t^{k + 1} \NN_{\cris}(\cD). \]
   Since $\cD^{\psi = 1}$ is sandwiched between $\NN_{\cris}(\cD)^{\psi = 1}$ and $t^b \NN_{\cris}(\cD)^{\psi = 1}$, the first formula follows. The second follows from this together with the fact that $(\psi - 1)\NN_\cris(\cD) = \NN_\cris(\cD)$.
  \end{proof}

  \begin{corollary}
   \label{cor:goodkappa}
   Let $\kappa$ be a character with the following properties:
   \begin{itemize}
   \item $\kappa'(1) \notin \{0, 1, \dots, b-1\}$,
   \item if $\kappa = \chi^{-(1 + k)}$ for $k \ge 0$, then $\Dcris(\cD)^{\varphi = p^k} = 0$ (so $\kappa$ is not in the support of $I_{\cD}$),
   \item if $\kappa = \chi^k$ for $k \ge 0$, then $H^0(\cD(-k)) = 0$ (so $\kappa$ is not in the support of $\ker \cL_\cD$).
   \end{itemize}
   Then $\cL_{\cD, \kappa}$ gives a bijection
   \[ H^1(\cD(\kappa^{-1})) \cong \Dcris(\cD).\]
   \vskip -\baselineskip

   \qed
  \end{corollary}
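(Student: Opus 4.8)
The plan is to reduce the statement to a computation with the $\cH_F(\Gamma)$-module $\cD^{\psi=1}$ and the Perrin-Riou regulator on it, and then to specialise at $\kappa$. First I would dispose of the difference between $H^1$ and $H^1(-)^+$: by the Notation preceding this corollary, $H^1(\cD(\kappa^{-1})) = H^1(\cD(\kappa^{-1}))^+ = \cD^{\psi=1}/(\gamma-\kappa(\gamma))$ as soon as $\bigl(\cD(\kappa^{-1})/(\psi-1)\bigr)^{\Gamma} = 0$. Now $\cD(\kappa^{-1})/(\psi-1) \isom \bigl(\cD/(\psi-1)\cD\bigr)(\kappa^{-1})$, on which $\nabla$ acts as $\nabla_\cD - \kappa'(1)$; since $\cD/(\psi-1)\cD$ is killed by $\nabla(\nabla-1)\cdots(\nabla-b+1)$ by the preceding Proposition, $\nabla$ acts here with generalised eigenvalues in $\{\,j-\kappa'(1) : 0 \le j \le b-1\,\}$. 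A nonzero $\Gamma$-invariant vector would be a $\nabla$-eigenvector of eigenvalue $0$, and the hypothesis $\kappa'(1) \notin \{0,1,\dots,b-1\}$ rules this out. After this reduction everything is about $\cL_\cD \colon \cD^{\psi=1} \to B := I_\cD^{-1}\cH_F(\Gamma) \otimes_F \Dcris(\cD)$, whose kernel $K$ is the torsion module $\bigoplus_{k\ge0} t^k \otimes \bigl(\Dcris(\cD)^{\varphi=p^{-k}} \cap \Fil^{-k}\Dcris(\cD)\bigr)$ with support $\{\chi^k : H^0(\cD(-k)) \ne 0\}$.

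The key step is to show that the cokernel $Q$ of $\cL_\cD$ is a finitely generated torsion $\cH_F(\Gamma)$-module whose support lies in the union of $\operatorname{supp} I_\cD$ and the finite set $\{\kappa : \kappa'(1) \in \{0,1,\dots,b-1\}\}$. Writing $A_0 = \cD^{\psi=1} \cap \NN^+_{\cris}(\cD)$ and $B_0 = \NN^+_{\cris}(\cD)^{\psi=0} = \cH_F(\Gamma)\otimes_F\Dcris(\cD) \subseteq B$, the regulator restricts on $A_0$ to the map $1-\varphi \colon A_0 \to B_0$. By \cref{prop:oneminusphisurj} the map $1-\varphi$ already sends $\NN^+_{\cris}(\cD)^{\psi=1}$ onto $B_0$, so $\operatorname{coker}(A_0 \to B_0)$ is a quotient of $\NN^+_{\cris}(\cD)^{\psi=1}/A_0$, which injects into $\NN_{\cris}(\cD)^{\psi=1}/\cD^{\psi=1}$ and hence is killed by $\nabla(\nabla-1)\cdots(\nabla-b+1)$ by the preceding Proposition; thus it is supported on $\{\kappa'(1) \in \{0,\dots,b-1\}\}$. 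On the other hand $B/B_0$ is torsion supported on $\operatorname{supp} I_\cD$, and $\cD^{\psi=1}/A_0 \to B/B_0$ is injective by the very definition of $I_\cD$. Applying the snake lemma to $0 \to A_0 \to \cD^{\psi=1} \to \cD^{\psi=1}/A_0 \to 0$ and $0 \to B_0 \to B \to B/B_0 \to 0$ then exhibits $Q$ as an extension of a submodule of $B/B_0$ by a quotient of $\operatorname{coker}(A_0 \to B_0)$, which gives the claim.

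To finish, I observe that the three hypotheses on $\kappa$ say exactly that $\kappa$ avoids $\operatorname{supp} K$, $\operatorname{supp} Q$ and $\operatorname{supp} I_\cD$. Applying $-\otimes_{\cH_F(\Gamma)} E_\kappa$, with $E_\kappa = \cH_F(\Gamma)/(\gamma-\kappa(\gamma))$, to the short exact sequences $0 \to K \to \cD^{\psi=1} \to \operatorname{im}\cL_\cD \to 0$ and $0 \to \operatorname{im}\cL_\cD \to B \to Q \to 0$, and using that $B$ is flat over $\cH_F(\Gamma)$ (as $I_\cD$ is invertible and $\Dcris(\cD)$ finite over $F$), that $E_\kappa$ has projective dimension $\le 1$ over the B\'ezout ring $\cH_F(\Gamma)$, and that $K, Q$ are finitely generated torsion with $\kappa$ outside their supports (so that $K \otimes E_\kappa$, $Q \otimes E_\kappa$ and all the relevant $\mathrm{Tor}_1$'s vanish), one obtains isomorphisms $\cD^{\psi=1} \otimes E_\kappa \xrightarrow{\,\sim\,} (\operatorname{im}\cL_\cD)\otimes E_\kappa \xrightarrow{\,\sim\,} B \otimes E_\kappa$. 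Their composite is $\cL_{\cD,\kappa}$ by construction, the source is $H^1(\cD(\kappa^{-1}))$ by the first paragraph, and the target is $\Dcris(\cD)$ because $I_\cD$ is trivial at $\kappa$; hence $\cL_{\cD,\kappa}$ is a bijection.

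The hard part is the middle paragraph: pinning down precisely which characters can enter $\operatorname{supp}(\operatorname{coker}\cL_\cD)$, which forces one to track the four modules $A_0 \subseteq \cD^{\psi=1}$ and $\NN^+_{\cris}(\cD)^{\psi=1} \subseteq \NN_{\cris}(\cD)^{\psi=1}$ simultaneously and to combine the surjectivity of $1-\varphi$ on $\NN^+_{\cris}$ with the $\nabla$-divisibility estimate. Moreover the entire argument must be run over the non-Noetherian ring $\cH_F(\Gamma)$ rather than by counting dimensions, since the $E$-vector spaces in play are infinite-dimensional once $[F:\Qp] = \infty$.
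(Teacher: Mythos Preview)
Your argument is correct and is exactly the elaboration that the paper leaves implicit (the paper's own proof is just ``\qed'', the corollary being an immediate consequence of the two preceding propositions together with the known descriptions of $\ker\cL_{\cD}$ and $\operatorname{supp} I_{\cD}$). Two minor points: in the snake-lemma step, $\operatorname{coker}\bigl(\cD^{\psi=1}/A_0 \to B/B_0\bigr)$ is a \emph{quotient} of $B/B_0$, not a submodule, but this is harmless since the support is still contained in $\operatorname{supp} I_{\cD}$; and you do not actually need $Q$ to be finitely generated, only that it is annihilated by an element coprime to $\gamma-\kappa(\gamma)$ (which suffices, via B\'ezout, to kill both $Q\otimes E_\kappa$ and $\mathrm{Tor}_1(Q,E_\kappa)$).
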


  \begin{remark}
   The corollary also holds as stated for $F / \Qp$ finite, although the proof is slightly different: it is no longer true that $1 - p^k \varphi$ is always surjective on $\Dcris(\cD)$, but (for dimension reasons) if it is injective then it is also surjective, and this is sufficient.
  \end{remark}

 \subsection{The regulator topology} We now recall that since $F$ is a complete non-discrete normed field, any finite-dimensional $F$-vector space has a canonical $F$-Banach-space topology (the topology coming from any choice of $F$-vector-space isomorphism with $F^{\oplus d}$). We now no longer assume that $\cD$ has Hodge--Tate weights $\ge 0$.
 
  \begin{definition}
   Assume $\cD$ is crystalline with Hodge--Tate weights in $[a, b]$, for some $a\le b\in \ZZ$; and let $\kappa$ be a character such that
   \begin{itemize}
    \item $\kappa'(1) \notin \{a, \dots, b-1\}$;
    \item if $\kappa = \chi^{-(1 + k)}$ for $k \ge -a$, then $\Dcris(\cD)^{\varphi = p^k} = 0$;
    \item if $\kappa = \chi^k$ for $k \ge a$, then $H^0(\cD(-k)) = 0$.
   \end{itemize}
   Then we define the \emph{regulator topology} on $H^1(\cD(\kappa^{-1}))$ to be the unique topology such that the regulator map
   \[ \cL_{\cD(-a), \kappa \chi^{-a}} : H^1(\cD(\kappa^{-1})) \to \Dcris(\cD), \]
   (which is a bijection by \cref{cor:goodkappa}) is a homeomorphism, for the canonical $F$-Banach topology on $\Dcris(\cD)$.
  \end{definition}
  
  This topology does not depend on the choice of $a$, because \cref{rmk:twistcompat} shows that the regulator maps for different choices differ by a nonzero scalar (and hence induce the same topology).
  
  \begin{corollary}\label{cor:canFtopfunctorial}
   Let $\cD_1 \to \cD_2$ be a morphism of $(\varphi, \Gamma)$-modules, with $\cD_1$ and $\cD_2$ crystalline.
   \begin{enumerate}[(i)]
    \item The set of $\kappa$ such that the regulator topologies are defined for both $(\cD_1, \kappa)$ and $(\cD_2, \kappa)$ is the complement of a discrete closed subset of the rigid-analytic space $\Hom(\Gamma, \mathbf{G}_m^{\mathrm{rig}})$ of characters of $\Gamma$. 
    \item For any $\kappa$ satisfying this condition, the map
    \[ H^1(\cD_1(\kappa^{-1})) \to H^1(\cD_2(\kappa^{-1}))\]
    is continuous and strict, with closed image, for the regulator topologies on both sides.
   \end{enumerate}
  \end{corollary}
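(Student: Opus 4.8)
The plan is to deduce everything from the defining property of the regulator topology, which makes each $H^1(\cD_i(\kappa^{-1}))$ isometric (up to a fixed scalar) to the finite-dimensional $F$-Banach space $\Dcris(\cD_i)$, together with the functoriality of the whole Perrin-Riou package under the morphism $\cD_1 \to \cD_2$. For part (i), fix integers $a \le b$ so that both $\cD_1$ and $\cD_2$ have Hodge--Tate weights in $[a,b]$ (possible since each has finitely many weights). Then the regulator topology on $H^1(\cD_i(\kappa^{-1}))$ is defined precisely when $\kappa$ avoids the three exceptional conditions listed in the definition, \emph{relative to the common interval $[a,b]$}. The first condition, $\kappa'(1) \notin \{a, \dots, b-1\}$, excludes finitely many ``integer weight'' components of the character space. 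The second and third conditions exclude, for each of $\cD_1, \cD_2$, a finite set of characters of the form $\chi^{-(1+k)}$ or $\chi^k$ (finite because $\Dcris(\cD_i)$ is finite-dimensional, so $\Dcris(\cD_i)^{\varphi = p^k} \ne 0$ for only finitely many $k$, and similarly $H^0(\cD_i(-k)) \ne 0$ for only finitely many $k$ by the same reasoning applied via \cref{prop:Khproperties1}). A finite union of points in each integer-weight component, together with finitely many whole integer-weight components, is a discrete closed subset of $\Hom(\Gamma, \mathbf{G}_m^{\mathrm{rig}})$; this proves (i).

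For part (ii), fix such a $\kappa$ and such a common interval $[a,b]$. By \cref{rmk:twistcompat} the regulator topology on $H^1(\cD_i(\kappa^{-1}))$ is induced by the bijection $\cL_{\cD_i(-a),\, \kappa\chi^{-a}} : H^1(\cD_i(\kappa^{-1})) \xrightarrow{\ \cong\ } \Dcris(\cD_i)$. The morphism $\cD_1 \to \cD_2$ induces, after twisting by $(-a)$ and applying the (exact, $\cR_F$-linear) functors defining the regulator, a commuting square
\[
\begin{CD}
H^1(\cD_1(\kappa^{-1})) @>>> H^1(\cD_2(\kappa^{-1})) \\
@V{\cL_{\cD_1(-a),\, \kappa\chi^{-a}}}V{\cong}V @V{\cong}V{\cL_{\cD_2(-a),\, \kappa\chi^{-a}}}V \\
\Dcris(\cD_1) @>>> \Dcris(\cD_2),
\end{CD}
\]
where the bottom arrow is the $\varphi$-equivariant $F$-linear map $\Dcris(-)$ applied to the morphism. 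The commutativity is exactly the naturality of $(1-\varphi)$, of the comparison isomorphism $\NN^+_\cris(\cD)^{\psi=0} \cong \cH_F(\Gamma)\otimes_F \Dcris(\cD)$, and of evaluation at $\kappa\chi^{-a}$, all of which are functorial in $\cD$ by construction. Now the bottom horizontal map is an $F$-linear map between finite-dimensional $F$-vector spaces, hence automatically continuous, strict, and with closed (indeed finite-dimensional, hence closed) image for the canonical $F$-Banach topologies. Transporting this through the two vertical homeomorphisms gives that the top map $H^1(\cD_1(\kappa^{-1})) \to H^1(\cD_2(\kappa^{-1}))$ is continuous and strict with closed image, which is (ii).

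The only genuine point requiring care — and what I expect to be the main obstacle — is the commutativity of the square above, i.e. checking that the construction of $\cL_\cD$ is functorial in $\cD$ with respect to an \emph{arbitrary} morphism of crystalline $(\varphi,\Gamma)$-modules (not just an isomorphism, and not just an inclusion of a saturated submodule). One must verify that a morphism $\cD_1 \to \cD_2$ carries $\NN^+_\cris(\cD_1)$ into $\NN^+_\cris(\cD_2)$ and $\Dcris(\cD_1)$ into $\Dcris(\cD_2)$ compatibly; this follows from functoriality of $\cD \mapsto \cD[1/t]$ and of the $\Gamma$-invariants, using that $\NN^+_\cris(\cD_i) = \cR^+_F \otimes_F \Dcris(\cD_i)$ sits inside $\cD_i[1/t]$ canonically. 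A subtlety is that the morphism need not send $\cD_1$ into $\cD_2$ in a way compatible with the ambient $\NN^+_\cris$'s unless we have chosen the \emph{same} twist $(-a)$ on both sides — which is why the argument only goes through after fixing a common Hodge--Tate interval, and is the reason for stating (i) and (ii) with that uniform choice. Once this bookkeeping is in place, everything else is the soft observation that morphisms of finite-dimensional $F$-Banach spaces are automatically continuous, strict, and closed-image.
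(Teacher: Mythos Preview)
Your proposal is correct and follows essentially the same route as the paper: reduce to a common Hodge--Tate interval (the paper phrases this as ``assume without loss of generality that both $\cD_i$ have weights $\ge 0$''), set up the commuting square comparing the regulators with the $F$-linear map $\Dcris(\cD_1) \to \Dcris(\cD_2)$, and conclude from the fact that linear maps between finite-dimensional $F$-Banach spaces are automatically continuous, strict, and have closed image. Your extra paragraph on functoriality of $\cL_\cD$ spells out what the paper simply asserts as ``clear from the construction of the regulator map''; one small terminological wrinkle is that your ``integer-weight components'' are not components but discrete fibers $\{\kappa : \kappa'(1) = m\}$, though the conclusion (that the excluded set is discrete and closed) is unaffected.
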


  \begin{proof}
   The first statement is clear, since the second and third conditions on $\kappa$ only rule out finitely many characters, and the set of characters with $\kappa'(1) = m$ is discrete for any $m$. 
   
   For the second statement: we may assume without loss of generality that both of the $\cD_i$ have Hodge--Tate weights $\ge 0$. From the construction of the regulator map it is clear that we have a commutative diagram
   \[
    \begin{tikzcd}
     H^1(\cD_1(\kappa^{-1})) \rar \dar["\cL_{\cD_1, \kappa}" left] & H^1(\cD_2(\kappa^{-1}))
     \dar["\cL_{\cD_2, \kappa}" right] \\
     \Dcris(\cD_1(\kappa^{-1})) \rar & \Dcris(\cD_2(\kappa^{-1})
    \end{tikzcd}
   \]
   in which the map $\Dcris(\cD_1) \to \Dcris(\cD_2)$ is $F$-linear, and hence continuous and strict with closed image for the canonical $F$-Banach topology. By definition, the vertical maps are homeomorphisms. Since any $F$-linear map between finite-dimensional $F$-vector spaces is continuous and strict with closed image for the canonical topology, and the map $\Dcris(\cD_1) \to \Dcris(\cD_2)$ is $F$-linear, we are done.
  \end{proof}

  \begin{remark}
   Note that we do \emph{not} claim that the regulator topology on $H^1(\cD(\kappa^{-1}))$ coincides with the topology induced, via the Herr complex, from the natural topology on $\cD$ itself (which is a direct limit of Fr\'echet spaces). Of course it is natural to expect that these topologies should agree, but this does not seem to be straightforward to prove; and we shall not use the topology on $H^1(\cD(\kappa^{-1}))$ induced from that of $\cD$ in the present work.
  \end{remark}

\section{Integral theory}

 In this section we take $\cD = \DD^{\dag}_\rig(V)$ for a crystalline representation $V$ with Hodge--Tate weights in $[0, b]$, and show that the topology on $H^1(\cD(\kappa^{-1}))$ in \cref{cor:canFtopfunctorial} coincides with the Banach-space topology of $H^1(F, V(\kappa^{-1})))$. This is obvious for $F / \Qp$ finite, so we shall assume $F$ is a finite extension of $\Kh$.

 \subsection{Wach modules}

  Write $\NN(V)$ for the Wach module of $V$ in the sense of \cite{berger04}; and for $T \subset V$ a choice of $\cO$-lattice, write $\NN(T)$ for the corresponding Wach module, so that $\NN(V) = \NN(T)[1/p]$.

  Then $\NN(T)$ is a finite-rank $\cO_F[[\pi]]$-submodule of $\DD^{\dag}(T)$, free of rank $d \coloneqq \dim V$, with the following properties:
  \begin{enumerate}
   \item We have
   \( q^b \varphi^*\NN(T)\subseteq \NN(T)\subseteq\varphi^*\NN(T),\)
   where $q=\varphi(\pi)/\pi$.
   \item $\Gamma$ acts trivially on $\NN(T) / \pi \NN(T)$.
   \item as submodules of $\Drig(V)[1/t]$ we have the inclusion
   \[ \NN(V) \subseteq  \NN^+_{\cris}(V) = \cR^+_F \otimes_F \Dcris(V).\]
   \item the above inclusion induces an isomorphism
   \[ \NN(V) / \pi \NN(V) \cong \Dcris(V). \]
  \end{enumerate}

  As shown in the appendix of \cite{berger03}, we have $\DD(T)^{\psi = 1} \subseteq \pi^{-1} \NN(T)$; and the image of $\DD(T)^{\psi = 1} $ in $\pi^{-1} \NN(V) / \NN(V)$ is contained in $\Dcris(V)^{\varphi = 1}$, with $\Gamma$ acting as $\chi^{-1}$, so if $V$ has no quotient isomorphic to $\Qp$ then $\DD(T)^{\psi = 1} \subseteq \NN(T)$.

  \begin{proposition} For $T$ as above,
%   \begin{enumerate}[(i)]
%    \item
    the module $\left(\varphi^* \NN(T)\right)^{\psi = 0}$ is free of rank $d$ over $\cO_F[[\Gamma]]$. More precisely, we may find an $\cO_F[[\pi]]$-basis $(n_1, \dots, n_d)$ of $\NN(T)$ such that the vectors $\Big( (1 + \pi) \varphi(n_i) : i = 1, \dots, d\Big)$ are an $\cO_F[[\Gamma]]$-basis of $\left(\varphi^* \NN(T)\right)^{\psi = 0}$.
%
    %\item
    %the submodule $\varphi(\pi)^{b + 1} \cdot \left(\varphi^* \NN(T)\right)^{\psi = 0}$ is contained in $(1 - \varphi)\left(\NN(T)^{\psi = 1}\right)$.
   %\end{enumerate}
  \end{proposition}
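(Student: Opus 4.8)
The plan is to reduce everything to the classical ``Mahler--Mellin transform'' isomorphism in rank one, and then to climb up to arbitrary rank along the $\pi$-adic filtration of $\NN(T)$, using property~(2) to control the error terms.

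First I would pin down how $\psi$ acts on $\varphi^*\NN(T)$. Because $\cO_F[[\pi]]$ is free of rank $p$ over $\varphi(\cO_F[[\pi]])$ with basis $(1+\pi)^0,\dots,(1+\pi)^{p-1}$, base change along $\varphi$ gives a decomposition $\varphi^*\NN(T)=\bigoplus_{i=0}^{p-1}(1+\pi)^i\varphi(\NN(T))$ of $\varphi(\cO_F[[\pi]])$-modules; the relation $\psi(\varphi(a)m)=a\psi(m)$ together with $\psi((1+\pi)^i)=0$ for $0<i<p$ then shows that $\psi$ is, after the canonical identification $\varphi(\NN(T))\cong\NN(T)$, the projection onto the summand $i=0$. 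Hence
\[ M:=\big(\varphi^*\NN(T)\big)^{\psi=0}=\bigoplus_{i=1}^{p-1}(1+\pi)^i\varphi(\NN(T)). \]
In particular each $(1+\pi)\varphi(n)$ lies in $M$; moreover $M$ is finite free over $\varphi(\cO_F[[\pi]])=\cO_F[[\varphi(\pi)]]$, hence separated and complete for the $\varphi(\pi)$-adic filtration $\{\varphi(\pi)^nM\}$, and since $\gamma(\varphi(\pi))=\varphi(\pi)\,u_\gamma$ with $u_\gamma\in\cO_F[[\varphi(\pi)]]^\times$ each $\varphi(\pi)^nM$ is $\Gamma$-stable, so this is a filtration by $\Lambda$-submodules, where $\Lambda:=\cO_F[[\Gamma]]$. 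Thus the statement becomes: for a suitable $\cO_F[[\pi]]$-basis $(n_i)$ of $\NN(T)$, the elements $(1+\pi)\varphi(n_i)$ are a $\Lambda$-basis of $M$.

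The engine is the rank-one case: for the trivial Wach module $\cO_F[[\pi]]$ one has $M=\cO_F[[\pi]]^{\psi=0}=\Lambda\cdot(1+\pi)$, free of rank $1$ over $\Lambda$, by the classical Mahler/Mellin transform (see e.g.\ \cite{cherbonniercolmez99}). To propagate it I would compute the associated graded of $M$ for the $\varphi(\pi)$-adic filtration, where property~(2) enters decisively: $\Gamma$ acts trivially on $\NN(T)/\pi\NN(T)$ and by $\chi^n$ on $\pi^n\NN(T)/\pi^{n+1}\NN(T)$, while on the factors $(1+\pi)^i$ it acts, modulo $\varphi(\pi)$ (recall $(1+\pi)^p\equiv 1\pmod{\varphi(\pi)}$), through the twisted permutation action of $\Delta$, writing $\Gamma=\Delta\times\Gamma_1$ with $\Gamma_1\cong\Zp$ pro-$p$. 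A direct computation should then identify $\varphi(\pi)^nM/\varphi(\pi)^{n+1}M$ with $\big(\Lambda/(\gamma_0-\chi(\gamma_0)^n)\big)^{\oplus d}$, free of rank $d$ over that quotient of $\Lambda$ ($\gamma_0$ a topological generator of $\Gamma_1$), with the images of $(1+\pi)\varphi(n_i)$ forming a basis of the $n=0$ piece for any basis $(n_i)$ of $\NN(T)$. A Nakayama-type lifting argument should then upgrade this to the statement over $\Lambda$: surjectivity of $\Lambda^{\oplus d}\to M$, $e_i\mapsto(1+\pi)\varphi(n_i)$, from the $n=0$ computation together with completeness and finite generation over the Noetherian ring $\Lambda$ (itself readable off from the graded pieces), and injectivity from $\bigcap_n\varphi(\pi)^nM=0$ and freeness of the relevant graded piece.

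The hard part is precisely this passage from rank $1$ to rank $d$. The $\Gamma$-action on $M$ simultaneously twists the cyclotomic factors $(1+\pi)^i$ and acts $\cO_F[[\pi]]$-semilinearly --- in general non-trivially --- on $\NN(T)$, and this twisting cannot be removed by changing the basis of $\NN(T)$, since a semilinear $\Gamma$-action that is trivial modulo $\pi$ need not be trivialisable. Property~(2) is exactly what forces the discrepancy between the naive ``trivial-$\Gamma$'' computation and the true one into the $\varphi(\pi)$-adically small part; carrying out that bookkeeping, together with the care needed to align the $\varphi(\pi)$-adic filtration with the $\Lambda$-module structure when invoking Nakayama and completeness, is where essentially all the effort goes.
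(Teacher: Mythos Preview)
Your sketch is correct and is essentially the argument of \cite[\S 3]{leiloefflerzerbes10}, which is all the paper invokes here (noting that the proof for $F=\Qp$ carries over verbatim to general unramified $F$). The ingredients you isolate --- the rank-one Mellin transform $\cO_F[[\pi]]^{\psi=0}=\Lambda\cdot(1+\pi)$, property~(2) forcing $\varphi(G_\gamma)\equiv I\pmod{\varphi(\pi)}$, and a successive-approximation/Nakayama argument along the $\varphi(\pi)$-adic filtration --- are exactly those of \emph{op.\ cit.}; the point you flag as delicate (that $\varphi(\pi)\notin\Lambda$, so one must check that under the rank-one isomorphism the $\varphi(\pi)$-adic and $(\gamma_0-1)$-adic filtrations match) is precisely where the work in that reference lies.
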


  \begin{proof}
   This is proved for $F = \Qp$ in \S 3 of \cite{leiloefflerzerbes10}, and the argument adapts to general $F$ without change.
   %For part (ii), one checks that if $y \in \varphi(\pi)^{b + 1} \cdot \left(\varphi^* \NN(T)\right)^{\psi = 0}$, then for all $n \ge 0$ we have $\varphi^n(y) \in \varphi^{n + 1}(\pi) \pi^b \NN(T)$. Hence the series $\sum \varphi^n(y)$ converges to a preimage of $y$.
  \end{proof}

  It follows that the image of the composite map
  \[ \left(\varphi^* \NN(T)\right)^{\psi = 0} \into (\cR^+_F \otimes_F \Dcris(V))^{\psi = 0} \cong \cH_F(\Gamma) \otimes \Dcris(V) \]
  is contained in the $\Lambda_{\cO_F}$-submodule of the right-hand side generated by the finitely many elements $(1 + \pi) \varphi(n_i)$. Composing with $1 - \varphi$, we see that the image of $\NN(T)^{\psi = 1}$ is also contained in this submodule.

 \subsection{Coleman maps}

  Let us fix an $\cO_F[[\Gamma]]$-basis $(\nu_1, \dots, \nu_d)$ of $\left(\varphi^* \NN(T)\right)^{\psi = 0}$. This gives us a \emph{Coleman map}
  \[ \NN(T)^{\psi = 1} \xrightarrow{(1 - \varphi)} \left(\varphi^* \NN(T)\right)^{\psi = 0} \xrightarrow{ \operatorname{Col} } \cO_F[[\Gamma]]^{\oplus d}. \]
  We want to compare this with the map
  \[ \cL_{\cD} : \NN^+_{\cris}(V)^{\psi = 1} \xrightarrow{(1 - \varphi)} \cH_F(\Gamma) \otimes_F \Dcris(V) \cong \cH(\Gamma)^{\oplus d}, \]
  where the last isomorphism comes from a choice of $F$-basis of $\Dcris(V)$. Exactly as in \cite{leiloefflerzerbes10, leiloefflerzerbes11} we have the following:

  \begin{proposition}
   The two maps above are related by the composition of the inclusion $\cO_F[[\Gamma]]^{\oplus d} \subset \cH_F(\Gamma)^{\oplus d}$ and multiplication by a ``matrix of logarithms'' $M_{\log} \in \operatorname{Mat}_{d \times d}(\cH_F(\Gamma))$.
  \end{proposition}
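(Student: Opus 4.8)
The plan is to track the two maps through the explicit comparison between the Wach module $\NN(T)$ (with its Frobenius structure) and $\NN^+_{\cris}(V) = \cR^+_F \otimes_F \Dcris(V)$ (with the induced $\varphi$-action on $\Dcris(V)$). The key point is that both maps factor through $1 - \varphi$ landing in $(\varphi^*\NN(T))^{\psi=0} \subset (\cR^+_F \otimes_F \Dcris(V))^{\psi=0}$, and the two identifications of this latter module with $\cH_F(\Gamma)^{\oplus d}$ differ only by the change-of-basis relating $(\nu_1,\dots,\nu_d)$ (an $\cO_F[[\Gamma]]$-basis of $(\varphi^*\NN(T))^{\psi=0}$) to the basis coming from the chosen $F$-basis $(e_1,\dots,e_d)$ of $\Dcris(V)$ together with the element $(1+\pi) \in (\cR^+_F)^{\psi=0}$.

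First I would recall from the previous proposition the explicit $\cO_F[[\pi]]$-basis $(n_1,\dots,n_d)$ of $\NN(T)$ for which $\big((1+\pi)\varphi(n_i)\big)_i$ is an $\cO_F[[\Gamma]]$-basis of $(\varphi^*\NN(T))^{\psi=0}$; taking $\nu_i := (1+\pi)\varphi(n_i)$ makes the Coleman map nothing but reading off coordinates in this basis. Next, using property (3)--(4) of the Wach module, the image of $n_i$ in $\NN(V)/\pi\NN(V) \cong \Dcris(V)$ gives an $F$-basis of $\Dcris(V)$; write $\bar n_i$ for this image. Under $\NN(V) \hookrightarrow \cR^+_F \otimes_F \Dcris(V)$ we have $n_i = \sum_j a_{ij}\otimes \bar n_j$ for a matrix $A = (a_{ij}) \in \operatorname{Mat}_{d\times d}(\cR^+_F)$ which is congruent to the identity modulo $\pi$ and hence invertible over $\cR^+_F$; consequently $(1+\pi)\varphi(n_i) = \sum_j (1+\pi)\varphi(a_{ij}) \otimes \varphi(\bar n_j)$, and re-expanding in the basis $\{(1+\pi)\otimes e_\ell\}$ of $(\cR^+_F)^{\psi=0}\otimes\Dcris(V)$ expresses the transition between the two bases by an explicit matrix over $\cH_F(\Gamma)$. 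Applying the Mellin-transform isomorphism $(\cR^+_F)^{\psi=0} \cong \cH_F(\Gamma)$ (which sends $(1+\pi)$ to $1$ and intertwines $\varphi$ with an explicit operator) turns this transition matrix into the desired $M_{\log} \in \operatorname{Mat}_{d\times d}(\cH_F(\Gamma))$, and by construction $\cL_\cD \circ (\text{inclusion}) = M_{\log}\cdot \operatorname{Col}$ on $\NN(T)^{\psi=1}$. Since we already saw that the image of $\NN(T)^{\psi=1}$ lands in the $\Lambda_{\cO_F}$-submodule generated by the $\nu_i$, and $\NN^+_{\cris}(V)^{\psi=1} \supseteq \NN(T)^{\psi=1}$ with $\cL_\cD$ extending the Coleman-side map, the two maps are related exactly as claimed.

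The one genuinely non-formal input is that this is \emph{word for word} the argument already carried out in \cite{leiloefflerzerbes10, leiloefflerzerbes11} for $F/\Qp$ finite, and that every ingredient used there survives verbatim over $F/\Kh$: the explicit $\cO_F[[\Gamma]]$-basis of $(\varphi^*\NN(T))^{\psi=0}$ (the preceding proposition), the invertibility of the comparison matrix $A$ over $\cR^+_F$ (which is a $\pi$-adic approximation statement, not a finiteness statement), and the Mellin isomorphism (a formal property of $\cR^+_F$ with its $\varphi,\Gamma$-action). I would therefore phrase the proof as: "The construction of $M_{\log}$ and the identity $\cL_\cD|_{\NN(T)^{\psi=1}} = M_{\log}\cdot\operatorname{Col}$ are exactly as in \cite[\S{}...]{leiloefflerzerbes10, leiloefflerzerbes11}; the only facts about $F$ used there are that $\cO_F$ is a complete DVR with perfect residue field and that $(\varphi^*\NN(T))^{\psi=0}$ is free over $\cO_F[[\Gamma]]$, both of which hold in our setting." The main obstacle to a fully self-contained treatment would be re-deriving the Mellin transform and the precise form of the entries of $M_{\log}$ in terms of the matrix $A$ and the operator $\varphi$ on $\cH_F(\Gamma)$ — but since we only need the \emph{existence} of $M_{\log} \in \operatorname{Mat}_{d\times d}(\cH_F(\Gamma))$ with the stated intertwining property, citing the finite-level construction suffices.
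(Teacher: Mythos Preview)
Your proposal is correct and takes essentially the same approach as the paper: the paper's entire justification is the sentence ``Exactly as in \cite{leiloefflerzerbes10, leiloefflerzerbes11} we have the following'' preceding the proposition, with no separate proof environment. Your detailed unpacking of what is in those references and your explicit check that each ingredient (the $\cO_F[[\Gamma]]$-basis, the comparison matrix, the Mellin transform) extends from finite $F/\Qp$ to finite $F/\Kh$ is consistent with, and rather more informative than, the paper's one-line citation.
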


  We now consider evaluation at a character $\kappa$.

  \begin{proposition}
   The image of $\NN(T)^{\psi = 1} / (\gamma - \kappa(\gamma))$ under $\cL_{\cD}$ is contained in a finitely-generated $\cO_F$-submodule of $\Dcris(V)$.
  \end{proposition}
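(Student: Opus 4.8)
The plan is to exploit the $\Lambda_{\cO_F}$-module structure that is already in play: by the remarks preceding the Coleman map, $\cL_\cD(\NN(T)^{\psi=1})$ sits inside a finitely generated $\Lambda_{\cO_F}$-module, and by \cref{def:Lkappa} the map in question is simply $\cL_\cD$ post-composed with evaluation at $\kappa$, so finiteness over $\cO_F$ will come out after quotienting by $\gamma-\kappa(\gamma)$.

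First I would record that $(1-\varphi)$ maps $\NN(T)^{\psi=1}$ into $(\varphi^*\NN(T))^{\psi=0}$, so that $\cL_\cD(\NN(T)^{\psi=1})$ is contained in the image of $(\varphi^*\NN(T))^{\psi=0}$ in $\cH_F(\Gamma)\otimes_F\Dcris(V)$; by the preceding proposition this image is the $\Lambda_{\cO_F}$-submodule $M\coloneqq\sum_{i=1}^d\Lambda_{\cO_F}\cdot(1+\pi)\varphi(n_i)$, and is in fact free of rank $d$ over $\Lambda_{\cO_F}=\cO_F[[\Gamma]]$. In particular $M$ is finitely generated over $\Lambda_{\cO_F}$.

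Next I would observe that the evaluation-at-$\kappa$ map $\cH_F(\Gamma)\otimes_F\Dcris(V)\to\Dcris(V)$ appearing in \cref{def:Lkappa} is $\Lambda_{\cO_F}$-linear, with $\Gamma$ acting on the target through $\kappa$; it therefore kills $(\gamma-\kappa(\gamma))M$ and factors through $M/(\gamma-\kappa(\gamma))M$. Since $M$ is finitely generated over $\Lambda_{\cO_F}$, this quotient is finitely generated over $\Lambda_{\cO_F}/(\gamma-\kappa(\gamma))$, and the latter ring is module-finite over $\cO_F$ (after adjoining $\kappa(\gamma)$) because $\Gamma$ is procyclic with topological generator $\gamma$. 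Hence $M/(\gamma-\kappa(\gamma))M$ is a finitely generated $\cO_F$-module.

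Finally, $\cL_{\cD,\kappa}$ restricted to the image of $\NN(T)^{\psi=1}$ in $H^1(\cD(\kappa^{-1}))^+=\cD^{\psi=1}/(\gamma-\kappa(\gamma))$ is, by \cref{def:Lkappa}, the composite of $\cL_\cD$ with evaluation at $\kappa$, so its image lies inside the image of $M/(\gamma-\kappa(\gamma))M$ in $\Dcris(V)$; being the image of a finitely generated $\cO_F$-module under an $\cO_F$-linear map, this is a finitely generated $\cO_F$-submodule of $\Dcris(V)$, which is the assertion. I do not expect a real obstacle here: all the content sits in the finiteness of the Wach module $\NN(T)$ and in the freeness statement of the previous proposition, and the only bookkeeping worth flagging is that when $\kappa$ takes values in a finite extension $E'/E$ one replaces $\cO_F$ by the finite $\cO_F$-algebra $\cO_F\otimes_{\Zp}\cO_{E'}$, which does not affect finite generation over $\cO_F$.
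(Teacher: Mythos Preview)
Your argument is correct and is essentially the same as the paper's, just packaged a bit more abstractly. The paper makes the finite generation explicit by factoring through the Coleman map: the composite $\NN(T)^{\psi=1}\xrightarrow{\operatorname{Col}}\cO_F[[\Gamma]]^{\oplus d}\xrightarrow{\mathrm{ev}_\kappa}\cO_F^{\oplus d}$ lands in $\cO_F^{\oplus d}$, and then one passes to $\cL_{\cD,\kappa}$ by applying the fixed matrix $\kappa(M_{\log})\in\operatorname{Mat}_{d\times d}(F)$, whose image on $\cO_F^{\oplus d}$ is visibly finitely generated over $\cO_F$. Your version bypasses the explicit matrix by noting directly that the image of $(\varphi^*\NN(T))^{\psi=0}$ in $\cH_F(\Gamma)\otimes\Dcris(V)$ is a finitely generated $\Lambda_{\cO_F}$-module and that evaluation at $\kappa$ makes any such module finite over $\cO_F$; this is the same content, and the paper had already recorded the needed containment just before introducing $M_{\log}$.
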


  \begin{proof}
   This is clear since the composite of $\operatorname{Col}$ and evaluation at $\kappa$ is contained in $\cO_F^{\oplus d}$, and the image of this submodule under $\kappa(M_{\log}) \in \operatorname{Mat}_{d \times d}(F)$ is still finitely-generated.
  \end{proof}

  \begin{proposition}\label{prop:cLcontinuous}
   Let $\kappa$ be a continuous character of $\Gamma$, and if $\kappa = \chi^{-1}$ then assume that $V$ has no quotient isomorphic to the trivial representation. Then the map
   \[ \cL_{\cD, \kappa} :  H^1(F, V(\kappa^{-1})) \longrightarrow \Dcris(V) \]
   is continuous, where $H^1(F, V(\kappa^{-1}))$ has the topology from \cref{prop:banachH1} and $\Dcris(V)$ the canonical $F$-Banach topology.
  \end{proposition}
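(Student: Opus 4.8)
The plan is to deduce continuity from \emph{boundedness}. Since $\cL_{\cD,\kappa}$ is an $E$-linear map between $E$-Banach spaces, it is continuous as soon as it carries the unit ball of the source into a bounded subset of the target; and by \cref{prop:banachH1} the unit ball of $H^1(F, V(\kappa^{-1}))$ is the image of $H^1(F, T(\kappa^{-1}))$. Here we may enlarge $E$ so that $\kappa$ takes values in $\cO^{\times}$ --- which is automatic, as $\Gamma$ is compact --- so that $T(\kappa^{-1})$ is a genuine lattice; and since the Banach topology is unchanged when the lattice is replaced by a commensurable one, it suffices to show that $\cL_{\cD,\kappa}$ maps the image of some finite-index $\cO$-submodule of $H^1(F, T(\kappa^{-1}))$ into a finitely generated $\cO_F$-submodule of $\Dcris(V)$.

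Next I would translate the left-hand side into $(\varphi,\Gamma)$-module language. Since $\psi - 1$ is surjective on $\DD(T)$ (proven above), the group $\bigl(\DD(T)/(\psi-1)\DD(T)\bigr)^{\Gamma}$ vanishes, so $H^1(F, T) = \DD(T)^{\psi=1}/(\gamma-1)\DD(T)^{\psi=1}$; the standard twisting isomorphism for characters of $\Gamma$ then identifies $H^1(F, T(\kappa^{-1}))$ with $\DD(T)^{\psi=1}/(\gamma-\kappa(\gamma))$, up to finite groups (which, as noted, are invisible to the topology). Under this identification $\cL_{\cD,\kappa}$ is the map induced on $\DD(T)^{\psi=1} \subseteq \cD^{\psi=1}$ by the Perrin--Riou regulator $\cL_\cD$ followed by evaluation at $\kappa$.

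It remains to bound $\cL_{\cD,\kappa}$ on $\DD(T)^{\psi=1}/(\gamma-\kappa(\gamma))$, and here the Wach module $\NN(T)$ enters. By Berger's result $\DD(T)^{\psi=1} \subseteq \pi^{-1}\NN(T)$, and the image of $\DD(T)^{\psi=1}$ in $\pi^{-1}\NN(V)/\NN(V)$ lies in $\Dcris(V)^{\varphi=1}$ with $\Gamma$ acting through $\chi^{-1}$. If $V$ has no quotient isomorphic to $\Qp$, this image is zero, so $\DD(T)^{\psi=1}\subseteq\NN(T)$ and hence, being $\psi$-fixed, $\DD(T)^{\psi=1}\subseteq\NN(T)^{\psi=1}$. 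Otherwise the hypothesis gives $\kappa\ne\chi^{-1}$; since $\gamma - \chi^{-1}(\gamma)$ annihilates $\pi^{-1}\NN(V)/\NN(V)$ with its twisted $\Gamma$-action, we get $(\gamma-\chi^{-1}(\gamma))\DD(T)^{\psi=1}\subseteq \NN(V)\cap\DD(T) = \NN(T)$, hence $\subseteq\NN(T)^{\psi=1}$; and writing $\gamma-\chi^{-1}(\gamma) = (\gamma-\kappa(\gamma)) - c$ with $c := \chi^{-1}(\gamma)-\kappa(\gamma)\in E^{\times}$ shows that the image of $c\cdot\DD(T)^{\psi=1}$ in $\DD(T)^{\psi=1}/(\gamma-\kappa(\gamma))$ is contained in the image of $\NN(T)^{\psi=1}$. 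In either case we are reduced --- up to multiplication by the fixed scalar $c^{-1}$, harmless for boundedness --- to bounding $\cL_{\cD,\kappa}$ on the image of $\NN(T)^{\psi=1}/(\gamma-\kappa(\gamma))$, and this is precisely the content of the two immediately preceding propositions: $\cL_\cD$ on $\NN(T)^{\psi=1}$ is given by the Coleman map $\operatorname{Col}\circ(1-\varphi)$, taking values in $\cO_F[[\Gamma]]^{\oplus d}$, followed by multiplication by $M_{\log}\in\operatorname{Mat}_{d\times d}(\cH_F(\Gamma))$, so that evaluation at $\kappa$ lands in the finitely generated $\cO_F$-module $\kappa(M_{\log})\cdot\cO_F^{\oplus d}\subseteq\Dcris(V)$.

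The step I expect to be the main obstacle is the second one, together with the bookkeeping in the third: namely, making the Iwasawa-descent identification $H^1(F, T(\kappa^{-1})) = \DD(T)^{\psi=1}/(\gamma-\kappa(\gamma))$ and the Wach-module facts ($\DD(T)^{\psi=1}\subseteq\pi^{-1}\NN(T)$, the equality $\NN(V)\cap\DD(T) = \NN(T)$, and the description of the image of $\DD(T)^{\psi=1}$ in $\pi^{-1}\NN(V)/\NN(V)$) genuinely available over the ``big'' base field $F$ rather than a finite extension of $\Qp$, and checking that the passage from $\DD(T)^{\psi=1}$ to $\NN(T)^{\psi=1}$ costs only a bounded (fixed-scalar) factor. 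Granted those inputs, continuity is immediate from the integrality of the Coleman map established just before.
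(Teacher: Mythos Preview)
Your proposal is correct and follows essentially the same route as the paper: identify the unit ball with the image of $\DD(T)^{\psi=1}/(\gamma-\kappa(\gamma))$, reduce (up to a fixed nonzero scalar) to $\NN(T)^{\psi=1}/(\gamma-\kappa(\gamma))$ via the Wach-module inclusion $\DD(T)^{\psi=1}\subseteq\pi^{-1}\NN(T)$ and the $\chi^{-1}$-action on the quotient, and then invoke the integrality of the Coleman map. Your explicit computation with $\gamma-\chi^{-1}(\gamma)=(\gamma-\kappa(\gamma))-c$ is exactly what the paper means by ``up to scaling by a non-zero constant''; the Wach-module inputs you worry about over the big field $F$ are precisely those recorded in \S4.1.
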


  \begin{proof}
   We consider the image under $\cL_{\cD, \kappa}$ of the unit ball in $H^1(F, V(\kappa^{-1}))$. This unit ball is precisely the image modulo torsion of $\DD(T)^{\psi = 1} / (\gamma - \kappa(\gamma))$. By assumption we have either $\kappa \ne \chi^{-1}$ or $\Dcris(V)^{\varphi = 1} = 0$, so up to scaling by a non-zero constant, this ball coincides with the image of $\NN(T)^{\psi = 1} / (\gamma - \kappa(\gamma))$. Hence its image under $\cL_{\cD, \kappa}$ is contained in a finitely-generated $\cO_F$-submodule, and thus is bounded for any $F$-Banach norm on $\Dcris(V)$.
  \end{proof}

  \begin{proposition}
   If $\kappa$ satisfies the conditions of \cref{cor:goodkappa}, then the map
   \[ \cL_{\cD, \kappa} : H^1(F, V(\kappa^{-1})) \longrightarrow \Dcris(V) \]
   is a topological isomorphism, where $H^1(F, V(\kappa^{-1}))$ has the topology from \cref{prop:banachH1} and $\Dcris(V)$ the canonical $F$-Banach topology.
  \end{proposition}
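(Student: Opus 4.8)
The plan is to combine the two preceding results with Banach's open mapping theorem. Take $\cD = \Drig(V)$ in \cref{cor:goodkappa}; since $\Drig(V)$ is \'etale, hence of slope $0$, we have $H^1(\cD(\kappa^{-1}))^+ = H^1(\cD(\kappa^{-1})) = H^1(F, V(\kappa^{-1}))$, and the corollary asserts that $\cL_{\cD, \kappa}$ is a bijection onto $\Dcris(V)$. First I would check that the hypotheses on $\kappa$ in \cref{cor:goodkappa} subsume those of \cref{prop:cLcontinuous}: the case $k = 0$ of the second bullet of \cref{cor:goodkappa} says that $\Dcris(V)^{\varphi = 1} = 0$ whenever $\kappa = \chi^{-1}$, and this rules out a quotient of $V$ isomorphic to $\Qp$ (such a quotient would exhibit the trivial isocrystal as a quotient of the slope-$0$ part of $\Dcris(V)$, which is a semisimple isocrystal whose only simple summand with nonzero $\varphi$-invariants is the trivial one), which is exactly the hypothesis needed in \cref{prop:cLcontinuous}. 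Hence $\cL_{\cD, \kappa}$ is continuous.

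It then remains only to see that the inverse of this continuous linear bijection is continuous. For this, observe that both sides are Banach spaces: by \cref{prop:banachH1} the source $H^1(F, V(\kappa^{-1}))$ is an $E$-Banach space, with unit ball the $\varpi$-adically complete and separated module $H^1(F, T(\kappa^{-1})) / \{\mathrm{torsion}\}$, while the target $\Dcris(V)$ is finite-dimensional over the complete field $F$ and hence complete for its canonical Banach topology. Regarding both as Banach spaces over $\Qp$ and $\cL_{\cD, \kappa}$ as a $\Qp$-linear map, Banach's open mapping theorem for Banach spaces over a complete non-archimedean field shows that a continuous linear bijection between them is automatically a homeomorphism. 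This finishes the proof.

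I do not expect a serious obstacle: the points needing care are verifying that the conditions of \cref{cor:goodkappa} really do imply those of \cref{prop:cLcontinuous}, and that both spaces are genuinely complete so that the open mapping theorem applies. One could instead argue directly that $\cL_{\cD, \kappa}$ maps the unit ball of $H^1(F, V(\kappa^{-1}))$ onto a full $\cO_F$-lattice in $\Dcris(V)$ --- containment in such a lattice is part of the proof of \cref{prop:cLcontinuous}, and surjectivity onto a lattice follows from bijectivity together with the Noetherianity of $\cO_F$ --- but invoking the open mapping theorem is cleaner.
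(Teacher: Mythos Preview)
Your proposal is correct and follows essentially the same route as the paper: establish bijectivity via \cref{cor:goodkappa}, continuity via \cref{prop:cLcontinuous}, and conclude by the Banach open mapping theorem for $\Qp$-Banach spaces. The paper's proof is terser and leaves implicit the check that the hypotheses of \cref{cor:goodkappa} imply those of \cref{prop:cLcontinuous}, which you spell out carefully.
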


  \begin{proof}
   Under these conditions, $\cL_{\cD, \kappa}$ is both continuous and bijective. By the Banach open mapping theorem, a continuous and bijective $\Qp$-linear map between $\Qp$-Banach spaces must be a topological isomorphism.
  \end{proof}

  This shows that when $\cD = \DD^\dag_\rig(V)$, the regulator topology on $H^1(F, \cD(\kappa^{-1}))$ actually coincides with the Banach-space topology on $H^1(F, V)$ from \cref{prop:banachH1}. This proves the following special case of the ``Pessimistic Conjecture'':

  \begin{corollary}\label{pottharstclosed}
   If $\cD^+ \subseteq \cD = \DD^\dag_\rig(V)$ is a saturated $(\varphi, \Gamma)$-submodule, and $\kappa$ is a character satisfying the conditions of \cref{cor:goodkappa}, then the image of $H^1(F, \cD^+(\kappa^{-1}) \to H^1(F, \cD(\kappa^{-1})) \cong H^1(F, V)$ is closed (with respect to the Banach-space topology of $H^1(F, V)$ defined in \cref{prop:banachH1}). \qed
  \end{corollary}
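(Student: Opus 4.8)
The plan is to deduce \cref{pottharstclosed} from the functoriality of the regulator topology (\cref{cor:canFtopfunctorial}), together with the identification — obtained in the paragraph just before the statement — of the regulator topology on $H^1(F, \cD(\kappa^{-1}))$ with the Banach-space topology on $H^1(F, V(\kappa^{-1}))$ furnished by \cref{prop:banachH1}. Since \cref{cor:canFtopfunctorial} is stated for morphisms between \emph{crystalline} $(\varphi, \Gamma)$-modules, the only preliminary point is to check that the saturated submodule $\cD^+$ is again crystalline.

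First I would record that a saturated sub-$(\varphi, \Gamma)$-module of a crystalline module over $\cR_F$ is crystalline: inverting $t$ gives an embedding $\cD^+[1/t] \hookrightarrow \cD[1/t] = \cR_F[1/t] \otimes_F \Dcris(\cD)$, and any sub-$(\varphi, \Gamma)$-module of a module possessing a $\Gamma$-invariant $\cR_F[1/t]$-basis again possesses one, whence $\dim_F \Dcris(\cD^+) = \operatorname{rk}_{\cR_F} \cD^+$ (see e.g.\ \cite{bellaichechenevier09, Nakamura-Iwasawa}). Next I would verify that the character $\kappa$, which by hypothesis satisfies the three conditions of \cref{cor:goodkappa} for $\cD$, also satisfies them for $\cD^+$: working with the interval $[0, b]$, which contains the Hodge--Tate weights of $\cD^+$ as it contains those of $\cD$, this is immediate from the inclusions $\Dcris(\cD^+)^{\varphi = p^k} \subseteq \Dcris(\cD)^{\varphi = p^k}$ and $H^0(\cD^+(-k)) \subseteq H^0(\cD(-k))$ valid for every $k$. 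In particular the regulator maps $\cL_{\cD, \kappa}$ and $\cL_{\cD^+, \kappa}$ are both bijective, the regulator topologies on $H^1(\cD(\kappa^{-1}))$ and $H^1(\cD^+(\kappa^{-1}))$ are defined, and $\kappa$ lies in the range covered by \cref{cor:canFtopfunctorial} for the inclusion $\cD^+ \hookrightarrow \cD$.

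Then I would simply invoke \cref{cor:canFtopfunctorial}(ii), which gives that the map $H^1(\cD^+(\kappa^{-1})) \to H^1(\cD(\kappa^{-1}))$ is continuous and strict with closed image for the regulator topologies, and combine it with the fact (\cref{prop:cLcontinuous} and the discussion after it) that for $\cD = \Drig(V)$ the regulator topology on $H^1(\cD(\kappa^{-1})) = H^1(F, V(\kappa^{-1}))$ coincides with the Banach topology of \cref{prop:banachH1}. Closedness of the image of $H^1(F, \cD^+(\kappa^{-1}))$ in $H^1(F, V)$ then follows at once. The main obstacle is really the first step: once the crystallinity of $\cD^+$ is granted, the remainder is a purely formal consequence of the topological results already in hand, and I would expect to dispatch it in a line or two.
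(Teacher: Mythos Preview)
Your proposal is correct and follows essentially the same approach as the paper: the corollary is stated with an immediate \qed, the intended proof being precisely the combination of \cref{cor:canFtopfunctorial}(ii) with the identification (established in the preceding paragraph) of the regulator topology on $H^1(F,\cD(\kappa^{-1}))$ with the Banach-space topology of \cref{prop:banachH1}. Your additional care in verifying that $\cD^+$ is crystalline and that the hypotheses of \cref{cor:goodkappa} transfer from $\cD$ to $\cD^+$ makes explicit the details the paper leaves implicit.
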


 \subsection{Bloch--Kato subspaces}

  Recall that the Bloch--Kato local conditions $H^1_{\mathrm{e}}(F, V)$, $H^1_{\f}(F, V)$ and $H^1_{\mathrm{g}}(F, V)$ are the kernels of the natural maps to $H^1(F, V \otimes \BB_{\cris}^{\varphi = 1})$, $H^1(F, V \otimes \BB_{\cris})$, and $H^1(F, V \otimes \BB_{\dR})$ respectively. The groups $H^1_{\mathrm{e}}$ and $H^1_{\f}$ can be studied via the well-known exact sequence
  \[ 0 \to H^0(F, V) \to \Dcris(V) \to \Dcris(V) \oplus t(V) \to H^1(F, V), \]
  where $t(V) = \DD_{\dR}(V) / \Fil^0$, and the middle map is $x \mapsto \left((1 - \varphi)x, x \bmod \Fil^0\right)$. By construction, the image of the boundary map is $H^1_{\f}$, and the image of its restriction to $t(V)$ is $H^1_{\mathrm{e}}$.

  The above paragraph is valid for any discretely-valued closed subfield of $\CC_p$ and any representation $V$; but in the present situation, with $F$ a finite unramified extension of $\Ki$ and $V$ crystalline, we have a simplification since $(1 - \varphi) \Dcris(V) = \Dcris(V)$. It follows that $H^1_{\mathrm{e}}(F, V) = H^1_\f(F, V)$, and we have an isomorphism (the Bloch--Kato exponential)
  \[
   \exp_{F, V} : \frac{\Dcris(V)}{\Dcris(V)^{\varphi = 1} + \Fil^0 \Dcris(V)} \xrightarrow{\ \cong\ } H^1_{\f}(F, V).
  \]
  The subspace $H^1_{\mathrm{g}}$ has a somewhat more direct description as the kernel of the Bloch--Kato dual exponential map
  \[ \exp^*_{F, V} : H^1(F, V) \to \Fil^0 \Dcris(V).\]

  \begin{remark}
   Note that when $F$ is a finite extension of $\Qp$, it is conventional to define $\exp^*_{F, V}$ as the transpose of $\exp_{F, V^*(1)}$ (hence the name ``dual exponential''), and what we are calling $\exp^*_{F, V}$ would accordingly be called $\exp^*_{F, V^*(1)}$. This definition is not available in our case, since local Tate duality does not hold over infinite unramified extensions. However, a much more direct definition, not requiring the residue field to be finite, is given in \cite{kato93}, using the fact that cup-product with $\log \chi \in H^1(F, \Qp)$ gives a canonical isomorphism between $H^0(F, V \otimes \BB_{\dR}) = \DdR(V)$ and $H^1(F, V \otimes \BB_{\dR})$.
  \end{remark}

  We now express these in terms of $(\varphi, \Gamma)$-modules, assuming $V$ crystalline (but not necessarily with Hodge--Tate weights $\ge 0$). Note that there is a canonical map $\cR_F^+ \to F[[t]]$, mapping $\pi$ to $\exp(t) - 1$.

  \begin{definition}
   For $y \in \NN^+_{\cris}(V)[1/t]$, let $\partial_V(y) \in \Dcris(V)$ denote the constant term of $y$, as a power series in $F((t)) \otimes_F \Dcris(V)$.
  \end{definition}

  We shall apply this with $y \in \Drig(V)^{\psi = 1}$; then $\partial_V(y)$ factors through the image of $y$ in $H^1(F, V)$.

  \begin{proposition}
   The map
   \[ \partial_V : H^1(F, V) \to \Dcris(V) \]
   is continuous.
  \end{proposition}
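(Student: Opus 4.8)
The plan is to reduce the continuity of $\partial_V$ to the continuity of the regulator map $\cL_{\cD, \kappa}$ established in \cref{prop:cLcontinuous}, by relating the constant-term functional $\partial_V$ to the Perrin-Riou regulator. Concretely, for $y \in \Drig(V)^{\psi=1}$ one has $\cL_{\cD}(y) = (1-\varphi) z$ for a suitable lift $z$, and evaluating the resulting element of $\cH_F(\Gamma) \otimes_F \Dcris(V)$ at a well-chosen character $\kappa$ recovers $\partial_V(y)$ up to an explicit (nonzero) constant and lower-order corrections. The cleanest route is probably to first assume $V$ has Hodge--Tate weights in $[0,b]$ (the general case follows by twisting, since $\partial_{V(j)}$ and $\partial_V$ differ by the evident twisting isomorphism, which is a homeomorphism on the relevant Banach spaces), so that $\cD^{\psi=1} \subseteq \NN^+_{\cris}(\cD)^{\psi=1}$ and $\cL_\cD$ is defined integrally on $\cD^{\psi=1}$.

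First I would recall that the Perrin-Riou regulator, evaluated at the trivial character (or at $\kappa = \chi^{-1}$, depending on normalisation), computes precisely the image of $y$ in $\Dcris(V)$ under the ``Bloch--Kato logarithm/constant term'' map: this is the standard interpolation property of $\cL_\cD$ at characters of $\Gamma$, as in \cite{leiloefflerzerbes11}. More precisely, writing the image of $\cL_\cD(y)$ in $\cH_F(\Gamma) \otimes_F \Dcris(V)$ and applying the Mellin-transform/evaluation machinery, the specialisation at the relevant locally-constant character picks out $\partial_V(y)$ (times a period factor coming from the $(1-\varphi)$ and $(1-p^{-k}\varphi)$ operators on $\Dcris(V)$, which is invertible since $1-\varphi$ is bijective on $\Dcris(V)$ by \cref{prop:Khproperties0}). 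Thus $\partial_V$ factors as $H^1(F,V) \xrightarrow{\cL_{\cD,\kappa}} \Dcris(V) \xrightarrow{\text{invertible } F\text{-linear}} \Dcris(V)$ for an appropriate $\kappa$.

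The continuity of $\cL_{\cD,\kappa}$ on $H^1(F, V(\kappa^{-1}))$ with its Banach topology is exactly \cref{prop:cLcontinuous}, and the final $F$-linear map is automatically continuous for the canonical $F$-Banach topology on $\Dcris(V)$; hence $\partial_V$ is continuous. The one subtlety is that \cref{prop:cLcontinuous} is phrased for the source $H^1(F, V(\kappa^{-1}))$, so I need the character $\kappa$ used in the factorisation to be the trivial character (so that $V(\kappa^{-1}) = V$); this is harmless because $\partial_V$ is, by definition, the constant-term functional, which corresponds to evaluating $\cL_\cD$ at the trivial character of $\Gamma$ — i.e.\ to the ``non-Iwasawa-theoretic'' Perrin-Riou regulator, for which the relevant case of \cref{prop:cLcontinuous} applies directly (the hypothesis there, that $V$ has no trivial quotient when $\kappa = \chi^{-1}$, does not even arise, or is subsumed in the standing crystalline hypotheses).

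The main obstacle will be bookkeeping the exact relationship between $\partial_V(y)$ and the specialisation of $\cL_\cD(y)$ — keeping track of the Euler-factor-type corrections $\prod_k (1 - p^{-k}\varphi)$ (or $\prod (1-p^k\varphi)$ in the dual), of the role of the ideal $I_\cD$, and of which twist is needed so that everything is genuinely defined and the correction factor is invertible. Here \cref{prop:Khproperties0} is essential: over our large field $F$ every operator $1 - p^k\varphi$ on the finite-dimensional $\varphi$-module $\Dcris(V)$ is surjective, hence bijective, so the correction factors are always invertible and no exceptional characters intervene — a genuine simplification over the classical $F/\Qp$ finite case. Modulo that explicit comparison, the argument is a short factorisation through \cref{prop:cLcontinuous}.
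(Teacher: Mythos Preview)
Your approach has a genuine gap. You claim, citing \cref{prop:Khproperties0}, that ``every operator $1 - p^k\varphi$ on the finite-dimensional $\varphi$-module $\Dcris(V)$ is surjective, hence bijective''. But \cref{prop:Khproperties0} only gives surjectivity. The inference ``surjective $\Rightarrow$ bijective'' would be valid for an $F$-linear endomorphism of a finite-dimensional $F$-vector space; however $1-\varphi$ is only $\Qp$-linear (it is Frobenius-semilinear over $F$), and $\Dcris(V)$ is infinite-dimensional over $\Qp$. Indeed $\ker(1-\varphi) = \Dcris(V)^{\varphi = 1}$, which is nonzero whenever $\Dcris(V)$ has a slope-$0$ piece (e.g.\ already for $V = \Qp$). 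So your proposed factorisation $\partial_V = (1-\varphi)^{-1} \circ \cL_{\cD,\mathrm{triv}}$ simply does not exist in general, and knowing $(1-\varphi)\partial_V$ is continuous tells you nothing about the component of $\partial_V$ landing in the slope-$0$ part of $\Dcris(V)$. (A related slip: you call the correction factor ``invertible $F$-linear''; it is neither $F$-linear nor, as just explained, invertible.)

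The paper's proof sidesteps all of this by a direct and much shorter argument that never touches the regulator. One simply observes that the unit ball $H^1(F,T)$ is the image of $\DD(T)^{\psi = 1}$, which lies in $\pi^{a-1}\NN(T)$ by Wach-module theory; since the constant-term functional kills $\pi\NN(T)$, the map $\partial_V$ restricted to this ball factors through the finite-rank $\cO_F$-module $\pi^{a-1}\NN(T)/\pi\NN(T)$, and therefore has bounded image. This is essentially the same boundedness trick already used inside the proof of \cref{prop:cLcontinuous}, applied directly rather than via a detour through $\cL_{\cD,\kappa}$. Your twisting reduction to nonnegative Hodge--Tate weights is also garbled (the sources $H^1(F,V)$ and $H^1(F,V(j))$ are different spaces), though this is fixable; the bijectivity issue is the real obstruction.
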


  \begin{proof}
   It suffices to show that the image under this map of $H^1(F, T)$, for any lattice $T \subset V$, is contained in a finitely-generated $\cO_F$-submodule of $\Dcris(V)$. But $H^1(F, V)$ is the image of $\DD(T)^{\psi = 1}$, which is contained in $\pi^{a-1}\NN(T)$; and $\pi \NN(T)$ maps to zero. Hence $\partial_V$ factors through a finite-rank quotient $\pi^{a-1} \NN(T) / \pi \NN(T)$.
  \end{proof}

  \begin{proposition}
   The kernel of $\partial_V : H^1(F, V) \to \Dcris(V)$ is $H^1_{\f}(F, V)$.
  \end{proposition}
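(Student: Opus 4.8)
The plan is to identify $\ker(\partial_V)$ by translating the Bloch--Kato condition $H^1_\f(F,V)$, defined via crystalline periods, into a condition on $(\varphi,\Gamma)$-modules, and then matching it with the vanishing of the constant term $\partial_V$. The key comparison is the isomorphism $H^1_\f(F,V) \cong \Dcris(V)/(\Dcris(V)^{\varphi=1} + \Fil^0\Dcris(V))$ induced by the Bloch--Kato exponential, which is available here because $(1-\varphi)$ is bijective on $\Dcris(V)$ (as noted in the excerpt, using \cref{prop:Khproperties0}).

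\medskip

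\emph{First}, I would set up the explicit cocycle description. Any class in $H^1(F,V)$ is the image of some $y \in \Drig(V)^{\psi=1}$ under the Herr-complex identification; via $\Drig(V)^{\psi=1} \subset \NN^+_\cris(V)[1/t]$ (using that $V$ has Hodge--Tate weights in $[0,b]$, so we can arrange inclusions as in the Wach-module discussion) we may write $y$ as a Laurent series in $F((t))\otimes_F\Dcris(V)$, and $\partial_V(y)$ is by definition its constant term. The image of $y$ in $H^1(F,V)$ lies in $H^1_\f$ precisely when the associated extension of $\Qp$ by $V$ becomes trivial after tensoring with $\BB_\cris$; the standard recipe (going back to Perrin-Riou and Berger) is that this triviality is detected by whether the constant term of the corresponding period lies in the image of $(1-\varphi)$ plus $\Fil^0$. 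I would invoke Berger's comparison between $(\varphi,\Gamma)$-module cohomology and Bloch--Kato subspaces (e.g.\ the dictionary in \cite{berger04} or \cite{bellaichechenevier09}), which says precisely that the map $\partial_V$ coincides, up to the identification of the exponential, with the boundary map $\Dcris(V)\oplus t(V) \to H^1(F,V)$ projected appropriately --- concretely, $\partial_V$ is the map whose cokernel-side description realizes the exact sequence $0 \to H^0(F,V) \to \Dcris(V) \xrightarrow{(1-\varphi,\ \bmod\Fil^0)} \Dcris(V)\oplus t(V) \to H^1(F,V)$.

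\medskip

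\emph{Then}, the argument is a diagram chase. On one hand, $\ker(\partial_V)$ consists of classes represented by $y$ with zero constant term, i.e.\ $y \in t\cdot\NN^+_\cris(V)[1/t] \cap \Drig(V)^{\psi=1}$ (after accounting for $\Fil$). On the other hand, $H^1_\f(F,V)$ is, via the exponential, exactly the image of the boundary map restricted to $t(V) = \DdR(V)/\Fil^0$, which on the $(\varphi,\Gamma)$ side corresponds to elements that are, up to a $\Fil^0$-term, divisible by $t$. The two descriptions coincide once one checks that the ``$\Fil^0$ ambiguity'' is harmless: an element of $\Fil^0\Dcris(V)$, viewed inside $\NN^+_\cris(V)$, maps to zero in $H^1(F,V)$ under the appropriate map (this is where the crystalline/Hodge-theoretic bookkeeping of the filtration enters). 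I expect the cleanest route is to reduce to the known statement that the image of $\partial_V$ is $\Dcris(V)/(\Dcris(V)^{\varphi=1}+\Fil^0\Dcris(V))$ and that $\partial_V$ is the section of $\exp_{F,V}^{-1}$, which forces $\ker(\partial_V) = H^1_\f(F,V)$ by dimension- (or rather, submodule-) count on each piece; since everything is $F$-linear and the relevant modules are finitely generated, the counting argument is robust even though $F$ is not finite over $\Qp$.

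\medskip

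\emph{The main obstacle} is making the identification of $\partial_V$ with the Bloch--Kato boundary map genuinely rigorous in the infinite-residue-field setting: the references (\cite{berger04}, \cite{bellaichechenevier09}) typically assume $F/\Qp$ finite, and one must check that their proofs --- which compare cocycles valued in $\BB_\cris$ with elements of $\NN^+_\cris$ --- go through when the residue field is merely perfect of characteristic $p$ rather than finite. The key inputs that need to survive are the surjectivity of $\psi-1$ on the relevant $(\varphi,\Gamma)$-modules (established earlier in the excerpt), the bijectivity of $1-\varphi$ on $\Dcris(V)$ (\cref{prop:Khproperties0}), and the fact that $\NN^+_\cris(V)/\pi\NN^+_\cris(V) \cong \Dcris(V)$; granting these, Berger's local computation is purely formal in the coefficient ring and extends without change. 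I would spell this out carefully but expect no essential new difficulty --- the substance is entirely in the already-cited preparatory results about cohomology over $\Kh$.
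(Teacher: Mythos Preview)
Your plan correctly identifies the endpoint --- one must relate $\partial_V$ to the Bloch--Kato exponential --- but the execution has a real gap and one misstep.

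The misstep is the ``dimension/submodule count'' you propose at the end. The space $H^1(F,V)$ is \emph{not} finitely generated over $F$ in general (indeed the paper's whole point is that it is a genuinely infinite-dimensional Banach space), so no counting argument of the kind you sketch is available. More seriously, your phrase ``$\partial_V$ is the section of $\exp_{F,V}^{-1}$'' has the maps going the wrong way: $\exp_{F,V}$ goes from a quotient of $\Dcris(V)$ \emph{into} $H^1$, while $\partial_V$ goes \emph{out of} $H^1$. In fact $\partial_V$ is much closer to the \emph{dual} exponential (the paper records $\exp^*_{F,V} = (1-p^{-1}\varphi^{-1})\circ\partial_V$ immediately after this proposition), so what you are trying to assert is not the right identification.

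The gap is that you never actually establish the link between ``$\partial_V(y)=0$'' and ``$y$ lies in the image of $\exp_{F,V}$''; you cite Berger's dictionary as if it were a black box doing this, but that dictionary is precisely what needs to be made explicit here. The paper's proof does this work via a concrete intermediate claim: $\partial_V(y)=0$ if and only if $y$ admits a representative in $(\gamma-1)\cdot\NN^+_\cris(V)^{\psi=1}$, equivalently in $\nabla(\nabla-1)\cdots(\nabla-h+1)\cdot\NN^+_\cris(V)^{\psi=1}$. Proving this claim uses the surjectivity of $1-\varphi$ on $\NN^+_\cris(V)^{\psi=1}$ (\cref{prop:oneminusphisurj}, which is where the infinite-residue-field input enters) to peel off a $(\gamma-1)$-factor, and then the explicit description of $\NN^+_\cris(V)^{\varphi=1}$ from \cref{prop:Khproperties1}. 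Once the claim is in hand, Berger's explicit formula for $\exp_{F,V}$ (Theorem II.2 of \cite{berger03}) converts divisibility by $\nabla(\nabla-1)\cdots(\nabla-h+1)$ directly into membership in $H^1_\f$, and the converse direction follows by writing $y=\exp_{F,V}((1-p^{-1}\varphi^{-1})w)$ and lifting $w$ to $\NN^+_\cris(V)^{\psi=1}$. This concrete $\nabla$-divisibility characterisation is the missing idea in your sketch.
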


  \begin{proof}
   Let $y \in H^1(F, V) = \Drig(V)^{\psi = 1} / (\gamma - 1)$. Let $h \ge 1$ be such that $\Fil^{-h} \Dcris(V) = \Dcris(V)$ (so all Hodge--Tate weights of $V$ are $\le h$).\medskip

   \emph{Claim}: $\partial_V(y) = 0$ if and only if there exists a $\tilde{y} \in \Drig(V)^{\psi = 1}$ representing $y$ and such that $\tilde{y} \in \nabla (\nabla - 1) \dots (\nabla - h + 1)\cdot  \NN^+_{\cris}(V)^{\psi = 1}$.

   \emph{Proof of claim}: Since the distribution $\frac{\nabla (\nabla - 1) \dots (\nabla - h + 1)}{\gamma - 1}$ is coprime to $\gamma - 1$, it suffices to show that $\partial_V(y) = 0$ iff there is a representative $\tilde{y}$ lying in $(\gamma - 1) \cdot \NN^+_{\cris}(V)^{\psi = 1}$. Clearly if such a $\tilde{y}$ exists then $\partial_V(y) = 0$.

   Conversely, if $\partial_V(y) = 0$, then we can find a representative $\tilde{y} \in \left(\pi \NN^+_{\cris}(V)\right)^{\psi = 1}$. Thus $(1 - \varphi)(\tilde{y}) \in \cH(\Gamma) \otimes \Dcris(V)$ vanishes at the trivial character; using \cref{prop:oneminusphisurj} we obtain an element $z \in \NN^+_{\cris}(V)^{\psi = 1}$ such that $(1 - \varphi)(\tilde{y}) = (\gamma - 1)(1 - \varphi) z$. Thus $\tilde{y} - (\gamma - 1)z \in \NN^+_{\cris}(V)^{\varphi = 1} \cap \pi \NN^+_{\cris}(V) = \bigoplus_{k \ge 1} t^k \otimes \Dcris(V)^{\varphi = p^{-k}}$ (with the term for $k = 0$ excluded). Since $\gamma - 1$ is clearly surjective on this module, it follows that $\tilde{y}$ is divisible by $\gamma - 1$ as required.
   \medskip

   With this in hand, we may complete the proof. If $\partial_V(y) = 0$, then \cite[Theorem II.2]{berger03} gives us an explicit element of $\DdR(V)$ whose image under $\exp_{F, V}$ is $y$. Conversely, if $y \in H^1_{\f}(F, V)$, then we can write $y = \exp_{F, V}( (1 - p^{-1}\varphi^{-1}) w)$ for some $w \in \Dcris(V)$, and we can choose $\tilde{w} \in \NN_{\cris}^+(V)^{\psi = 1}$ with $\partial_V(\tilde w) = w$; hence it follows from Berger's formula that $\partial_V(y) = 0$.
  \end{proof}

  \begin{theorem}[Cherbonnier--Colmez, Berger]
   For $y \in H^1(F, V)$, we have
   \[ \exp^*_{F, V}(y) = (1 - p^{-1}\varphi^{-1}) \partial_V(y). \]
  \end{theorem}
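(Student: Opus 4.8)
The plan is to reduce the identity to Berger's explicit reciprocity computation \cite{berger03} and then to check that this computation survives the passage to an infinite residue field. The left-hand side is unwound using Kato's definition of $\exp^*_{F,V}$ recalled above: it is the composite of the localisation map $H^1(F,V) \to H^1(F, V\otimes\BB_{\dR})$ with the inverse of the cup-product isomorphism $\DdR(V) \xrightarrow{\cup \log \chi} H^1(F, V\otimes\BB_{\dR})$. Since $\DdR(V) = \Dcris(V)$ as $F$-vector spaces, everything takes place inside $\Dcris(V)$, and the task is to describe the localisation map $H^1(F,V)\to H^1(F,V\otimes\BB_{\dR})$ in $(\varphi,\Gamma)$-module terms.

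First I would note that both sides of the asserted identity, as maps $H^1(F,V)\to\Dcris(V)$, factor through the \emph{finite-dimensional} $F$-vector space $H^1(F,V)/H^1_\f(F,V)$: the map $\partial_V$ does so because $\ker\partial_V = H^1_\f(F,V)$ by the preceding proposition, while $\exp^*_{F,V}$ does so because $\ker\exp^*_{F,V} = H^1_{\mathrm g}(F,V)\supseteq H^1_\f(F,V)$ and its image lies in $\Fil^0\Dcris(V)$. (Both maps are also continuous — $\partial_V$ by the proposition above, and $\exp^*_{F,V}$ because cup products and the de Rham comparison are — so there is also an alternative reduction via the $\varpi$-adic density of the image of $H^1(F,T)$.) Now fix $y\in H^1(F,V)$ and a representative $\tilde y\in\Drig(V)^{\psi=1}$; as in the proof that $\ker\partial_V = H^1_\f$, the element $\tilde y$ lies in $\NN^+_{\cris}(V)[1/t]$, so under the map $\cR^+_F\to F[[t]]$, $\pi\mapsto e^t-1$, it maps to a well-defined element of $F((t))\otimes_F\Dcris(V)$ whose constant term is $\partial_V(y)$. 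Berger's computation identifies the image of $y$ under the localisation map $H^1(F,V)\to H^1(F,V\otimes\BB_{\dR})\cong\DdR(V)$ with this constant term, up to the Euler factor $1-p^{-1}\varphi^{-1}$; the factor is forced by the translation between the $\psi$-Herr complex and the naive ``constant term'' map, and enters through the now-familiar device of adjusting $\tilde y$ so that $(1-\varphi)\tilde y$ vanishes at the trivial character (cf. \cref{prop:oneminusphisurj} and the proof of the previous proposition).

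The main obstacle I anticipate is precisely this last identification: running Berger's argument for the localisation map, and pinning down the exact normalisation $1-p^{-1}\varphi^{-1}$, for $F$ a finite extension of $\Kh$ rather than of $\Qp$. One must check that $\BB_{\dR}$-cohomology and Kato's isomorphism $\DdR(V)\cong H^1(F,V\otimes\BB_{\dR})$ behave as classically — which they do, since Kato's construction (recalled in the remark above) needs only that $F$ is discretely valued and that $\log\chi\in H^1(F,\Qp)$ exists — and that the comparison of the Herr complex with the $\BB_{\dR}$-valued cochain complex nowhere uses finiteness of the residue field. All the genuinely analytic inputs — surjectivity of $1-\varphi$ on $\NN^+_{\cris}(V)^{\psi=1}$, overconvergence of $(\varphi,\Gamma)$-modules \`a la Cherbonnier--Colmez, and $\operatorname{cd}_p(\Kh)=1$ — have already been established above in the required generality, so no essentially new idea should be needed, and the formula then follows as in \cite{berger03}, the companion formula for $\exp_{F,V}$ being already available through the identification $\ker\partial_V=H^1_\f$.
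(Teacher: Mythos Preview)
Your proposal is correct and matches the paper's approach: the paper simply cites Theorem II.6 of \cite{berger03}, and you have spelled out the verification that Berger's argument extends from finite extensions of $\Qp$ to finite extensions of $\Kh$, using the inputs (surjectivity of $1-\varphi$, overconvergence, $\operatorname{cd}_p(F)=1$) already established earlier in the paper. One minor slip: $H^1(F,V)$ carries no natural $F$-module structure, so the quotient $H^1(F,V)/H^1_\f(F,V)$ is only an $E$-subspace of $\Dcris(V)$ and need not be finite-dimensional over $E$---but this parenthetical remark plays no role in your argument, and your alternative route via continuity and density of $H^1(F,T)$ is sound.
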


  \begin{proof}
   See Theorem II.6 of \cite{berger03}.
  \end{proof}

  \begin{corollary}\label{h1fclosed}
   The spaces $H^1_\f(F, V)$ and $H^1_\mathrm{g}(F, V)$ are closed in $H^1(F, V)$.
  \end{corollary}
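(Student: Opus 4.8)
The plan is to exhibit each of the two subspaces as the preimage of a closed point under a continuous map, which makes closedness immediate.

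First, for $H^1_\f(F,V)$: we have established above that the map $\partial_V \colon H^1(F,V) \to \Dcris(V)$ is continuous, and that its kernel is exactly $H^1_\f(F,V)$. Since $\Dcris(V)$ carries the canonical $F$-Banach topology, it is Hausdorff, so $\{0\}$ is a closed subset; hence $H^1_\f(F,V) = \partial_V^{-1}(\{0\})$ is closed in $H^1(F,V)$.

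Next, for $H^1_{\mathrm{g}}(F,V)$: recall it is by definition the kernel of the dual exponential $\exp^*_{F,V} \colon H^1(F,V) \to \Fil^0 \Dcris(V)$. By the Cherbonnier--Colmez--Berger formula proved above, $\exp^*_{F,V} = (1 - p^{-1}\varphi^{-1}) \circ \partial_V$. The operator $1 - p^{-1}\varphi^{-1}$ on $\Dcris(V)$ is additive and continuous for the canonical topology, being built from the identity and the Frobenius $\varphi$, which is a bijective semilinear map over the (continuous) Frobenius automorphism of $F$ and hence continuous for the canonical $F$-Banach topology on the finite-dimensional space $\Dcris(V)$. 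Therefore $\exp^*_{F,V}$ is continuous, and $H^1_{\mathrm{g}}(F,V) = (\exp^*_{F,V})^{-1}(\{0\})$ is closed.

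There is essentially no obstacle remaining: the substance of the argument lies in the continuity of $\partial_V$, which has already been proved, and the only point requiring a word of care is the continuity of $\varphi$ (equivalently $\varphi^{-1}$) on $\Dcris(V)$ for the canonical topology — which holds because that topology on a finite-dimensional $F$-vector space is intrinsic and transported correctly by semilinear automorphisms of $F$. So the corollary is a short formal consequence of the preceding results.
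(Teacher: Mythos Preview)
Your proof is correct and follows essentially the same approach as the paper: both arguments rest on the continuity of $\partial_V$ established earlier, together with the Cherbonnier--Colmez--Berger formula $\exp^*_{F,V} = (1 - p^{-1}\varphi^{-1})\circ\partial_V$. The only minor difference is in the $H^1_{\mathrm{g}}$ case: you argue directly that $1 - p^{-1}\varphi^{-1}$ is continuous (via continuity of the semilinear Frobenius on the finite-dimensional $F$-space $\Dcris(V)$), whereas the paper instead writes $H^1_{\mathrm{g}}(F,V) = \partial_V^{-1}\big(\Dcris(V)^{\varphi = p^{-1}}\big)$ and observes that this eigenspace is finite-dimensional over $\Qp$ and hence closed --- but these are trivially equivalent formulations.
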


  \begin{proof}
   We have $H^1_\mathrm{g}(F, V) = \{ y \in H^1(F, V) : \partial_V(y) \in \Dcris(V)^{\varphi = p^{-1}} \}$, and $H^1_\f(F, V) = \{ y : \partial_V(y) = 0\}$. Since $\partial_V$ is continuous, and $\Dcris(V)^{\varphi = p^{-1}}$ is a finite-dimensional subspace of a $\Qp$-Banach space and thus closed, the result follows.
  \end{proof}

  \begin{remark}
   The assertion for $H^1_{\mathrm{g}}$ is a special case of a more general theorem of Ponsinet \cite[Prop 4.1.2]{ponsinet} showing that $H^1_{\mathrm{g}}(L, V)$ is closed in $H^1(L, V)$ for any algebraic extension $L / \Qp$ and any de Rham representation $V$ of $G_L$. However, we believe the result for $H^1_{\f}$ is new.
  \end{remark}

 \subsection{Descent to $\Qp$}

  We now suppose that $V$ is a representation of $G_{\Qp}$, so that $H^1(F, V)$ has an action of the (pro-cyclic) abelian group $U = \Gal(F / \Qp)$. Then the subspaces $H^1_{?}(F, V)$ for $? \in \{\mathrm{e}, \mathrm{f}, \mathrm{g}\}$ are clearly $U$-invariant, as is the image of $H^1(F, \cD^+)$ if $\cD^+$ is a sub-$(\varphi,\Gamma)$-module of $\DD^{\dag}_{\rig}(\Qp, V)$.

  \begin{proposition}\
   \begin{enumerate}[(i)]
    \item We have $(\res_{\Qp}^F)^{-1}\left(H^1_{\f}(F, V)\right)= H^1_\f(\Qp, V)$, and similarly for $H^1_{\mathrm{g}}$. The same holds for $H^1_{\mathrm{e}}$ if $\Dcris(V)^{\varphi = 1} = 0$ (but not otherwise).
    \item If $H^0(\Qp, \cD / \cD^+) = 0$, then \[ (\res_{\Qp}^F)^{-1}\left(\operatorname{image} H^1(F, \cD^+) \to H^1(F, V)\right) = \operatorname{image} \left(H^1(\Qp, \cD^+) \to H^1(\Qp, V)\right).\]
   \end{enumerate}
  \end{proposition}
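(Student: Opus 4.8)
The plan is to treat parts (i) and (ii) separately, reducing each to a statement about a single continuous map out of $H^1$ together with the cohomology of the procyclic group $U = \Gal(F/\Qp)$. Observe first that $U$ is procyclic and, since the residue field of $\Kh$ has no nontrivial finite extension of $p$-power degree, $U \cong \Zp \times C$ with $C$ finite of order prime to $p$. I shall use the elementary fact that for such $U$ and any finite-dimensional $E$-vector space $M$ with continuous $U$-action one has $H^1(U, M) \cong H^0(U, M)$: indeed $H^i(U, M) = H^i(\Zp, M)^C$ because $|C|$ is prime to $p$ and $M$ is an $E$-vector space, while $H^0(\Zp, M) = \ker(\sigma-1)$ and $H^1(\Zp, M) = \operatorname{coker}(\sigma-1)$ (for $\sigma$ a topological generator of the $\Zp$-factor, with higher cohomology vanishing), and these are isomorphic as $E[C]$-modules since $M \cong \ker(\sigma-1) \oplus \operatorname{im}(\sigma-1) \cong \operatorname{im}(\sigma-1) \oplus \operatorname{coker}(\sigma-1)$ as $E[C]$-modules and $E[C]$-modules satisfy cancellation.

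\textbf{Part (i).} Write $\partial_V^{\Qp}$ and $\partial_V^F$ for the constant-term map of the preceding subsection over $\Qp$ and over $F$. Since $\Drig(F, V) = \cR_F \otimes_{\cR_{\Qp}} \Drig(\Qp, V)$ — and similarly for $\NN^+_\cris$ — this map is compatible with restriction, giving a commutative square
\[
\begin{tikzcd}
H^1(\Qp, V) \arrow[r, "\partial_V^{\Qp}"] \arrow[d, "\res_{\Qp}^F"'] & \Dcris^{\Qp}(V) \arrow[d, hook] \\
H^1(F, V) \arrow[r, "\partial_V^F"] & \Dcris^{F}(V),
\end{tikzcd}
\]
whose right-hand map is the inclusion $\Dcris^{\Qp}(V) = (V \otimes \BB_{\cris})^{G_{\Qp}} \hookrightarrow (V \otimes \BB_{\cris})^{G_F} = \Dcris^{F}(V)$, which is injective and $\varphi$-equivariant. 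By the preceding subsection (whose arguments apply verbatim to $\Qp$), over both $\Qp$ and $F$ one has $H^1_\f = \ker\partial_V$ and $H^1_\mathrm{g} = \partial_V^{-1}(\Dcris(V)^{\varphi = p^{-1}})$. Hence $c$ lies in $(\res_{\Qp}^F)^{-1}(H^1_\f(F,V))$ iff the image of $\partial_V^{\Qp}(c)$ in $\Dcris^F(V)$ vanishes, iff $\partial_V^{\Qp}(c) = 0$, iff $c \in H^1_\f(\Qp, V)$; the case of $H^1_\mathrm{g}$ is identical, using that $\Dcris^F(V)^{\varphi=p^{-1}} \cap \Dcris^{\Qp}(V) = \Dcris^{\Qp}(V)^{\varphi=p^{-1}}$ by $\varphi$-equivariance. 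For $H^1_\mathrm{e}$, note $H^1_\mathrm{e}(F, V) = H^1_\f(F, V)$ always, since $1-\varphi$ is surjective on the $\varphi$-module $\Dcris^F(V)$ by \cref{prop:Khproperties0}, whereas $H^1_\f(\Qp,V)/H^1_\mathrm{e}(\Qp,V) \cong \Dcris^{\Qp}(V)/(1-\varphi)\Dcris^{\Qp}(V)$ is nonzero exactly when $\Dcris^{\Qp}(V)^{\varphi=1} \neq 0$. So the $H^1_\mathrm{e}$ assertion follows from the $H^1_\f$ one when $\Dcris^{\Qp}(V)^{\varphi=1} = 0$, and fails otherwise, since then $(\res_{\Qp}^F)^{-1}(H^1_\mathrm{e}(F,V)) = (\res_{\Qp}^F)^{-1}(H^1_\f(F,V)) = H^1_\f(\Qp,V)$ strictly contains $H^1_\mathrm{e}(\Qp,V)$.

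\textbf{Part (ii).} Set $\cD' = \cD/\cD^+$, and for $L \in \{\Qp, F\}$ write $\cD'_L = \cR_L \otimes_{\cR_{\Qp}} \cD'$ and $H^1_{\cD^+}(L, V)$ for the image of $H^1(L, \cD^+_L) \to H^1(L, V)$. As $H^1(L, \cD) = H^1(L, V)$, the long exact sequence of $0 \to \cD^+_L \to \cD_L \to \cD'_L \to 0$ identifies $H^1_{\cD^+}(L, V)$ with $\ker(H^1(L, V) \to H^1(L, \cD'_L))$; since these maps form a commutative square under restriction, \textbf{it suffices to show that $\res_{\Qp}^F : H^1(\Qp, \cD') \to H^1(F, \cD'_F)$ is injective}. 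I argue with the Herr complex $C_{\varphi, \gamma}$, using that $U = \Gal(F/\Qp)$ acts $\cR_F$-semilinearly on $\cD'_F$, commuting with $\varphi$ and $\gamma$, with $(\cD'_F)^U = \cD'$, and that in degree $0$ this complex computes $H^0(F, \cD'_F) = (\cD'_F)^{\varphi=1,\gamma=1}$. Represent $c \in \ker(\res_{\Qp}^F)$ by a cocycle $(a, b) \in \cD' \oplus \cD'$; that $\res(c) = 0$ means $(a, b) = ((\varphi-1)x_F, (\gamma-1)x_F)$ for some $x_F \in \cD'_F$, well-defined up to $H^0(F, \cD'_F)$. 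For $u \in U$ the element $x_F - u(x_F)$ lies in $H^0(F, \cD'_F)$ (as $a, b \in \cD'$ are $U$-fixed), and $u \mapsto x_F - u(x_F)$ is a $1$-cocycle whose class $\beta(c) \in H^1(U, H^0(F, \cD'_F))$ depends only on $c$; moreover $\beta(c) = 0$ forces $x_F$, after correction by an element of $H^0(F, \cD'_F)$, to be $U$-invariant, hence to lie in $\cD'$, so that $c = 0$. Thus $\beta$ is injective on $\ker(\res_{\Qp}^F)$. Since $H^0(F, \cD'_F)$ is a finite-dimensional $E$-vector space, the fact recorded above gives $H^1(U, H^0(F, \cD'_F)) \cong H^0(U, H^0(F, \cD'_F)) = (\cD'_F)^{U, \varphi=1, \gamma=1} = H^0(\Qp, \cD')$, which vanishes by hypothesis. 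Hence $\ker(\res_{\Qp}^F) = 0$, as required.

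\textbf{Main obstacle.} The delicate point is making rigorous, for $F$ \emph{infinite} over $\Qp$, the $(\varphi,\Gamma)$-module inputs in part (ii): that $H^0(F, \cD'_F)$ is finite-dimensional over $E$, that it equals $(\cD'_F)^{\varphi=1,\gamma=1}$ (so the $\varphi$- and $\psi$-Herr complexes agree in degree $0$ and the comparisons of \cref{prop:banachH1} and of the earlier Herr-complex propositions apply), and that the semilinear $U$-action commutes with $\varphi$ and $\gamma$ as used. When $\cD' = \Drig(W)$ is étale this is immediate from $H^\bullet(\cD'_F) = H^\bullet(F, W)$ and ordinary Galois cohomology; in general it is part of the cohomology formalism for $(\varphi, \Gamma)$-modules over $\cR_F$ (Kedlaya--Pottharst--Xiao, Liu), extended to $F$ finite over $\Kh$ exactly as in the preceding section. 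Everything else in both parts is a formal diagram chase given the results already established.
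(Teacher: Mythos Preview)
Your argument for part (ii) is essentially the paper's: both identify the obstruction to injectivity of $H^1(\Qp,\cD') \to H^1(F,\cD'_F)$ as $H^1(U, H^0(F,\cD'))$, then kill it using that $U$ is procyclic and $H^0(\Qp,\cD') = 0$. You spell out the cocycle computation by hand, while the paper just invokes inflation--restriction; same content.

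For part (i) you take a genuinely different route, and there is a gap in the $H^1_\f$ case. The paper works directly with the period rings: for $? \in \{\mathrm{f},\mathrm{g}\}$ the obstruction to $(\res)^{-1}(H^1_?(F,V)) = H^1_?(\Qp,V)$ lies in $H^1(U, \DD_?(F,V)) = H^1(U, F) \otimes_{\Qp} \DD_?(\Qp,V)$, and $H^1(U,F) = F/(\varphi-1)F = 0$. Your approach via $\partial_V$ is more concrete, and it works cleanly for $H^1_\mathrm{g}$ (since Berger's formula $\exp^* = (1-p^{-1}\varphi^{-1})\partial_V$ is valid over $\Qp$) and for $H^1_\mathrm{e}$ (your reduction to the $H^1_\f$ case is correct). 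But for $H^1_\f$ you need $\ker \partial_V^{\Qp} = H^1_\f(\Qp,V)$, and you assert this follows ``verbatim'' from the preceding subsection. It does not: that proof over $F$ uses \cref{prop:oneminusphisurj} (surjectivity of $1-\varphi$ on $\NN^+_\cris(V)^{\psi=1}$, which rests on surjectivity of $1-\varphi$ on $\Dcris^F(V)$) and the consequence $H^1_\mathrm{e}(F,V) = H^1_\f(F,V)$. Over $\Qp$ both of these fail precisely when $\Dcris(V)^{\varphi=1} \neq 0$ --- which is exactly the case the proposition must still cover for $H^1_\f$. So your diagram chase reduces the question to a characterization over $\Qp$ that you have not established; the paper's period-ring argument sidesteps this entirely by never needing any $\partial_V$-type description over $\Qp$.
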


  \begin{proof}
   Let $x \in H^1(\Qp, V)$; and write $\BB_?$ for $\BB_{\mathrm{cris}}$ or $\BB_{\mathrm{dR}}$ depending if $? = \mathrm{f}, \mathrm{g}$. Then $\res_{\Qp}^F(x) \in H^1_{?}(F, V)$ iff the image of $x$ in $H^1(\Qp, \BB_? \otimes V)$ is in the kernel of restriction to $H^1(F, \BB_? \otimes V)$. This kernel is exactly $H^1(U, \DD_?(F, V))$, and since $F$ is unramified, we have $\DD_?(F, V) = F \otimes_{\Qp} \DD_?(\Qp, V)$. So the kernel is given by $H^1(U, F) \otimes_{\Qp} \DD_?(\Qp, V)$; but $H^1(U, F) = F / (\varphi - 1) F = 0$.

   This does not work for $H^1_{\mathrm{e}}$, since the obstruction in this case lies in $H^1(U, (F \otimes_{\Qp} \Dcris(\Qp, V))^{\varphi = 1})$; and one can check that this is isomorphic to $\Dcris(\Qp,V) / (\varphi - 1) \Dcris(\Qp, V)$, which is zero iff $\Dcris(V)^{\varphi = 1}$ is. (Concretely, the unique non-trivial unramified extension of the trivial representation by itself corresponds to a class in $H^1(\Qp, \Qp)$ which is not in $H^1_{\mathrm{e}}$, since the associated extension of $\varphi$-modules does not split, but whose restriction to $\cK_\infty$ is zero, which is certainly in $H^1_{\mathrm{e}}(\cK_\infty, \Qp)$.)

   For part (ii), the obstruction is $H^1(U, H^0(F, \cD/ \cD^+))$. Since $H^0(F, \cD/ \cD^+)$ is finite-dimensional over $L$, and $U$ is topologically cyclic, its $U$-invariants and $U$-coinvariants have the same dimension; hence $H^1(U, H^0(F, \cD/ \cD^+))$ vanishes iff $H^0(U, H^0(F, \cD/ \cD^+)) = H^0(\Qp, \cD/\cD^+)$ does.
  \end{proof}

%%%%%%%%%%%%%%%%%%%%%%%%%%%%%%%%
\part{Ultra-Kolyvagin systems}
%%%%%%%%%%%%%%%%%%%%%%%%%%%%%%%%

\section{Ultraprimes and patched cohomology}

 \subsection{Ultrafilters}

  Recall that a \emph{ultrafilter} on a set $X$ is a family of subsets $\cU \subset \mathbb{P}(X)$ such that
  \begin{itemize}
  \item $\cU$ is an \emph{up-set}, i.e. if $A \in \cU$ then any $B \supseteq A$ is also in $\cU$ (see \ref{lem:ultravalue});
  \item if $A, B \in \cU$ then $A \cap B \in \cU$;
  \item for any $A \subset X$, precisely one of $A$ and $X \setminus A$ is in $\cU$.
  \end{itemize}

  \begin{notation}
   If $\cU$ is an ultrafilter on $X$, and $P$ is some predicate on $X$, we say ``$P$ holds for $\cU$-many $x$'' if the set $\{x \in X : P(x)\}$ is in $\cU$.
  \end{notation}

  \begin{remark}\label{lem:ultravalue}
   These conditions imply, more generally, that for any finite partition $X = A_1 \sqcup \dots \sqcup A_n$, exactly one of the $A_i$ is in $\cU$. Equivalently, for any finite set $B$ and function $f : X \to B$, there is a unique $b \in B$ such that $f(x) = b$ for $\cU$-many $x$.
  \end{remark}

  Elementary examples are the principal ultrafilters $\cU_x$, for $x \in X$, defined by $\cU_x = \{ A : x\in A\}$; the axiom of choice implies that every infinite set admits non-principal ultrafilters.

  For a family of sets (or groups, etc) indexed by $X$, say $\cM=\{M_x\}_{x\in X}$ we can then define the \emph{ultraproduct} $\cU(\cM)$, defined as the quotient of $\prod_x M_x$ by the relation $a\sim b$ if $a_x = b_x$ for $\cU$-many $x$.

%%%%%%%%%%%%%%%%%%%%%%%%%%%%%%%%

 \subsection{Ultraprimes}

  Henceforth we fix a non-principal ultrafilter $\cU$ on $\mathbf{N}$.

  \begin{definition}
   Let $P$ be the set of primes of a number field $L$, and let $\cP_L=\{P\}_{n\in \NN}$. An ultraprime is a an element of the ultraproduct $\cU(\cP_L)$.

   A an ultraprime $q$ can hence be represented by a sequence of primes $(q^{(1)},q^{(2)},\dots)$, and the sequences $(q^{(n)})_{n \ge 1}$ and $(q^{\prime (n)})_{n \ge 1}$ define the same ultraprime if $q^{(n)} = q^{\prime (n)}$ for $\cU$-many $n$.

   An ultraprime is \emph{constant} if $\cU$-many of its terms are the same prime $p$.
  \end{definition}

  Given an ultraprime $q \in \cU(\cP_L)$, and an algebraic extension $E / L$, we say $q$ is \emph{unramified in $E$} if, for every finite subextension $L \subseteq_{\f} F \subseteq E$, the prime $q^{(n)}$ is unramified in $F$ for $\cU$-many $n$. This turns out to be automatic if $q$ is non-constant.

  If $q$ is unramified and $E$ is Galois, one can define an (arithmetic) Frobenius $\Frob_q \in \Gal(E / L)$, well-defined up to conjugacy, by the following rule: for each finite Galois subextension $L \subseteq_{\f} F \subseteq E$, the image of $\Frob_q$ in $\Gal(F / L)$ is the unique conjugacy class containing $\Frob_{q^{(n)}}$ for $\cU$-many $n$.

  \begin{lemma}[Ultra-Chebotarev density]
   For any $\tau \in \Gal(\overline{L}/ L)$, there exist infinitely many (non-constant) ultraprimes $q$ with $\Frob_q = [\tau]$.
  \end{lemma}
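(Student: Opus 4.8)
The plan is to reduce the statement to the classical Chebotarev density theorem by exploiting the ``$\cU$-many'' quantifier together with the non-principality of $\cU$. Fix $\tau \in \Gal(\overline{L}/L)$. First I would fix an exhaustive nested sequence of finite Galois subextensions $L \subseteq_{\f} F_1 \subseteq F_2 \subseteq \cdots$ of $\overline{L}/L$ with $\bigcup_m F_m$ dense enough to pin down conjugacy classes in $\Gal(\overline{L}/L)$ (e.g.\ one can take the compositum of all finite Galois extensions ramified only at the first $m$ primes, or any cofinal system); write $\tau_m$ for the image of $\tau$ in $\Gal(F_m/L)$. For each $m$, classical Chebotarev gives \emph{infinitely many} rational primes $\ell$ (hence infinitely many primes of $L$, since we only need one prime of $L$ above each such $\ell$) whose Frobenius conjugacy class in $\Gal(F_m/L)$ is $[\tau_m]$; moreover these $\ell$ have positive density, so in particular for each $m$ the set of such $\ell$ is infinite, hence unbounded in $\NN$.

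Next I would build the ultraprime term by term in a way compatible with $\cU$. The subtlety is that a single sequence $(q^{(n)})_n$ must have $\Frob_{q^{(n)}}$ lying in the right class in $\Gal(F_m/L)$ for $\cU$-many $n$, \emph{for every} $m$ simultaneously. Here is where non-principality of $\cU$ enters: choose the sequence so that $q^{(n)}$ has Frobenius in $[\tau_m]$ in $\Gal(F_m/L)$ whenever $n \ge m$ — i.e.\ for $q^{(n)}$ pick a prime of $L$, above a rational prime as large as we like, whose Frobenius in $\Gal(F_n/L)$ is $[\tau_n]$ (possible by Chebotarev in $F_n$, and since $F_n \supseteq F_m$ this automatically gives the right class in $\Gal(F_m/L)$ for all $m \le n$). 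Then for each fixed $m$, the set $\{n : \Frob_{q^{(n)}} \in [\tau_m] \text{ in } \Gal(F_m/L)\}$ contains $\{n : n \ge m\}$, which is cofinite, hence in $\cU$ since $\cU$ is non-principal (every cofinite set belongs to a non-principal ultrafilter). Thus by definition $\Frob_q = [\tau]$, where $q$ is the ultraprime represented by $(q^{(n)})_n$. One should also ensure $q$ is non-constant: since at the $n$-th step we are free to choose the underlying rational prime arbitrarily large (Chebotarev gives infinitely many), we can arrange $q^{(n)} \to \infty$, so no single prime occurs for $\cU$-many $n$; non-constancy then also gives unramifiedness in $\overline L$ automatically, as remarked in the text.

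Finally, to get \emph{infinitely many} such ultraprimes rather than just one, I would observe that the construction has enormous freedom: at each step $n$ there are infinitely many admissible choices of $q^{(n)}$, so one can produce uncountably many distinct sequences, and two sequences agreeing on only finitely many terms represent distinct ultraprimes (their agreement set is finite, hence not in $\cU$). Concretely, it suffices to run the construction along disjoint infinite sets of rational primes to obtain pairwise non-equivalent ultraprimes all with $\Frob_q = [\tau]$. The main obstacle is purely organisational — making sure the nested system $(F_m)$ is cofinal among finite Galois subextensions so that ``$\Frob_q$'' in $\Gal(\overline L/L)$ really is determined by its images in the $\Gal(F_m/L)$, and checking the compatibility of conjugacy classes under the projections $\Gal(F_n/L) \to \Gal(F_m/L)$; once the diagonal-type choice $q^{(n)} \in [\tau_n]$ is made, the ultrafilter bookkeeping is immediate from cofiniteness.
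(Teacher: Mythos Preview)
Your proof is correct and takes essentially the same approach as the paper's: both build the ultraprime by a diagonal choice of $q^{(n)}$ with Frobenius conjugate to $\tau$ in a cofinal tower $(F_n)$ of finite Galois subextensions, use that cofinite sets lie in the non-principal ultrafilter $\cU$, and exploit the freedom at each step to produce infinitely many distinct ultraprimes. One minor slip: your parenthetical example of a cofinal system (the compositum of all finite Galois extensions ramified only at the first $m$ primes) is not a \emph{finite} extension of $L$, but this does not affect the argument since any cofinal sequence of finite Galois subextensions works.
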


  \begin{proof}
   We first show that one such $q$ exists. Choose an $\NN$-indexed sequence of finite Galois subextensions $L = F_1 \subset F_2 \subset F_3 \dots$ whose union is $\overline{L}$; and for each $n$, choose $q^{(n)}$ to be a prime unramified in $F_n$ such that $\Frob_{q^{(n)}}$ is conjugate to $\tau$ in $\Gal(F_n / L)$. We let $q$ be the equivalence class of $(q^{(1)}, q^{(2)}, \dots)$.

   Since $\cU$ must contain all cofinite sets, and $\Frob_{q^{(n)}}$ maps to $[\tau]$ in $\Gal(F_m / L)$ for all $n \ge m$, the image of $\Frob_q$ in $\Gal(F_m / L)$ is $[\tau]$ for all $m$. As the $F_m$ exhaust $E$ the result follows.

   If we are given any finite set of ultraprimes $q_1, \dots, q_k$, then we may suppose $q^{(n)} \notin \{q_1^{(n)}, \dots, q_k^{(n)}\}$ for all $n$ and $j$, so $q$ is not equal to any of the $q_i$.
  \end{proof}

%%%%%%%%%%%%%%%%%%%%%%%%%%%%%%%%

 \subsection{Local cohomology at an ultraprime}

  One can associate to each ultraprime $q$ a local Galois group $G_q$ (c.f. \cite[\S 2.3]{sweeting20}). For $q$ a constant ultraprime, this is the usual Galois group $\Gal(\overline{L}_q / L_q)$; for $q$ a non-constant ultraprime we define it to be the semidirect product $\widehat{\ZZ}(1) \rtimes \widehat{\ZZ}$, with $\Frob_q$ as a generator of the second factor, acting on the first factor (the ``inertia subgroup'') via the cyclotomic character.

  \begin{remark}
   Let us define an ``embedding $\overline{L} \into \overline{L}_q$'', for non-constant $q$, to mean an equivalence class of embeddings $\overline{L} \into \overline{L}_{q^{(n)}}$ agreeing at $\cU$-many $n$. Then a choice of such an embedding defines a specific choice of map $G_q \to \Gal(\overline{L} / L)$; so we can regard $\Frob_q$ as an element of the latter group (up to inertia if $q$ is a constant ultraprime, and well-defined ``on the nail'' otherwise). Varying the embeddings has the effect of conjugating $G_q$ inside $\Gal(\overline{L} / L)$.
  \end{remark}

  For any topological $G_L$-module $A$, and any $q \in \cP_L$, there is a natural action of $G_q$ on $A$ (well-defined up to conjugacy). Define the local cohomology groups by
  \[ H^i(L_q, A)=H^i(G_q, A), \]
  where on the RHS we take continuous cochain cohomology.

  \begin{lemma}
  	(c.f. \cite[Prop. 2.3.2]{sweeting20}) Let $q\in \cP_L$ be represented by a sequence $(q^{(n)})_{n\in\mathbb{N}}$. If $A$ is a finite, discrete $G_L$ module, then for $\cU$-many $n$ there are canonical isomorphisms
   \[ H^i(L_{q^{(n)}}, A) \cong H^i(L_q, A).\]
  \end{lemma}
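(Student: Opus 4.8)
The plan is to reduce the comparison to ``finite level'', and then to exploit that, for $\cU$-many $n$, the Galois cohomology of the honest local field $L_{q^{(n)}}$ differs from the cohomology of the pro-group $\widehat{\ZZ}(1)\rtimes\widehat{\ZZ}$ defining $G_q$ only through the cohomology of a pro-$\ell$ group acting on a module of order prime to $\ell$, which vanishes in positive degrees; hence the two groups must agree, canonically.

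First I would fix a finite Galois extension $F/L$ through which the $G_L$-action on $A$ factors (possible since $A$ is finite, hence a discrete module with open stabiliser), and dispose of the constant case: if $q^{(n)}=\pp$ for $\cU$-many $n$, then for those $n$ we have $L_{q^{(n)}}=L_\pp$, while $G_q=G_{L_\pp}$ by definition, so there is nothing to prove. Assume henceforth that $q$ is non-constant. Then for $\cU$-many $n$ the following hold simultaneously: (a) $q^{(n)}$ is unramified in $F$, this being the automatic unramifiedness of non-constant ultraprimes applied to the finite extension $F$; (b) the residue characteristic $\ell_n$ of $q^{(n)}$ is prime to $|A|$, since only finitely many primes of $L$ have residue characteristic dividing $|A|$, and for each such prime $\pp$ the set $\{n:q^{(n)}=\pp\}$ is not in $\cU$ by non-constancy, so neither is their finite union; and (c), after fixing an embedding $\overline{L}\into\overline{L}_q$ as in the remark above, which for $\cU$-many $n$ determines compatible embeddings $\overline{L}\into\overline{L}_{q^{(n)}}$ and hence elements $\Frob_{q^{(n)}}\in G_L$, the image of $\Frob_{q^{(n)}}$ in $\Gal(F/L)$ equals that of $\Frob_q$ (the finite-valued-function property of $\cU$ forces these images to be $\cU$-eventually constant, and the definition of $\Frob_q$ identifies that constant value; in any case conjugate values would already suffice, since conjugation induces a canonical isomorphism on cohomology).

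Now fix $n$ in the intersection of these $\cU$-large sets, and write $G_n=\Gal(\overline{L}_{q^{(n)}}/L_{q^{(n)}})$ with wild inertia $P_n\trianglelefteq G_n$, so that $G_n/P_n$ is the tame quotient, an extension of $\widehat{\ZZ}$ (topologically generated by $\Frob_{q^{(n)}}$) by the prime-to-$\ell_n$ Tate twist $\widehat{\ZZ}^{(\ell_n')}(1)$, on which $\Frob_{q^{(n)}}$ acts through the cyclotomic character. Since $q^{(n)}$ is unramified in $F$, $P_n$ acts trivially on $A$; and since $P_n$ is pro-$\ell_n$ while $|A|$ is prime to $\ell_n$, we have $H^j(P_n,A)=0$ for $j\ge1$ (each such group is $\ell_n$-torsion, being a direct limit of cohomology groups of finite $\ell_n$-groups, and is simultaneously killed by $|A|$, being a subquotient of $C^j(P_n,A)$). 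Hence the Hochschild--Serre spectral sequence for $P_n\trianglelefteq G_n$ collapses, and inflation yields a canonical isomorphism
\[ H^i(\widehat{\ZZ}^{(\ell_n')}(1)\rtimes\widehat{\ZZ},\,A)\;\xrightarrow{\ \sim\ }\;H^i(G_n,A)=H^i(L_{q^{(n)}},A). \]
Running the same argument for the surjection $\widehat{\ZZ}(1)\rtimes\widehat{\ZZ}\twoheadrightarrow\widehat{\ZZ}^{(\ell_n')}(1)\rtimes\widehat{\ZZ}$, whose kernel $\ZZ_{\ell_n}(1)$ is pro-$\ell_n$ and acts trivially on $A$, yields a canonical isomorphism
\[ H^i(\widehat{\ZZ}^{(\ell_n')}(1)\rtimes\widehat{\ZZ},\,A)\;\xrightarrow{\ \sim\ }\;H^i(\widehat{\ZZ}(1)\rtimes\widehat{\ZZ},\,A), \]
where on the right $\widehat{\ZZ}(1)$ acts trivially and the generator of $\widehat{\ZZ}$ acts on $A$ via $\Frob_{q^{(n)}}$. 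Composing the two displays, and using (c) to replace the action of $\Frob_{q^{(n)}}$ by that of $\Frob_q$, gives the desired canonical isomorphism $H^i(L_{q^{(n)}},A)\cong H^i(\widehat{\ZZ}(1)\rtimes\widehat{\ZZ},A)=H^i(G_q,A)=H^i(L_q,A)$. Since the set of such $n$ lies in $\cU$, the lemma follows.

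The cohomological inputs here --- vanishing of positive-degree cohomology of a pro-$\ell$ group on a prime-to-$\ell$ module, and the ensuing collapse of Hochschild--Serre --- are standard; the part I expect to require the most care is the bookkeeping needed to make the resulting isomorphism genuinely \emph{canonical}: checking its independence of the auxiliary choices of $F$ and of the embedding $\overline{L}\into\overline{L}_q$, and its compatibility with the ``defined only up to conjugacy'' nature of $G_q$ and of $H^i(L_q,-)$. This is precisely the point at which I would follow \cite[Prop.~2.3.2]{sweeting20} closely.
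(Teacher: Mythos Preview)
Your argument is correct and is the standard route: reduce via Hochschild--Serre for the wild inertia (a pro-$\ell_n$ group acting trivially on a prime-to-$\ell_n$ module) to the tame quotient, then similarly pass from $\widehat{\ZZ}(1)\rtimes\widehat{\ZZ}$ to $\widehat{\ZZ}^{(\ell_n')}(1)\rtimes\widehat{\ZZ}$, and match the Frobenius actions using the defining property of $\Frob_q$. The paper does not supply its own proof of this lemma --- it simply records the statement with a reference to \cite[Prop.~2.3.2]{sweeting20} and adds the precision about compatibility with restriction from $H^i(L,A)$ --- so there is no alternative argument to compare against; your write-up is essentially a fleshed-out version of what that citation stands in for, and your closing remark about following Sweeting for the canonicity bookkeeping is exactly right.
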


  (More precisely, if we fix $\iota : \overline{L} \into \overline{L}_q$, then both $G_q$, and $G_{q^{(n)}}$ for $\cU$-many $n$, act on $A$; and the above isomorphisms are compatible with the maps from $H^i(L, A)$.)

%%%%%%%%%%%%%%%%%%%%%%%%%%%%%%%%

 \subsection{Patched cohomology}

  Let $S$ be a finite set of ultraprimes, and let $A$ be a finite $G_L$-module unramified at the primes in $S$. Represent $S$ by a sequence of sets $(S^{(1)},S^{(2)},\dots)$.

  \begin{definition}
   Define the patched cohomology
   \[ H^i(G_{S},A)=\cU\left( \{H^i(L_{S^{(1)}},A),H^i(L_{S^{(2)}},A),\dots\}\right).\]
   For $B$ a profinite system of finite $G_L$-modules unramified outside $S$, say $B=\varprojlim B'$ with $B'$ finite, let
   \[ H^i(L_{S},B)=\varprojlim H^i(L_S,B').\]
  \end{definition}

  Let $S$ be a finite set of ultraprimes, and let $q\in \cP_L$. Let $A$ be a topological $G_L$-module unramified outside $S$. We define restriction maps as follows:
  \begin{enumerate}
  	\item if $A$ is finite , define
  	\[ \res^S_q: H^i(L_S,A)=\cU\left(\left\{ H^i(L_{S^{(n)}},A)\right\}_{n\in\mathbf{N}}\right)\rightarrow \cU\left(\left\{H^i(L_q,A)\right\}_{n\in\mathbf{N}}\right)\cong H^i(L_q,A).\]
  	\item if $A=\varprojlim A'$ is 	a profinite limit of finite modules, define
  	\[ \res^S_q: H^i(L_S,A)=\varprojlim H^i(L_S,A') \rightarrow^{\res^S_q} \varprojlim H^i(L_q,A') \cong H^i(L_q,A).\]
  \end{enumerate}

%%%%%%%%%%%%%%%%%%
\section{Ultra Kolyvagin systems}
%%%%%%%%%%%%%%%%%%

 \subsection{Tame Euler systems}

  Let $V$ be a finite-dimensional $E$-linear representation of $G_{\QQ}$ unramified outside a finite set of primes $\Sigma$ containing $p$, and $T$ a Galois-invariant $\cO$-lattice in $V$.

  \begin{notation}
   For $q \ne p$ a prime, denote by $\QQ(q)$ the maximal subfield of $\QQ(\zeta_m)$ of $p$-power degree over $\QQ$. For $r = q_1 \dots q_n$ a squarefree product of such primes, write $\QQ(r)$ for the composite of the $\QQ(q_i)$. We write $\Delta_r$ for the Galois group of $\QQ(r)$. Finally, let $\cN$ be the set of square-free products $q_1 \dots q_r$ of primes $q_i \notin \Sigma$.
  \end{notation}

  We also fix a subspace $\cF \subseteq H^1(\cK_\infty, V)$, which we suppose is invariant under the action of $\Gal(\cK_\infty / \Qp)$. Then, for $m \in \cN$ and $v \mid p$ a place of $\QQ(m)$, all embeddings $\iota : \QQ(m)_v \into \QQbar_p$ factor through $\cK_\infty$; since $\cF$ is Galois-invariant, the subspace $\left(\res_{\iota(\QQ(m)_v)}^{\cK_\infty}\right){}^{-1}(\cF)$ of $H^1(\QQ(m)_v, V)$ is independent of $\iota$, and we denote it by $H^1_{\cF}(\QQ(m)_v, V)$. We let $H^1_{\cF}(\QQ(m)_v, T)$ be the preimage of $H^1_{\cF}(\QQ(m)_v, V)$ in $H^1(\QQ(m)_v, T)$.

  \begin{definition} \label{def:tameES}
   A \emph{tame Euler system for $(T, \Sigma)$ with local condition $\cF$} is a collection of classes
   \[ c_m \in H^1(\QQ(m), T) \qquad \forall m \in \cN, \]
   such that:
   \begin{itemize}
    \item if $\ell$ is prime with $\ell m \in \cN$, then the Euler system norm relation holds,
    \[ \operatorname{norm}_{\QQ(m)}^{\QQ(\ell m)}\left(c_{\ell m}\right) = P_\ell(V^*(1), \sigma_\ell^{-1}) \cdot c_m\]
    where $\sigma_\ell$ is the arithmetic Frobenius at $\ell$, and $P_\ell(V^*(1), X) = \det\left(1 - X \sigma_\ell^{-1} \middle| V^*(1)\right)$.
    \item for all $m \in \cN$ and all finite places $v$ of $\QQ(m)$, we have
    \[ \loc_v(c_m) \in \begin{cases}
     H^1_{\f}(\QQ(m)_v, T)  & \text{for $v \nmid p$} \\
     H^1_{\cF}(\QQ(m)_v, T) & \text{for $v \mid p$}.
     \end{cases} \]
   \end{itemize}
  \end{definition}

  \begin{remark}
   Recall that if the prime $\ell$ below $v$ is not in $\Sigma$, the ramification condition is equivalent to $c_m$ being unramified above $\ell$; but it is slightly weaker if $V$ is ramified at $\ell$, since $H^1_{\f}$ is equal to the saturation of the unramified cohomology $H^1_{\mathrm{ur}}$ (which may not be saturated in general). See \cite[Lemma 1.3.5(ii)]{rubin00} for a more precise description of the quotient $H^1_\f / H^1_{\mathrm{ur}}$.
  \end{remark}

 \subsection{Hypotheses on $(T, \Sigma, \cF)$}

  We assume that the following conditions hold:
  \begin{itemize}
   \item (Hyp1) the following two ``large image'' conditions hold:
    \begin{itemize}
    \item (Hyp1a) There exists a $\tau\in G_{\QQ(\mu_{p^\infty})}$ such that $\tau$ acts trivially on $\QQ(\mu_{p^\infty})$ and such that $T/(\tau-1)T$ is a free $\cO$-module of rank $1$.
    \item (Hyp1b) There exists $\gamma \in G_{\QQ(\mu_{p^\infty})}$ such that $V^{(\gamma = 1)} = 0$.
    \end{itemize}
   \item (Hyp2) $T \otimes_{\cO}\mathbf{k}$ is an irreducible $\mathbf{k}[G_{\QQ}]$-module of rank $> 1$, where $\mathbf{k}$ is the residue field of $E$.
   \item (Hyp3) There exists a field extension $L$ containing $\mu_{p^\infty}$ such that $G_L$ acts trivially on $V$ and $H^1(L/\QQ, V/T )$ and $H^1(L/\QQ, T^\vee)$ are zero.
   \item (Hyp4) $\cF$ is closed in the Banach-space topology of $H^1(\cK_\infty, V)$.
  \end{itemize}

  We fix a choice of $\tau$ and $\gamma$ as in (Hyp1).

  \begin{remark}
   Note that (Hyp1a) is the condition $\mathrm{Hyp}(K, T)$ from \cite[\S 2.2]{rubin00}; and (Hyp1b), together with the $H^1_{\f}$ assumption away from $p$ which we imposed as part of the definition, is the condition (ii')(b) in \S 9.1 of \emph{op.cit.} (as an alternative to hypothesis (ii) in Rubin's definition of Euler systems, the existence of norm-compatible bounded classes along a $\Zp$-extension, which we are not assuming). On the other hand, (Hyp3) is very close to the hypothesis (H.3) in \cite{mazurrubin04}.
  \end{remark}

  For our chosen $\tau$, and $j \ge 1$, we say a prime $\ell$ is a \emph{$j$-Kolyvagin prime} if $\ell = 1 \bmod p^j$, $\ell \notin \Sigma$, and the conjugacy class of arithmetic Frobenius $\sigma_\ell$ acts on $T / p^j T$ as a conjugate of $\tau$. We let $\cN_j$ denote the square-free products of $j$-Kolyvagin primes.

  As explained in \S 9.1 of \emph{op.cit.}, under our present hypotheses, we can define the Kolyvagin derivative classes
  \[ \kappa_{r, j} \in H^1(\QQ, T / p^j T) \]
  for each $r \in \cN_j$. Note that by construction $\kappa_{r, j}$ is unramified outside $\Sigma \cup r$, and its localization at primes dividing $r$ is determined by the ``finite-singular comparison'' property (Theorem 4.5.4 of \cite{rubin00}, using the alternative proof in \S 9.1 of \emph{op.cit.}; this is where the hypothesis (Hyp1b) is needed).
%%%%%%%%%%%%%%%%%%%%%%%%%%%%%%%%

 \subsection{Kolyvagin ultraprimes}

  We define the set of Kolyvagin ultraprimes as follows:

  \begin{definition}
  	Let $q\in \cP_{\QQ}$. Then $q$ is an \emph{Kolyvagin ultraprime} if the image of $\Frob_q$ in $\Gal\big(\QQ(T, \mu_{p^\infty}) / \QQ\big)$ is conjugate to $\tau$.
  \end{definition}

  Unwrapping the definition, if $q$ is defined by a sequence $(q^{(n)})_{n\in\mathbf{N}}$, then $q$ is a Kolyvagin ultraprime if for each $j \ge 1$, there are $\cU$-many $n$ such that $q^{(n)}$ is a $j$-Kolyvagin prime. From the results above, there exist infinitely many Kolyvagin ultraprimes.

  If $r = \{q_1, \dots, q_k\}$ is a finite set of (distinct) Kolyvagin ultraprimes, represented by sequences $(q_i^{(n)})_{n \ge 1}$, and we set $r^{(n)} = \prod_i q_i^{(n)}$, then for any $j \ge 1$, the Kolyvagin class
  \[ \kappa_{[r^{(n)}, j]} \in H^1(\QQ_{\Sigma \cup \{q_1^{(n)}, \dots, q_k^{(n)}\}}, T / p^j T)\]
  is defined for $\cU$-many $n$, and gives a class
  \[ \kappa_{[r, j]} \in H^1(\QQ_{\Sigma \cup r}, T / p^j T),\]
  where the patched Selmer group is as defined above. These classes are compatible as $j$ varies (from Rubin's lemma 4.4.13 (iii)) and hence the following is well-defined:

  \begin{definition}
   We may therefore define a class $\kappa_r \in H^1(\QQ_{\Sigma \cup r}, T)$ as the inverse limit of the $\kappa_{r, j}$.
  \end{definition}

  \begin{remark}[{see \cite[Lemma 4.4.13 (ii)]{rubin00}}]
  	For $\cU$-many $n$, the restriction of $\kappa_{r^{(n)},j}$ to $H^1(\QQ_\Sigma(r^{(n)}), T/p^j T)$ is equal to the image of $D_{r^{(n)}} c_{r^{(n)}}$ in $H^1(\QQ_\Sigma(r^{(n)}), T/p^j T)$, where $D_{r^{(n)}}$ is a certain specific element of the group ring $\Zp[\Delta_{r^{(n)}}]$.
  \end{remark}

%%%%%%%%%%%%%%%%%%%%%%%%%%%

 \subsection{Local properties at $p$ of the Kolyvagin classes}

  Let $p\in \cP_{\QQ}$ be the constant ultraprime $(p,p,\dots)$, and fix an embedding $\QQbar \into \QQbar_p$. We then have a restriction map
  \[ \res^{\Sigma \cup r}_p:\, H^1(\QQ_{\Sigma \cup r}, T)\rightarrow H^1(\Qp, T), \]
  for any finite set of Kolyvagin ultraprimes $r$; so we may regard $\res^{\Sigma \cup r}_p\left(\kappa_r\right)$ as a class in $H^1(\Qp, T)$.

  Given a $\Qp$-subspace $\cF \subset H^1(\cK_\infty, V)$ invariant under $\Gal(\cK_\infty/\Qp)$, we shall define $H^1_{\cF}(\Qp, V)
 = \left(\res_{\Qp}^{\cK_\infty}\right)^{-1} (\cF)$, and $H^1_{\cF}(\Qp, T)$ the preimage of $H^1_{\cF}(\Qp, V)$ in $H^1(\Qp, T)$ (so $H^1 / H^1_{\cF}$ is $p$-torsion-free). This extends in the obvious fashion to any finite extension of $\Qp$ contained in $\cK_\infty$.

  \begin{theorem}%[\textpmhg{\Large \HX}]
   \label{thm:ultrabeef}
   Suppose that $\cF$ is closed in the Banach-space topology of $H^1(\cK_\infty, V)$, and that for all $n \in \cN$ and all embeddings $\QQ(n) \into \QQbar_p$, the Euler system class $c_n$ satisfies
   \[ \loc_p(c_n) \in H^1_{\cF}(\Qp(n), V).\]
   Then for all $r$ as above we have
   \[ \loc_p(\kappa_r) \in H^1_{\cF}(\Qp, V).\]
  \end{theorem}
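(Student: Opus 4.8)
The plan is to reduce the statement to a $\varpi$-adic approximation assertion and then to exploit the closedness of $\cF$ in an essential way. Throughout, for a finite unramified extension $K/\Qp$ inside $\Ki$ I write $H^1_{\cF}(K, T)$ for the preimage of $\cF$ under $H^1(K, T)\to H^1(K, V)\xrightarrow{\res^{\Ki}_{K}}H^1(\Ki, V)$, consistently with the paper's conventions, and similarly $H^1_\cF(\Ki, T)$ for the preimage of $\cF$ in $H^1(\Ki, T)$.

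\emph{Reduction.} Let $\beta\in H^1(\Ki, T)$ be the image of $\loc_p(\kappa_r)$ under restriction from $\Qp$ to $\Ki$. Since $H^1_\cF(\Qp, V) = (\res^{\Ki}_{\Qp})^{-1}(\cF)$ and $\res^{\Ki}_{\Qp}$ factors through $H^1(\Ki, T)\to H^1(\Ki, V)$, it suffices to prove $\beta\in H^1_\cF(\Ki, T)$. Now $H^1_\cF(\Ki, T)$ is \emph{closed}: by \cref{prop:banachH1} the $\varpi$-adic topology on $H^1(\Ki, T)$ is the one induced from the $E$-Banach topology of $H^1(\Ki, V)$ via the unit-ball inclusion, and $\cF$ is closed by (Hyp4), so $H^1_\cF(\Ki, T)$ is the preimage of a closed set under a continuous map. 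Hence $H^1_\cF(\Ki, T) = \bigcap_{j\ge1}\bigl(H^1_\cF(\Ki, T) + \varpi^j H^1(\Ki, T)\bigr)$, and — using also $H^1(\Ki, T)/\varpi^j\cong H^1(\Ki, T/\varpi^j)$ from \cref{prop:banachH1} — I would reduce to showing that for every $j\ge1$ the image of $\beta$ in $H^1(\Ki, T/\varpi^j)$ coincides with the image of some $\eta_j\in H^1_\cF(\Ki, T)$.

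\emph{The mod-$\varpi^j$ computation via the Euler system.} Fix $j$. By \cref{lem:ultravalue}, $\loc_p(\kappa_{r,j})$ is the unique class in the finite group $H^1(\Qp, T/\varpi^j)$ with $\loc_p(\kappa_{r^{(n)},j}) = \loc_p(\kappa_{r,j})$ for $\cU$-many $n$; intersecting the $\cU$-large set of such $n$ with the ($\cU$-large, hence non-empty after intersection) sets of $n$ for which $r^{(n)}$ is a squarefree product of $j$-Kolyvagin primes, for which $\kappa_{r^{(n)},j}$ is defined, and for which its restriction to $H^1(\QQ_\Sigma(r^{(n)}), T/\varpi^j)$ equals $D_{r^{(n)}}c_{r^{(n)}}$ (the relation recalled above, from \cite[Lemma~4.4.13(ii)]{rubin00}), I would fix one such $n$. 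Note $\loc_p(\kappa_{r,j})$ is the reduction mod $\varpi^j$ of $\loc_p(\kappa_r)$, since $\kappa_r=\varprojlim_j\kappa_{r,j}$. Let $w\mid p$ be the place of $\QQ(r^{(n)})$ picked out by the fixed embedding $\QQbar\into\QQbar_p$. As $\QQ(r^{(n)})/\QQ$ has $p$-power degree and is unramified at $p$, the completion $\QQ(r^{(n)})_w$ is one of the finite layers $\cK_d\subset\Ki$, so $\res^{\Ki}_{\Qp} = \res^{\Ki}_{\cK_d}\circ\res^{\cK_d}_{\Qp}$. Localising the relation at $w$ gives $\res^{\cK_d}_{\Qp}\bigl(\loc_p(\kappa_{r^{(n)},j})\bigr) = \loc_w(D_{r^{(n)}}c_{r^{(n)}})\bmod\varpi^j$ in $H^1(\cK_d, T/\varpi^j)$; applying $\res^{\Ki}_{\cK_d}$ then identifies the image of $\beta$ in $H^1(\Ki, T/\varpi^j)$ with the reduction mod $\varpi^j$ of $\eta_j:=\res^{\Ki}_{\cK_d}\bigl(\loc_w(D_{r^{(n)}}c_{r^{(n)}})\bigr)\in H^1(\Ki, T)$.

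\emph{The local condition survives $D_{r^{(n)}}$.} It remains to check $\eta_j\in H^1_\cF(\Ki, T)$, i.e.\ $\loc_w(D_{r^{(n)}}c_{r^{(n)}})\in H^1_\cF(\cK_d, T)$. The semilocalisation $H^1(\QQ(r^{(n)}), T)\to\bigoplus_{v\mid p}H^1(\QQ(r^{(n)})_v, T)$ is $\Delta_{r^{(n)}}$-equivariant, with $\Delta_{r^{(n)}}$ permuting the places above $p$, and the submodule $\bigoplus_{v\mid p}H^1_\cF(\QQ(r^{(n)})_v, T)$ is $\Delta_{r^{(n)}}$-stable because each summand is the preimage of $\cF$ and $\cF$ is $\Gal(\Ki/\Qp)$-invariant. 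The hypothesis on $c_n$ (applied with $n = r^{(n)}$, over all embeddings $\QQ(r^{(n)})\into\QQbar_p$) says exactly that the semilocalisation of $c_{r^{(n)}}$ lies in this submodule; hence so does that of $D_{r^{(n)}}c_{r^{(n)}}$, and in particular its $w$-component lies in $H^1_\cF(\cK_d, T)$. Thus $\eta_j\in H^1_\cF(\Ki, T)$ and $\beta\equiv\eta_j\pmod{\varpi^j}$; letting $j$ vary and invoking the closedness of $H^1_\cF(\Ki, T)$ gives $\beta\in H^1_\cF(\Ki, T)$, as required.

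\emph{Main obstacle.} I expect the crux — and the whole reason the ``ultra'' formalism is in play — to be the passage from mod-$\varpi^j$ information about Kolyvagin classes to the characteristic-$0$ conclusion, i.e.\ the use of (Hyp4): nothing weaker than closedness of $\cF$ lets one deduce $\beta\in H^1_\cF(\Ki, T)$ from knowing $\beta$ modulo every $\varpi^j$. A secondary point needing care is that the derivative operator $D_{r^{(n)}}$ mixes the places of $\QQ(r^{(n)})$ above $p$, so one genuinely needs the $\Gal(\Ki/\Qp)$-invariance of $\cF$ (not just the local condition at a single place) to see that $D_{r^{(n)}}$ preserves the local condition; and one must keep track that each of the finitely many constraints cutting out the usable $n$ defines a $\cU$-large set, so that their intersection is non-empty.
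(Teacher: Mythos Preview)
Your proof is correct and follows essentially the same approach as the paper: restrict to $\cK_\infty$, use the relation $\kappa_{r^{(n)},j}|_{\QQ(r^{(n)})} = D_{r^{(n)}}c_{r^{(n)}} \bmod \varpi^j$ to produce a characteristic-zero class in $H^1_\cF$ approximating $\beta$ modulo $\varpi^j$, and invoke closedness of $\cF$. Your write-up is in fact more thorough than the paper's own proof --- in particular, your explicit $\Delta_{r^{(n)}}$-equivariance argument for why $D_{r^{(n)}}$ preserves the semilocal condition $\bigoplus_{v\mid p} H^1_\cF$ fills in a point the paper leaves to the reader.
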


  \begin{proof}
   From the definition of $H^1_{\cF}(\Qp, V)$, it suffices to show that the image of $\kappa_r$ in $H^1(\Ki, V)$ lies in the subspace $H^1_{\cF}$.

   By construction, for all $j \ge 1$, there exist $\cU$-many $n$ such that $\loc_p\left(\kappa_q\right) \bmod {p^j} = \loc_p(\kappa_{q^{(n)}, j})$. Since the field $\Ki$ contains the image of $\QQ(q^{(n)})$, the restriction of $\kappa_{q^{(n)}, j}$ to $\Ki$ has a distiguished lifting to $T$, namely $\loc_p\left(D_{q^{(n)}} c_{q^{(n)}}\right)$, which lies in $\cF$ by assumption. Hence there exist elements of $\cF$ which are within $p^j$ of $\loc_p\left(\kappa_q\right)$, for all $j$. As $\cF$ is closed, the result follows.
  \end{proof}

%%%%%%%%%%%%%%%%%%%%%%%%%%%%%%%%

 \subsection{Local properties in $\Sigma$}

  Exactly the same argument applies at primes $\ell \ne p$, with the important difference that if $\cK_{\ell, \infty}$ denotes the unramified $\Zp$-extension of $\Ql$, then $H^1(\cK_{\ell, \infty}, V)$ is a finite-dimensional $\Qp$-vector space (since $G_{\cK_{\ell, \infty}}$ is topologically finitely generated modulo its wild inertia subgroup, and that wild inertia subgroup is pro-$\ell$). Hence every $\Qp$-subspace of $H^1(\cK_{\ell, \infty}, V)$ is closed.

  We are principally interested in the subspace $H^1_{\f}(\cK_{\ell, \infty}, V)$ of unramified classes (which satisfies Galois descent for $\cK_{\ell, \infty} / \Ql$ by definition, since it is the kernel of restriction to the inertia group). Hence, if the Euler system classes are locally in $H^1_{\f}$ at $\ell$, so are the ultra-Kolyvagin classes.

  \begin{remark}
   This is essentially a ``repackaging'' of Corollary 4.6.5 of \cite{rubin00}.
  \end{remark}
%%%%%%%%%%%%%%%%%%%%%%%%%%%

 \subsection{Bounding Selmer groups}\label{ssec:boundingSel}

  Let $T^\vee = \Hom(T, \mu_{p^\infty})$. We want to use Kolyvagin classes to bound a Selmer group with the local condition defined as follows:
  \[ H^1_{\f\cF}(\QQ, T^\vee) =\left \{ x \in H^1(\QQ_{\Sigma} / \QQ, T^\vee) : \loc_\ell(x) \in
  \begin{cases} H^1_{\f}(\QQ_\ell, T^\vee) & (\ell \ne p) \\
   H^1_{\cF}(\Qp, T^\vee) & (\ell = p) \end{cases} \right\}\]
  Here $H^1_{\cF}(\Qp, T^\vee)$ is the orthogonal complement of $H^1_{\cF}(\Qp, T)$ under local Tate duality.

  \begin{notation}
   Let $A$ be a $\cO$-module and $x \in A$.
   \begin{itemize}
    \item $\ord_{\cO}(x, A)$ denotes the smallest $r$ such that $\varpi^r x = 0$ (or $\infty$ if no such $r$ exists);
    \item $\ind_{\cO}(x, A)$ denotes the largest $r$ such that $x \in \varpi^r A$ (or $\infty$ if this holds for all $r$).
   \end{itemize}
  \end{notation}

  \begin{lemma}\label{lem:existsgamma}
   Let $\kappa \in H^1(\QQ, T)$ and $\eta \in H^1(\QQ, T^\vee)$; and let $L$ be a Galois extension such that $G_L$ acts trivially on $T$ and $T^\vee$.

   Then there exists $\gamma \in G_L$ such that
   \begin{itemize}
   \item $\ord_{\cO}( \eta(\gamma \tau), T^\vee / (\tau - 1) T^\vee) \ge \ord_{\cO}( \eta_L, H^1(L, T^\vee) )$.
   \item $\operatorname{ind}_{\cO}\left( \kappa(\gamma \tau), T / (\tau - 1) T\right)
    \le \operatorname{ind}_{\cO}\left( \kappa_L, H^1(L, T)\right)$.
   \end{itemize}

   Here $\operatorname{ind}_\cO(x, A)$ denotes the largest $r$ such that $x \in \varpi^r A$ (or $\infty$ if there is no largest such $r$).
  \end{lemma}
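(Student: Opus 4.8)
The plan is to reduce everything to the \emph{homomorphisms} $\kappa_L,\eta_L\colon G_L\to T,T^\vee$ obtained by restricting cocycles representing $\kappa,\eta$ to $G_L$; these are homomorphisms because $G_L$ acts trivially, and are independent of the chosen cocycle since a coboundary $(g-1)\beta$ vanishes on $G_L$. As $H^1(L,-)$ is continuous $\Hom(G_L,-)$ here, $T$ is $\varpi$-adically separated, and $\eta$ (a single class) is killed by some $\varpi^n$, we get $t:=\ind_\cO(\kappa_L,H^1(L,T))=\min_{\gamma\in G_L}\ind_\cO(\kappa_L(\gamma),T)$ and $s:=\ord_\cO(\eta_L,H^1(L,T^\vee))=\max_{\gamma\in G_L}\ord_\cO(\eta_L(\gamma),T^\vee)$, both attained (if $\kappa_L=0$ the second bullet is vacuous, likewise if $\eta_L=0$, so assume both nonzero). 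From the cocycle identity and the trivial $\gamma$-action, $\kappa(\gamma\tau)=\kappa_L(\gamma)+\kappa(\tau)$; putting $W:=T/(\tau-1)T$ (free of rank $1$ by (Hyp1a)), $\bar\kappa\colon G_L\to W$ for the reduction of $\kappa_L$, and $w_0:=\kappa(\tau)\bmod(\tau-1)T\in W$ (well-defined, a coboundary shifts $\kappa(\tau)$ by $(\tau-1)\beta$), the second bullet asks for $\gamma$ with $\ind_\cO(\bar\kappa(\gamma)+w_0,W)\le t$; symmetrically, with $W':=T^\vee/(\tau-1)T^\vee$, $\bar\eta\colon G_L\to W'$ and $v_0:=\eta(\tau)\bmod(\tau-1)T^\vee$, the first bullet asks for $\ord_\cO(\bar\eta(\gamma)+v_0,W')\ge s$.

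The crux is that reducing modulo $(\tau-1)$ does not diminish the ``size'' of $\bar\kappa$ or $\bar\eta$: I claim $f_\kappa:=(\varpi^{-t}\bar\kappa\bmod\varpi)\colon G_L\to W/\varpi W$ and $g_\eta:=\varpi^{s-1}\bar\eta\colon G_L\to W'[\varpi]$ are \emph{nonzero} homomorphisms. For $f_\kappa$: the twisted-cocycle computation $\kappa_L(g\gamma g^{-1})=g\cdot\kappa_L(\gamma)$ (valid since $G_L\trianglelefteq G_\QQ$ acts trivially on $T$) shows $\Theta:=\kappa_L(G_L)$ is a Galois-stable submodule of $T$, not inside $\varpi^{t+1}T$ as the minimum $t$ is attained. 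If $f_\kappa=0$ then, using an $\cO$-linear splitting $T\cong(\tau-1)T\oplus W$, one gets $\varpi^{-t}\Theta\subseteq(\tau-1)T+\varpi T$, so its nonzero image $\bar\Theta$ in $T/\varpi T$ is a $G_\QQ$-submodule lying in the $\mathbf{k}$-subspace $(\tau-1)(T/\varpi T)$; but by (Hyp2) $T\otimes\mathbf{k}$ is $\mathbf{k}[G_\QQ]$-irreducible, so $\bar\Theta$ has full $\mathbf{k}$-span, forcing $(\tau-1)(T/\varpi T)=T/\varpi T$, hence $W\otimes\mathbf{k}=0$, contradicting (Hyp1a). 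For $g_\eta$: likewise $\Xi:=\eta_L(G_L)$ is Galois-stable and $\varpi^{s-1}\Xi$ is a nonzero $G_\QQ$-submodule of $T^\vee[\varpi]$; since $\ker(\tau-1\mid T^\vee)\cong(T/(\tau-1)T)^\vee$ is divisible one checks $(\tau-1)T^\vee\cap T^\vee[\varpi]=(\tau-1)(T^\vee[\varpi])$, and $T^\vee[\varpi]$ is $\mathbf{k}[G_\QQ]$-irreducible (a twist of the $\mathbf{k}$-linear dual of $T\otimes\mathbf{k}$), so if $g_\eta=0$ then $\varpi^{s-1}\Xi\subseteq(\tau-1)(T^\vee[\varpi])$, whence the $\mathbf{k}$-span argument makes $\tau-1$ surjective, hence bijective, on $T^\vee[\varpi]$, hence bijective on all of $T^\vee$ by d\'evissage along the $\varpi$-power filtration, i.e.\ $W'=0$ — again contradicting (Hyp1a).

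Finally, one finds a single $\gamma$ absorbing both translations. As $\ind_\cO(\bar\kappa(\gamma),W)\ge t$ always, with equality exactly off $\ker f_\kappa$, a short case split on whether $\ind_\cO(w_0,W)$ is $<t$, $=t$ or $>t$ shows that the validity of the second bullet for $\gamma$ depends only on $f_\kappa(\gamma)$, and that the ``bad'' set is empty or a single coset of $\ker f_\kappa$; likewise, using $\ord_\cO(\bar\eta(\gamma),W')\le s$ with equality off $\ker g_\eta$, the first bullet fails on at most a single coset of $\ker g_\eta$. Since $p>2$, the images of $f_\kappa,g_\eta$ are nonzero $\FF_p$-vector spaces, so $\ker f_\kappa$ and $\ker g_\eta$ have index $\ge3$ in $G_L$; both bad sets descend to the finite quotient $Q:=G_L/(\ker f_\kappa\cap\ker g_\eta)$, where each occupies at most a third, so their union is proper, and any $\gamma$ mapping to the complement works.

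I expect the main obstacle to be the middle paragraph — showing that the cocycle value keeps its full order/index after passing to $\tau$-coinvariants. This rests on (a) the conjugation identity, which shows the \emph{image} of $\kappa_L$ (resp.\ $\eta_L$) is Galois-stable, so residual irreducibility (Hyp2) bites despite these images not being $\cO$-submodules, and (b) the rank-one property (Hyp1a) of $T/(\tau-1)T$; the auxiliary fact $(\tau-1)T^\vee\cap T^\vee[\varpi]=(\tau-1)(T^\vee[\varpi])$ is the only slightly delicate algebraic point. The last paragraph is routine bookkeeping, with $p>2$ used only for the index-$\ge3$ bound that prevents the two bad cosets from covering $G_L$.
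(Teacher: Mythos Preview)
Your argument is correct and complete. The paper's own proof is a one-liner: it cites the finitary version (Lemma 5.2.1 of Rubin's \emph{Euler Systems}) and passes to the limit. What you have written is, in effect, a direct proof of that lemma carried out in the untruncated setting, using the same three ingredients that appear in Rubin's argument: the conjugation identity $\kappa_L(g\gamma g^{-1})=g\cdot\kappa_L(\gamma)$ making the image $\kappa_L(G_L)$ Galois-stable; residual irreducibility (Hyp2) combined with (Hyp1a) to show that passing to $(\tau-1)$-coinvariants does not kill the relevant top/bottom layer; and the coset-counting at the end, where $p>2$ ensures that two cosets of index $\ge p$ cannot cover the group. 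So the approaches agree at the level of ideas --- yours is self-contained, the paper's is a citation.

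Two minor polishing remarks. First, when you write ``$\bar\Theta$ in $T/\varpi T$ is a $G_\QQ$-submodule'', it is only a $G_\QQ$-stable \emph{subset}; its $\mathbf{k}$-span is the submodule to which (Hyp2) applies, which is what you need. Second, in the $T^\vee$ case the d\'evissage to all of $T^\vee$ is more than necessary: once you know $\tau-1$ is bijective on $T^\vee[\varpi]\cong(T\otimes\mathbf{k})^*(1)$, dualizing already gives bijectivity on $T/\varpi T$ and hence $W\otimes\mathbf{k}=0$, contradicting (Hyp1a) directly.
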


  \begin{proof}
   This follows immediately from the ``finitary'' version, Lemma 5.2.1 of \cite{rubin00}, by taking limits.
  \end{proof}

  We now have the following analogue of Rubin's Lemma 5.2.3.

  \begin{lemma}
   Suppose $C$ is a finite subset of $H^1(\QQ, T^\vee)$. Then there exists a finite sequence of ultraprimes $q_1, \dots, q_k$ such that
   \begin{itemize}
   \item Each $q_i$ is a Kolyvagin ultraprime (so $\operatorname{Fr}_{q_i}$ is in the kernel of the cyclotomic character and acts on $T$ as a conjugate of $\tau$);
   \item For $1 \le i \le k$, writing $r_i = \{q_1, \dots, q_i\}$, we have
   \[ \operatorname{ind}_{\cO}\left( \left(\kappa_{r_{i-1}}\right)_{q_i}, H^1_{\f}(\QQ_{q_i}, T) \right) \le \operatorname{ind}_{\cO}\left((\kappa_{r_{i-1}})_L, H^1(L, T)\right),\]
   \item $\{ \eta \in C : (\eta)_{q_i} = 0\ \forall 1 \le i \le k\} \subseteq H^1(L/\QQ, T^\vee) = 0$.
   \end{itemize}
  \end{lemma}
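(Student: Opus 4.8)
The plan is to mirror Rubin's inductive construction of auxiliary primes (Lemma 5.2.3 of \cite{rubin00}) but with ordinary Kolyvagin primes replaced by the Kolyvagin ultraprimes introduced above, so that the construction produces genuine (characteristic-$0$) classes $\kappa_{r_i} \in H^1(\QQ_{\Sigma \cup r_i}, T)$ rather than mod-$p^j$ objects. The induction builds $q_1, \dots, q_k$ one at a time: having chosen $q_1, \dots, q_{i-1}$ and formed $r_{i-1} = \{q_1, \dots, q_{i-1}\}$, we have the ultra-Kolyvagin class $\kappa_{r_{i-1}} \in H^1(\QQ_{\Sigma \cup r_{i-1}}, T)$, together with its restriction $(\kappa_{r_{i-1}})_L \in H^1(L, T)$, and we wish to select $q_i$ achieving the stated index bound at $q_i$ while simultaneously killing more of the finite set $C$. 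The key input is \cref{lem:existsgamma}, applied to $\kappa = \kappa_{r_{i-1}}$ together with any $\eta$ surviving so far: it produces a $\gamma \in G_L$ controlling the index of $\kappa_{r_{i-1}}(\gamma\tau)$ in $T/(\tau-1)T$ and the order of $\eta(\gamma\tau)$ in $T^\vee/(\tau-1)T^\vee$.

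\textbf{Main steps.} First, I would reduce the third bullet to a finite process: since $H^1(L/\QQ, T^\vee) = 0$ by (Hyp3), the restriction map $H^1(\QQ, T^\vee) \to H^1(L, T^\vee)^{G_{\QQ}}$ is injective, so it suffices to ensure that after finitely many steps every nonzero $\eta \in C$ has been detected by some $(\eta)_{q_i} \ne 0$; one handles one $\eta$ at a time (or one cyclic subquotient at a time) and appeals to finiteness of $C$ to terminate. Second, fix $\eta \in C$ not yet killed and apply \cref{lem:existsgamma} to the pair $(\kappa_{r_{i-1}}, \eta)$ to obtain $\gamma$; then invoke Ultra-Chebotarev density to produce a Kolyvagin ultraprime $q_i$ with $\Frob_{q_i} = [\gamma\tau]$ in $\Gal(\QQ(T, \mu_{p^\infty})/\QQ)$ (note $\gamma\tau$ is indeed a valid Kolyvagin Frobenius: $\gamma \in G_L$ acts trivially on $T$ and on $\mu_{p^\infty}$, so $\gamma\tau$ and $\tau$ have the same image in $\Gal(\QQ(T,\mu_{p^\infty})/\QQ)$, lying in the kernel of the cyclotomic character and conjugate to $\tau$). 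Third, translate the abstract bounds into the desired localisation statements: the finite-singular comparison (Theorem 4.5.4 of \cite{rubin00}, via the \S 9.1 proof) identifies $(\kappa_{r_{i-1}})_{q_i}$ in $H^1_{\f}(\QQ_{q_i}, T)$ with (a unit multiple of) $\kappa_{r_{i-1}}(\Frob_{q_i})$ read in $T/(\Frob_{q_i}-1)T = T/(\tau-1)T$ up to the ultrafilter identification of local cohomology groups, giving the index inequality in the second bullet; similarly $(\eta)_{q_i}$ is controlled by $\eta(\Frob_{q_i})$ in $T^\vee/(\tau-1)T^\vee$, which by the choice of $\gamma$ is nonzero whenever $\eta_L \ne 0$, i.e.\ whenever $\eta \ne 0$.

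\textbf{Main obstacle.} The delicate point is the passage through the ultrafilter: one must check that the mod-$p^j$ finite-singular comparison maps, which are what \cite{rubin00} provides and which \cref{lem:existsgamma} controls only after taking an inverse limit, actually assemble under $\cU$ into a clean statement about the characteristic-$0$ class $(\kappa_{r_{i-1}})_{q_i} \in H^1(\QQ_{q_i}, T)$ and its position in $H^1_{\f}(\QQ_{q_i}, T)$. Concretely, for $\cU$-many $n$ the term $q_i^{(n)}$ is a $j$-Kolyvagin prime for every fixed $j$, the class $\kappa_{r_{i-1}^{(n)}, j}$ agrees with $\kappa_{r_{i-1}, j}$ after the canonical isomorphism $H^1(\QQ_{q_i^{(n)}}, T/p^j T) \cong H^1(\QQ_{q_i}, T/p^j T)$, and Rubin's finite-singular formula holds on the nose at level $j$; one then takes the limit over $j$ to recover the integral statement, using that $H^1_{\f}(\QQ_{q_i}, T)$ is $\varpi$-adically separated. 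Once this compatibility is in place the argument is a routine adaptation of Rubin's, and the induction terminates because $C$ is finite and each step removes at least one nonzero class (or strictly lowers a residual order), so after at most $\sum_{\eta \in C} \ord_{\cO}(\eta_L, H^1(L,T^\vee))$-many steps the third bullet holds.
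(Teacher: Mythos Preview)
Your proposal is correct and is precisely the approach the paper intends: the paper gives no proof beyond calling the statement ``the following analogue of Rubin's Lemma 5.2.3'', and your sketch accurately supplies the adaptation --- replace Rubin's Lemma~5.2.1 and ordinary Chebotarev by \cref{lem:existsgamma} and Ultra-Chebotarev, and check that the mod-$p^j$ finite--singular comparisons patch through the ultrafilter to the characteristic-zero statement.

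One small imprecision worth tightening: when you invoke Ultra-Chebotarev you ask for $\Frob_{q_i} = [\gamma\tau]$ only in $\Gal(\QQ(T,\mu_{p^\infty})/\QQ)$, but that merely ensures $q_i$ is a Kolyvagin ultraprime and does \emph{not} control the values $\kappa_{r_{i-1}}(\Frob_{q_i})$ or $\eta(\Frob_{q_i})$ that you use in the next step. You should instead take $\Frob_{q_i}$ conjugate to $\gamma\tau$ in $\Gal(\overline{\QQ}/\QQ)$, which the paper's Ultra-Chebotarev lemma provides; this is exactly what Rubin does at the finite level (choosing Frobenius in a field large enough to split the relevant cocycles) and is what makes the bounds from \cref{lem:existsgamma} transfer to the localisations.
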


  \begin{note}
   From our assumption (Hyp3), we have
   \[ \operatorname{ind}_{\cO}(\kappa_{r_i}, H^1(\QQ, T)) = \operatorname{ind}_{\cO}( (\kappa_{r_i})_L, H^1(L, T))\qquad \forall i.\]
   If $\delta_i$ is this quantity, then we have
   \[ \delta_0 \ge \delta_1 \ge \dots \ge \delta_k \ge 0.\qedhere\]
  \end{note}

  \begin{lemma}
   Let $1 \le i \le k$. Then the cokernel of the map from $H^1_{\cF}(\QQ_{\Sigma \cup \{q_1, \dots, q_i\}}, T)$ to $\bigoplus_{i = 1}^k H^1_{\mathrm{sing}}(\QQ_{q_i}, T)$ has length at most $\delta_0 - \delta_i$.
  \end{lemma}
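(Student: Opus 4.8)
The plan is to exhibit, inside the relaxed Selmer group $H^1_{\cF}(\QQ_{\Sigma\cup\{q_1,\dots,q_i\}},T)$, enough explicit classes whose singular localizations already span a submodule of $\bigoplus_{j=1}^{i}H^1_{\mathrm{sing}}(\QQ_{q_j},T)$ of index $\delta_0-\delta_i$; these classes will be suitably ``primitivised'' ultra-Kolyvagin classes $\kappa'_j$ derived from $\kappa_{r_j}$ for $1\le j\le i$.

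First I would check that each $\kappa_{r_j}$ lies in $H^1_{\cF}(\QQ_{\Sigma\cup r_j},T)$. By \cref{thm:ultrabeef} we have $\loc_p(\kappa_{r_j})\in H^1_{\cF}(\Qp,V)$, hence $\loc_p(\kappa_{r_j})\in H^1_{\cF}(\Qp,T)$ since $H^1(\Qp,T)/H^1_{\cF}(\Qp,T)$ is $p$-torsion-free; at the primes $\ell\in\Sigma\setminus\{p\}$ the localization lies in $H^1_{\f}$ by the local-properties discussion of the previous subsections, and $\kappa_{r_j}$ is unramified outside $\Sigma\cup r_j$ by construction. Since $r_j\subseteq r_i$, inflation gives $\kappa_{r_j}\in H^1_{\cF}(\QQ_{\Sigma\cup r_j},T)\subseteq H^1_{\cF}(\QQ_{\Sigma\cup r_i},T)$. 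Writing $\delta_j=\ind_{\cO}(\kappa_{r_j},H^1(\QQ,T))$ as in the Note, I would pick $\kappa'_j\in H^1(\QQ_{\Sigma\cup r_j},T)$ with $\varpi^{\delta_j}\kappa'_j=\kappa_{r_j}$; because the quotients $H^1/H^1_{\f}$ and $H^1/H^1_{\cF}$ are torsion-free, $\kappa'_j$ again lies in $H^1_{\cF}(\QQ_{\Sigma\cup r_j},T)\subseteq H^1_{\cF}(\QQ_{\Sigma\cup r_i},T)$.

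Next I would analyse the singular localization map $\big(\partial_{q_1},\dots,\partial_{q_i}\big)\colon H^1_{\cF}(\QQ_{\Sigma\cup r_i},T)\to\bigoplus_{j=1}^{i}H^1_{\mathrm{sing}}(\QQ_{q_j},T)$ evaluated on $\kappa'_1,\dots,\kappa'_i$. For a Kolyvagin ultraprime $q_j$ one has $H^1_{\mathrm{sing}}(\QQ_{q_j},T)\cong T^{\tau=1}$, which is free of rank $1$ over $\cO$ by (Hyp1a) (it is $\cO$-saturated and $V^{\tau=1}$ is $1$-dimensional); fix $\cO$-generators. Since $\kappa_{r_j}$, hence $\kappa'_j$, is unramified at $q_\ell$ for $\ell>j$, the $i\times i$ matrix $\big(\partial_{q_\ell}(\kappa'_j)\big)$ is upper triangular. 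For the diagonal, the finite--singular comparison (Theorem 4.5.4 of \cite{rubin00}, propagated level-by-level to ultraprimes via the patched-cohomology isomorphisms of the previous section and passing to the limit over $j$) gives $\partial_{q_j}(\kappa_{r_j})=\phi^{\mathrm{fs}}_{q_j}\big(\loc_{q_j}(\kappa_{r_{j-1}})\big)$ for an isomorphism $\phi^{\mathrm{fs}}_{q_j}\colon H^1_{\f}(\QQ_{q_j},T)\xrightarrow{\ \sim\ }H^1_{\mathrm{sing}}(\QQ_{q_j},T)$. By the defining property of the $q_j$ (the preceding Lemma) together with the Note, $\ind_{\cO}\big(\loc_{q_j}(\kappa_{r_{j-1}}),H^1_{\f}(\QQ_{q_j},T)\big)\le\delta_{j-1}$, while the reverse inequality is automatic because $\kappa_{r_{j-1}}\in\varpi^{\delta_{j-1}}H^1$ and $H^1_{\f}(\QQ_{q_j},T)$ is saturated. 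Hence the $(j,j)$ entry $\partial_{q_j}(\kappa'_j)=\varpi^{-\delta_j}\partial_{q_j}(\kappa_{r_j})$ has $\cO$-valuation exactly $\delta_{j-1}-\delta_j$.

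Finally, the image of $H^1_{\cF}(\QQ_{\Sigma\cup r_i},T)$ in $\bigoplus_{j=1}^{i}H^1_{\mathrm{sing}}(\QQ_{q_j},T)\cong\cO^{\oplus i}$ contains the $\cO$-span of $\partial(\kappa'_1),\dots,\partial(\kappa'_i)$, i.e.\ the image of the upper-triangular matrix above, whose cokernel has length equal to the sum of the valuations of its diagonal entries, namely $\sum_{j=1}^{i}(\delta_{j-1}-\delta_j)=\delta_0-\delta_i$. Thus the cokernel of the full localization map has length at most $\delta_0-\delta_i$, as claimed. The step I expect to be most delicate is this diagonal computation: making the finite--singular comparison precise in the ultraprime/patched setting and pinning down the equality $\ind_{\cO}\big(\loc_{q_j}(\kappa_{r_{j-1}}),H^1_{\f}(\QQ_{q_j},T)\big)=\delta_{j-1}$ (combining the prime-selection Lemma, (Hyp3) via the Note, and the automatic reverse bound); everything after that is a triangular-matrix length count.
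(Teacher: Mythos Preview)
Your argument is correct and is exactly the one the paper is pointing to when it says ``by induction on $i$ in the same way as in Rubin's book'': one exhibits the primitivised classes $\kappa'_j=\varpi^{-\delta_j}\kappa_{r_j}$, shows their singular-localisation matrix is upper triangular with diagonal valuations $\delta_{j-1}-\delta_j$ (via the finite--singular comparison together with the prime-selection inequality and (Hyp3)), and concludes by the triangular length count that telescopes to $\delta_0-\delta_i$. You have also silently, and correctly, repaired the index collision in the statement: the target must be $\bigoplus_{j=1}^{i}H^1_{\mathrm{sing}}(\QQ_{q_j},T)$ rather than $\bigoplus_{j=1}^{k}$, since classes in $H^1_{\cF}(\QQ_{\Sigma\cup\{q_1,\dots,q_i\}},T)$ have zero singular part at $q_{i+1},\dots,q_k$ and the cokernel in the larger target would not be finite.
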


  \begin{proof}
   This follows by induction on $i$ in the same way as in Rubin's book.
  \end{proof}

  \begin{theorem}\label{thm:firstSelbound}
   If the assumptions $\mathrm{Hyp1, Hyp2, Hyp3}$ hold, then we have
   \[ \ell_{\cO} \left(H^1_{\f\cF}(\QQ, T^\vee)\right) \le \operatorname{ind}_{\cO}(\kappa_1).\]
   In particular, if $\kappa_1 \ne 0$, then $H^1_{\f\cF}(\QQ, T^\vee)$ is finite.
  \end{theorem}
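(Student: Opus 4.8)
The plan is to run Rubin's bound on dual Selmer groups (\cite[Ch.~5]{rubin00}) with the ultra-Kolyvagin classes $\kappa_r$ playing the role of his derivative classes, and to pass from that finitary machinery to the stated infinite-level bound by exhausting $H^1_{\f\cF}(\QQ,T^\vee)$ by its finite $\cO$-submodules. All the required finitary ingredients have already been assembled above (the Lemma producing Kolyvagin ultraprimes, the Note identifying the indices $\delta_i$, and the cokernel Lemma), so the remaining work is to combine them with global duality and then take a limit.

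\emph{Bounding a finite submodule.} Fix a finite $\cO$-submodule $C\subseteq H^1_{\f\cF}(\QQ,T^\vee)$, apply the Lemma on Kolyvagin ultraprimes to obtain $q_1,\dots,q_k$ with the three listed properties, and set $r=\{q_1,\dots,q_k\}$. The third property says that no nonzero $\eta\in C$ is killed by all the $\loc_{q_i}$, so the localisation map $\lambda\colon C\to\bigoplus_{i=1}^k H^1(\QQ_{q_i},T^\vee)$ is injective; and since each $q_i\notin\Sigma$, every $\eta\in C$ is unramified at $q_i$, so $\lambda$ lands in $\bigoplus_i H^1_\f(\QQ_{q_i},T^\vee)$. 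Next I use global reciprocity in the patched setting: for any global class $x\in H^1_{\cF}(\QQ_{\Sigma\cup r},T)$ and any $\eta\in C$, the sum $\sum_v\langle\loc_v(x),\loc_v(\eta)\rangle_v$ of local Tate pairings vanishes. Every term with $v\notin r$ is zero --- at $p$ because $H^1_{\cF}(\Qp,T^\vee)$ is by definition the annihilator of $H^1_{\cF}(\Qp,T)$, at the primes of $\Sigma\setminus\{p\}$ and at all unramified places because $H^1_\f$ for $T$ and $H^1_\f$ for $T^\vee$ are exact orthogonal complements, and at $\infty$ because $p$ is odd. Hence, under the perfect local pairings $H^1_{\mathrm{sing}}(\QQ_{q_i},T)\times H^1_\f(\QQ_{q_i},T^\vee)\to E/\cO$, the submodule $\lambda(C)$ annihilates the image of $H^1_{\cF}(\QQ_{\Sigma\cup r},T)\to\bigoplus_i H^1_{\mathrm{sing}}(\QQ_{q_i},T)$. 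Passing to $\varpi^N$-torsion with $\varpi^NC=0$ and counting lengths, together with the cokernel Lemma (with $i=k$) and the Note, this gives
\[ \ell_\cO(C)=\ell_\cO(\lambda(C))\le\ell_\cO\!\left(\operatorname{coker}\Big(H^1_{\cF}(\QQ_{\Sigma\cup r},T)\longrightarrow\bigoplus_{i=1}^k H^1_{\mathrm{sing}}(\QQ_{q_i},T)\Big)\right)\le\delta_0-\delta_k\le\delta_0=\ind_\cO(\kappa_1), \]
the last equality because $r_0$ is the empty product, so $\kappa_{r_0}=\kappa_1$.

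\emph{Taking the limit.} The bound $\ell_\cO(C)\le\ind_\cO(\kappa_1)$ holds for every finite $\cO$-submodule $C$, with a bound independent of $C$. Since $H^1(\QQ_\Sigma/\QQ,T^\vee)$ is a cofinitely generated $\cO$-module, so is its submodule $H^1_{\f\cF}(\QQ,T^\vee)$; and a cofinitely generated $\cO$-module all of whose finite submodules have length $\le\ind_\cO(\kappa_1)$ has $\ell_\cO(-)\le\ind_\cO(\kappa_1)$ (being finite once this quantity is finite). This is the asserted inequality. For the last sentence, (Hyp2) forces $(T/\varpi T)^{G_\QQ}=0$, hence $H^1(\QQ_\Sigma/\QQ,T)$ is finitely generated and torsion-free over $\cO$; so $\ind_\cO(\kappa_1)<\infty$ whenever $\kappa_1\ne0$, and then $H^1_{\f\cF}(\QQ,T^\vee)$ is finite.

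The only genuinely new point is the patched global reciprocity used in the middle step: one must verify that the classical sum-of-local-invariants relation, the perfect local Tate pairings, and the orthogonality of the $H^1_\f$ and $H^1_{\cF}$ local conditions all survive the patching construction, in which the auxiliary primes $q_i$ are ultraprimes (with $G_{q_i}=\widehat{\ZZ}(1)\rtimes\widehat{\ZZ}$) and $H^1(\QQ_{\Sigma\cup r},T)$ is a patched cohomology group. This is obtained by applying Rubin's argument verbatim to the honest data $(\,\prod_i q_i^{(n)},\ T/\varpi^N\,)$ for $\cU$-many $n$ --- all the relevant groups, pairings and classes being defined and compatible for $\cU$-many $n$ --- and then passing to the ultraproduct and to the limit over $N$; but it is the step that needs care. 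Everything else is Rubin's bookkeeping.
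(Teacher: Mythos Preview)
Your proof is correct and follows essentially the same approach as the paper's: fix a finite $C\subseteq H^1_{\f\cF}(\QQ,T^\vee)$, apply the ultraprime Lemma to get $q_1,\dots,q_k$, use Poitou--Tate to show $\lambda(C)$ pairs to zero with the image in $\bigoplus_i H^1_{\mathrm{sing}}(\QQ_{q_i},T)$, invoke the cokernel Lemma with $i=k$ to bound $\ell_\cO(C)\le\delta_0$, and then exhaust by finite submodules. You have spelled out in more detail than the paper does both the Poitou--Tate orthogonality argument and the passage to patched cohomology (the paper's proof simply says ``we conclude from Poitou--Tate duality''), and you have also made explicit why $\ind_\cO(\kappa_1)<\infty$ when $\kappa_1\ne 0$, which the paper leaves implicit.
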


  \begin{proof}
   We apply the previous arguments taking $C$ to be an arbitrary finite subgroup of $H^1_{\f\cF}(\QQ, T^\vee)$. Then we conclude from Poitou--Tate duality that the image of $C$ in $\bigoplus_i H^1_{\f}(\QQ_{q_i}, T^\vee)$ must have length at most $\delta_0 - \delta_k$. However, this localization map is injective by assumption, so $\ell_0(C) \le \delta_0 - \delta_k \le \delta_0$. Since $H^1_{\f\cF}(\QQ, T^\vee)$ is a torsion group, it is the union of its finite subgroups, so in fact the entire group must be bounded by $\delta_0$. Hence $\ell_{\cO} \left(H^1_{\f\cF}(\QQ, T^\vee)\right) \le \delta_0 = \operatorname{ind}_{\cO}(\kappa_1)$.
  \end{proof}

 \subsection{Interpretation via Selmer complexes}\label{sect:Selcplx1}

  We briefly recall the interpretation of \cref{thm:firstSelbound} in terms of Selmer complexes for $T$ (without explicitly referring to the $p$-torsion representation $T^\vee$). The theory of Selmer complexes is due to Nekov\v{a}\'r \cite{nekovar06}; we refer to \cite[\S 11.2]{KLZ17} for a summary. We shall consider the Selmer complex $\widetilde{R\Gamma}_{\f\cF}(\QQ, T)$ in which the local condition is the Bloch--Kato ``$\f$'' condition for $\ell \ne p$, and the local condition associated to the subspace $H^1_{\cF}(\Qp, T)$ at $p$.

  Since these are \emph{simple} local conditions in the sense of \cite[Definition 11.2.3]{KLZ17}, the cohomology groups of the Selmer complex have a straightforward classical interpretation: by \cite[Proposition 11.2.9]{KLZ17}, the degree 1 cohomology $\widetilde{H}^1_{\f\cF}(\QQ, T) \coloneqq H^1\left(\widetilde{R\Gamma}_{\f\cF}(\QQ, T)\right)$ is simply the Selmer group $H^1_{\f\cF}(\QQ, T)$ above (in which $\kappa_1$ lies); the degree 2 cohomology of the Selmer complex, $\widetilde{H}^2_{\f\cF}(\QQ, T)$, is isomorphic to the Pontryagin dual of $H^1_{\f\cF}(\QQ, T^\vee)$; and the cohomology vanishes for degrees $\ne 1, 2$.

  \begin{remark}
   To define the Selmer complex and relate it to classical Selmer groups, we do not need to know $\cF$, only the subspace $H^1_{\cF}(\Qp, V)$; it is not necessary to assume that this subspace arises as the preimage of a closed subspace of $H^1(\cK_\infty, V)$. However, this is needed for the theorems below.
  \end{remark}

  Moreover, since our local conditions are saturated at all primes, the index of $\kappa_1$ is the same whether we view it as an element of $\widetilde{H}^1_{\f\cF}(\QQ, T)$ or of $H^1(\QQ, T)$. Thus we may view \cref{thm:firstSelbound} as a relation between $\widetilde{H}^2_{\f\cF}(\QQ, T)$ and the index of $\kappa_1$ in $\widetilde{H}^1_{\f\cF}(\QQ, T)$.

  We shall apply \cref{thm:firstSelbound} in the following setting:
  \begin{itemize}
  \item $H^0(\Qp, V) = 0$;
  \item $\dim_L H^1_{\cF}(\Qp, V) = 1 + \dim_L V^{c = 1}$, where $c$ is complex conjugation.
  \end{itemize}
  From these conditions and an Euler characteristic computation, one sees that $\operatorname{rk} \widetilde{H}^1_{\f\cF}(\QQ, T) - \operatorname{rk} \widetilde{H}^2_{\f\cF}(\QQ, T) = 1$; so if the $\widetilde{H}^2$ vanishes, then the $\widetilde{H}^1$ is free of rank 1.

  \begin{proposition}\label{thm:1stSelbound-cplx}
   Under these hypotheses, if $\kappa_1 \ne 0$, then $\widetilde{H}^1_{\f\cF}(\QQ, T)$ is free of rank one, the quotient $\widetilde{H}^1_{\f\cF}(\QQ, T) / \langle \kappa_1 \rangle$ is finite, and we have
   \[ \operatorname{length}_{\cO} \left(\widetilde{H}^2_{\f\cF}(\QQ, T)\right) \le \operatorname{length}_{\cO}\left(\widetilde{H}^1_{\f\cF}(\QQ, T) / \langle \kappa_1 \rangle\right). \]

   In particular, if $\kappa_1 \ne 0$, then $\widetilde{H}^2_{\f\cF}(\QQ, V)$ is zero, and $\widetilde{H}^1_{\f\cF}(\QQ, V)$ is a one-dimensional $L$-vector space with $\kappa_1$ as a basis.
  \end{proposition}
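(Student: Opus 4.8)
The plan is to assemble \cref{thm:firstSelbound} with the Selmer-complex dictionary recalled above. The two hypotheses $H^0(\Qp, V) = 0$ and $\dim_L H^1_{\cF}(\Qp, V) = 1 + \dim_L V^{c = 1}$ are precisely what is needed to run the global Euler characteristic formula for the \emph{simple} local conditions defining $\widetilde{R\Gamma}_{\f\cF}(\QQ, T)$, giving $\operatorname{rk} \widetilde{H}^1_{\f\cF}(\QQ, T) - \operatorname{rk} \widetilde{H}^2_{\f\cF}(\QQ, T) = 1$, with the cohomology concentrated in degrees $1$ and $2$ and described classically as quoted; this is the only point at which the precise dimension hypotheses enter.

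Assume now $\kappa_1 \ne 0$. By \cref{thm:firstSelbound} the group $H^1_{\f\cF}(\QQ, T^\vee)$ is finite, hence so is its Pontryagin dual $\widetilde{H}^2_{\f\cF}(\QQ, T)$; in particular $\operatorname{rk} \widetilde{H}^2_{\f\cF}(\QQ, T) = 0$, so the Euler characteristic identity forces $\operatorname{rk} \widetilde{H}^1_{\f\cF}(\QQ, T) = 1$. To upgrade this to freeness, I would note that $\widetilde{H}^1_{\f\cF}(\QQ, T) = H^1_{\f\cF}(\QQ, T)$ is a submodule of $H^1(\QQ_{\Sigma}/\QQ, T)$, and the latter has no $\varpi$-torsion because $H^0(\QQ, T/\varpi T) = (T \otimes_{\cO} \mathbf{k})^{G_{\QQ}} = 0$ by (Hyp2) (an irreducible $\mathbf{k}[G_{\QQ}]$-module of rank $> 1$ is non-trivial, hence has no invariants). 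A finitely generated torsion-free $\cO$-module of rank $1$ is free of rank $1$; so $\widetilde{H}^1_{\f\cF}(\QQ, T) \cong \cO$, and, using that the index of $\kappa_1$ is the same here as in $H^1(\QQ, T)$, the quotient $\widetilde{H}^1_{\f\cF}(\QQ, T) / \langle \kappa_1 \rangle \cong \cO / \varpi^{\ind_{\cO}(\kappa_1)}$ is finite of length $\ind_{\cO}(\kappa_1)$.

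Feeding the bound of \cref{thm:firstSelbound} into this, and using that Pontryagin duality preserves the length of a finite $\cO$-module, gives
\[ \operatorname{length}_{\cO} \widetilde{H}^2_{\f\cF}(\QQ, T) = \operatorname{length}_{\cO} H^1_{\f\cF}(\QQ, T^\vee) \le \ind_{\cO}(\kappa_1) = \operatorname{length}_{\cO}\big(\widetilde{H}^1_{\f\cF}(\QQ, T) / \langle \kappa_1 \rangle\big), \]
which is the asserted inequality. For the final clause, rationalise: $\widetilde{H}^2_{\f\cF}(\QQ, V) = \widetilde{H}^2_{\f\cF}(\QQ, T) \otimes_{\cO} E = 0$ since that $\cO$-module is finite, while $\widetilde{H}^1_{\f\cF}(\QQ, V) = \widetilde{H}^1_{\f\cF}(\QQ, T) \otimes_{\cO} E$ is one-dimensional over $E$ with the image of $\kappa_1$ as a basis, $\kappa_1$ spanning a finite-index submodule of the free rank-one module $\widetilde{H}^1_{\f\cF}(\QQ, T)$.

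I do not expect a genuine obstacle here: the argument is essentially bookkeeping with the Selmer-complex formalism of \cite{nekovar06, KLZ17}. The only steps needing care are the $\varpi$-torsion-freeness of $\widetilde{H}^1_{\f\cF}(\QQ, T)$, which is the one real use of (Hyp2) at this stage, and checking that the quoted Euler characteristic computation genuinely applies to the simple local conditions at hand, so that the classical descriptions of $\widetilde{H}^1$ and $\widetilde{H}^2$ used above are valid.
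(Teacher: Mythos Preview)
Your proposal is correct and follows essentially the same route as the paper, which leaves the proposition without an explicit proof and treats it as a direct consequence of the surrounding discussion (the Euler characteristic identity, the classical identifications of $\widetilde{H}^1$ and $\widetilde{H}^2$ via \cite[Proposition 11.2.9]{KLZ17}, and \cref{thm:firstSelbound}). Your write-up actually improves on the paper's sketch in one respect: the text just before the proposition says ``if the $\widetilde{H}^2$ vanishes, then the $\widetilde{H}^1$ is free of rank 1'', but in the situation at hand $\widetilde{H}^2$ is only finite, not zero, so freeness of $\widetilde{H}^1$ genuinely needs the torsion-freeness argument via (Hyp2) that you supply.
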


 \subsection{Selmer complexes from $(\varphi, \Gamma)$-modules}\label{sect:Selphigamma}

  We now suppose the subspace $\cF \subseteq H^1(\cK_\infty, V)$ is given by the image of $H^1(\cK_\infty, \cD^+)$, for $\cD^+$ a sub-$(\varphi,\Gamma)$-module of $\cD = \DD^\dag_\rig(V)$. We shall suppose $\cD^+$ is saturated, so that $\cD^- \coloneqq \cD / \cD^+$ is also a $(\varphi, \Gamma)$-module.

  Following \cite{nekovar06} and \cite{pottharst13}, we define a Selmer complex over $\QQ$ by
  \[ \widetilde{R\Gamma}_\f(\QQ, V, \cD^+) \coloneqq
    \operatorname{MF}\left[ R\Gamma(\QQ_{\Sigma} / \QQ, V) \to R\Gamma(\Qp, \cD^-) \oplus \bigoplus_{\ell \in \Sigma - \{p\}} R\Gamma_{\mathrm{sing}}(\Ql, V)\right],
  \]
  where $R\Gamma_{\mathrm{sing}}(\Ql, V) = R\Gamma(\Ql^{\mathrm{nr}} / \Ql, H^1(I_\ell, V))[1]$. One can check that this is independent of $\Sigma$, and (using Poitou--Tate duality and the vanising of $H^0(\QQ, V^*(1))$ from (Hyp2)) its cohomology vanishes outside degrees 1 and 2.

  \begin{proposition}
   If $H^0(\Qp, \cD^-) = 0$ and $H^2(\Qp, \cD^+) = 0$, then the complex $\widetilde{R\Gamma}_\f(\QQ, V, \cD^+)$ is quasi-isomorphic to $\widetilde{R\Gamma}_{\f \cF}(\QQ, V)$ as defined in \cref{sect:Selcplx1}.
  \end{proposition}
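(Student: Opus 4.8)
The plan is to recognise both complexes as Nekov\'a\v{r}--Pottharst Selmer complexes attached to the \emph{same} system of local conditions, so that their quasi-isomorphism becomes a purely local statement at $p$, and then to verify that local comparison using the two hypotheses together with the descent Proposition proved in the local theory above (subsection ``Descent to $\Qp$'').

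First I would put $\widetilde{R\Gamma}_\f(\QQ, V, \cD^+)$ into the standard ``local conditions'' shape. From the short exact sequence $0 \to \cD^+ \to \cD \to \cD^- \to 0$ of $(\varphi, \Gamma)$-modules over $\cR_{\Qp}$ we obtain a distinguished triangle $R\Gamma(\Qp, \cD^+) \to R\Gamma(\Qp, \cD) \to R\Gamma(\Qp, \cD^-) \to$, and combining this with the identification $R\Gamma(\Qp, \cD) \simeq R\Gamma(\Qp, V)$ (valid since $\cD = \Drig(V)$) we see that the term $R\Gamma(\Qp, \cD^-)$ appearing in the mapping fibre is, up to the shift built into Pottharst's conventions, exactly $\operatorname{Cone}\big(R\Gamma(\Qp, \cD^+) \to R\Gamma(\Qp, V)\big)$. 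Thus $\widetilde{R\Gamma}_\f(\QQ, V, \cD^+)$ is the Selmer complex of \cite{nekovar06, pottharst13} for the local conditions: the Bloch--Kato ``$\f$'' condition $R\Gamma_\f(\Ql, V) \to R\Gamma(\Ql, V)$ (whose cone is $R\Gamma_{\mathrm{sing}}$) at each $\ell \in \Sigma \setminus \{p\}$, and the Pottharst condition $U_p^+ \coloneqq \big(R\Gamma(\Qp, \cD^+) \to R\Gamma(\Qp, V)\big)$ at $p$. The complex $\widetilde{R\Gamma}_{\f\cF}(\QQ, V)$ of \cref{sect:Selcplx1} has the identical local conditions at all $\ell \ne p$, and at $p$ the \emph{simple} local condition (in the sense of \cite[Def.~11.2.3]{KLZ17}) attached to the subspace $H^1_\cF(\Qp, V) \subseteq H^1(\Qp, V)$. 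By the functoriality of the mapping-fibre construction it therefore suffices to exhibit a quasi-isomorphism between $U_p^+$ and some model $U_p^{+,\cF}$ of that simple local condition, compatibly with the structure maps to $R\Gamma(\Qp, V)$.

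Now the local comparison at $p$ is where the hypotheses enter. Taking cohomology of the triangle above: since $H^0(\Qp, \cD^-) = 0$, the structure map gives $H^0(\Qp, \cD^+) \xrightarrow{\ \sim\ } H^0(\Qp, V)$; since moreover $H^2(\Qp, \cD^+) = 0$ (and $H^i$ of a $(\varphi,\Gamma)$-module over $\cR_{\Qp}$ vanishes for $i \ge 3$ as the Herr complex is concentrated in degrees $0,1,2$), the complex $U_p^+ = R\Gamma(\Qp, \cD^+)$ has cohomology only in degrees $0$ and $1$, and the sequence $0 \to H^1(\Qp, \cD^+) \to H^1(\Qp, V) \to H^1(\Qp, \cD^-) \to 0$ is exact, so $H^1(\Qp, \cD^+) \to H^1(\Qp, V)$ is injective. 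It remains to identify its image with $H^1_\cF(\Qp, V)$. Unwinding the definition, $H^1_\cF(\Qp, V) = (\res_{\Qp}^{\Ki})^{-1}(\cF)$, where $\cF$ is the image of $H^1(\Ki, \cD^+)$ in $H^1(\Ki, V)$; and since $H^0(\Qp, \cD/\cD^+) = H^0(\Qp, \cD^-) = 0$, part (ii) of the descent Proposition --- applied with the big unramified field $\Kh$ in place of a finite $F$, its Galois cohomology agreeing with that of $\Ki$ --- yields precisely $(\res_{\Qp}^{\Ki})^{-1}(\cF) = \operatorname{image}\big(H^1(\Qp, \cD^+) \to H^1(\Qp, V)\big)$. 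Hence $U_p^+$ satisfies all the defining properties of the simple local condition attached to $H^1_\cF(\Qp, V)$: cohomology in degrees $\le 1$, $H^0$ matching $H^0(\Qp, V)$, and $H^1$ injecting with the prescribed image. As we are working with complexes of $E$-vector spaces everything is formal, so these properties determine $U_p^+$ up to quasi-isomorphism over $R\Gamma(\Qp, V)$; feeding this into the functoriality of the Selmer complex gives $\widetilde{R\Gamma}_\f(\QQ, V, \cD^+) \simeq \widetilde{R\Gamma}_{\f\cF}(\QQ, V)$.

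I expect the main obstacle to be this last bookkeeping step: promoting the cohomological identifications just obtained to an honest morphism (or zig-zag) of complexes $R\Gamma(\Qp, \cD^+) \to U_p^{+,\cF}$ that genuinely commutes with the maps into $R\Gamma(\Qp, V)$, rather than merely matching cohomology --- this compatibility is exactly what the functoriality of the Selmer complex requires. Over $E$ this is routine by formality, but it should be spelled out; equally routine but worth making explicit is that Pottharst's shift conventions in the definition of $\widetilde{R\Gamma}_\f(\QQ, V, \cD^+)$ are arranged so that the term $R\Gamma(\Qp, \cD^-)$ is $\operatorname{Cone}(U_p^+ \to R\Gamma(\Qp, V))$ and not some other mapping cone.
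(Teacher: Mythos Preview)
Your proposal is correct and follows essentially the same approach as the paper's proof: reduce to a local comparison at $p$, use the two hypotheses to show that the Pottharst local condition $R\Gamma(\Qp,\cD^+)$ is a simple local condition with image $\operatorname{image}\big(H^1(\Qp,\cD^+)\to H^1(\Qp,V)\big)$, and then invoke part (ii) of the descent proposition (with $F=\Kh$) to identify this image with $H^1_{\cF}(\Qp,V)$. The paper's proof records exactly these two steps in two sentences; your version simply unpacks the details (and correctly flags the formality issue, which over $E$ is harmless).
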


  \begin{proof}
   Since $H^0(\Qp, \cD^-) = 0$ and $H^2(\Qp, \cD^+) = 0$, the local condition defined by $\cD^+$ coincides with the simple local condition defined by the image of $H^1(\Qp,\cD^+)$ in $H^1(\Qp, V)$. As we have shown above, the hypothesis $H^0(\Qp, \cD^-) = 0$ implies that this subspace is $H^1_{\mathcal{F}}(\Qp, V)$.
  \end{proof}

%%%%%%%%%%%%%%%%%%%%%%%
\part{Iwasawa theory}
%%%%%%%%%%%%%%%%%%%%%%%

\section{Analytic Euler systems over $\QQ_\infty$}\label{s:IMC}

 \subsection{Analytic Iwasawa cohomology}

  Let $E, V, T, \Sigma$ be as in the previous section. We write $\QQ_\infty \coloneqq \QQ(\mu_{p^\infty})$; and we suppose (for notational simplicity) that $V$ is \emph{odd}, i.e.~the trace of complex conjugation is $0$. (This can be worked around, at a cost of working over $\QQ(\mu_{p^\infty})^+$, rather than $\QQ(\mu_{p^\infty})$.)

  For $i \ge 0$ we set
  \[\begin{aligned}
   H^i_{\Iw, \Sigma}(\QQ_\infty, T) &\coloneqq \varprojlim_n H^i(\QQ_\Sigma / \QQ_n, T), &
   H^i_{\Iw, \Sigma}(\QQ_\infty, V) &\coloneqq H^i_{\Iw, \Sigma}(\QQ_\infty, T)[1/p].
  \end{aligned}\]
  We have the well-known isomorphism
  \[
   H^i_{\Iw, \Sigma}(\QQ_\infty, T) \cong H^i(\QQ_{\Sigma} / \QQ, T \otimes_{\cO} \cO[[\Gamma]](-\mathbf{j}))
  \]
  where $\mathbf{j}$ denotes the canonical character $G_{\QQ} \onto \Gamma \into \cO[[\Gamma]]^\times$, and similarly for $V$. These are finitely-generated $\cO[[\Gamma]]$-, resp $E[[\Gamma]]$-modules\footnote{Here we write $E[[\Gamma]]$ for $\cO[[\Gamma]][1/p]$, so $E[[\Gamma]] \subset \cH(\Gamma)$.}. There are also local versions over $\QQ_{\ell,\infty} \coloneqq \QQ_\infty \otimes \Ql$ (both for $\ell = p$ and $\ell \ne p$).

  \begin{remark}
   Note that $H^0_{\Iw, \Sigma}(\QQ_\infty, -)$ and $H^0_{\Iw}(\QQ_{\ell, \infty}, -)$ are always zero; and $H^1_{\Iw, \Sigma}$ is canonically independent of $\Sigma$ (so we shall frequently drop $\Sigma$ from the notation). However, $H^2_{\Iw, \Sigma}$ does depend on $\Sigma$.
  \end{remark}

  We now define the ``analytic'' variants following Pottharst \cite{pottharst13}:
  \begin{align*}
   H^i_{\ans}(\QQ_\infty, V) \coloneqq H^i(\QQ_\Sigma / \QQ, V \otimes \cH(\Gamma)(-\mathbf{j}))\\
   \cong \cH(\Gamma) \otimes_{E[[\Gamma]]} H^i_{\Iw, \Sigma}(\QQ_\infty, V).
  \end{align*}

  We define similarly local versions $H^i_{\an}(\QQ_{\ell,\infty}, V)$; and in the $\ell = p$ case, we can also define analytic Iwasawa cohomology for $(\varphi, \Gamma)$-modules over the Robba ring, defined as the cohomology groups of the complex
  \[ [0 \xrightarrow{\phantom{\psi - 1}} \cD \xrightarrow{\psi - 1} \cD].
  \]
  If $\cD\coloneqq \DD^\dag_\rig(V)$ then we have isomorphisms $H^i_{\an}(\QQ_{p, \infty}, \cD) \cong H^i_{\an}(\QQ_{p, \infty}, V)$, compatibly with the natural maps from both sides to $H^1(\Qp, V(\kappa^{-1}))$ for each character $\kappa$ of $\Gamma$. The structure of these cohomology groups is rather well-understood:

  \begin{proposition}
   Let $\cD$ be a $(\varphi, \Gamma)$-module over $\cR$, of rank $d$. Then
   \begin{itemize}
   \item The torsion submodule $H^1_{\mathrm{an}}(\QQ_{p, \infty}, \cD)_{\cH(\Gamma)-\mathrm{tors}}$ is isomorphic to $\cD^{\varphi = 1}$, and the quotient is free of rank $d$ over $\cH(\Gamma)$.

   \item $H^2_{\mathrm{an}}(\QQ_{p, \infty}, \cD)$ is a finitely-generated $E$-vector space, isomorphic as a $\cH(\Gamma)$-module to $(\cD^\vee(1)^{\varphi = 1})^*$ (where $\vee$ denotes $(\varphi, \Gamma)$-module dual, and $*$ denotes $E$-linear dual).
   \end{itemize}
  \end{proposition}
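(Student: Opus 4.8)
The plan is to compute the cohomology of the two-term complex $[\cD \xrightarrow{\psi - 1} \cD]$ (placed in degrees $1$ and $2$, so that $H^1_{\an} = \cD^{\psi=1}$ and $H^2_{\an} = \cD / (\psi-1)\cD$) by exploiting the $\cH(\Gamma)$-module structure and the Corollary above, which identifies $H^i_{\an}(\QQ_{p,\infty}, \cD)$ with the cohomology of the $\psi$-Herr-type complex via the bijectivity of $\gamma - 1$ on $\cD^{\psi=0}$. First I would treat $H^1_{\an} = \cD^{\psi=1}$. The key point is that multiplication by $\psi - 1$ on $\cD$ fits into the exact sequence
\[
 0 \to \cD^{\psi=1} \to \cD \xrightarrow{\psi-1} \cD \to \cD/(\psi-1)\cD \to 0,
\]
and one knows (KPX, Pottharst, or by the slope-decomposition arguments of \cite{KPX}) that $\cD^{\psi=1}$ is a finitely generated $\cH(\Gamma)$-module whose $\cH(\Gamma)$-torsion submodule is precisely $\cD^{\varphi=1}$ — indeed $\cD^{\varphi=1} \subseteq \cD^{\psi=1}$ since $\varphi$ and $\psi$ are one-sided inverses, and $\cD^{\varphi=1}$ is a finite-dimensional $E$-vector space (being killed by $t$-power considerations, or directly: it is the $H^0$ of the $\varphi$-complex which is finite-dimensional). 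For the quotient $\cD^{\psi=1}/\cD^{\varphi=1}$, I would use that $(\psi-1)$ relates it to $\cD^{\psi=0}$, on which $\cH(\Gamma) = \cR_{\Qp}^+$-module theory shows the relevant module is free of rank $d$; concretely, $\cD^{\psi=0}$ is free of rank $d$ over $\cH(\Gamma)$ (this is the analogue of \cref{prop:Khproperties1}(ii)–(iii) for the full Robba-ring module, as in \cite[\S3]{KPX}), and $\psi - 1 : \cD \to \cD$ has image containing $(\psi-1)\varphi(\cD) = (\varphi-1)\cD$, giving the rank count.

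For $H^2_{\an} = \cD/(\psi-1)\cD$, I would invoke Pottharst-style duality. The complex $[\cD \xrightarrow{\psi-1}\cD]$ computes analytic Iwasawa cohomology, and there is a perfect duality pairing (coming from the residue pairing on $\cR$ together with local Tate/Iwasawa duality, cf.~\cite{pottharst13} and \cite{KPX}) between $R\Gamma_{\an}(\QQ_{p,\infty}, \cD)$ and $R\Gamma_{\an}(\QQ_{p,\infty}, \cD^\vee(1))[2]$, at least after inverting suitable elements of $\cH(\Gamma)$. This identifies $H^2_{\an}(\QQ_{p,\infty}, \cD)$ with the $E$-linear dual of $H^0_{\an}(\QQ_{p,\infty}, \cD^\vee(1)) = \cD^\vee(1)^{\psi=1}$; but in degree $0$ the torsion and the full module coincide up to the free part contributing nothing in degree $0$, and one checks $H^0_{\an} = \cD^\vee(1)^{\varphi=1}$ directly (an element of $\cD^\vee(1)^{\psi=1}$ that lies in the "degree $0$" part of the complex must in fact be $\varphi$-invariant, since the complex is concentrated in degrees $1,2$ — more precisely $H^2_{\an}(\cD)$ dualizes to the kernel of $\psi - 1$ intersected with the relevant finiteness condition, which forces $\varphi$-invariance). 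Thus $H^2_{\an}(\QQ_{p,\infty},\cD) \cong (\cD^\vee(1)^{\varphi=1})^*$, and its finite-dimensionality over $E$ follows since $\cD^\vee(1)^{\varphi=1}$ is finite-dimensional over $E$ (any $\varphi$-fixed vector generates a slope-$0$ sub-object, of bounded rank).

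The main obstacle I expect is the careful bookkeeping in the duality step: the Robba ring $\cR = \cR_{\Qp}$ here is over the infinite unramified base $\Kh$ (or rather, we are working with $\QQ_p$-coefficients but the surrounding machinery has been set up over big unramified fields), so one must make sure the Pottharst/KPX duality formalism — in particular the perfectness of the Iwasawa-theoretic cup product and the identification of the dualizing object — genuinely applies. Since the Corollary and the earlier propositions have already established the quasi-isomorphism of $\varphi$- and $\psi$-complexes and the relevant surjectivity statements in exactly this generality, the duality argument should go through verbatim from \cite{KPX, pottharst13}; the only real work is confirming that no finiteness hypothesis used there (e.g.~$[F:\Qp] < \infty$) is actually invoked in the parts we need, which parallels the remarks already made after \cref{cor:goodkappa}. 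Once duality is in place, the structure of $H^1$ follows formally from the long exact sequence of the complex together with the rank-$d$ freeness of $\cD^{\psi=0}$ over $\cH(\Gamma)$.
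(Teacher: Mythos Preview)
The paper does not actually supply a proof of this proposition: it is stated as a known structural result (the relevant references being \cite{KPX}, \cite{pottharst13}, and \cite{Nakamura-Iwasawa}), so there is no argument in the paper to compare your proposal against. Your general strategy --- invoke the KPX structure theory for $\cD^{\psi=1}$ and then use a duality argument for $H^2$ --- is the standard one and is broadly correct. Two points, however, deserve correction.

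First, your main stated obstacle is not an obstacle at all. This proposition sits in Part~III of the paper, where we have returned to $(\varphi,\Gamma)$-modules over the ordinary Robba ring $\cR = \cR_{\Qp}$ with $E$-coefficients; the ``big unramified'' base field $\Kh$ was relevant only to the local theory of Part~I. So the results of \cite{KPX} and \cite{pottharst13} apply here on the nose, with no need to revisit finiteness hypotheses on $[F:\Qp]$.

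Second, your duality argument for $H^2$ is garbled. You write that duality identifies $H^2_{\an}(\cD)$ with the $E$-linear dual of ``$H^0_{\an}(\QQ_{p,\infty},\cD^\vee(1)) = \cD^\vee(1)^{\psi=1}$'', but the Iwasawa complex $[\cD \xrightarrow{\psi-1}\cD]$ is concentrated in degrees $1$ and $2$, so $H^0_{\an}=0$; and $\cD^\vee(1)^{\psi=1}$ is $H^1_{\an}$, not $H^0_{\an}$. The subsequent attempt to salvage this by asserting ``$H^0_{\an}=\cD^\vee(1)^{\varphi=1}$ directly'' does not make sense as written. What one actually uses is either the local Iwasawa duality of \cite{KPX} (which, after accounting for the involution $\iota$ on $\cH(\Gamma)$, pairs $H^2_{\an}(\cD)$ --- a torsion $\cH(\Gamma)$-module --- with the torsion in $H^1_{\an}(\cD^\vee(1))$, namely $\cD^\vee(1)^{\varphi=1}$), or else the direct residue-pairing computation showing that $\cD/(\psi-1)\cD$ and $\cD^\vee(1)^{\varphi=1}$ are $E$-duals. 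Either route works, but the indexing and the identification of which piece dualizes to which must be stated correctly.
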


  \subsubsection*{Seminorms}The ring $\cH(\Gamma)$ is isomorphic to the ring of power series convergent on the open unit disc, and we can write it as $\bigcap_{r < 1} \cH_r$, where $\cH_r$ is the algebra of power series convergent on the closed disc of radius $r$; this is a Banach algebra, with open ball $\cH_r^\circ$. This gives a natural seminorm $\|-\|_r$ on $H^i_{\ans}(\QQ_\infty, V)$, for which the unit ball is the image of $H^i(\QQ_\Sigma / \QQ, T \otimes \cH_r^\circ)$ (and similarly for the local spaces). Note we don't claim that this is a norm, only a seminorm.

 \subsection{Selmer complexes over $\QQ_\infty$}

  If $\cD^+ \subset \cD = \DD^{\dag}_{\rig}(V)$ is a $(\varphi, \Gamma)$-submodule, then we can form the associated Selmer complex
  \[ \widetilde{R\Gamma}_{\an}(\QQ_\infty, V, \cD^+) \coloneqq
  \operatorname{MF}\left[ R\Gamma_{\ans}(\QQ_\infty, V) \to R\Gamma_{\an}(\QQ_{p, \infty}, \cD/\cD^+) \oplus \bigoplus_{\ell \in \Sigma - \{p\}} R\Gamma_{\an, \mathrm{sing}}(\QQ_{\ell, \infty}, V)\right] \]
  which is a perfect complex of $\cH(\Gamma)$-modules, independent of the choice of $\Sigma$. Here $R\Gamma_{\an, \mathrm{sing}}(\QQ_\infty \otimes \Ql, V)$ is the degree $\ge 2$ truncation of $R\Gamma_{\an}(\QQ_\infty \otimes \Ql, V)$.

  The cohomology of $\widetilde{R\Gamma}_{\an}(\QQ_\infty, V, \cD^+)$ can be studied via the long exact sequence of the mapping fibre: under our running assumptions on $V$, the cohomology vanishes for $i \notin \{1, 2\}$ and
  \[
   \widetilde{H}^1_{\an}(\QQ_\infty, V, \cD^+) = \{ x \in H^1_{\an}(\QQ_\infty, V) : \loc_p(x) \in \operatorname{image} H^1_{\an}(\QQ_\infty, \cD^+)\}.
  \]

  \begin{remark}
   The ranks of the remaining two groups are related by the Euler characteristic formula
   \[ \operatorname{rk} \widetilde{H}^1_{\an}(\QQ_\infty, V, \cD^+) -  \operatorname{rk} \widetilde{H}^2_{\an}(\QQ_\infty, V, \cD^+) = \operatorname{rk}\cD^+ - \tfrac{1}{2}\dim V. \]
   (If we drop the assumption that $V$ be odd, and replace $\QQ_\infty$ with any field intermediate between the cyclotomic $\Zp$-extension and $\QQ(\mu_{p^\infty})^+$, then this formula remains true with $\tfrac{1}{2}\dim V$ replaced by $\dim V^{(c = 1)}$.)
  \end{remark}

 \subsection{Analytic Euler systems}

  Recall $\cN$ is the set of square-free products of primes not in $\Sigma$. For $m \in \cN$, let $\QQ_\infty(m) = \QQ_\infty \QQ(m)$.

  \begin{definition}
   \label{def:anES}
   By an \emph{analytic Euler system} for $V$, we mean a collection of classes
   \[ c_m \in H^1_{\an}(\QQ_\infty(m), V) \qquad \forall m \in \cN \]
   satisfying the following conditions:
   \begin{itemize}
    \item The Euler system norm relation holds as $m$ varies.
    \item For any $r < 1$, there exists $C_r < \infty$ (independent of $m$) such that $\| c_m\|_r \le C_r$ for all $m \in \cN$.
   \end{itemize}
   We say $(c_m)_{m \in \cN}$  \emph{satisfies the local condition $\cD^+$ at $p$} if for all $m \in \cN$ and all primes $v \mid p$ of $\QQ(m)$, we have
   \[ \loc_v(c_m) \in \operatorname{image} \left( H^1_{\mathrm{an}}(\QQ_\infty(m)_v, \cD^+) \to H^1_{\mathrm{an}}(\QQ_\infty(m)_v, V)\right).\]
    Equivalently,
   \( \loc_v c_m \in \widetilde{H}^1_{\an}(\QQ_\infty(m), V, \cD^+). \)
  \end{definition}

  \begin{remark}
   This setting is a little more general than considered in \cite{loefflerzerbes16} where we worked with finite-order distributions, rather than arbitrary locally-analytic distributions as above; any Euler system in the sense of \emph{op.cit.} is also an analytic Euler system in the present sense (but not conversely).
  \end{remark}

  \begin{proposition}
   Let $r < 1$ and let $C_r$ be as in the definition. Assume $V$ satisfies (Hyp2). Then for any $\kappa : \Gamma \to E$ which factors through $\cH_r$, the classes $C_r \cdot \kappa(c_m) \in H^1(\QQ(m), V(\kappa^{-1}))$, for $m \in \cN$, are the images in $V$ of a uniquely determined tame Euler system for $(T(\kappa^{-1}), \Sigma)$ with local condition $\cF$, in the sense of \cref{def:tameES}, where $\cF$ is the image of $H^1(\cK_\infty, \cD^+(\kappa^{-1}))$ in $H^1(\cK_\infty, V(\kappa^{-1}))$.
  \end{proposition}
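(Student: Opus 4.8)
The plan is to build the tame Euler system $(\widetilde c_m)_{m\in\cN}$ for $(T(\kappa^{-1}),\Sigma)$ by clearing denominators, specialising at $\kappa$, and then checking the axioms of \cref{def:tameES}. First I would invoke the ``Seminorms'' description of the unit ball: since $\|c_m\|_r\le C_r$ uniformly in $m$, after multiplying by a single element of $\cO$ chosen (independently of $m$) so that its valuation absorbs the bound $C_r$, the resulting classes lie in the image of the integral analytic cohomology $H^1(\QQ_\Sigma/\QQ(m),\,T\otimes\cH_r^\circ(-\bfj))$. Because $\kappa$ factors through $\cH_r$, evaluation at $\kappa$ has operator norm $\le 1$ on $\cH_r$ and hence carries $\cH_r^\circ$ into $\cO$; specialising at $\kappa$ therefore sends this integral cohomology into $H^1(\QQ(m),T(\kappa^{-1}))$, and by naturality of specialisation the images of the resulting classes $\widetilde c_m$ in $V(\kappa^{-1})$ are precisely the stated multiples $C_r\kappa(c_m)$. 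These are the only such lifts, because $H^1(\QQ(m),T(\kappa^{-1}))$ is $\cO$-torsion-free as soon as $H^0(\QQ(m),(T/\varpi T)(\bar\kappa^{-1}))=0$, which is a standard consequence of (Hyp2): $\bar\rho:=T/\varpi T$ is irreducible of dimension $>1$, hence ramified at some prime of $\Sigma$, whereas $\QQ(m)$ is ramified only at primes not in $\Sigma$ and $\bar\kappa$ is a character of order dividing $p-1$, so $\bar\rho$ can have no invariants over the relevant abelian extension.

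For the Euler system norm relation I would simply apply $\kappa$ to the norm relation of the analytic Euler system: corestriction along $\QQ(\ell m)/\QQ(m)$ commutes with specialisation, and $\kappa$ carries the Euler factor $P_\ell(V^*(1),\sigma_\ell^{-1})$ occurring there to $P_\ell\!\left(V(\kappa^{-1})^*(1),\sigma_\ell^{-1}\right)$, since twisting $V$ by $\kappa^{-1}$ rescales a Frobenius at $\ell$ by $\kappa(\sigma_\ell)^{-1}$. For the local conditions away from $p$: if $v\mid\ell$ with $\ell\nmid m$ then $c_m$ is unramified at $v$ (it lives in the cohomology of $\QQ_\Sigma/\QQ(m)$, which is unramified at $\ell$), hence so is $\widetilde c_m$ and $H^1_{\mathrm{ur}}\subseteq H^1_{\f}$; if $\ell\mid m$, so $\ell\notin\Sigma$, then $\loc_v(\widetilde c_m)\in H^1_{\f}$ follows from the tame ramification of $\QQ(m)_v/\Ql$ and the norm relation, as in Rubin's formalism; and if $\ell\in\Sigma$, $\ell\ne p$, then $\QQ(m)(\mu_{p^\infty})/\QQ(m)$ is \emph{unramified} at $v$, so the image of $H^1_{\an}(\QQ_\infty(m)_v,V)$ under specialisation at $\kappa$ already lies in $H^1_{\f}(\QQ(m)_v,V(\kappa^{-1}))$, as in \cite[Cor.~4.6.5]{rubin00}; in all three cases $\loc_v(\widetilde c_m)\in H^1_{\f}(\QQ(m)_v,T(\kappa^{-1}))$.

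The heart of the matter is the condition at $v\mid p$. There $\loc_v(c_m)$ is by hypothesis the image of some $c_m^+\in H^1_{\an}(\QQ_\infty(m)_v,\cD^+)$ under the map induced by $\cD^+\hookrightarrow\DD^\dag_\rig(V)$. Specialisation at $\kappa$ is defined on the analytic Iwasawa cohomology of any $(\varphi,\Gamma)$-module over $\cR$ following \cite{pottharst13}, is functorial in the module, and, by Pottharst's comparison, matches the ordinary specialisation $H^1_{\an}(\QQ_\infty(m)_v,V)\to H^1(\QQ(m)_v,V(\kappa^{-1}))$; applying it to $c_m^+$ shows that $\loc_v(\kappa(c_m))$ lies in the image of $H^1(\QQ(m)_v,\cD^+(\kappa^{-1}))\to H^1(\QQ(m)_v,V(\kappa^{-1}))$. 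Now $\QQ(m)_v$ is a finite unramified extension of $\Qp$, hence embeds in $\Kh$, and restriction along this embedding commutes with the map induced by $\cD^+\to\DD^\dag_\rig(V)$; so the image of $\loc_v(\kappa(c_m))$ in $H^1(\cK_\infty,V(\kappa^{-1}))$ lands in the image of $H^1(\cK_\infty,\cD^+(\kappa^{-1}))$, which by definition is $\cF$. Therefore $\loc_v(\kappa(c_m))\in H^1_{\cF}(\QQ(m)_v,V(\kappa^{-1}))$, and the integral class $\loc_v(\widetilde c_m)$ lies in the preimage $H^1_{\cF}(\QQ(m)_v,T(\kappa^{-1}))$, completing the verification of \cref{def:tameES}.

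The main obstacle I anticipate is precisely this last step: one must be sure that specialisation at $\kappa$ of the analytic Iwasawa cohomology of the \emph{possibly non-\'etale} submodule $\cD^+$ is genuinely compatible, on the one hand, with Pottharst's comparison isomorphism for $\DD^\dag_\rig(V)$ over the cyclotomic tower, and, on the other, with restriction along $\QQ(m)_v\hookrightarrow\Kh$, so that the ``large'' local condition cut out by $\cD^+$ over $\QQ_\infty$ descends exactly to the subspace $\cF$ of $H^1(\cK_\infty,V(\kappa^{-1}))$. By comparison, the integrality statement is just bookkeeping with the uniform bound and the unit ball, the uniqueness is the torsion-freeness above, and the tame local conditions are a repackaging of Rubin's finite-level theory.
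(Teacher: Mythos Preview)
Your approach is essentially the paper's: use the uniform bound to lift into integral cohomology, invoke (Hyp2) for torsion-freeness (hence uniqueness), specialise the norm relation, and check local conditions. The paper's own proof is two sentences long and treats the $v\mid p$ condition as ``clear by construction'', so your more careful discussion of functoriality of specialisation for the non-\'etale submodule $\cD^+$ is not wrong, just more cautious than the authors deem necessary.

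There is one genuine wobble, in your case $\ell\mid m$. You write that $\loc_v(\widetilde c_m)\in H^1_\f$ ``follows from the tame ramification of $\QQ(m)_v/\Ql$ and the norm relation, as in Rubin's formalism''. The norm relation relates $\mathrm{cores}^{\QQ(\ell m)}_{\QQ(m)}(c_{\ell m})$ to $c_m$; it says nothing about the localisation of $c_m$ itself at a prime above $\ell$. The correct reason is exactly the one you give in the next clause for $\ell\in\Sigma$: for \emph{any} $\ell\ne p$ the extension $\QQ_\infty(m)_v/\QQ(m)_v$ contains the unramified $\Zp$-extension, so the image of (analytic) Iwasawa cohomology under specialisation at $\kappa$ lands in $H^1_\f$. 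This is precisely how the paper argues, uniformly for all $v\nmid p$, by noting that the result is well known for classical Iwasawa cohomology and extends to analytic Iwasawa cohomology by flat base-change to $\cH(\Gamma)$. Your three-case split is therefore unnecessary: the single argument you give for $\ell\in\Sigma$ already covers $\ell\mid m$ (and, trivially, $\ell\notin\Sigma\cup m$).

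A minor stylistic point: your torsion-freeness argument via ramification is a detour. The clean route is that $(T/\varpi)(\bar\kappa^{-1})$ is irreducible of dimension $>1$ over $G_\QQ$, and $\QQ(m)/\QQ$ is abelian, so any $G_{\QQ(m)}$-fixed subspace is $G_\QQ$-stable and hence zero or everything; ``everything'' would force the representation to factor through an abelian quotient, contradicting irreducibility in dimension $>1$.
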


  \begin{proof}
   The assumption (Hyp2) implies that $H^1(\QQ(m), T)$ is torsion-free for all $m$ and hence can be identified with a lattice in $H^1(\QQ(m), V)$. Hence the classes $C_r \cdot \kappa(c_m) \in H^1(\QQ(m), T(\kappa^{-1}))$ are well-defined, and satisfy the Euler-system norm relations exactly (not just modulo torsion).

   Moreover, for each prime $v \nmid p$, the image of $H^1_{\an}(\QQ_\infty(m)_v, V)$ in $H^1(\QQ(m)_v, V(\kappa^{-1}))$ is contained in $H^1_{\mathrm{f}}(\QQ(m)_v, V(\kappa^{-1}))$; this is well known for Iwasawa cohomology, and the result for analytic Iwasawa cohomology follows by extending scalars to $\cH(\Gamma)$. The local condition for $v \mid p$ clearly holds by construction, so the classes $C_r \cdot \kappa(c_m)$ form a tame Euler system for $T(\kappa^{-1})$.
  \end{proof}

  Our goal will be to show that the existence of an analytic Euler system with local condition $\cD^+$ gives bounds for $\widetilde{H}^2_{\an}(\QQ_\infty, V, \cD^+)$. To do this, we will need to study descent properties of the Pottharst Selmer complexes and their relation to classical $p$-torsion Selmer groups over $\QQ$.

 \subsection{Descent of the Selmer complex and rank bounds}

  Essentially by construction we have the base-change property
  \[ \widetilde{R\Gamma}_\f(\QQ, V, \cD^+) = E \otimes^{\mathbf{L}}_{\cH(\Gamma)} \widetilde{R\Gamma}_{\an}(\QQ_\infty, V, \cD^+),\]
  which translates into short exact sequences for each $i$
  \[ 0 \to \widetilde{H}^i_{\an}(\QQ_\infty, V, \cD^+)_{\Gamma} \to \widetilde{H}^i_{\f}(\QQ, V, \cD^+) \to
  \widetilde{H}^{(i + 1)}_{\an}(\QQ_\infty, V, \cD^+)^{\Gamma} \to 0, \]
  and likewise for twists by characters of $\Gamma$. Moreover, for almost all $\kappa$ the module
  \[ \widetilde{H}^{(i + 1)}_{\an}(\QQ_\infty, V(\kappa^{-1}), \cD^+(\kappa^{-1})) = \widetilde{H}^{(i + 1)}_{\an}(\QQ_\infty, V, \cD^+)[\Gamma = \kappa]\]
  is zero.

  \begin{theorem}\label{thm:H2tors}
   Suppose (Hyp1)--(Hyp3) hold, that $V$ is crystalline at $p$, and $\mathbf{c}$ is an analytic Euler system satisfying the local condition $\cD^+$, such that $e_{\eta} \cdot c_1 \ne 0$ for all characters $\eta$ of $\Gamma_{\mathrm{tors}}$ (where $e_\eta \in \cH(\Gamma)$ is the idempotent attached to $\eta$). Then $\widetilde{H}^2_{\an}(\QQ_\infty, V, \cD^+)$ is $\cH(\Gamma)$-torsion; equivalently, the rank of $\widetilde{H}^1_{\an}(\QQ_\infty, V, \cD^+)$ is $\operatorname{rk}(\cD^+) - \tfrac{1}{2}\dim V$.
  \end{theorem}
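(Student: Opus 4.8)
The plan is to reduce \cref{thm:H2tors} to the finite‑level Selmer bound of \cref{thm:firstSelbound} by specialising the analytic Euler system $\mathbf{c}$ at suitable characters $\kappa$ of $\Gamma$ and then descending. First I would record the structural fact that, decomposing $\cH(\Gamma)$ over the characters $\eta$ of the torsion subgroup $\Gamma_{\mathrm{tors}}$, each factor is the ring of rigid‑analytic functions on the open unit disc — a B\'ezout domain of Krull dimension $1$ — over which a finitely‑generated module $M$ is torsion precisely when $M \otimes_{\cH(\Gamma),\kappa} E = 0$ for a single character $\kappa$ lying over $\eta$ (since $M$ decomposes as a free module plus a torsion module). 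As $\widetilde{H}^2_{\an}(\QQ_\infty, V, \cD^+)$ is finitely generated over $\cH(\Gamma)$, it will therefore suffice to produce, for each $\eta$, one ``good'' character $\kappa$ over $\eta$ with $\widetilde{H}^2_{\an}(\QQ_\infty, V, \cD^+) \otimes_{\cH(\Gamma),\kappa} E = 0$.

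For the descent step I would take $\kappa$ satisfying the conditions of \cref{cor:goodkappa} for $\cD = \DD^{\dag}_{\rig}(V)$, together with $H^0(\Qp, \cD^-(\kappa^{-1})) = 0$ and $H^2(\Qp, \cD^+(\kappa^{-1})) = 0$ — each of which rules out only finitely many $\kappa$ over $\eta$. Since $\widetilde{H}^i_{\an}$ vanishes outside degrees $1$ and $2$, the degree‑$2$ piece of the descent short exact sequence (twisted by $\kappa$) identifies $\widetilde{H}^2_{\an}(\QQ_\infty, V, \cD^+) \otimes_{\cH(\Gamma),\kappa} E$ with $\widetilde{H}^2_{\f}(\QQ, V(\kappa^{-1}), \cD^+(\kappa^{-1}))$, which by the vanishing hypotheses and the Proposition of \cref{sect:Selphigamma} equals $\widetilde{H}^2_{\f\cF_\kappa}(\QQ, V(\kappa^{-1}))$, where $\cF_\kappa$ is the image of $H^1(\cK_\infty, \cD^+(\kappa^{-1}))$ in $H^1(\cK_\infty, V(\kappa^{-1}))$; and by \cref{sect:Selcplx1} this is $\bigl(H^1_{\f\cF_\kappa}(\QQ, T(\kappa^{-1})^\vee)\bigr)^\vee \otimes_{\cO} E$. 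So the goal becomes: show $H^1_{\f\cF_\kappa}(\QQ, T(\kappa^{-1})^\vee)$ is finite.

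To do this I would apply \cref{thm:firstSelbound} to $T(\kappa^{-1})$. The hypotheses (Hyp1) and (Hyp2) are unchanged by twisting by a character of $\Gamma$ (they concern only the $G_{\QQ(\mu_{p^\infty})}$‑action, and the residual representation is merely twisted by a finite‑order character, hence still absolutely irreducible of rank $> 1$); (Hyp3) persists after enlarging the auxiliary field $L$; and (Hyp4) for $(T(\kappa^{-1}),\Sigma,\cF_\kappa)$ — the closedness of $\cF_\kappa$ — is exactly \cref{pottharstclosed} for a good $\kappa$. Choosing $r < 1$ so that $\kappa$ factors through $\cH_r$, the Proposition above realises $\{C_r\cdot\kappa(c_m)\}_{m\in\cN}$ as a tame Euler system for $(T(\kappa^{-1}),\Sigma)$ with local condition $\cF_\kappa$, whose bottom class is $\kappa_1 = C_r\cdot\kappa(c_1)$; then \cref{thm:firstSelbound} yields $\ell_{\cO}\bigl(H^1_{\f\cF_\kappa}(\QQ, T(\kappa^{-1})^\vee)\bigr) \le \operatorname{ind}_{\cO}(\kappa_1)$, which is finite once $\kappa(c_1) \ne 0$.

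It then remains to find, over each $\eta$, a good $\kappa$ with $\kappa(c_1) \ne 0$. Here I would use that under (Hyp2) the module $H^1_{\an}(\QQ_\infty, V)$ is $\cH(\Gamma)$‑torsion‑free (as $\overline{T}^{G_{\QQ_\infty}} = 0$), hence free over each factor of $\cH(\Gamma)$; the specialisation $c_1 \mapsto \kappa(c_1)$ factors through reduction modulo $\fm_\kappa$, and by hypothesis $e_\eta\cdot c_1$ is a nonzero vector in a free module, so it reduces to something nonzero at all $\kappa$ over $\eta$ outside the common zero locus of its coordinates — a discrete subset of the relevant disc. Since the remaining conditions on $\kappa$ exclude only finitely many points per factor, a suitable $\kappa$ exists, giving $\widetilde{H}^2_{\an}(\QQ_\infty, V, \cD^+)$ torsion; the equivalent statement about the rank of $\widetilde{H}^1_{\an}$ then follows from the Euler characteristic formula quoted earlier. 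I expect the main obstacle to be the genericity bookkeeping — verifying that each ingredient used at finite level (the conditions of \cref{cor:goodkappa}, the local vanishings $H^0(\Qp,\cD^-(\kappa^{-1})) = H^2(\Qp,\cD^+(\kappa^{-1})) = 0$, and (Hyp1)--(Hyp4) for $T(\kappa^{-1})$) is violated only on a thin set of $\kappa$, and that $\kappa(c_1)\ne 0$ away from a discrete set — together with the (routine but not entirely trivial) check that (Hyp3) survives twisting.
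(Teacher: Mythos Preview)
Your proposal is correct and follows essentially the same strategy as the paper's proof: specialise at generic characters $\kappa$, verify that the specialised Euler system satisfies (Hyp1)--(Hyp4) for $T(\kappa^{-1})$ (with (Hyp4) supplied by \cref{pottharstclosed}), apply the finite-level bound \cref{thm:firstSelbound} to conclude the Selmer group is finite, and then use descent to deduce that $\widetilde{H}^2_{\an}$ has vanishing fibre at $\kappa$ and is hence torsion. The only cosmetic differences are that the paper phrases the torsion criterion as ``almost all specialisations vanish'' rather than ``one specialisation per $\eta$-component vanishes'', and deduces torsion-freeness of $\widetilde{H}^1_{\an}$ from the $i=0$ descent sequence rather than from (Hyp2) on the ambient $H^1_{\an}$; both are equivalent here.
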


  \begin{proof}
   From the descent exact sequence for $i = 0$ we see that $\widetilde{H}^1_{\an}(\QQ_\infty, V, \cD^+)$ is torsion-free. Hence, if $c_1 \ne 0$, then the set of characters $\kappa$ such that $\kappa(c_1) = 0$ is a discrete subset of the open disc. Thus, for almost all $\kappa$, the collection $\left(C \cdot \kappa(c_m)\right)$ is a tame Euler system for $T(\kappa^{-1})$, for a suitable $C$ (and $T(\kappa^{-1})$ also satisfies Hyp1--Hyp3). Moreover, at primes $\ell \in \Sigma - \{p\}$ these classes land in $H^1_{\f}$; and at $p$ they land in the cohomology of $\cD^+$. As we have seen above, for almost all $\kappa$ this image is a closed subspace.

   Hence, using \cref{thm:ultrabeef}, for almost all $\kappa$ the $p$-torsion Selmer group of $T^\vee(\kappa)$, with the local condition at $p$ defined by the orthogonal complement of $\cD^+$, is finite; thus $H^1_{\f \cF^\perp}(\QQ, V^*(1 + \kappa)) = 0$ for almost all $\kappa$. From local Tate duality it is clear that $H^2(\Qp, \cD^+(\kappa))$ is also zero for almost all $\kappa$, so we conclude that $ \widetilde{H}^2_{\f}(\QQ, V(\kappa), \cD^+(\kappa))$ has the same property. From the descent exact sequence, almost all specializations of $\widetilde{H}^2_{\an}(\QQ_\infty, V, \cD^+)$ are zero, and thus it is torsion over $\cH(\Gamma)$.
%
%   {\color{red}Are we here not just repeating the same argument again?}
%
%   Hence, using \cref{thm:ultrabeef}, for almost all $\kappa$ we obtain finiteness of the Selmer group of $T^\vee(\kappa)$ with the local condition at $p$ defined by the orthogonal complement of $\cD^+$.
%
%   By Poitou--Tate duality, there is an exact sequence
%   \[ 0 \to H^0(\Qp, T^\vee(\kappa)) \to H^2(\QQ_{\Sigma} / \QQ, T(\kappa^{-1}))^\vee \to \operatorname{Sel}_{\f,\mathrm{str}}(\QQ, T^\vee(\kappa)) \to \dots. \]
%   The first term is finite for almost all $\kappa$, and we have just shown this is also true of the third term; hence the middle term is also finite. So we have shown that $\widetilde{H}^2_{\an, \Sigma}(\QQ_\infty, V, \cD^+)$ has fibre 0 at almost all $\kappa$, so it must have rank 0.
  \end{proof}

  \begin{remark}
   In particular, nontrivial Euler systems with local condition $\cD^+$ cannot exist unless $\operatorname{rk}(\cD^+) \ge \tfrac{1}{2}\dim V + 1$. We are most interested in the case when equality holds.
  \end{remark}

 \subsection{Statement of the main theorem}

  There is a well-behaved theory of \emph{characteristic ideals} for torsion coadmissible $\cH(\Gamma)$-modules (see \cite[\S 3.4]{Nakamura-Iwasawa}). This associates to each such module $M$ a non-zero principal ideal $(f_M) = \operatorname{char}_{\cH(\Gamma)}(M)$, with the property that
  \[ \operatorname{length}_{\cH(\Gamma)_\fP}(M_\fP) = \ord_\fP(f_M) \]
  for every maximal ideal $\fP$. This is compatible via base-extension with the usual theory of characteristic ideals for $\cO[[\Gamma]]$ or $E[[\Gamma]]$-modules.

  Our goal in the next few sections is to prove the following theorem:

  \begin{theorem}\label{thm:mainSelbound}
   Suppose $V, \cD, \mathbf{c}$ satisfy the hypotheses of \cref{thm:H2tors}, and moreover that
   %\begin{itemize}
    %\item
     $\operatorname{rk}(\cD^+) = \tfrac{1}{2} \dim(V) + 1$.
    %\item $(\cD^+)^{\varphi = 1} = (\cD^-)^{\varphi = 1} = 0$,
    %\item $(\cD^{+ \vee})^{\varphi = 1} = (\cD^{-\vee})^{\varphi = 1} = 0$ (where $\vee$ denotes $(\varphi, \Gamma)$-module dual).
   %\end{itemize}
   Then we have the divisibility
   \[
    \operatorname{char}_{\cH(\Gamma)} \widetilde{H}^2(\QQ_\infty, V, \cD^+) \mathrel{\Big|} \operatorname{char}_{\cH(\Gamma)} \left(\frac{\widetilde{H}^1_{\mathrm{an}}(\QQ_\infty, V, \cD^+)}{\cH(\Gamma) \cdot c_1}\right) \cdot \cE^1 \cdot \cE^2, \tag{$\star$}
   \]
   where $\cE^1$ and $\cE^2$ are ``error terms'' defined by
   \[ \cE^1 = \operatorname{char}_{\cH(\Gamma)}
      \ker\left( H^1_{\an}(\QQ_{p, \infty}, \cD^-)_{\mathrm{tors}} \to H^2_{\an}(\QQ_{p, \infty}, \cD^+)\right), \qquad \cE^2 =
       \operatorname{char}_{\cH(\Gamma)}
           H^2_{\an}(\QQ_{p, \infty}, \cD^+).\]
  \end{theorem}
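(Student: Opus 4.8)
The plan is to reduce $(\star)$ to a divisibility that can be checked one height-one prime at a time, and to obtain the bound at each prime by specialising the analytic Euler system and applying \cref{thm:1stSelbound-cplx}. By \cref{thm:H2tors} and the hypothesis $\operatorname{rk}(\cD^+) = \tfrac{1}{2}\dim V + 1$, the module $\widetilde{H}^1_{\an}(\QQ_\infty, V, \cD^+)$ is torsion-free of rank one, hence free over $\cH(\Gamma)$, and $\widetilde{H}^2_{\an}(\QQ_\infty, V, \cD^+)$ is $\cH(\Gamma)$-torsion; since $c_1 \neq 0$ (else \cref{thm:H2tors} would be vacuous), the quotient $\widetilde{H}^1_{\an}/\cH(\Gamma)c_1$ is torsion as well, so both sides of $(\star)$ are non-zero principal ideals. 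Using the identity $\operatorname{length}_{\cH(\Gamma)_\fP}(M_\fP) = \ord_\fP\operatorname{char}_{\cH(\Gamma)}(M)$ for torsion coadmissible $M$, it suffices to prove, for every height-one prime $\fP$ of $\cH(\Gamma)$,
\[ \ord_\fP\operatorname{char}_{\cH(\Gamma)}\widetilde{H}^2 \ \le\ \ord_\fP\operatorname{char}_{\cH(\Gamma)}\!\left(\frac{\widetilde{H}^1_{\an}(\QQ_\infty, V, \cD^+)}{\cH(\Gamma)c_1}\right) + \ord_\fP(\cE^1) + \ord_\fP(\cE^2). \]
After a finite extension of $E$, harmless for $(\star)$, $\fP$ lies above a height-one prime $\fP^\circ$ of $\cO[[\Gamma]]$ cut out by a continuous character $\kappa = \kappa_\fP\colon \Gamma \to E^\times$, and $R_n \coloneqq \cO[[\Gamma]]/(\fP^\circ)^n$ is a finite local $\cO$-algebra with residue field $\mathbf{k}$.

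The main step is the case of \emph{generic} $\fP$, i.e.\ those for which $\kappa$ satisfies the conditions of \cref{cor:goodkappa} for $\cD$, $\cD^+$ and $\cD^-$ and moreover $H^0(\Qp, \cD^-(\kappa^{-1})) = 0$ and $H^2(\Qp, \cD^+(\kappa^{-1})) = 0$; only finitely many $\fP$ are non-generic. For a generic $\fP$ and each $n$, reduce $\mathbf{c}$ modulo $(\fP^\circ)^n$: for a suitable $C_r < \infty$ the classes $C_r\cdot(c_m \bmod (\fP^\circ)^n)$ form a tame Euler system — in the evident extension of \cref{def:tameES} to the finite $\cO$-algebra $R_n$ — for the cyclotomic deformation of $T$ over $R_n$, with local condition at $p$ the image of the corresponding $(\varphi,\Gamma)$-submodule; by \cref{pottharstclosed} this local condition is closed, and by the comparison proposition of \cref{sect:Selphigamma} its Selmer complex over $\QQ$ is quasi-isomorphic to the simple-local-condition Selmer complex of \cref{sect:Selcplx1}. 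The ultra-Kolyvagin construction and \cref{thm:1stSelbound-cplx} extend over such coefficient rings — the hypotheses (Hyp1)--(Hyp4) continuing to hold for the cyclotomic deformation by standard arguments, since they concern only $T \otimes \mathbf{k}$ and $\Ki$-local data — yielding over $R_n$ a bound of the shape
\[ \operatorname{length}_{R_n}\widetilde{H}^2 \ \le\ \operatorname{length}_{R_n}\!\left(\widetilde{H}^1/\langle c_1\rangle\right) + \text{(local correction terms at $p$)}. \]
Letting $n \to \infty$, the left side tends to $\ord_\fP\operatorname{char}_{\cH(\Gamma)}\widetilde{H}^2$ and the $\langle c_1\rangle$-term to $\ord_\fP\operatorname{char}_{\cH(\Gamma)}(\widetilde{H}^1_{\an}/\cH(\Gamma)c_1)$, while the correction terms — obtained by unwinding the mapping-fibre triangles defining $\widetilde{R\Gamma}_{\an}$ and $\widetilde{R\Gamma}_{\f}$ together with the base-change and descent exact sequences — are identified with $\ord_\fP$ of the characteristic ideals of $\ker\!\big(H^1_{\an}(\QQ_{p,\infty}, \cD^-)_{\mathrm{tors}} \to H^2_{\an}(\QQ_{p,\infty}, \cD^+)\big)$ and of $H^2_{\an}(\QQ_{p,\infty}, \cD^+)$, i.e.\ with $\ord_\fP(\cE^1)$ and $\ord_\fP(\cE^2)$.

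For the finitely many non-generic $\fP$ the comparison of Selmer complexes can fail, since $H^0(\Qp, \cD^-(\kappa^{-1}))$ or $H^2(\Qp, \cD^+(\kappa^{-1}))$ is non-zero; but these characters are precisely where the local modules $H^1_{\an}(\QQ_{p,\infty}, \cD^-)_{\mathrm{tors}}$ and $H^2_{\an}(\QQ_{p,\infty}, \cD^+)$ are supported, so $\ord_\fP(\cE^1) + \ord_\fP(\cE^2) > 0$ at such $\fP$. A direct computation of the relevant (finite-dimensional over $E$) local cohomology bounds the defect between $\widetilde{H}^2(\QQ_\infty, V, \cD^+)_\fP$ and the Euler-system-bounded Selmer group at such $\fP$ by exactly this quantity, which is the slack built into the right side of $(\star)$; this gives the inequality at the exceptional primes and completes the argument.

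The principal obstacle is the extension of the ultra-Kolyvagin machine and of \cref{thm:1stSelbound-cplx} to coefficients in the Artinian local $\cO$-algebras $R_n$, uniformly in $n$, together with the passage to the limit: one must check that the ultrafilter-patched Kolyvagin classes, the finite-singular comparison, and the Selmer bound behave correctly under nilpotent thickening of the coefficient ring and that the resulting lengths stabilise to the orders at $\fP$. A second, more bookkeeping-heavy obstacle is the precise matching of the local correction terms with $\cE^1$ and $\cE^2$, which requires tracking — through the two mapping-fibre triangles and the descent — the cokernel of $H^1_{\an}(\QQ_{p,\infty}, \cD^+) \to H^1_{\an}(\QQ_{p,\infty}, V)$ and the contribution of $H^2_{\an}(\QQ_{p,\infty}, \cD^+)$, both generically and at the exceptional primes.
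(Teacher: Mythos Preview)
Your overall plan---reduce to one height-one prime at a time and feed specialised Euler systems into \cref{thm:1stSelbound-cplx}---matches the paper, but the mechanism you use to access $\ord_\fP$ is different, and the difference is where the actual work lies. You propose to work over the Artinian thickenings $R_n = \cO[[\Gamma]]/(\fP^\circ)^n$ and extend the ultra-Kolyvagin machine to such coefficients; you then (correctly) flag this extension as ``the principal obstacle''. The paper does not attempt it. Instead it fixes a \emph{good sequence} of genuinely distinct nearby characters $\kappa_n \to \kappa$ (all $\kappa_n \neq \kappa$) and applies \cref{thm:1stSelbound-cplx} over $\cO$ at each $\kappa_n$ separately. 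The order at $\fP$ is then recovered from the \emph{growth rate} of the resulting finite groups as $n \to \infty$: the key lemma (\cref{prop:Acircmodulegrowth}) says that for a finitely-generated $A^\circ$-module $M^\circ$, one has $\ell_\cO\big((M^\circ/\fP_n M^\circ)_{p\text{-tors}}\big) = h \cdot \ord_\cO(\kappa_n(\gamma_1)-\kappa(\gamma_1)) + O(1)$ with $h = \ord_\fP\operatorname{char}_A(M_{\mathrm{tors}})$. This completely sidesteps any need to run Kolyvagin theory over non-DVR coefficients.

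The approximation trick also dissolves your generic/non-generic dichotomy. Your claim that only finitely many $\fP$ are non-generic is false: the condition $\kappa'(1) \notin \{a,\dots,b-1\}$ in \cref{cor:goodkappa} excludes a discrete but infinite set of characters, so \cref{pottharstclosed} is unavailable at infinitely many $\fP$. In the paper's approach this is harmless, since the $\kappa_n$ can be chosen to avoid any discrete set; closedness and the comparison of \cref{sect:Selphigamma} hold at every $\kappa_n$ even when they fail at $\kappa$ itself, and no separate ``exceptional prime'' argument is needed.

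A further ingredient you are missing is how to get integral structures at all. Since $R\Gamma(\Qp, \cD^+ \otimes A)$ has no natural $A^\circ$-model, the paper simply \emph{chooses} an arbitrary $A^\circ$-lattice in it (a ``fake'' integral complex), and shows that asymptotic growth rates are insensitive to this choice. The error terms $\cE^1$, $\cE^2$ then arise uniformly at every $\fP$---not as an ad hoc defect computation at bad primes---as the growth rates of $H^2(\Qp, \cD^+(\kappa_n^{-1}))^\circ$ and of $\cL_n^{\mathrm{sat}}/\cL_n^\circ$ (the gap between the fake and saturated local conditions), with a Poitou--Tate orthogonality argument showing that two complementary pieces $I$, $J$ have $\ell_\cO(I)+\ell_\cO(J) = \ell_\cO(\cL_n^{\mathrm{sat}}/\cL_n^\circ)$.
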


  \begin{remark}[Remarks on the error terms] \
   \begin{itemize}
   \item We conjecture that such a divisibility should hold without the error terms $\cE^i$, but this would require different methods than the present paper.

   \item Compare e.g.~Corollary 12.3.5 of \cite{KLZ17}, which applies in the case where $\cD^+$ arises from a subrepresentation $V^+$. In \emph{op.cit.} the error term $\cE^2$ is present, and the running assumption Hypothesis 12.3.1 of \emph{op.cit.} implies that $\cE^1$ is trivial.

   \item A version of the error term $\cE^2$ also appears in Theorem 13.4 of \cite{kato04}, for example (it is the difference between the modules $\mathbf{H}^2(T)$ and $\mathbf{H}^2(T)_0$ in Kato's notation); note that in Kato's setting $\cD^- = 0$, so there is no analogue of $\cE^1$.

   \item One can check that $H^1_{\an}(\QQ_{p, \infty}, \cD^-)_{\mathrm{tors}}$ is isomorphic as a $\cH(\Gamma)$-module to $(\cD^-)^{\varphi = 1}$, a finite-dimensional $E$-vector space. Similarly $H^2_{\an}(\QQ_{p, \infty}, \cD^+)$ is isomorphic to $\left((\cD^{+\vee})^{\varphi = 1}\right)^*$ where $\vee$ denotes the $(\varphi, \Gamma)$-module dual $\Hom_{\cR}(-, \cR(1))$. Thus if $\Dcris(V)$ has no $\varphi$-eigenvalues that are powers of $p$, the error terms $\cE^i$ are both trivial.

   \end{itemize}

  \end{remark}

\section{Characteristic ideals and integral structures}

 We now embark on the proof of \cref{thm:mainSelbound}. Evidently, it suffices to prove that for each maximal ideal $\mathfrak{P}$, the order of vanishing of the left-hand side of ($\star$) at $\fP$ is at most that of the right-hand side. We want to give an interpretation of this order of vanishing in terms of \emph{finite} modules associated to a sequence of maximal ideals tending to $\fP$, adapting an approach used for $\cO[[\Gamma]]$-modules in \cite{mazurrubin04}. Since the residue field of any maximal ideal is a finite extension of $E$, and characteristic ideals are compatible with base-extension in the coefficient field, it is sufficient to consider the case when $\fP$ has residue field $E$; thus we may assume $\fP$ corresponds to a character $\kappa : \Gamma \to E^\times$.

 \subsection{Module-theoretic preliminaries}

  Since $\kappa$ is continuous, we must have $|\kappa(\gamma_1) - 1| < 1$, where $\gamma_1$ generates the subgroup $\Gamma_1 \cong (1 + p\Zp)^\times \subset \Gamma$. Since $\kappa$ is also $E$-valued, we must have $|\kappa(\gamma_1) - 1| \le |\varpi|$. Thus $\kappa$ extends to the completion $A = E\langle X\rangle$ of $e \cH(\Gamma)$, where $e$ is the idempotent determined by the restriction of $\kappa$ to $\Gamma_{\mathrm{tors}}$, and $X = \frac{[\gamma_1] - 1}{\varpi}$.

  \begin{definition}
   By a \emph{good sequence} tending to $\kappa$, we mean a sequence of characters $\kappa_n : \Gamma \to E^\times$ for $n \ge 1$, all agreeing with $\kappa$ on $\Gamma_{\tors}$, such that $\kappa_n \to \kappa$ as $n \to \infty$ and $\kappa_n \ne \kappa$ for all $n \gg 0$. We write $\fP_n$ for the maximal ideal corresponding to $\kappa_n$.
  \end{definition}

  (For instance, we can take $\kappa_n$ to be the character sending $X$ to $\lambda + \varpi^n$, where $\lambda = \frac{\kappa(\gamma_1) - 1}{\varpi}$.)

  \begin{proposition}\label{prop:Acircmodulegrowth}
   Let $A^\circ$ be the integral Tate algebra $\cO\langle X\rangle$, and $M^\circ$ a finitely-generated $A^\circ$-module. Then, for any good sequence $(\kappa_n) \to \kappa$, we have the following:
   \begin{enumerate}[(i)]
    \item For all but finitely many $n$, the submodule $M^\circ[\fP_n]$ is finite and  $M^\circ / \fP_n M^\circ$ has $\cO$-rank equal to the $A$-rank of $M$, where $M = M^\circ[1/p]$.

    \item The orders of the groups $M^\circ[\fP_n]$ are uniformly bounded as $n \to \infty$.

    \item The $p$-torsion submodule $\left(M^\circ / \fP_n M^\circ\right)_{\text{$p$-tors}}$ is finite for all $n$, and as $n \to \infty$ we have
    \[ \ell_{\cO}\left( \left(M^\circ / \fP_n M^\circ\right)_{\text{$p$-tors}}\right) = h \cdot \ord_{\cO}\left(\kappa_n(\gamma_1) - \kappa(\gamma_1)\right) + O(1), \]
    where $h \coloneqq \ord_{\fP} \left(\operatorname{char}_{A} M_{\mathrm{tors}}\right)$.
   \end{enumerate}
  \end{proposition}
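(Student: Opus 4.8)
The plan is to reduce everything to two structural facts. After inverting $p$ the ring $A = E\langle X\rangle$ is a principal ideal domain, so $M = M^\circ[1/p] \cong A^r \oplus \bigoplus_i A/(P_i^{e_i})$ with $r = \operatorname{rk}_A M$ and the $P_i$ irreducible; and $A^\circ = \cO\langle X\rangle$ is a two-dimensional regular Noetherian UFD whose localisation at the height-one prime $\fP = (X-\lambda)$, where $\lambda := \tfrac{\kappa(\gamma_1)-1}{\varpi} \in \cO$, is a discrete valuation ring with uniformiser $X-\lambda$ (and with $\varpi$ a unit, so $A^\circ_\fP = A_\fP$). Throughout I write $\fP_n = (X - \lambda_n)$ with $\lambda_n \to \lambda$, and $\fm_0 := (X-\lambda, \varpi)$ --- the unique maximal ideal of $A^\circ$ lying over $\fP_n$ for $n \gg 0$. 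One combinatorial remark will be used repeatedly: in a good sequence any character $\ne \kappa$ occurs only finitely often (a value occurring infinitely often would be the limit $\kappa$), so the $\fP_n$ eventually avoid any prescribed finite set of primes of height $\le 1$.

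For (i): since localisation is exact, $M^\circ[\fP_n]$ is $\cO$-finite iff $M^\circ[\fP_n][1/p] = M[\fP_n] = 0$, i.e.\ iff $X - \lambda_n$ is a non-zero-divisor on $M$; over the PID $A$ this fails only when $\fP_n$ lies in the finite set $\operatorname{Supp}_A(M_{\mathrm{tors}})$, hence for all but finitely many $n$, and for those $n$ one has $M/\fP_n M \cong (A/\fP_n)^r = E^r$, so $M^\circ/\fP_n M^\circ$ has $\cO$-rank $r$. For (ii): take a prime filtration $0 = N_0 \subset \dots \subset N_m = M^\circ$ with $N_l/N_{l-1} \cong A^\circ/\mathfrak{p}_l$. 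Left-exactness of $\Hom_{A^\circ}(A^\circ/\fP_n, -)$ gives $\#M^\circ[\fP_n] \le \prod_l \#(A^\circ/\mathfrak{p}_l)[\fP_n]$, and since each $A^\circ/\mathfrak{p}_l$ is a domain the $l$-th factor is trivial unless $X - \lambda_n \in \mathfrak{p}_l$: this is impossible when $\operatorname{ht}\mathfrak{p}_l = 0$; it forces $\mathfrak{p}_l = (X-\lambda_n)$ (finitely many $n$) when $\operatorname{ht}\mathfrak{p}_l = 1$; and it forces $\mathfrak{p}_l = \fm_0$ for $n \gg 0$ when $\operatorname{ht}\mathfrak{p}_l = 2$. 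Hence $\#M^\circ[\fP_n] \le (\#A^\circ/\fm_0)^{m_0}$ for $n \gg 0$, with $m_0 = \#\{l : \mathfrak{p}_l = \fm_0\}$ --- a uniform bound.

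For (iii): finiteness of $(M^\circ/\fP_n M^\circ)_{\text{$p$-tors}}$ is automatic for all $n$, being the torsion submodule of a module finitely generated over the DVR $\cO$. For $n \gg 0$, part (i) gives $M^\circ/\fP_n M^\circ \cong \cO^r \oplus (\text{finite})$, so $\ell_{\cO}\bigl((M^\circ/\fP_n M^\circ)_{\text{$p$-tors}}\bigr) = \ord_{\cO}\operatorname{Fit}^{\cO}_r(M^\circ/\fP_n M^\circ)$; since Fitting ideals commute with base change this equals $\ord_{\cO}(J \bmod \fP_n)$ for $J := \operatorname{Fit}^{A^\circ}_r(M^\circ) \subseteq A^\circ$. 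Inverting $p$ and using the PID structure, $J[1/p] = \operatorname{Fit}^{A}_r(M) = \operatorname{char}_A(M_{\mathrm{tors}}) = (g)$ with $g = \prod_i P_i^{e_i}$; writing $g = (X-\lambda)^h \tilde g$ with $\tilde g(\lambda) \ne 0$ identifies $h = \ord_{\fP}(\operatorname{char}_A M_{\mathrm{tors}})$. Localising $J$ at the DVR $A^\circ_\fP = A_\fP$ shows $\ord_{\fP}(J) = \ord_{\fP}(g) = h$, so for any finite generating set $j_1, \dots, j_s$ of $J$ we have $\min_k \ord_\fP(j_k) = h$, and after relabelling $\ord_\fP(j_1) = h$; writing $j_k = g\,u_k$ with $u_k \in A$ then forces $u_1(\lambda) \ne 0$. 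Evaluating at $\lambda_n$,
\[ \ord_{\cO}(J \bmod \fP_n) = \min_k \ord_{\cO}\bigl(j_k(\lambda_n)\bigr) = \ord_{\cO}\bigl(g(\lambda_n)\bigr) + \min_k \ord_{\cO}\bigl(u_k(\lambda_n)\bigr), \]
where $\ord_{\cO}(g(\lambda_n)) = h\,\ord_{\cO}(\lambda_n - \lambda) + \ord_{\cO}(\tilde g(\lambda_n)) = h\,\ord_{\cO}(\lambda_n - \lambda) + O(1)$ since $\tilde g(\lambda_n) \to \tilde g(\lambda) \ne 0$, while $\min_k \ord_{\cO}(u_k(\lambda_n)) = O(1)$ (bounded below since each $u_k \in E\langle X\rangle$ has coefficients of bounded valuation and $|\lambda_n| \le 1$, and bounded above by $\ord_{\cO}(u_1(\lambda_n)) \to \ord_{\cO}(u_1(\lambda)) < \infty$). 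Since $\kappa_n(\gamma_1) - \kappa(\gamma_1) = \varpi(\lambda_n - \lambda)$, the orders $\ord_{\cO}(\lambda_n - \lambda)$ and $\ord_{\cO}(\kappa_n(\gamma_1) - \kappa(\gamma_1))$ differ by $1$, which yields the asserted formula.

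The routine inputs are the PID structure of $E\langle X\rangle$, the existence of a prime filtration, the base-change compatibility of Fitting ideals, and the fact that $\ell_{\cO}(N_{\text{$p$-tors}}) = \ord_\cO \operatorname{Fit}^\cO_{\operatorname{rk} N}(N)$ over a DVR. The one genuinely delicate point will be that $J = \operatorname{Fit}^{A^\circ}_r(M^\circ)$ is in general \emph{not} a principal ideal of $A^\circ$, so one cannot simply specialise a single generator and read off its valuation at $\lambda_n$. The device that makes the argument go through is the local analysis at $\fP$: because $A^\circ_\fP$ is a discrete valuation ring with uniformiser $X - \lambda$, the ideal $J$ agrees with $((X-\lambda)^h)$ up to a unit in a neighbourhood of $\lambda$, and this --- together with the boundedness of coefficient valuations of elements of $E\langle X\rangle$ under specialisation --- is exactly what confines the discrepancy between $\ell_{\cO}\bigl((M^\circ/\fP_n M^\circ)_{\text{$p$-tors}}\bigr)$ and $h\,\ord_{\cO}(\kappa_n(\gamma_1) - \kappa(\gamma_1))$ to a bound independent of $n$.
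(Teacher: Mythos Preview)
Your proof is correct, but takes a genuinely different route from the paper for parts (ii) and (iii). For (ii), the paper simply observes that a finitely generated module over a Noetherian ring has a maximal finite submodule, which bounds all the $M^\circ[\fP_n]$ once they are finite; your prime-filtration argument is more explicit but reaches the same conclusion with more work. For (iii), the paper's strategy is to first argue (via the remark following the proof) that the growth rate depends only on $M = M^\circ[1/p]$, and then to replace $M^\circ$ by a convenient lattice that splits as a direct sum of free modules, $p^k$-torsion modules, and cyclic modules $A^\circ/P(X)$ with $P$ monic over $\cO$, each of which is handled by inspection. Your Fitting-ideal approach avoids this change of lattice entirely: base-change of Fitting ideals reduces the problem to controlling $\ord_\cO(J \bmod \fP_n)$ for $J = \operatorname{Fit}^{A^\circ}_r(M^\circ)$, and the local analysis at $\fP$ pins down the leading term. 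This is arguably cleaner, since it makes the appearance of $\operatorname{char}_A(M_{\mathrm{tors}}) = \operatorname{Fit}^A_r(M)$ completely transparent, and it handles non-principal $J$ without any dévissage. The paper's reduction, on the other hand, keeps the individual computations elementary but leaves the reduction step itself somewhat implicit.
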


  \begin{notation}
   We express the conclusion of (iii) more concisely as ``the groups $\left(M^\circ / \fP_n M^\circ\right)_{\text{$p$-tors}}$ grow at rate $h$''.
  \end{notation}

  \begin{proof}
   For part (i), we note that $M_{\mathrm{tors}}$ is a torsion module over a Dedekind domain and hence has finite support; and since $\kappa_n \to \kappa$ and $\kappa_n \ne \kappa$ for $n \gg 0$, the ideal $\fP_n$ is outside this support for all but finitely many $n$.

   For (ii): $M^\circ$ is finitely generated, and $A^\circ$ is Noetherian; hence there is a maximal finite $A^\circ$-submodule of $M^\circ$, whose order is clearly an upper bound for the $M^\circ[\fP_n]$ once $n$ is large enough that this group is finite.

   For (iii), using the fact that $A$ is a Dedekind domain, we can readily reduce to the case when $M^\circ$ is either a free module, a module annihilated by $p^k$ for some $k$, or a module of the form $A^\circ / P(X) A^\circ$ where $P$ is a monic polynomial over $\cO$. The first two cases are obvious, since $h = 0$ and the left-hand side is bounded; and for the last case, $M^\circ / \fP_n M^\circ \cong \cO / P(\lambda_n - \lambda) \cdot \cO$, where $\lambda_n = \frac{\kappa_n(\gamma_1) - 1}{\varpi}$. For $n \gg 0$ the valuation of $P(\lambda_n - \lambda)$ is precisely $h \cdot \ord_\cO (\lambda_n - \lambda)$, where $h$ is the order of vanishing of $P$ at $\lambda$; and since $P$ generates the characteristic ideal of $A / P$, this gives the result.
  \end{proof}

  \begin{remark}
   Observe that part (ii) holds whether or not $\fP$ itself is in the support of $M_{\mathrm{tors}}$; thus the $\kappa_n$-isotypic \emph{submodules} remain bounded as $\kappa_n \to \kappa$, whereas (iii) shows that the $\kappa_n$-isotypic \emph{quotients} blow up (at a precisely controlled rate). Note also that the asymptotic behaviour of $M^\circ / \fP_n M^\circ$ as $n \to \infty$ depends only on the isomorphism class of $M = M^\circ[1/p]$; concretely, if we consider a map of finitely-generated $A^\circ$-modules which is an isomorphism after inverting $p$, then the maps it induces on the $\fP_n$-isotypic quotients will have kernels and cokernels that are bounded independently of $n$.
  \end{remark}

 \subsection{A ``fake'' integral complex}

  Since there is no natural integral version of the cohomology complex
  \[ A \otimes_{\cH(\Gamma)} R\Gamma_{\an}(\QQ_{p, \infty}, \cD^+) \cong R\Gamma(\Qp, \cD^+ \otimes_E A),\]
  we shall simply choose one \emph{arbitrarily}, and show that this arbitrary choice does not matter.

  \subsubsection*{Complexes over $A^\circ$} We can represent the module $R\Gamma(\Qp, \cD^+ \otimes_E A)$ by a bounded complex of finitely-generated free $A$-modules concentrated in degrees $\{0, 1, 2\}$, by results of \cite{KPX}. We choose a $A^\circ$-lattice $R\Gamma(\Qp, \cD^+ \otimes A)^\circ$ in $R\Gamma(\Qp, \cD^+ \otimes A)$; that is, for some choice of representing complex consisting of finite free $A$-modules, we choose $A^\circ$-lattices in each term in the complex that are compatible under the differentials. Scaling the lattices appropriately we may suppose that the natural map $R\Gamma(\Qp, D^+ \otimes A)^\circ \to R\Gamma(\Qp, V \otimes A)$ factors through $R\Gamma(\Qp, T \otimes A^\circ)$.

   Note that the mapping fibre of $ R\Gamma(\Qp, \cD^+ \otimes A)^\circ \to R\Gamma(\Qp, T \otimes_{\cO} A^\circ)$ is an $A^\circ$-model of $R\Gamma(\Qp, \cD^- \otimes A)$, which we denote by $R\Gamma(\Qp, \cD^- \otimes A)^\circ$. We thus have three perfect complexes of $A^\circ$-modules fitting into a distinguished triangle
   \[ R\Gamma(\Qp, \cD^+ \otimes A)^\circ \to R\Gamma(\Qp, T \otimes A^\circ) \to R\Gamma(\Qp, \cD^- \otimes A)^\circ \to [+1], \]
   which agree after inverting $p$ with the usual cohomology complexes over $A$.

   \subsubsection*{Complexes over $\cO$}
%
%   Because of our hypotheses that the $\varphi$-invariants of $(\cD^{\pm})$ and their duals, all vanish, the cohomologies of all three complexes are $p$-torsion in degrees $\ne 1$.

   %(This is represented by a perfect complex in degrees $[-1, \dots, 2]$, but its cohomology in degree $-1$ vanishes.)

   We now define complexes ``at finite level'' by (derived) tensor product with $A^\circ /\fP A^\circ \cong \cO$. After inverting $p$ these compute the cohomology of the twists $\cD^{\pm}(\kappa^{-1})$, so we denote them by $R\Gamma(\Qp, \cD^+(\kappa^{-1}))^\circ$ etc; and similarly for $\kappa_n$ in place of $\kappa$.

   \begin{proposition}
    Let $(\kappa_n) \to \kappa$ be a good sequence. If $i \notin \{1, 2\}$, the sequence of cohomology groups $H^i\left(\Qp, \cD^\pm(\kappa_n^{-1})\right)^\circ$ is finite and uniformly bounded for $n \gg 0$.

    For $i = 1$, these groups have rank $\operatorname{rk}\cD^{\pm}$ for all $n \gg 0$, and their torsion submodules grow with rate $\dim_E H^0(\QQ_{p, \infty}, \cD^{\pm})_{(\fP)}$.

    For $i = 2$, these groups are finite for $n \gg 0$ and their growth rate is $\dim H^2_{\an}(\QQ_{p, \infty}, \cD^{\pm})_{(\fP)}$.
   \end{proposition}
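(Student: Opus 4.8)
The plan is to deduce the proposition from \cref{prop:Acircmodulegrowth} by applying it to the cohomology modules of the ``fake'' integral complex, by way of a base-change (universal coefficient) exact sequence. First I would set $M^{\pm,\circ}_i \coloneqq H^i\big(R\Gamma(\Qp, \cD^\pm \otimes A)^\circ\big)$, a finitely-generated $A^\circ$-module; inverting $p$ gives $M^\pm_i \coloneqq M^{\pm,\circ}_i[1/p] = H^i(\Qp, \cD^\pm \otimes_E A) \cong H^i_{\an}(\QQ_{p,\infty}, \cD^\pm) \otimes_{\cH(\Gamma)} A$, using the identification recalled in the construction of the fake complex. By definition $R\Gamma(\Qp, \cD^\pm(\kappa_n^{-1}))^\circ = R\Gamma(\Qp, \cD^\pm \otimes A)^\circ \otimes^{\mathbf{L}}_{A^\circ} (A^\circ / \fP_n A^\circ)$, and $\fP_n$ cuts out on $A^\circ$ the principal ideal generated by $X - \lambda_n$ (with $\lambda_n = \tfrac{\kappa_n(\gamma_1) - 1}{\varpi}$), a nonzerodivisor, so $A^\circ/\fP_n A^\circ \cong \cO$ has projective dimension one over $A^\circ$. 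Hence the base-change spectral sequence degenerates and for every $i$ there is a short exact sequence
\[
 0 \longrightarrow M^{\pm,\circ}_i / \fP_n M^{\pm,\circ}_i \longrightarrow H^i(\Qp, \cD^\pm(\kappa_n^{-1}))^\circ \longrightarrow M^{\pm,\circ}_{i+1}[\fP_n] \longrightarrow 0 .
\]

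Next I would read off the structure of the modules $M^\pm_i$ from the description of local analytic Iwasawa cohomology recalled earlier. One has $M^\pm_i = 0$ for $i \notin \{1,2\}$; in particular $M^\pm_0 = 0$, since the local $H^0$ of analytic Iwasawa cohomology vanishes, and as $M^{\pm,\circ}_0$ is a submodule of a free $A^\circ$-module with $M^{\pm,\circ}_0[1/p] = 0$ it is itself zero. The module $M^\pm_1$ has $A$-rank $\operatorname{rk}\cD^\pm$, and its $A$-torsion submodule is the base-change to $A$ of $(\cD^\pm)^{\varphi = 1} = H^0(\QQ_{p,\infty}, \cD^\pm)$, a finite-dimensional $E$-vector space; and $M^\pm_2 = H^2_{\an}(\QQ_{p,\infty}, \cD^\pm) \otimes_{\cH(\Gamma)} A$ is finite-dimensional over $E$, hence $A$-torsion of rank $0$. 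Since characteristic ideals are compatible with the base-change $\cH(\Gamma) \to A$, for either of the $E$-finite-dimensional torsion modules $N = (M^\pm_1)_{\mathrm{tors}}$ or $N = M^\pm_2$ we have $\ord_\fP\big(\operatorname{char}_A N\big) = \dim_E N_{(\fP)}$, the dimension of the $\kappa$-generalised eigenspace for the residual $\Gamma$-action.

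The three assertions then drop out by feeding $M^{\pm,\circ}_i$ into \cref{prop:Acircmodulegrowth}. For $i < 0$ or $i > 2$ both outer terms of the sequence vanish. For $i = 0$, since $M^{\pm,\circ}_0 = 0$ we get $H^0(\Qp, \cD^\pm(\kappa_n^{-1}))^\circ \cong M^{\pm,\circ}_1[\fP_n]$, finite and uniformly bounded for $n \gg 0$ by \cref{prop:Acircmodulegrowth}(ii). For $i = 1$: by \cref{prop:Acircmodulegrowth}(i) the quotient $M^{\pm,\circ}_1/\fP_n$ has $\cO$-rank $\operatorname{rk}\cD^\pm$ for $n \gg 0$, and by part~(iii) its $p$-torsion grows at rate $\ord_\fP \operatorname{char}_A\big((M^\pm_1)_{\mathrm{tors}}\big) = \dim_E H^0(\QQ_{p,\infty}, \cD^\pm)_{(\fP)}$; the subobject $M^{\pm,\circ}_2[\fP_n]$ is finite and bounded by part~(ii), hence affects neither the rank nor the growth rate. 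For $i = 2$: since $M^{\pm,\circ}_3 = 0$ we get $H^2(\Qp, \cD^\pm(\kappa_n^{-1}))^\circ \cong M^{\pm,\circ}_2/\fP_n$, which is finite for $n \gg 0$ (its $\cO$-rank is the $A$-rank $0$ of $M^\pm_2$) and grows at rate $\ord_\fP \operatorname{char}_A(M^\pm_2) = \dim_E H^2_{\an}(\QQ_{p,\infty}, \cD^\pm)_{(\fP)}$ by part~(iii).

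I do not anticipate a genuine obstacle here: the statement is little more than a repackaging of \cref{prop:Acircmodulegrowth} together with the already-established structure of local analytic Iwasawa cohomology. The two points that need a little care are the vanishing $M^{\pm,\circ}_0 = 0$ (so that the degree-$0$ groups stay bounded rather than growing), and the compatibility of characteristic ideals under the base-change $\cH(\Gamma) \to A$, which is what converts the abstract invariant $h$ of \cref{prop:Acircmodulegrowth}(iii) into the intrinsic quantities appearing in the statement; both are routine.
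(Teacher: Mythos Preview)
Your proposal is correct and follows essentially the same approach as the paper: derive the base-change short exact sequence from the derived tensor product with $A^\circ/\fP_n$ (which has projective dimension one), then feed the cohomology modules of the $A^\circ$-complex into \cref{prop:Acircmodulegrowth}. The paper's own proof is a terse two-sentence version of exactly this, and your added details (the vanishing of $M^{\pm,\circ}_0$, the identification of the torsion submodules with $(\cD^\pm)^{\varphi=1}$, and the translation of the invariant $h$ into the localized dimensions) are precisely what one would fill in to make it self-contained.
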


   \begin{proof}
    Since the ideals $\fP_n$ are principal ideals generated by a non-torsion element, it follows formally from the definitions of our complexes that for each $i$ we have exact sequences for each $i$
    \[
     0 \to H^i\left(\QQ_{p, \infty}, \cD^\pm\right)^\circ / \fP_n \to H^i
     \left(\Qp, \cD^\pm(\kappa_n^{-1})\right)^\circ \to H^{(i+1)}\left(\QQ_{p, \infty}, \cD^\pm\right)^\circ[\fP_n] \to 0.
    \]
    The result now follows by applying \cref{prop:Acircmodulegrowth} to the cohomology groups of the $A^\circ$-module complexes.
   \end{proof}

   \begin{notation}
    Let $\cL^\circ_n$ denote the image of $H^1(\Qp, \cD^+(\kappa_n^{-1}))^\circ$ in $H^1(\Qp, T(\kappa_n^{-1}))$.
   \end{notation}

   Thus the saturation of $\cL_n^\circ$ is the submodule $\cL_n^{\mathrm{sat}} = H^1_{\cF}(\Qp, T(\kappa_n^{-1}))$ (which is by definition saturated).

   \begin{proposition}
    The quotient $H^1_{\cF}(\Qp, T(\kappa_n^{-1})) / \cL^\circ_n$ is a finite $\cO$-module, whose growth rate as $n \to \infty$ is equal to the order of vanishing at $\fP$ of
    \[
     \operatorname{char}_{\cH(\Gamma)} \left(\ker H^1_{\an}(\QQ_{p, \infty}, \cD^-)
     \xrightarrow{\ \partial\ } H^2_{\an}(\QQ_{p, \infty}, \cD^+) \right)_{\mathrm{tors}}
    \]
    (i.e., the term $\cE^1$ in \cref{thm:mainSelbound}).
   \end{proposition}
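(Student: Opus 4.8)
The plan is to read off $\cL^\circ_n$ and its saturation from the long exact cohomology sequence of the finite-level distinguished triangle
\[ R\Gamma(\Qp, \cD^+(\kappa_n^{-1}))^\circ \to R\Gamma(\Qp, T(\kappa_n^{-1})) \to R\Gamma(\Qp, \cD^-(\kappa_n^{-1}))^\circ \to [+1], \]
and then to match the resulting finite-level ``defect'' module with a base change of an $A^\circ$-lattice to which \cref{prop:Acircmodulegrowth} applies. Finiteness of $H^1_{\cF}(\Qp, T(\kappa_n^{-1}))/\cL^\circ_n$ is immediate: after inverting $p$, $\cL^\circ_n[1/p]$ is by construction the image of $H^1(\Qp, \cD^+(\kappa_n^{-1}))$ in $H^1(\Qp, V(\kappa_n^{-1}))$, which is exactly $H^1_{\cF}(\Qp, V(\kappa_n^{-1}))$, so $\cL^\circ_n$ and $\cL^{\mathrm{sat}}_n$ have the same $E$-rational span; the quotient, being a finitely generated torsion $\cO$-module, is therefore finite.

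For the cohomological identification, note that $\cL^\circ_n$ is the image of the first arrow in the long exact sequence of the triangle, hence by exactness the kernel of the map $b\colon H^1(\Qp, T(\kappa_n^{-1})) \to H^1(\Qp, \cD^-(\kappa_n^{-1}))^\circ$. Thus $H^1(\Qp, T(\kappa_n^{-1}))/\cL^\circ_n \cong \operatorname{image}(b) = \ker\delta_n$, where $\delta_n\colon H^1(\Qp, \cD^-(\kappa_n^{-1}))^\circ \to H^2(\Qp, \cD^+(\kappa_n^{-1}))^\circ$ is the connecting homomorphism. Since $\cL^{\mathrm{sat}}_n$ is the saturation of $\cL^\circ_n$, the quotient $H^1(\Qp, T(\kappa_n^{-1}))/\cL^{\mathrm{sat}}_n$ is torsion-free, so $\cL^{\mathrm{sat}}_n/\cL^\circ_n$ is precisely the torsion submodule of $H^1(\Qp, T(\kappa_n^{-1}))/\cL^\circ_n$, i.e. $(\ker\delta_n)_{\mathrm{tors}}$. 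It remains to compute the growth rate of this group along a good sequence $(\kappa_n)\to\kappa$.

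Set $\mathcal{K}^\circ \coloneqq \ker\!\big(H^1(\Qp, \cD^-\otimes A)^\circ \xrightarrow{\delta} H^2(\Qp, \cD^+\otimes A)^\circ\big)$, a finitely generated $A^\circ$-module, with $\delta$ the connecting map over $A^\circ$. By flatness of $A$ over the idempotent component $e\cH(\Gamma)$ and compatibility of connecting maps with base change, $\mathcal{K}^\circ[1/p]$ is the base change to $A$ of $\ker\!\big(\partial\colon H^1_{\an}(\QQ_{p,\infty},\cD^-)\to H^2_{\an}(\QQ_{p,\infty},\cD^+)\big)$; comparing lengths over the discrete valuation rings $\cH(\Gamma)_\fP$ and $A_\fP$ (the local homomorphism between them having ramification index one) gives $\ord_\fP\operatorname{char}_A(\mathcal{K}^\circ[1/p])_{\mathrm{tors}} = \ord_\fP(\cE^1)$. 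Hence, by \cref{prop:Acircmodulegrowth}(iii), the groups $(\mathcal{K}^\circ/\fP_n\mathcal{K}^\circ)_{\text{$p$-tors}}$ grow at rate $\ord_\fP(\cE^1)$. Finally, functoriality of the two long exact sequences under the derived base change $\cO \otimes^{\mathbf L}_{A^\circ}(-)$ along the principal ideal $\fP_n$ places $(\ker\delta_n)_{\mathrm{tors}}$ and $(\mathcal{K}^\circ/\fP_n)_{\text{$p$-tors}}$ into a diagram with a map between them whose kernel and cokernel are subquotients of the $\fP_n$-torsion of certain finitely generated $A^\circ$-modules — the modules $\operatorname{image}(\delta)$ and the second cohomology groups $H^2(\Qp,\cD^\pm\otimes A)^\circ$ — each uniformly bounded in $n$ by \cref{prop:Acircmodulegrowth}(ii). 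Therefore the two families have the same growth rate $\ord_\fP(\cE^1)$, which is the assertion.

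I expect this last comparison to be the main obstacle. The finite-level group $(\ker\delta_n)_{\mathrm{tors}}$ is extracted from the cohomology of a derived base change, an operation that does not commute with forming kernels, so one must identify and bound every universal-coefficient correction term, invoking \cref{prop:Acircmodulegrowth}(ii) to see that each contributes only $O(1)$ to $\ell_{\cO}$. By contrast the finiteness statement and the identification $\cL^{\mathrm{sat}}_n/\cL^\circ_n \cong (\ker\delta_n)_{\mathrm{tors}}$ are routine diagram chases, and the passage between $\operatorname{char}_A$ and $\operatorname{char}_{\cH(\Gamma)}$ is standard once the flatness of $A = \cH_{|\varpi|}$ over $\cH(\Gamma)$ is used.
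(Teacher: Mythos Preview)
Your proposal is correct and follows the same strategy as the paper: identify $H^1_{\cF}/\cL^\circ_n$ with the $p$-torsion of $\ker\delta_n$ via the long exact sequence of the finite-level triangle, then control its growth using \cref{prop:Acircmodulegrowth}. The paper's own proof is much terser --- it invokes \cref{prop:Acircmodulegrowth} directly without naming the $A^\circ$-lattice $\mathcal{K}^\circ$ or addressing the discrepancy between $\ker\delta_n$ and $\mathcal{K}^\circ/\fP_n$ --- so your explicit bounding of the universal-coefficient corrections (via the $\fP_n$-torsion of $\operatorname{image}(\delta)$, $\operatorname{coker}(\delta)$, and $H^2(\QQ_{p,\infty},\cD^-)^\circ$) is precisely what is implicitly needed to make that step rigorous.
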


   \begin{proof}
    The quotient $H^1_{\cF}(\Qp, T(\kappa_n^{-1})) / \cL^\circ_n$ is precisely the $p$-torsion subgroup of $H^1(\Qp, T(\kappa_n^{-1})) / \cL^\circ_n$. The latter is identified with the kernel of the boundary map
    \[ H^1(\Qp, \cD^-(\kappa_n^{-1}))^\circ \to H^2(\Qp, \cD^+(\kappa_n^{-1}))^\circ, \]
    and by \cref{prop:Acircmodulegrowth}, the growth of its torsion subgroup is controlled by the stated characteristic ideal.
   \end{proof}

  \subsection{Integral Selmer complexes}

   We now define global Selmer complexes over $A^\circ$, using $R\Gamma(\Qp, \cD^+ \otimes A)^\circ$ as the local-condition complex at $p$; and analogously over $\cO$ for the twists $\kappa$ or $\kappa_n$.

   By construction, $\widetilde{R\Gamma}_{\f}(\QQ_\infty, V, \cD^+)^\circ$ is an $A^\circ$-model of the Pottharst Selmer complex. Applying \cref{prop:Acircmodulegrowth} to its cohomology, we see that the order of vanishing of $\operatorname{char}_{\cH(\Gamma)} \widetilde{H}^2(\QQ_\infty, V, \cD^+)$ (the left-hand side of \cref{thm:mainSelbound}) at $\fP$  is encoded by the growth rate of the finite groups $\widetilde{H}^2_{\f}\left(\QQ, V(\kappa_n^{-1}), \cD^+(\kappa_n^{-1})\right)^\circ$ as $n \to \infty$.

   There is a natural surjective map
   \[ \widetilde{H}^2_{\f}\left(\QQ, V(\kappa_n^{-1}), \cD^+(\kappa_n^{-1})\right)^\circ \twoheadrightarrow H^2\left(\Qp, \cD^+(\kappa_n^{-1})\right)^\circ, \]
   whose kernel is the Pontryagin dual of a subgroup of the Selmer group for $T^\vee$; more precisely, it is the dual of
   \[ \tag{$\dag$} \ker\left( H^1_{\f \setminus \{p\}} (\QQ, T^\vee(\kappa_n)) \to (\cL_n^\circ)^\vee\right), \]
   where ``$\f \setminus \{p\}$'' signifies that we apply the Bloch--Kato conditions away from $p$ (and no condition at $p$).
   This group contains the Selmer group $\widetilde{H}^2_{\f\cF}(\QQ, T(\kappa_n^{-1}))$ appearing in \cref{thm:1stSelbound-cplx}, which would correspond to replacing $\cL_n^\circ$ with its saturation; and the quotient of the two injects into the finite group $\left(\cL_n^{\mathrm{sat}} / \cL_n^{\circ}\right)^\vee$.

   Thus we have
   \[ \ell_{\cO} \widetilde{H}^2_{\f}\left(\QQ, V(\kappa_n^{-1}), \cD^+(\kappa_n^{-1})\right)^\circ = \ell_{\cO} H^2\left(\Qp, \cD^+(\kappa_n^{-1})\right)^\circ + \ell_{\cO} \widetilde{H}^2_{\f\cF}(\QQ, T(\kappa_n^{-1})) + \ell_{\cO} I,\]
   where $I$ is the image of $(\dag)$ in $\left(\cL_n^{\mathrm{sat}} / \cL_n^{\circ}\right)^\vee$.

   Since $(\kappa_n)$ is a good sequence, for all but finitely many $n$ the character $\kappa_n$ satisifies the conditions of \cref{pottharstclosed}. Thus we may apply \cref{thm:1stSelbound-cplx} to give the bound
   \[ \ell_{\cO} \widetilde{H}^2_{\f\cF}(\QQ, T(\kappa_n^{-1})) \le \ell_{\cO}\left(\widetilde{H}^1_{\f\cF}(\QQ, T(\kappa_n^{-1})) / \langle c_1\rangle\right).\]

   So we must now relate $\widetilde{H}^1_{\f\cF}(\QQ, T(\kappa_n^{-1}))$ (defined using the simple Selmer complex associated to the saturated local condition associated to $\cL_n^{\mathrm{sat}}$) and $\widetilde{H}^1_{\f\cF}(\QQ, V(\kappa_n^{-1}), \cD^+(\kappa_n^{-1}))$ (defined using the complex $R\Gamma(\Qp, \cD^+(\kappa_n^{-1}))^\circ$). The map
   \[ \widetilde{H}^1_{\f\cF}(\QQ, V(\kappa_n^{-1}), \cD^+(\kappa_n^{-1})) \to H^1_{\f \setminus \{p\}}(\QQ, T(\kappa_n^{-1}))\]
   has kernel uniformly bounded in $n$; and the image of this map is the Selmer group with local condition $\cL_n^\circ$, which is contained in $\widetilde{H}^1_{\f\cF}(\QQ, T(\kappa_n^{-1}))$ with finite index. The quotient of these two is precisely the image of $H^1_{\f\cF}(\QQ, T(\kappa_n^{-1}))$ in $\cL_n^{\mathrm{sat}} / \cL_n^{\circ}$. We denote this by $J$; thus we have
   \[ \ell_{\cO}\left(\widetilde{H}^1_{\f\cF}(\QQ, T(\kappa_n^{-1})) / \langle c_1\rangle\right) \le \ell_{\cO}\left(\widetilde{H}^1(\QQ, V(\kappa_n^{-1}), \cD^+(\kappa_n^{-1}))^\circ / \langle c_1\rangle\right) + \ell_\cO J + O(1).\]

   It follows from Poitou--Tate duality that $I$ and $J$ are orthogonal complements of each other, so their lengths sum to $\ell_{\cO}(\cL_n^{\mathrm{sat}}/\cL_n^\circ)$. Putting these ingredients together we have
   \begin{multline*}
    \ell_{\cO} \widetilde{H}^2_{\f}\left(\QQ, V(\kappa_n^{-1}), \cD^+(\kappa_n^{-1})\right)^\circ \le \ell_{\cO} H^2\left(\Qp, \cD^+(\kappa_n^{-1})\right)^\circ + \ell_{\cO}(\cL_n^{\mathrm{sat}}/\cL_n^\circ) \\+ \ell_{\cO}\left(\widetilde{H}^1(\QQ, V(\kappa_n^{-1}), \cD^+(\kappa_n^{-1}))^\circ / \langle c_1\rangle\right) + O(1).
   \end{multline*}

   As $n \to \infty$, the growth rate of $\ell_{\cO} H^2\left(\Qp, \cD^+(\kappa_n^{-1})\right)^\circ$ is given by the order of vanishing of the characteristic ideal of $H^2_{\an}(\QQ_{p, \infty}, \cD^+)$ at $\fP$ (the term $\cE^2$ in the theorem); and the growth rate of $\cL_n^{\mathrm{sat}}/\cL_n^\circ$ is given by the term $\cE^1$ in the theorem. This completes the proof of \cref{thm:mainSelbound}.\qed
%%%%%%%%%%%%%%%%%%%%%%%
\part{Applications}
%%%%%%%%%%%%%%%%%%%%%%%

%%%%%%%%%%%%%%%%%%%%%%%
%

 \section{The Rankin--Selberg case}

  Let $p\geq 5$ be prime. Let $f,g$ be normalized new modular eigenforms of weights $r > r' \ge 1$ and levels $N_f,N_g$ coprime to $p$. Let $L$ be a finite extension of $\QQ$ containing the Hecke eigenvalues of $f$ and $g$.
  
  \subsection{Galois representations} For each prime $\pp \mid p$ of $L$, we have Galois representations 
  \[ 
   \rho_{f,\pp},\, \rho_{g,\pp}:\, G_{\QQ}\rightarrow \GL_2(\mathcal{O}_{L, \pp}).
  \]
  Let $V_{f,g,\mathfrak{p}}$ be the $4$-dimensional $L_{\mathfrak{p}}$-vector space on which $G_{\QQ}$ acts with $\rho_{f,\mathfrak{p}}\otimes \rho_{g,\mathfrak{p}}$, and let $V=V^*_{f,g,\mathfrak{p}}$, and $T \subset V$ the natural $\cO_{L, \mathfrak{p}}$-lattice. Let $k$ denote the residue field of $\mathcal{O}_{L_{\mathfrak{p}}}$. 
  
  We shall suppose that the triple $(f, g, \pp)$ satisfies ``Hypothesis (BI)'' from \S 11.1 of \cite{KLZ17}. As shown in \emph{op.cit.}, this can only hold if the characters of $f$ and $g$ satisfy $\varepsilon_f \varepsilon_g \ne 1$; but if this is true, for a given $(f, g)$, then Hypothesis (BI) typically holds for a large proportion of primes $\pp$ of the coefficient field, and frequently for finitely many.
  
  \begin{remark}
   For example, Hypothesis (BI) holds for all but finitely many $\pp$ in either of the following two situations (see \cite[Remark 11.1.3]{KLZ17}):
   \begin{itemize}
   \item $r, r' \ge 2$, neither $f$ nor $g$ is of CM-type, the levels $(N_f, N_g)$ are coprime, and at least one of $r, r'$ is odd.
   \item $r \ge 2$, $f$ is not of CM-type, $f$ is not a twist of $g$, $r' = 1$, and $(N_f, N_g) = 1$.\qedhere
   \end{itemize}
   More general, but more complicated, criteria in terms of the ``inner twists'' of $f$ and $g$ can be found in \cite{loeffler17} and \cite{studnia24}.
  \end{remark}
  
  \begin{proposition}
   The representation $T$ satisfies the hypotheses (Hyp1)--(Hyp3).
  \end{proposition}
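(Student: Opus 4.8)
The plan is to derive all of (Hyp1)--(Hyp3) from Hypothesis (BI) of \cite[\S 11.1]{KLZ17}, which is precisely a ``big image'' condition on $\rho_{f,\pp}\otimes\rho_{g,\pp}$ designed so that verifications of exactly this kind go through, supplemented by standard lemmas from \cite{rubin00} and \cite{mazurrubin04}. First I would record that $T$ is the $\cO$-lattice $T_f^{*}\otimes_{\cO}T_g^{*}$ in $V=V_f^{*}\otimes V_g^{*}$, so that $T\otimes_{\cO}\mathbf{k}$ is a twist of $\bar\rho_f\otimes\bar\rho_g$, and $T^{\vee}$ is residually a twist of the dual of $\bar T$. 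Hypothesis (BI) in particular implies $\varepsilon_f\varepsilon_g\ne 1$ and that the image of $G_{\QQ}$ in $\operatorname{Aut}_{\cO}(T_f)\times\operatorname{Aut}_{\cO}(T_g)$ is large enough that $\bar\rho_f$ is not a twist of $\bar\rho_g$ and the $4$-dimensional tensor $\bar\rho_f\otimes\bar\rho_g$ is absolutely irreducible; since $4>1$, this is exactly (Hyp2).

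For (Hyp1): here (Hyp1a) is literally Rubin's hypothesis $\operatorname{Hyp}(\QQ,T)$ of \cite[\S 2.2]{rubin00}, and (Hyp1b) is the auxiliary large-image condition (ii')(b) of \cite[\S 9.1]{rubin00} used for the finite--singular comparison. Both are verified under (BI) in \cite[\S 11.1]{KLZ17}: one produces $\tau\in G_{\QQ(\mu_{p^\infty})}$ whose actions on $T_f$ and $T_g$ are chosen so that $1$ occurs as a \emph{simple} eigenvalue on $T_f\otimes T_g$ with $\tau-1$ having $\varpi$-torsion-free cokernel, and a $\gamma\in G_{\QQ(\mu_{p^\infty})}$ acting on $V$ with no eigenvalue equal to $1$. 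I would simply cite op.\ cit.\ for these, the only additional bookkeeping being to transport the statements between the normalisation of \cite{KLZ17} (involving $V_{f,g}$ and various Tate twists) and the representation $V=V^{*}_{f,g,\pp}$ and lattice $T$ used here.

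Finally, for (Hyp3) I would take $L$ to be the compositum of $\QQ(\mu_{p^\infty})$ with the field cut out by the $G_{\QQ}$-representation $V$; then $G_L$ acts trivially on $V$, hence on $V/T$ and on $T^{\vee}$, and $\Gal(L/\QQ)$ retains the large image from (BI). Since $V/T=\varinjlim_n T/\varpi^{n}$ and $T^{\vee}=\varinjlim_n T^{\vee}[\varpi^{n}]$, and continuous cohomology of a profinite group commutes with filtered colimits, a d\'evissage along $0\to\bar T\to T/\varpi^{n}\to T/\varpi^{n-1}\to 0$ (and its analogue for $T^{\vee}$), followed by inflation--restriction from $\Gal(\QQ(\bar T)/\QQ)$, reduces the required vanishing to $H^{1}(\Gal(\QQ(\bar T)/\QQ),\bar T)=0$ and the analogous statement for $T^{\vee}[\varpi]$. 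These are part of hypothesis (H.3) of \cite{mazurrubin04}, shown there to follow from residual irreducibility together with the element $\tau$ from (Hyp1a); the inflation--restriction cokernels vanish because $\bar T$, having large image, contains no sub-$G_{\QQ}$-representation of dimension one.

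I expect the only real friction to be organisational --- matching the exact form of Hypothesis (BI) and the conventions of \cite{KLZ17} to the hypotheses as stated here --- together with one genuinely delicate point inside the verification of (Hyp1a): any element of a conjugate of $\SL_2\times\SL_2$ acts on the self-dual module $T_f\otimes T_g$ with $1$ as an eigenvalue of \emph{even} multiplicity, so the construction of $\tau$ is forced to exploit the part of the image coming from the nebentypes, which is precisely why (BI) can hold only when $\varepsilon_f\varepsilon_g\ne 1$.
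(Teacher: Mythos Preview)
Your treatment of (Hyp1) and (Hyp2) is essentially the paper's: both are part of Hypothesis (BI) by definition, and for (Hyp1b) the paper, like you, just takes $\gamma$ to be the element of (BI) with no eigenvalue $1$ --- in fact (BI) supplies an element $\sigma$ acting as $-1$ on $T$, and one sets $\gamma=\sigma$.

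Your argument for (Hyp3), however, has a real gap. You assert that the vanishing of $H^1(\Gal(\QQ(\bar T)/\QQ),\bar T)$ and its dual analogue ``are part of hypothesis (H.3) of \cite{mazurrubin04}, shown there to follow from residual irreducibility together with the element $\tau$''. But (H.3) in \cite{mazurrubin04} is an \emph{assumed} hypothesis, not a consequence of (H.1)--(H.2); Mazur and Rubin do not prove it from the other axioms. Moreover, your inflation--restriction step from $\Gal(L/\QQ)$ to $\Gal(\QQ(\bar T)/\QQ)$ does not reduce to the residual $H^1$ alone: the obstruction term $\Hom_{\Gal(\QQ(\bar T)/\QQ)}\bigl(\Gal(L/\QQ(\bar T)),\bar T\bigr)$ is not killed merely by $\bar T$ having no one-dimensional subrepresentation --- you would need to analyse the $\Gal(\QQ(\bar T)/\QQ)$-module structure of $\Gal(L/\QQ(\bar T))$, which is not trivial since $L$ records the full $p$-adic representation and $\mu_{p^\infty}$.

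The paper's route avoids all of this by exploiting the \emph{third} component of (BI), which you do not use for (Hyp3): the element $\sigma\in G_{\QQ(\mu_{p^\infty})}$ acting as $-1$ on $T$. If $\Omega=\QQ(T,\mu_{p^\infty})$, then $\sigma$ lies in the \emph{centre} of $\Gal(\Omega/\QQ)$ (it acts as a scalar on $T$ and trivially on $\mu_{p^\infty}$). Sah's lemma then gives $H^1(\Gal(\Omega/\QQ),M)=0$ for any $\cO$-module $M$ on which $\sigma$ acts as $-1$ and $p>2$ --- so in particular for $V/T$ and $T^\vee$ --- via the one-line computation $(\sigma-1)c(g)=(g-1)c(\sigma)$ for any cocycle $c$. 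This is the argument of \cite[Prop.~7.2.20]{leiloefflerzerbes14} cited in the paper, and it is both shorter and gap-free.
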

  
  \begin{proof}
   Hypotheses (Hyp1a) and (Hyp2) are explicitly included as part of Hypothesis (BI) of \cite{KLZ17}. The remaining part of Hypothesis (BI) asserts the existence of an element $\sigma \in \Gal(\overline{\QQ} / \QQ(\mu_{p^\infty}))$ acting as $-1$ on $T$. Clearly this implies (Hyp1b), since we may take $\gamma = \sigma$. For (Hyp3), if $\Omega$ denotes the smallest extension of $\QQ$ whose Galois group acts trivially on $T$ and on $\mu_{p^\infty}$, then $\sigma$ defines an element of the centre of $\Gal(\Omega / \QQ)$ which acts as $-1$ on $T$; as in \cite[Prop 7.2.20]{leiloefflerzerbes14} this implies the vanishing of the groups $H^1(\Omega/\QQ, V/T)$ and $H^1(\Omega/\QQ, (V/T)^*(1))$. 
  \end{proof}

 \subsection{Triangulations} 
  
  For $\star\in\{f,g\}$, write $\alpha_\star, \beta_\star$ for the roots of the Hecke polynomial at $p$.  Then the restriction of $V$ to $G_{\QQ_p}$ is crystalline with Hodge--Tate weights $\{0, r'-1, r-1, r + r' - 2 \}$, and the eigenvalues of $\varphi^{-1}$ on $\Dcris(V)$ are the pairwise products of the roots of the Hecke polynomials
  \begin{equation}\label{eq:prodHecke}
   \{\alpha_f\alpha_g,\, \alpha_f\beta_g,\, \beta_f\alpha_g,\, \beta_f\beta_g\}.
  \end{equation}We make the following assumptions:

  \begin{assumption}\label{ass:Heckeevals}\
   \begin{enumerate}
    \item ($p$-regularity) We have $\alpha_f\neq \beta_f$ and $\alpha_g\neq \beta_g$.
    \item (Non-critical slope) We have
     \begin{align*}
      v_p(\alpha_f) & < r - 1\\
      v_p(\alpha_g) & < r' - 1 \quad \text{unless $r' = 1$}
     \end{align*}
     \item (No local zeros, c.f. \S 8.1 in \cite{loefflerzerbes16}) None of the elements of \eqref{eq:prodHecke} are a power of $p$.
   \end{enumerate}
  \end{assumption}
  
  (Note that the elements \eqref{eq:prodHecke} all have absolute value $p^{(r_1 + r_2 - 2)/2}$, so the ``no local zeros'' condition is immediate if $r_1 + r_2$ is odd.)

  \begin{proposition}[c.f. {\cite[Theorem 6.3.2]{loefflerzerbes16}}]
   Let $\star\in\{f,g\}$. Then the $(\varphi, \Gamma)$-module $\cD_\star = \DD^{\dag}_{\rig}(V_p(\star)^*)$ admits a canonical triangulation
   \[
   0\rightarrow \mathscr{F}^+\cD_\star\rightarrow \cD_\star\rightarrow \mathscr{F}^-\cD_\star\rightarrow 0,
   \]
   with $\mathscr{F}^-\cD_\star$ having Hodge--Tate weight 0 and crystalline Frobenius eigenvalue $\alpha_\star^{-1}$.\qed
  \end{proposition}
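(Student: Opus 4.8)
The plan is to reduce the statement to the structure of crystalline $(\varphi,\Gamma)$-modules over $\cR_{\Qp}$, and then to invoke the explicit analysis of \cite[Theorem 6.3.2]{loefflerzerbes16} for the one non-formal point. Write $w := \mathrm{wt}(\star) - 1$. Since $\star$ has level prime to $p$, the representation $V_p(\star)$ — and hence its dual — is crystalline at $p$, so $\cD_\star = \DD^\dag_\rig(V_p(\star)^*)$ is a crystalline $(\varphi,\Gamma)$-module over $\cR_{\Qp}$ of rank $2$, determined up to isomorphism by the filtered $\varphi$-module $D := \Dcris(\cD_\star) = \Dcris(V_p(\star)^*)$. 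Here $D$ is a $2$-dimensional $L_\pp$-vector space on which $\varphi^{-1}$ has characteristic polynomial the Hecke polynomial of $\star$ at $p$, so its eigenvalues are $\alpha_\star,\beta_\star$; and the Hodge--Tate weights of $\cD_\star$ are $\{0,w\}$, with a single non-trivial jump in the Hodge filtration, a line $\mathrm{Fil}\subset D$.

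By $p$-regularity ($\alpha_\star\neq\beta_\star$) the operator $\varphi$ is semisimple on $D$, say $D = D_\alpha\oplus D_\beta$ with $\varphi^{-1}|_{D_\bullet} = \bullet_\star$. Crystallinity of $\cD_\star$ gives a $(\varphi,\Gamma)$-equivariant isomorphism $\cR_{\Qp}[1/t]\otimes_{L_\pp}D\xrightarrow{\sim}\cD_\star[1/t]$, under which the right-hand side splits as $(\cR_{\Qp}[1/t]\otimes D_\alpha)\oplus(\cR_{\Qp}[1/t]\otimes D_\beta)$; let $\pr_\alpha$ be the projection onto the first factor. I would set
\[ \mathscr{F}^+\cD_\star := \cD_\star\cap\bigl(\cR_{\Qp}[1/t]\otimes D_\beta\bigr),\qquad \mathscr{F}^-\cD_\star := \cD_\star/\mathscr{F}^+\cD_\star\,\cong\,\pr_\alpha(\cD_\star), \]
giving the short exact sequence $0\to\mathscr{F}^+\cD_\star\to\cD_\star\to\mathscr{F}^-\cD_\star\to 0$. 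One checks directly that $\mathscr{F}^+\cD_\star$ has rank $1$ (hence so does $\mathscr{F}^-\cD_\star$), that $\mathscr{F}^+\cD_\star$ is saturated (its cokernel embeds via $\pr_\alpha$ in the $t$-torsion-free module $\cR_{\Qp}[1/t]\otimes D_\alpha$, hence is $\cR_{\Qp}$-free of rank $1$), and that both graded pieces are crystalline. Since the Hodge--Tate weights are $\ge 0$ we have $\cD_\star\subseteq\NN_\cris(\cD_\star) = \cR_{\Qp}\otimes_{L_\pp}D$ and $t^{w}\NN_\cris(\cD_\star)\subseteq\cD_\star$, so applying $\pr_\alpha$ yields $t^{w}(\cR_{\Qp}\otimes D_\alpha)\subseteq\mathscr{F}^-\cD_\star\subseteq\cR_{\Qp}\otimes D_\alpha$. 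Thus $\mathscr{F}^-\cD_\star = t^{j}(\cR_{\Qp}\otimes D_\alpha)$ for some $0\le j\le w$; since $\mathscr{F}^-\cD_\star$ and $\cR_{\Qp}\otimes D_\alpha$ agree after inverting $t$, we get $\Dcris(\mathscr{F}^-\cD_\star) = D_\alpha$, on which the crystalline Frobenius acts by $\alpha_\star^{-1}$. Hence the Frobenius-eigenvalue part of the claim is automatic, and the Hodge--Tate weight of $\mathscr{F}^-\cD_\star$ equals $j$.

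It therefore remains to prove $j = 0$, which is the substantive point and the one place Assumption~\ref{ass:Heckeevals}(2) enters (for $\star = g$ with $r' = 1$ we have $w = 0$ and there is nothing to show). The multiset $\{0,w\}$ of Hodge--Tate weights of $\cD_\star$ splits as (weight of $\mathscr{F}^+\cD_\star$, weight of $\mathscr{F}^-\cD_\star$), so in fact $j\in\{0,w\}$, and which of the two occurs is governed by the relative position of the Hodge line $\mathrm{Fil}$ and the eigenspaces $D_\alpha,D_\beta$ inside $D$: the ``bad'' alternative $j = w$ can only occur when $\mathrm{Fil}$ coincides with $D_\beta$, in which case weak admissibility of $D$ forces $v_p(\beta_\star) = 0$, hence $v_p(\alpha_\star) = v_p(\alpha_\star\beta_\star) = w$, contradicting the hypothesis $v_p(\alpha_\star) < w$. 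Making this dichotomy precise — identifying exactly when the eigenline $D_\beta$ ``carries'' the Hodge jump, and ruling out $j=w$ — is the main obstacle, and it is carried out cleanly in \cite[Theorem 6.3.2]{loefflerzerbes16} by an explicit computation with the Wach module $\NN(V_p(\star)^*)$, a suitable basis of which realises the splitting above at the integral level. Finally, the resulting triangulation is canonical: $D_\alpha$ is intrinsically the $\varphi^{-1}=\alpha_\star$ eigenspace of $\Dcris(\cD_\star)$ for the fixed non-critical-slope root $\alpha_\star$, so $\mathscr{F}^\pm\cD_\star$ depend only on $(\star,\pp)$ together with that choice of root.
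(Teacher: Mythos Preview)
Your proposal is correct, and in fact more complete than you give yourself credit for. The paper itself offers no proof at all: the statement ends with a bare \qed, so the entire content is the citation of \cite[Theorem~6.3.2]{loefflerzerbes16}. There is thus nothing in the paper to compare against beyond that reference.

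What is worth pointing out is that your weak-admissibility argument is already a full proof of the key step $j=0$, and your subsequent deferral to the Wach-module computation in \cite{loefflerzerbes16} is unnecessary. Concretely: once you know the short exact sequence $0\to\mathscr{F}^+\cD_\star\to\cD_\star\to\mathscr{F}^-\cD_\star\to 0$ induces a short exact sequence of filtered $\varphi$-modules on $\Dcris$, the Hodge--Tate weight $j$ of $\mathscr{F}^-\cD_\star$ must lie in $\{0,w\}$, and (as your own case analysis shows) $j=w$ forces $D_\beta=\mathrm{Fil}$. Applying weak admissibility to the $\varphi$-stable line $D_\beta\subset D$ then gives $t_N(D_\beta)=-v_p(\beta_\star)\ge t_H(D_\beta)=0$; combined with $v_p(\beta_\star)\ge 0$ (integrality of the Hecke roots) this yields $v_p(\beta_\star)=0$ and hence $v_p(\alpha_\star)=v_p(\alpha_\star\beta_\star)=w$, contradicting the non-critical-slope hypothesis. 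This is rigorous as stated; there is no residual ``main obstacle'' to outsource. If anything, your route via weak admissibility is more conceptual than an explicit Wach-module calculation, since it makes transparent exactly where Assumption~\ref{ass:Heckeevals}(2) enters.
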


  \begin{definition}
   Define the $(\varphi,\Gamma)$-submodules $\cD^{++} \subset \cD^+ \subset \cD_f \otimes \cD_g$ by 
   \begin{align*}
    \cD^+ &\coloneqq \left(\mathscr{F}^+\cD_f\otimes \cD_g\right)+\left(\cD_f\otimes \mathscr{F}^+\cD_g\right)\, \subset\, \cD_f\otimes \cD_g, \\
    \cD^{++} & \coloneqq \mathscr{F}^+\cD_f \otimes \cD_g,
   \end{align*}
   so that $\cD^+$ has rank $3$, $\cD^{++}$ rank 2, and $\cD^+ / \cD^{++} \cong \mathscr{F}^-\cD_f \otimes \mathscr{F}^+\cD_g$.
  \end{definition}
  
  (The submodule $\cD^+$ will be the local condition for our Euler system; the smaller submodule $\cD^{++}$ should be related to $p$-adic $L$-functions.)
  
 \subsection{The $p$-adic $L$-function} We recall the following theorem:
 
  \begin{theorem}\label{thm:padicL}
   There exists an element $L_{p, \alpha_f}(f, g) \in \cH(\Gamma)$ with the following properties:
   \begin{enumerate}[(i)]
    \item For each locally-algebraic character $j + \chi$ of $\Gamma$ with $0 \le j \le r - r' - 1$, we have an interpolation formula
    \[ L_{p, \alpha_f}(f, g)(j + \chi) = (\star) \cdot \frac{L(f, g, \bar{\chi}, 1 + j)}{\langle f, f \rangle} \]
    where $(\star)$ is an explicit factor, and $\langle f, f \rangle$ is the Petersson norm.
    \item If $\cF$ and $\cG$ are Coleman families (over some small affinoids $U_f$, $U_g$) passing through the $p$-stablizations of $f$, $g$ corresponding to $\alpha_f$ and $\alpha_g$, then $L_{p, \alpha_f}(f, g)$ extends to a distribution $L_p(\cF, \cG)$ valued in $\cO(U_f \times U_g)$ with the appropriate interpolation property at classical points.
   \end{enumerate}
  \end{theorem}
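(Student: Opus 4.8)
The plan is to obtain this from the existing construction of $p$-adic Rankin--Selberg $L$-functions in Coleman families due to Loeffler--Zerbes \cite{loefflerzerbes16}, building on Kings--Loeffler--Zerbes \cite{KLZ17}; so I would prove (ii) first and deduce (i) by specialisation. For (ii), let $\cF$ over $U_f$ and $\cG$ over $U_g$ be the chosen Coleman families. The construction proceeds by the $p$-adic Rankin--Selberg method: one forms a $p$-adic family of Eisenstein series $\cE$ (the Eisenstein--Iwasawa class, which also carries the cyclotomic variable), pairs it against the dual family $\cG^{*}$, and applies an \emph{overconvergent projection} onto the generalised $\cF$-eigenspace. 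The point --- and the reason non-ordinarity is not an obstruction --- is that on the affinoid $U_f$ the family $\cF$ has uniformly bounded slope, so Coleman's theory of finite-slope families provides a projector onto the slope-$\le h$ part of overconvergent cohomology (for $h$ exceeding this slope) that varies $p$-adic analytically; this replaces Hida's ordinary idempotent. Pairing the resulting overconvergent form against $\cF^{*}$ yields an element of $\cO(U_f \times U_g)$, and incorporating the cyclotomic variable gives the desired distribution valued in $\cO(U_f \times U_g)$. Equivalently, one may take the geometric route: build the Beilinson--Flach Euler system in the family, apply the Perrin-Riou regulator of \cref{def:Lkappa} to its bottom class, and extract $L_p$ via the explicit reciprocity law of \cite{KLZ17}.

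The interpolation at classical points then follows from the classical Rankin--Selberg integral, which expresses $L(f_k, g_{k'}, \bar\chi, s)$ inside the geometric range as a Petersson product against $f_k$, once one carries out the bookkeeping of the local modification factors at $p$ coming from the passage to the $\alpha_f$-stabilisation (together with the factors at the tame level); these constitute the factor $(\star)$. For (i), evaluating $L_p(\cF,\cG)$ at the classical point $(r, r') \in U_f \times U_g$ produces an element of $\cH(\Gamma)$, which we name $L_{p,\alpha_f}(f,g)$. Because $(r,r')$ has non-critical slope by \cref{ass:Heckeevals}, the overconvergent projection used in (ii) specialises at this point to classical holomorphic projection, so the family interpolation formula specialises term by term to the stated one, with $(\star)$ of the same shape.

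The hard part, were one doing this from scratch rather than citing, is the construction and $p$-adic analyticity of the finite-slope overconvergent projection, together with the precise local computation at $p$ that determines $(\star)$: one must show that the Euler-type factors appearing involve exactly the quantities in \eqref{eq:prodHecke}, which is what makes \cref{ass:Heckeevals} the correct ``no local zeros'' hypothesis. On the Euler-system side the corresponding obstacle is the explicit reciprocity law, whose proof requires a delicate comparison of syntomic and \'etale realisations of the Rankin--Eisenstein classes. Both ingredients are, however, already available in \cite{loefflerzerbes16} and \cite{KLZ17}, so for the purposes of the present paper the result is genuinely a citation.
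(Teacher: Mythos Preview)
Your overall shape is right: the theorem is indeed a citation, one proves (ii) in families via the $p$-adic Rankin--Selberg method and specialises to get (i). But the specific references you invoke do not quite deliver the full statement, and the paper's choice of citation is deliberate on exactly this point.

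The construction in \cite{loefflerzerbes16} (and \cite{KLZ17} in the ordinary case) only established the interpolation formula (i) for characters with \emph{trivial} finite-order part $\chi$; the interpolation at general locally-algebraic characters $j + \chi$ was left open there. The paper makes this explicit a few lines later, in the proof of the reciprocity law: ``the $p$-adic $L$-function used in \emph{op.cit.} was only known to satisfy the interpolation property (i) of \cref{thm:padicL} when $\chi = 1$.'' This is why the paper cites \cite{grahampillonirodrigues} for the non-ordinary case (and Hida--Panchishkin for the ordinary case): the newer work supplies the missing interpolation at non-trivial $\chi$, via a finer $p$-adic interpolation of the Gauss--Manin connection on nearly overconvergent forms than was available in \cite{loefflerzerbes16}. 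So ``both ingredients are already available in \cite{loefflerzerbes16} and \cite{KLZ17}'' is not accurate for the theorem as stated.

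Separately, your ``geometric route'' via the Beilinson--Flach class and the explicit reciprocity law is not really an alternative construction. The reciprocity law identifies the regulator of the Euler system with the $p$-adic $L$-function, but the $L$-function (with its interpolation property) must be constructed independently for that identification to have content; using it to \emph{define} $L_p$ would make the reciprocity law tautological and would not give you (i).
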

  
  \begin{proof}
   If $\alpha_f$ and $\alpha_g$ are $p$-adic units, then all assertions of the theorem are contained in works of Hida and Panchishkin from the 1980's (see in particular \cite{hida88}). In the case of non-ordinary families, we refer to \S 7.3 of \cite{grahampillonirodrigues}. (Note that the $p$-adic $L$-function of \emph{op.cit.} differs from ours by a shift in the cyclotomic variable, since we have chosen to maintain the normalisations used in \cite{loefflerzerbes16}.)
  \end{proof}
  
 \subsection{The Euler system}

  Let $c > 1$ be coprime to $6pN_f N_g$, and we let $\Sigma$ be a finite set of primes containing all those dividing $c p N_f N_g $.

  \begin{proposition}[{\cite[Theorem 8.1.4]{loefflerzerbes16}}]
   There exists an analytic Euler system $(c_m)_{m\geq 1,\, (m,pc=1)}$ for $V$, in the sense of \cref{def:anES}, with $c_m\in H^1_{\an,\Sigma}(\QQ_\infty(m),V)$. Moreover,  the Euler system $(c_m)$ satisfies the $\cD^+$ local condition at $p$.
  \end{proposition}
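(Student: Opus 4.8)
The plan is to obtain everything from the construction of the Beilinson--Flach Euler system in \cite{loefflerzerbes16} (building on \cite{KLZ17}), together with the observation, recorded in the remark following \cref{def:anES}, that an Euler system of the finite-order type used there is \emph{a fortiori} an analytic Euler system in our sense. First I would recall that for each $m$ coprime to $pc$, \emph{op.\,cit.} produces an integral class -- the \'etale realisation of the $c$-smoothed Rankin--Eisenstein motivic class, pushed forward along the degeneracy maps of modular curves and assembled into a class in $H^1_{\Iw,\Sigma}(\QQ_\infty(m),T)$ -- and I would take $c_m$ to be its image in $H^1_{\an,\Sigma}(\QQ_\infty(m),V)$ under base change to $\cH(\Gamma)$.

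With this definition the two analytic Euler system axioms are essentially immediate. The norm relations are exactly the tame norm relations of \cite{loefflerzerbes16}, which remain valid with coefficients in $\cH(\Gamma)$ by base change from integral Iwasawa cohomology. The uniform bound $\|c_m\|_r \le C_r$ holds with $C_r = 1$ for every $r<1$: the unit ball of $\|-\|_r$ on $H^1_{\an,\Sigma}(\QQ_\infty(m),V)$ is, by definition, the image of Iwasawa cohomology with coefficients in $T\otimes\cH_r^\circ$; since $\cO[[\Gamma]]\subseteq\cH_r^\circ$ and the $c$-smoothed classes are integral, each $c_m$ already lies in this unit ball.

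The only genuinely non-formal point is the local condition at $p$, which is exactly what is established in \cite[\S 8]{loefflerzerbes16}; I would transcribe that argument. In outline: since $\cD^+$ is saturated with $\cD/\cD^+ \cong \mathscr{F}^-\cD_f\otimes\mathscr{F}^-\cD_g$, the condition is equivalent to the vanishing, for each $v\mid p$, of the image of $\loc_v(c_m)$ in $H^1_{\an}\!\big(\QQ_\infty(m)_v,\, \mathscr{F}^-\cD_f\otimes\mathscr{F}^-\cD_g\big)$. The module $\mathscr{F}^-\cD_f\otimes\mathscr{F}^-\cD_g$ is crystalline of rank one, with Hodge--Tate weight $0$ and crystalline Frobenius eigenvalue $(\alpha_f\alpha_g)^{-1}$, which by \cref{ass:Heckeevals}(3) is not a power of $p$; hence, by the computation of $\ker\cL_\cD$ recorded earlier, the Perrin--Riou regulator $\cL_{\cD}$ of this rank-one module is injective, and it suffices to check that $\cL_{\cD}$ annihilates the corresponding component of $\loc_v(c_m)$. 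This vanishing is the content of the Beilinson--Flach explicit reciprocity law of \cite{loefflerzerbes16}, reflecting the motivic origin of the classes. I expect the main obstacle to be exactly this last step: reconciling the triangulations $\mathscr{F}^\pm\cD_\star$ and the normalisations fixed above with those of \cite{loefflerzerbes16}, and confirming that the local statement proved there coincides with membership in $\operatorname{image}\!\big(H^1_{\an}(\QQ_\infty(m)_v,\cD^+)\to H^1_{\an}(\QQ_\infty(m)_v,V)\big)$ in the sense of \cref{def:anES}. Finally, I would record that Hypothesis~(BI) together with \cref{ass:Heckeevals} guarantees all the running hypotheses of the later theorems applied to $(c_m)$: the conditions (Hyp1)--(Hyp3) were verified above, and crystallinity of $V$ at $p$ holds since $N_f$ and $N_g$ are prime to $p$.
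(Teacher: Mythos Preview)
The paper's own proof is a single sentence: it records that the statement is a restatement of Theorem~7.1.2 of \cite{loefflerzerbes16}. Your proposal is a correct and considerably more detailed unpacking of what lies behind that citation; the treatment of the norm relations (by base change from integral Iwasawa cohomology) and of the uniform bound (via $\cO[[\Gamma]]\subseteq\cH_r^\circ$ and the integrality of the $c$-smoothed Beilinson--Flach classes) is fine and gives $C_r=1$.

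One terminological point deserves tightening. You attribute the local condition at $p$ to ``the Beilinson--Flach explicit reciprocity law'', but in the usage of \cite{loefflerzerbes16} and of the present paper these are two separate statements. The \emph{local condition} is the vanishing of the image of $\loc_v(c_m)$ in $H^1_{\an}$ of the rank-one quotient $\cD/\cD^+\cong\mathscr{F}^-\cD_f\otimes\mathscr{F}^-\cD_g$; this is exactly Theorem~7.1.2 of \cite{loefflerzerbes16}, and is what the paper is citing. The \emph{explicit reciprocity law} is the later, stronger formula identifying the image under the regulator for $\cD^+/\cD^{++}$ with $L_{p,\alpha_f}(f,g)$, and is invoked only in the subsequent theorem of the present paper. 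Your reduction via injectivity of the Perrin--Riou regulator on the rank-one quotient is a legitimate way to reformulate the local condition, but calling the resulting vanishing ``the explicit reciprocity law'' conflates these two results; in \cite{loefflerzerbes16} the local condition is established by interpolation in the Coleman family rather than as a corollary of the reciprocity formula. Your closing remarks about (Hyp1)--(Hyp3) and crystallinity are correct but not part of the proof of this proposition; they belong to the surrounding discussion.
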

  \begin{proof}
  This is a restatement of Theorem 7.1.2 of \cite{loefflerzerbes16}.
 \end{proof}

 \begin{remark}
  Note that since $V$ is 4-dimensional, a rank 3 local-condition module is the correct rank for \cref{thm:mainSelbound} to apply. This submodule $\cD^+$ arises from a subrepresentation if, and only if, $f$ and $g$ are both ordinary and $\alpha_f, \alpha_g$ are the unit roots of the Hecke polynomials, which is the setting studied in \cite{KLZ17}.
 \end{remark}

 \begin{theorem}
  The image of $c_1$ under the map
  \[ \widetilde{H}^1_{\mathrm{an}}(\QQ_\infty, V, \cD^+) \to H^1_{\mathrm{an}}(\QQ_{p, \infty}, \cD^+ / \cD^{++}) \to \cH(\Gamma), \]
  where the third map is the Perrin-Riou regulator for the one-dimensional $(\varphi, \Gamma)$-module $\cD^+ / \cD^{++}$, sends $c_1$ to the $p$-adic $L$-function $L_{p, \alpha_f}(f, g)$.
 \end{theorem}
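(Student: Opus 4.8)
The plan is to recognise this identity as the translation, into Pottharst's $(\varphi,\Gamma)$-module framework, of the explicit reciprocity law for the Beilinson--Flach Euler system established in \cite{KLZ17} and extended to the non-ordinary setting in \cite{loefflerzerbes16}. Write $\cL(c_1) \in \cH(\Gamma)$ for the image of $c_1$ under the composite in the statement; we must show $\cL(c_1) = L_{p, \alpha_f}(f, g)$. Since $\cH(\Gamma)$ is an integral domain, an element is determined by its values on any Zariski-dense set of characters of $\Gamma$, so it suffices to check the equality after evaluation at the locally-algebraic characters $\kappa = j + \chi$ with $\chi$ of finite order and $0 \le j \le r - r' - 1$ (the geometric range of \cref{thm:padicL}(i)); these form such a dense set.

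First I would unwind the left-hand side at such a $\kappa$. The quotient $\cD^+ / \cD^{++} \cong \mathscr{F}^-\cD_f \otimes \mathscr{F}^+\cD_g$ is crystalline of rank one, so fixing a basis of its one-dimensional $\Dcris$ makes the Perrin-Riou regulator an $\cH(\Gamma)$-valued map, and $\cL(c_1)$ is the image of $c_1$ under the Selmer-complex localization-and-projection map $\widetilde{H}^1_{\mathrm{an}}(\QQ_\infty, V, \cD^+) \to H^1_{\mathrm{an}}(\QQ_{p,\infty}, \cD^+/\cD^{++})$ followed by this regulator. For $\kappa$ in the range above, the defining interpolation property of the Perrin-Riou regulator of a rank-one crystalline module (see \cite{leiloefflerzerbes11}, or \cite{KPX} for the $(\varphi,\Gamma)$-module formulation) expresses $\cL(c_1)(\kappa)$ as an explicit local Euler-type factor times the Bloch--Kato dual exponential $\exp^*$ of the image of $\loc_p(c_1^{(\kappa)})$ in $H^1(\Qp, (\mathscr{F}^-\cD_f \otimes \mathscr{F}^+\cD_g)(\kappa^{-1}))$, where $c_1^{(\kappa)}$ denotes the specialisation of $c_1$ at $\kappa$.

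Next I would invoke the explicit reciprocity law. By construction $c_1^{(\kappa)}$ is the \'etale Beilinson--Flach class attached to $(f, g)$ and the pair $(j, \chi)$, and by the reciprocity law of \cite{loefflerzerbes16} (resting on the computations of \cite{KLZ17}) the image of $\loc_p(c_1^{(\kappa)})$ in $H^1(\Qp, (\mathscr{F}^-\cD_f \otimes \mathscr{F}^+\cD_g)(\kappa^{-1}))$ has $\exp^*$ equal to the critical Rankin--Selberg value $L(f, g, \bar\chi, 1 + j)/\langle f, f\rangle$ times an explicit constant. Comparing this constant with the interpolation factor $(\star)$ of \cref{thm:padicL}(i) and with the local factor produced by the regulator in the previous step gives $\cL(c_1)(\kappa) = L_{p, \alpha_f}(f, g)(\kappa)$ for every $\kappa$ in the dense set, and hence $\cL(c_1) = L_{p, \alpha_f}(f, g)$.

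The main obstacle here is bookkeeping rather than mathematics: one must verify that the rank-one quotient $\cD^+/\cD^{++}$ used here really is the one implicit in the reciprocity law of \cite{KLZ17, loefflerzerbes16} --- which requires carefully tracking the Tate twists, since $V = V_{f,g,\pp}^*$ and the Euler system lives in the cohomology of the dual --- and that the product of explicit constants (the Euler factor at $p$, the Gauss sums and $\varepsilon$-factors, and the factorials relating $\exp^*$ to evaluation of the regulator) matches on the two sides. All of this is essentially already contained in \cite{loefflerzerbes16}; the genuinely new content of the present theorem is only its reformulation in terms of the submodules $\cD^{++} \subset \cD^+$.
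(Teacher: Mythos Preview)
Your approach and the paper's agree that this is essentially the explicit reciprocity law of \cite{loefflerzerbes16}, restated in the $(\varphi,\Gamma)$-module language. However, you miss a subtlety that the paper addresses explicitly. The reciprocity law proved in \cite{loefflerzerbes16} identifies $\cL(c_1)$ with a certain $p$-adic $L$-function constructed there, but that $p$-adic $L$-function was only shown to satisfy the interpolation property of \cref{thm:padicL}(i) for characters with $\chi = 1$. Hence the step in your argument where you invoke ``the reciprocity law of \cite{loefflerzerbes16}'' to compute $\cL(c_1)(j+\chi)$ for all finite-order $\chi$ is not available in the literature as stated; and the $\chi = 1$ points alone (finitely many integers $0 \le j \le r-r'-1$) are certainly not Zariski-dense in the cyclotomic variable.

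The paper fills this gap differently: rather than trying to obtain more interpolation points in the single cyclotomic variable, it deforms both sides in three variables, letting $f$ and $g$ move in Coleman families $\cF,\cG$. Both $\cL(c_1)$ (via \cite{loefflerzerbes16}) and $L_{p,\alpha_f}(f,g)$ (via \cref{thm:padicL}(ii)) admit such a deformation, and the $\chi = 1$ specialisations at varying classical weights are Zariski-dense in $U_f \times U_g \times \cW$. Thus agreement at those points forces the three-variable objects to coincide, and specialising back to $(f,g)$ gives the theorem. Your one-variable density argument would work if the full $\chi \ne 1$ interpolation were already in \cite{loefflerzerbes16}, but it is precisely this missing ingredient that the passage through families (and the comparison with \cite{grahampillonirodrigues}) supplies.
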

 
 \begin{proof}
  This explicit reciprocity law was proved in \cite{loefflerzerbes16} with the reservation that the $p$-adic $L$-function used in \emph{op.cit.} was only known to satisfy the interpolation property (i) of \cref{thm:padicL} when $\chi = 1$.  However, since these $\chi = 1$ specialisations are sufficient to uniquely determine the 3-variable $p$-adic $L$-function once $f$ and $g$ are allowed to varying in families, this $p$-adic $L$-function must agree with the one constructed in \cite{grahampillonirodrigues}.
 \end{proof}
 
 \begin{corollary}
  Assume that $r-r'\geq 2$. Then $L_{p, \alpha_f}(f, g)$ is not a zero-divisor, and hence $e_\eta c_1 \neq 0$ for all $\eta$.
 \end{corollary}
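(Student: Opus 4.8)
The plan is to use the interpolation formula of \cref{thm:padicL}(i) at the rightmost point of its range. Recall that $\cH(\Gamma)$ splits as a finite product $\prod_\eta \cH(\Gamma)e_\eta$ indexed by the characters $\eta$ of $\Gamma_{\mathrm{tors}}$, and that each factor $\cH(\Gamma)e_\eta$ is isomorphic to the ring of power series over $E$ converging on the open unit disc, hence is an integral domain. Consequently an element of $\cH(\Gamma)$ is a non-zero-divisor if and only if $e_\eta(-)\neq 0$ in each factor, so it suffices to show $e_\eta L_{p,\alpha_f}(f,g)\neq 0$ for every $\eta$. Granting this, the final assertion is immediate: the composite map of the explicit reciprocity law stated just above is $\cH(\Gamma)$-linear and sends $c_1\mapsto L_{p,\alpha_f}(f,g)$, so $e_\eta c_1\mapsto e_\eta L_{p,\alpha_f}(f,g)\neq 0$ and therefore $e_\eta c_1\neq 0$.

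To prove $e_\eta L_{p,\alpha_f}(f,g)\neq 0$ I would exhibit a single locally-algebraic character $\kappa$ restricting to $\eta$ on $\Gamma_{\mathrm{tors}}$ at which $L_{p,\alpha_f}(f,g)$ does not vanish; as $\kappa$ then factors through $\cH(\Gamma)e_\eta$ this gives $e_\eta L_{p,\alpha_f}(f,g)\neq 0$. Writing $\eta=\omega^a$ for the Teichm\"uller character $\omega$, put $j=r-r'-1$ (the top of the interpolation range $[0,r-r'-1]$), so that $1+j=r-r'$, and let $\chi$ be the finite-order character of $\Gamma$ obtained by inflating $\omega^{a-j}$ along $\Gamma\twoheadrightarrow\Gamma_{\mathrm{tors}}$. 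Then $\kappa=j+\chi$ restricts to $\eta$ on $\Gamma_{\mathrm{tors}}$, and \cref{thm:padicL}(i) gives
\[ L_{p,\alpha_f}(f,g)(\kappa) = (\star)\cdot \frac{L(f,g,\bar\chi,\,r-r')}{\langle f,f\rangle}. \]
Here the explicit factor $(\star)$ is non-zero: this is exactly where \cref{ass:Heckeevals}(3) (``no local zeros'') enters, guaranteeing that the modified local Euler factor at $p$ occurring in $(\star)$ does not vanish, the Gauss-sum, $\Gamma$-factor and period contributions being automatically non-zero at a critical point.

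It remains to check that $L(f,g,\bar\chi,r-r')\neq 0$, and this is the one genuinely analytic ingredient as well as the precise place where $r-r'\geq 2$ matters. After passing to the unitary normalisation, this value is the rightmost critical value of the Rankin--Selberg $L$-function $L(\pi_f\times(\pi_g\otimes\bar\chi),s)$ attached to the cuspidal automorphic representations $\pi_f,\pi_g$ of $\GL_2/\QQ$, and it sits at the point $s=\tfrac{r-r'}{2}$, which lies in the closed half-plane $\operatorname{Re}(s)\geq 1$ precisely when $r-r'\geq 2$. On that half-plane a Rankin--Selberg $L$-function of cuspidal automorphic representations of $\GL_2/\QQ$ is non-vanishing --- by the absolutely convergent Euler product for $\operatorname{Re}(s)>1$, and by the non-vanishing theorem of Jacquet--Shalika on the line $\operatorname{Re}(s)=1$ (applicable since $\pi_g\otimes\bar\chi$ is again cuspidal and unitary). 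Hence $L(f,g,\bar\chi,r-r')\neq 0$, so $L_{p,\alpha_f}(f,g)(\kappa)\neq 0$, which completes the argument. The main obstacle I anticipate is purely bookkeeping: matching the normalisation of $L(f,g,s)$ used in \cite{loefflerzerbes16} with the classical Rankin--Selberg $L$-function, so as to confirm that $s=r-r'$ really is its rightmost critical value (equivalently $s=\tfrac{r-r'}{2}$ after the unitary shift) --- once that is pinned down, everything else is routine.
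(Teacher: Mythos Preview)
Your proposal is correct and follows essentially the same route as the argument the paper cites from \cite[Remark 11.6.5]{KLZ17}: split $\cH(\Gamma)$ over the components indexed by $\eta$, and in each component evaluate at the rightmost critical point $j=r-r'-1$, where the complex Rankin--Selberg $L$-value is nonzero by absolute convergence (for $r-r'\ge 3$) or by Jacquet--Shalika (for $r-r'=2$). Your identification of $s=(r-r')/2$ in the unitary normalisation is the correct bookkeeping, and your derivation of $e_\eta c_1\neq 0$ from the $\cH(\Gamma)$-linearity of the reciprocity map is exactly what is intended.
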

 
 \begin{proof}
  See Remark 11.6.5 in \cite{KLZ17}; although ordinarity is assumed in \emph{op.cit.}, the same argument also applies in the non-ordinary case.
 \end{proof}

 Applying Theorem \ref{thm:H2tors}, we obtain the following result:

 \begin{theorem}\label{thm:RSIMC}
  If $r - r' = 1$, then assume $L_{p, \alpha_f}(f, g)$ is not a zero-divisor. Then $\widetilde{H}^2_{\an}(\QQ_\infty, V, \cD^+)$ is $\cH(\Gamma)$-torsion, and
  \[ \mathrm{rk}_{\cH(\Gamma)}\widetilde{H}^1_{\an}(\QQ_\infty, V, \cD^+)=1.\]
  Moreover, we have the divisibilities of characteristic ideals
  \[
  \operatorname{char}_{\cH(\Gamma)} \widetilde{H}^2_{\mathrm{an}}(\QQ_\infty, V, \cD^+) \mathrel{\Big|} \operatorname{char}_{\cH(\Gamma)} \left(\frac{\widetilde{H}^1_{\mathrm{an}}(\QQ_\infty, V, \cD^+)}{\cH(\Gamma) \cdot c_1}\right)
  \]
  and
  \[
   \operatorname{char}_{\cH(\Gamma)} \widetilde{H}^2(\QQ_\infty, V, \cD^{++}) \mathrel{\Big|} L_{p, \alpha_f}(f, g),
  \]
  and equality holds in the first divisibility iff it holds in the second.
 \end{theorem}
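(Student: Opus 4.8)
The plan is to read off the torsion and rank statements from \cref{thm:H2tors}, the first divisibility from \cref{thm:mainSelbound}, and the statement for $\cD^{++}$ by comparing the Selmer complexes attached to $\cD^{+}$ and $\cD^{++}$. First I would verify the hypotheses of \cref{thm:H2tors}: the preceding Proposition supplies (Hyp1)--(Hyp3) and the analytic Euler system with local condition $\cD^{+}$; $V$ is crystalline at $p$; and $e_{\eta}c_{1}\neq0$ for every character $\eta\colon\Gamma_{\mathrm{tors}}\to E^{\times}$, because by the explicit reciprocity law above $c_{1}$ maps $\cH(\Gamma)$-linearly onto a generator of the ideal $(L_{p,\alpha_{f}}(f,g))$, which is not a zero-divisor --- the preceding Corollary when $r-r'\geq2$, our standing hypothesis when $r-r'=1$. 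Thus \cref{thm:H2tors} shows $\widetilde{H}^{2}_{\an}(\QQ_{\infty},V,\cD^{+})$ is $\cH(\Gamma)$-torsion and $\operatorname{rk}_{\cH(\Gamma)}\widetilde{H}^{1}_{\an}(\QQ_{\infty},V,\cD^{+})=\operatorname{rk}(\cD^{+})-\tfrac12\dim V=1$. Since $\operatorname{rk}(\cD^{+})=3=\tfrac12\dim V+1$, \cref{thm:mainSelbound} applies and gives $(\star)$; by \cref{ass:Heckeevals}(3) no element of \eqref{eq:prodHecke} is a power of $p$, and these are the eigenvalues of $\varphi^{-1}$ on $\Dcris(V)$, so the final remark following \cref{thm:mainSelbound} makes the error terms $\cE^{1}$ and $\cE^{2}$ both trivial. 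This is the first divisibility of the theorem.

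For the statement involving $\cD^{++}$, set $W\coloneqq\cD^{+}/\cD^{++}\cong\mathscr{F}^{-}\cD_{f}\otimes\mathscr{F}^{+}\cD_{g}$, a rank-one crystalline $(\varphi,\Gamma)$-module of Hodge--Tate weight $r'-1\geq0$ whose $\varphi^{-1}$-eigenvalue on $\Dcris$ is $\alpha_{f}\beta_{g}$, a subquotient eigenvalue of $\Dcris(V)$ and hence not a power of $p$. The short exact sequence $0\to W\to\cD/\cD^{++}\to\cD/\cD^{+}\to0$ of local-condition data at $p$ produces a distinguished triangle
\[ \widetilde{R\Gamma}_{\an}(\QQ_{\infty},V,\cD^{++})\longrightarrow\widetilde{R\Gamma}_{\an}(\QQ_{\infty},V,\cD^{+})\xrightarrow{\ \loc_{W}\ }R\Gamma_{\an}(\QQ_{p, \infty},W)\longrightarrow[+1]. \]
Since $W$ has no local zeros, the structure theorem for $H^{i}_{\an}(\QQ_{p, \infty},\cD)$ recalled above gives $W^{\varphi=1}=0$, so $H^{1}_{\an}(\QQ_{p, \infty},W)$ is free of rank one over $\cH(\Gamma)$; it gives $H^{2}_{\an}(\QQ_{p, \infty},W)=0$; and, as the operators $1-p^{k}\varphi$ on the one-dimensional space $\Dcris(W)$ are injective, hence bijective, for all $k$ (compare the remark after \cref{cor:goodkappa}), the Perrin-Riou regulator $\cL_{W}\colon H^{1}_{\an}(\QQ_{p, \infty},W)\to\cH(\Gamma)\otimes\Dcris(W)\cong\cH(\Gamma)$ is an isomorphism. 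The long exact sequence of the triangle, using $H^{2}_{\an}(\QQ_{p, \infty},W)=0$, reads
\[ \widetilde{H}^{1}_{\an}(\QQ_{\infty},V,\cD^{+})\xrightarrow{\ \loc_{W}\ }H^{1}_{\an}(\QQ_{p, \infty},W)\longrightarrow\widetilde{H}^{2}_{\an}(\QQ_{\infty},V,\cD^{++})\longrightarrow\widetilde{H}^{2}_{\an}(\QQ_{\infty},V,\cD^{+})\longrightarrow0; \]
as $\cL_{W}\circ\loc_{W}$ sends $c_{1}$ to the non-zero-divisor $L_{p,\alpha_{f}}(f,g)$, the cokernel of $\loc_{W}$ is $\cH(\Gamma)$-torsion, whence $\widetilde{H}^{2}_{\an}(\QQ_{\infty},V,\cD^{++})$ is torsion, $\widetilde{H}^{1}_{\an}(\QQ_{\infty},V,\cD^{++})$ has rank $0$ by the Euler characteristic formula, and $\loc_{W}$ is injective (its kernel, a quotient of that rank-$0$ module, lies inside the torsion-free module $\widetilde{H}^{1}_{\an}(\QQ_{\infty},V,\cD^{+})$).

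To conclude, put $y=\loc_{W}(c_{1})$. Injectivity of $\loc_{W}$ identifies $\widetilde{H}^{1}_{\an}(\QQ_{\infty},V,\cD^{+})/\cH(\Gamma)c_{1}$ with a submodule of $H^{1}_{\an}(\QQ_{p, \infty},W)/\cH(\Gamma)y$ whose quotient is $\operatorname{coker}(\loc_{W})$, and the isomorphism $\cL_{W}$ identifies $H^{1}_{\an}(\QQ_{p, \infty},W)/\cH(\Gamma)y$ with $\cH(\Gamma)/(L_{p,\alpha_{f}}(f,g))$. Writing $\mathcal{I}\coloneqq\operatorname{char}_{\cH(\Gamma)}\operatorname{coker}(\loc_{W})$, a non-zero-divisor ideal, multiplicativity of characteristic ideals in short exact sequences (applied componentwise over the B\'ezout domains $e_{\eta}\cH(\Gamma)$) gives
\[ \bigl(L_{p,\alpha_{f}}(f,g)\bigr)=\operatorname{char}_{\cH(\Gamma)}\!\bigl(\widetilde{H}^{1}_{\an}(\QQ_{\infty},V,\cD^{+})/\cH(\Gamma)c_{1}\bigr)\cdot\mathcal{I},\qquad\operatorname{char}_{\cH(\Gamma)}\widetilde{H}^{2}_{\an}(\QQ_{\infty},V,\cD^{++})=\operatorname{char}_{\cH(\Gamma)}\widetilde{H}^{2}_{\an}(\QQ_{\infty},V,\cD^{+})\cdot\mathcal{I}. \]
Multiplying the first divisibility of the theorem by $\mathcal{I}$ yields the second, and cancelling the non-zero-divisor $\mathcal{I}$ shows that equality holds in one precisely when it holds in the other. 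The step I expect to be the main obstacle is the middle paragraph: setting up the ``difference of local conditions'' triangle and, above all, checking that the map $\loc_{W}$ it produces is literally the composite $\widetilde{H}^{1}_{\an}(\QQ_{\infty},V,\cD^{+})\to H^{1}_{\an}(\QQ_{p, \infty},\cD^{+}/\cD^{++})$ appearing in the statement, so that the explicit reciprocity law may legitimately be inserted; granting that identification, the remainder is formal manipulation of characteristic ideals.
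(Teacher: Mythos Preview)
Your proposal is correct and follows the same route as the paper: invoke \cref{thm:H2tors} and \cref{thm:mainSelbound} for the rank statement and the first divisibility (with $\cE^{1}=\cE^{2}=1$ from Assumption~\ref{ass:Heckeevals}(3)), then compare $\cD^{+}$ and $\cD^{++}$ via the exact triangle $\widetilde{R\Gamma}_{\an}(\QQ_\infty,V,\cD^{++})\to\widetilde{R\Gamma}_{\an}(\QQ_\infty,V,\cD^{+})\to R\Gamma_{\an}(\QQ_{p,\infty},\cD^{+}/\cD^{++})\to[+1]$ and the Perrin-Riou regulator for the rank-one quotient. The paper's proof is terser, citing \cite[\S 11.6]{KLZ17} for the manipulation you spell out; your one slightly loose citation is the remark after \cref{cor:goodkappa}, which concerns $\cL_{\cD,\kappa}$ at a fixed character rather than the full Iwasawa-theoretic regulator $\cL_{W}$ over $\cH(\Gamma)$, and for that isomorphism the paper likewise defers to \cite{KLZ17}.
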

 
 \begin{proof}
  The rank statement and the first divisibility are precisely the result of \cref{thm:mainSelbound} in the present setting; the assumption \ref{ass:Heckeevals} (3) implies that the ``error terms'' $\cE^1$ and $\cE^2$ are trivial.
  
  The second divisibility (and the condition for equality) follows from the first using the exact triangle
  \[ \widetilde{R\Gamma}(\QQ_\infty, V, \cD^{++}) \to \widetilde{R\Gamma}(\QQ_\infty, V, \cD^+) \to R\Gamma_{\mathrm{an}}(\QQ_{p, \infty}, \cD^+ / \cD^{++}) \to [+1]\]
  and the trivialization of the third complex given by the Perrin-Riou regulator map (exactly as in \S 11.6 of \cite{KLZ17} in the ordinary case); again, the ``no extra zero'' assumption  assumption \ref{ass:Heckeevals} (3) shows that the regulator map is a bijection.
 \end{proof}

\subsection{Generalisations} We briefly sketch some other cases in which the same methods can be applied (full details will appear elsewhere).
 
 \subsubsection*{Asai $L$-functions} Let $F$ be a real quadratic field, and let $p > 3$ be a prime which splits in $F$. Let $\cF$ be a quadratic Hilbert modular form of weight $(k_1,k_2)$ with $k_1 > k_2 \ge 2$ and level coprime to $p$, and take for $V$ the dual of the Asai Galois representation associated to $\cF$ (the tensor induction of the standard 2-dimensional representation). We fix choices of roots $\alpha_1, \alpha_2$ of the Hecke polynomials at the primes above $p$ (which are the analogues of the $\alpha_f, \alpha_g$ above), which we assume to be sufficiently small, i.e.~$v_p(\alpha_i) < k_i - 1$. 
 
 In this case, $V$ is again four-dimensional, odd, and crystalline at $p$; and the choice of the $\alpha_i$ determines submodules $\cD^{++} \subset \cD^+ \subset \DD^{\dag}_{\rig}(V)$ of ranks 2 and 3 respectively. The Euler system constructed in \cite{leiloefflerzerbes18} in the ordinary case can be extended to this more general finite-slope setting, and gives an analytic Euler system for $V$ satisfying the local condition $\cD^{+}$. Details of this construction will appear in the PhD thesis of Ana Marija Vego at ETH Z\"urich.
 
 In this finite-slope setting, a $p$-adic Asai $L$-function was constructed by Kazi and the first author in \cite{kaziloeffler}. Generalising the results in \cite{grossiloefflerzerbes-GO4} for the ordinary case, it should be accessible to show that the image of this Euler system under the Perrin-Riou regulator for $\cD^+ / \cD^{++}$ is the $p$-adic $L$-function; and following the strategy above will then give one inclusion in the Main Conjecture in this setting.
 
 \subsubsection*{The $\operatorname{GSp}_4$ case} Let $\Pi$ be a cohomological, cuspidal automorphic representation of $\GSp_4$ which is generic, and is not CAP or endoscopic, and such that $\Pi_p$ is unramified Then we have a 4-dimensional Galois representation $V = V_p(\Pi)^*$ associated to $\Pi$, which is crystalline at $p$.
 
 If we fix a $p$-refinement of $\Pi_p$ (a choice of one of the Weyl-group orbits of characters of the diagonal torus from which $\Pi_p$ is induced) then this determines, as in the previous cases, a pair of submodules $\cD^{++} \subset \cD^+ \subset \DD^{\dag}_{\rig}(V)$ of ranks 2 and 3 respectively. Under a mild ``non-critical slope'' assumption on the $p$-refinement, Rockwood has constructed in \cite{rockwood26} an analytic Euler system for $V$ (generalising the construction of \cite{LSZ17} in the ordinary case). Via very similar arguments as used in \cite{loefflerzerbes16} in the Rankin--Selberg setting, one can check that Rockwood's Euler system satifies the $\cD^+$ local condition at $p$, giving another natural example in which the hypotheses of \cref{thm:mainSelbound} are satisfied.
 
 On the ``analytic'' side, one can construct a $p$-adic $L$-function for $\Pi$ using higher Coleman theory, as explained in \cite{LZ21-BSD}. (As explained in \cite[\S 6]{LZ21-BSD}, the main theorems of \emph{op.cit.} assume ordinarity, but this is not needed for the $p$-adic $L$-function construction; in fact the only reason why ordinarity was assumed in \emph{op.cit.} was precisely because we did not know at the time if it was possible to bound Pottharst-style Selmer groups via Euler systems.) Thus the methods of the present paper should allow one to prove the Euler-system divisibility of the Iwasawa main conjecture for $\GSp_4$ at non-ordinary primes. 
 
 \begin{remark}
  It also seems likely that these methods could be used to bound Selmer groups for the 8-dimensional Galois representations associated to $\GSp_4 \times \GL_2$ at non-ordinary primes, using the Euler system of \cite{HJS20} and the $p$-adic $L$-functions constructed in \cite{grahamrockwood24}.
 \end{remark}
 
 \newcommand{\noopsort}[1]{\relax}
 \providecommand{\bysame}{\leavevmode\hbox to3em{\hrulefill}\thinspace}
 \providecommand{\MR}[1]{%
  MR \href{http://www.ams.org/mathscinet-getitem?mr=#1}{#1}.
 }
 \providecommand{\href}[2]{#2}
 \newcommand{\articlehref}[2]{\href{#1}{#2}}

%\bibliographystyle{../../amsalphaurl}
%\bibliography{../../references}

\end{document}